\def \dist {{\rm dist}}
\def \IR {\mathbb{R}}
\DeclareMathOperator{\Rm}{Rm}
\DeclareMathOperator{\Ric}{Ric}
\DeclareMathOperator{\spt}{spt}
\newcommand*{\rom}[1]{\rm {\expandafter\@slowromancap\romannumeral #1@}}
\def\XXint#1#2#3{{\setbox0=\hbox{$#1{#2#3}{\int}$ }
\vcenter{\hbox{$#2#3$ }}\kern-.6\wd0}}
\protected\def\vts{%
  \ifmmode
    \mskip0.5\thinmuskip
  \else
    \ifhmode
      \kern0.08334em
    \fi
  \fi
}
\numberwithin{equation}{section}
\newtheorem{Theorem}{Theorem}[section]
\newtheorem{Proposition}[Theorem]{Proposition}
\newtheorem{Lemma}[Theorem]{Lemma}
\newtheorem{Corollary}[Theorem]{Corollary}
\theoremstyle{definition}
\def \W {\overline{W}}
\title{On noncollapsed $\mathbb{F}$-limit metric solitons}
\author{Pak-Yeung Chan\footnote{Pak-Yeung Chan's research is supported by EPSRC grant EP/T019824/1.}, Zilu Ma, Yongjia Zhang\footnote{Yongjia Zhang's research is  supported by National Natural Science Foundation of China NSFC12301076 and Shanghai Sailing Program 23YF1420400.}}
\date{}
\begin{document}


\maketitle

\begin{abstract}
   A noncollapsed $\mathbb{F}$-limit metric soliton is a self-similar singularity model that inevitably arises when studying the Ricci flow with the tool of $\mathbb{F}$-convergence \cite{Bam20a,Bam20b,Bam20c}. In this article, we shall present a systematic study of the noncollapsed $\mathbb{F}$-limit metric soliton, and show that, apart from the known results in \cite{Bam20c}, it satisfies many properties of smooth Ricci shrinkers. In particular, we show a quadratic lower bound for the scalar curvature, a local gap theorem, a global Sobolev inequality, and an optimal volume growth lower bound.
\end{abstract}


\tableofcontents

\section{Introduction}

In the development of the Ricci flow, especially in the proof of the geometrization conjecture via the Hamilton-Perelman program \cite{Ham93,Per02,Per03a,Per03b}, singularity analysis is a critical problem. The efficiency of the singularity analysis, to certain degree, determines the potential of the Ricci flow in its geometric application.  In Hamilton's classical singularity analysis \cite{Ham93}, strong local a priori estimates (especially the ``bounded curvature at bounded distance condition'') are essential to the existence of singularity models. In other words, one cannot freely detect the local geometry at arbitrary regions; only the geometric information at the highest-curvature region is available. Although Perelman improved Hamilton's method and overcame this difficulty in the three-dimensional case, the singularity analysis in general higher dimensional cases remains a major issue in the field of Ricci flow.

Bamler's breakthrough \cite{Bam20a,Bam20b,Bam20c} provides some remedy to the aforementioned problem. By developing sophisticated estimates for the heat kernel and the entropy on the Ricci flow \cite{Bam20a}, Bamler invented proper objects for compactness theory, called metric flows \cite{Bam20b}. A metric flow is a family of time-dependent metric spaces, with evolving probability measures that behave like conjugate heat kernels. In particular, a Ricci flow can be regarded as a metric flow. The convergence of metric flows is called the $\mathbb{F}$-convergence. Just like the pointed-Gromov-Hausdorff convergences, one needs also to pick a sequence of  ``base points'' for the $\mathbb{F}$-convergence, which is, in reality, a sequence of conjugate heat kernels.

\emph{Any} sequence of $n$-dimensional Ricci flows convergences, possibly after passing to a subsequence, to a metric flow. This compactness result, if applied to the singularity analysis, naturally leads to a more general notion of singularity model---the $\mathbb{F}$-limit of smooth Ricci flows. Bamler \cite{Bam20c} showed many geometric properties for an $\mathbb{F}$-limit of a noncollapsed sequence of $n$-dimensional Ricci flows; here by noncollapsed we mean that the sequence of Ricci flows has bounded Nash entropy at some fixed time. Among many others, the most important properties of such an $\mathbb{F}$-limit are: (1) it is almost everywhere smooth, and the singular set has space-time codimension at least four; (2) the convergence is a smooth convergence on the regular part of the limit, and consequently, the regular part of the $\mathbb{F}$-limit is a smooth Ricci flow space-time.

If, in addition to the noncollapsing assumption, the $\mathbb{F}$-limit has constant entropy, then it is a self-similar metric flow called a metric soliton (see Section 2 for more details). Metric solitons are not only special $\mathbb{F}$-limits, but arise in many scenarios, and therefore are important objects of study. In fact, so long as the monotonicity of the Nash entropy comes into play, one expects the $\mathbb{F}$-limit to be a metric soliton. For instance, the tangent flow at one point, or the tangent flow of an ancient solution at infinity, are both metric solitons.

Not surprisingly, the metric soliton satisfies the same differential equation as the smooth Ricci shrinker on the regular part. Furthermore, it is known that a four-dimensional $\mathbb{F}$-limit metric soliton has only isolated orbifold singularities. But little is known beyond these facts. For classical smooth Ricci shrinkers, many nice estimates and analyses are obtained, among which quite a few promoted the study of the Ricci flow. One naturally wonders whether the same or similar results are true for metric solitons, at least on its regular part. The goal of the current paper is to show some of these results. Hopefully our work will be helpful to the singularity analysis in higher dimensions. In order to present our project, we shall review some classical results of the Ricci shrinker, and our plan will flush out naturally.

Let $(M^n,g,f)$ be a complete and smooth Ricci shrinker, satisfying the following equation and normalization conditions.
\begin{gather*}
\Ric+\nabla^2f=\frac{1}{2}g,
\\
\int_M(4\pi)^{-\frac{n}{2}}e^{-f}\,dg=1,
\\
\mu_g:\equiv 2\Delta f-|\nabla f|^2+R+f-n<0.
\end{gather*}
We summarize some classical results below.
\begin{enumerate}[(1)]
\item (\cite{CZ10}) The potential function has quadratic growth:
$$\frac{1}{4}\left(d(x,x_0)-5n\right)^2_+\le f(x)-\mu_g\le \frac{1}{4}\left(d(x,x_0)+\sqrt{2n}\right)^2,$$
where $x_0$ is a minimum point of $f$.

\item (\cite{CLY11}) A quadratic lower bound of the scalar curvature (so long as the shrinker is nonflat):
$$R(x)\ge \frac{c}{f(x)-\mu_g}\qquad\text{ for all }\qquad x\in M.$$
Here $c$ is a constant depending on the shrinker.

\item (\cite{CMZ22}) A local gap theorem: if there is a large region in the shrinker with almost-Euclidean $\mu$-functional, namely
$$\mu\left(B(x_0,\delta^{-1}),g,1\right)\ge -\delta,$$
where $\delta=\delta(n)$ and $x_0$ is a minimum point of $f$, then the shrinker is the Euclidean space.

\item (\cite{LW20}) A global Sobolev inequality, or equivalently, Perelman's $\nu$-functional is finite on a shrinker:
$$\nu(g)=\mu_g.$$ 

\item (\cite{MW12,LW20}) An optimal volume growth lower bound: 
$$|B(x_0,r)|\ge c(n)e^{\mu_g}r\qquad\text{ for all }\qquad r\ge C(n).$$

\end{enumerate}

In the rest part of the current section, we shall introduce our main theorems. It is therefore imperative to briefly introduce some basic notions which appear in our statements. Our work is built upon Bamler's theory \cite{Bam20a,Bam20b,Bam20c}, which is a massive system containing quite a few definitions and results. It is impossible to introduce every theorem and notion applied in this paper. So we assume of the reader some basic familiarity with Bamler's work, and will refer to \cite{Bam20a,Bam20b,Bam20c} whenever necessary.

Throughout this article, we consider an \emph{$n$-dimensional noncollapsed $\mathbb{F}$-limit metric soliton} $\left(\mathcal{X},(\nu_t)_{t\in(-T,0)}\right)$ over the time-interval $(-T,0)$. The terminology here implies that such a metric soliton arises as an $\mathbb{F}$-limit of a sequence of $n$-dimensional Ricci flows with bounded Nash entropy. Since a metric soliton is self-similar, it is modelled on some measured metric space $(X,d,\nu)$, which is called its \emph{model}. According to \cite{Bam20c}, the model space is a metric completion of a smooth Remannian manifold, called its \emph{regular part}. We shall denote by $(\mathcal{R}_X,g,f_0)$ the regular part of the model $(X,d,\nu)$. Note that $$d\nu=(4\pi)^{-\frac{n}{2}}e^{-f_0}dg \quad \text{ on }\quad \mathcal{R}_X.$$ The smooth function $f_0:\mathcal{R}_X\to\mathbb{R}$ is also called \emph{the potential function} as in the smooth case. Another notion needed in the statements below is called \emph{center of the metric soliton}. It is a point $x_0\in\mathcal{R}_X$ in the regular part which is almost equivalent to the minimum point of the potential function in the smooth case. Henceforth, the notation $x_0$ will be reserved for a center of a metric soliton. Finally, we shall call $$ W:=\int_{\mathcal{R}_X}f_0 (4\pi)^{-\frac{n}{2}}e^{-f_0}\,dg-\frac{n}{2}$$
the \emph{soliton entropy}. It is in fact the Nash entropy of the metric soliton defined using the conjugate heat flow $\nu_t$ (which is a constant), and is equivalent to the $\mu_g$ constant of a smooth shrinker. A more thorough treatment of these notions is included in Section 2 and Section 3.

\subsection{Basic properties of the potential function and the volume}

To begin with, the potential function $f_0$ of a metric soliton also has quadratic growth rate. 

\begin{Theorem}[Potential function estimate]\label{thm:potential-function}
Let $(X,d,\nu)$ be the model of an $n$-dimensional noncollapsed $\mathbb{F}$-limit metric soliton and $(\mathcal{R}_X,g,f_0)$ the regular part of the model. Let $x_0\in\mathcal{R}_X$ be a center of the soliton. Then, for any $\varepsilon>0$, we have
\begin{align*}
    \frac{1}{8+\varepsilon}d^2_g(x_0,x)-C(\varepsilon)\le f_0(x)-W\le \frac{1}{4}\big(d_g(x_0,x)+C(n)\big)^2,
\end{align*}
where $C(\varepsilon)$ is a constant depending only on $\varepsilon$.
\end{Theorem}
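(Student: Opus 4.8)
The plan is to mimic the classical argument of Cao–Zhou \cite{CZ10} for smooth shrinkers, transplanted to the regular part $\mathcal{R}_X$ of the model, using the fact (from \cite{Bam20c}) that $(\mathcal{R}_X,g,f_0)$ satisfies the shrinker equation $\Ric+\nabla^2 f_0=\tfrac12 g$ and the trace/"static" identities $R+|\nabla f_0|^2-f_0+W=0$ and $R+\Delta f_0=\tfrac n2$ on $\mathcal{R}_X$ (after the normalization that fixes the additive constant via the soliton entropy $W$). The starting point is that along a unit-speed minimizing geodesic $\gamma$ in $\mathcal{R}_X$ emanating from the center $x_0$, the function $\phi(s):=f_0(\gamma(s))$ satisfies $\phi''=\tfrac12-\Ric(\gamma',\gamma')$ and $|\phi'|^2=|\nabla f_0|^2\le f_0-W$. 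For the \emph{upper bound}, integrate $\phi'(s)\le\sqrt{\phi(s)-W}$ (valid since $R\ge 0$ on $\mathcal{R}_X$, which follows from Bamler's results, hence $|\nabla f_0|^2\le f_0-W$), i.e. $\tfrac{d}{ds}\sqrt{\phi(s)-W}\le\tfrac12$, so $\sqrt{\phi(s)-W}\le\sqrt{\phi(0)-W}+\tfrac s2$. Since $x_0$ is a center, $\phi(0)-W=f_0(x_0)-W$ is bounded by a dimensional constant (this is essentially the defining property of the center, to be recalled from Section 3), giving $f_0(x)-W\le \tfrac14(d_g(x_0,x)+C(n))^2$.

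For the \emph{lower bound} I would argue as in \cite{CZ10}: along $\gamma$ one has $\int_0^s \Ric(\gamma',\gamma')\,dr \le \tfrac12 s + (\text{bounded terms})$ by a second-variation / index-form comparison applied to $\Ric+\nabla^2 f_0=\tfrac12 g$, which forces $\Ric(\gamma',\gamma')$ to be small on average for large $s$, hence $\phi''(s)=\tfrac12-\Ric(\gamma',\gamma')$ is close to $\tfrac12$ on average, so $\phi'(s)\ge \tfrac12 s - C$ and $\phi(s)\ge \tfrac14 s^2-Cs-C'$. Absorbing the linear error with Young's inequality yields $f_0(x)-W\ge \tfrac{1}{8+\varepsilon}d_g^2(x_0,x)-C(\varepsilon)$ for every $\varepsilon>0$; the slightly weaker constant $\tfrac{1}{8+\varepsilon}$ (rather than $\tfrac14$) is the price paid for not having a clean quadratic lower bound with sharp linear correction, and is why the statement is phrased with $\varepsilon$.

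The main obstacle is that $\mathcal{R}_X$ is \emph{not complete}: a minimizing $g$-geodesic from $x_0$ to a far point $x$ may run into the singular set, so the one-dimensional ODE/comparison argument cannot be carried out naively along geodesics of $(X,d)$. To handle this I would use that, by \cite{Bam20c}, the singular set has (parabolic) codimension at least $4$ and in particular $\mathcal{R}_X$ is geodesically convex in an almost-everywhere sense, or alternatively work with $f_0$-gradient-flow curves (integral curves of $\nabla f_0$), which stay in $\mathcal{R}_X$ by the soliton structure and along which $f_0$ is monotone and well controlled; one then relates the length of such curves to $d_g$. An equivalent route is to exploit that $f_0$, being (up to normalization) $\tfrac14 d_{\text{(cone/soliton metric)}}^2$-comparable, can be estimated using the large-scale structure established in \cite{Bam20c} without ever choosing a minimizing geodesic that meets the singular set. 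A secondary technical point is to pin down precisely the quantitative meaning of "center" $x_0$ — specifically that $f_0(x_0)-W\le C(n)$ and $|\nabla f_0|(x_0)\le C(n)$ — which I would extract from the definition and basic estimates in Section 3; once that is in hand the two ODE integrations above are routine.
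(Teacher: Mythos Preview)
Your upper-bound argument is essentially the paper's, with one caveat: the bound $f_0(x_0)-W\le C(n)$ is \emph{not} the defining property of a center. A center is defined only by $\operatorname{Var}(\delta_{x_0},\nu_{-1})\le H_n$, i.e.\ a Wasserstein concentration condition on $\nu$, with no a priori control on $f_0(x_0)$. The paper extracts $f_0(x_0)-W\le C(n)$ by a genuine argument: pass to the approximating sequence, show the images $z_i'$ of $x_0$ are close to $H_n$-centers of $(x_i,0)$, invoke Bamler's volume bound to get $|\mathcal{R}_X\cap B(x_0,\sqrt{2H_n})|\le C(n)e^W$, and combine this with $\nu\big(B(x_0,\sqrt{2H_n})\big)\ge\tfrac12$ to force $\inf_{B(x_0,\sqrt{2H_n})}(f_0-W)\le C(n)$. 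You should not skip this step.

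Your lower-bound strategy has a real gap. The Cao--Zhou second-variation estimate on $\int_0^s\Ric(\gamma',\gamma')$ requires $\gamma$ to be a minimizing geodesic in a complete manifold, and the authors explicitly note (Section 4.2) that ``the second variation along a geodesic is not available'' on $\mathcal{R}_X$; this is precisely why the statement carries the constant $\tfrac{1}{8+\varepsilon}$ rather than $\tfrac14$. Your proposed fixes do not work: integral curves of $\nabla f_0$ are not minimizing, so controlling $f_0$ along them says nothing about $d_g$; and codimension-$4$ for the singular set does not by itself yield that minimizing geodesics generically avoid it, nor that the index form comparison survives. The paper bypasses all of this with a completely different mechanism: the conjugate heat kernel upper bound \cite[Lemma 15.9(a)]{Bam20c}, which (applied at $s=-1$ and transferred to the limit via local smooth convergence) gives directly
\[
(4\pi)^{-n/2}e^{-f_0(x)}\le C(\varepsilon)\exp\Big(-W-\tfrac{1}{8+\varepsilon}\big(d^g_{W_1}(\nu,\delta_x)\big)^2\Big),
\]
and then the center condition yields $d_g(x,x_0)\le d^g_{W_1}(\nu,\delta_x)+\sqrt{H_n}$. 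In other words, the $\tfrac{1}{8+\varepsilon}$ is inherited from Bamler's Gaussian heat-kernel bound, not from a Young-inequality absorption of a linear error in an ODE argument.
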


It is conceivable that the upper bound follows from integrating some classical soliton identity (cf. the third equation in \eqref{eq: basic-properties-2}) along almost-minimizing curves contained in $\mathcal{R}_X$. So the key of proving the upper bound is to show that $f_0(x_0)-W$ is bounded by a dimensional constant. The lower bound is in fact a consequence of Bamler's conjugate heat kernel estimate \cite[Theorem 7.2]{Bam20a} and the local smooth convergence property of the $\mathbb{F}$-convergence \cite[Theorem 2.5]{Bam20c}. It goes without saying that this idea can be applied to the volume. Therefore the following two corollaries are immediately at hand.

\begin{Corollary}[Volume ratio upper bound at a center]\label{volume-upper-1}
    In the same settings as Theorem \ref{thm:potential-function}, we have
    \begin{align*}
        \left|\mathcal{R}_X\cap B(x_0,r)\right|\le C(n)e^W r^n \quad \text{ for all }\quad r\ge 1.
    \end{align*}
\end{Corollary}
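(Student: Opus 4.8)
\emph{Proof proposal.} I would work on the model $(\mathcal{R}_X,g,f_0)$ with its probability measure $\nu$, recalling $d\nu=(4\pi)^{-n/2}e^{-f_0}\,dg$ and that $\nu(\mathcal{R}_X)=1$, the singular set $X\setminus\mathcal{R}_X$ being $\nu$-null by the codimension-four structure theory of \cite{Bam20c}. The first thing to note is that Theorem~\ref{thm:potential-function} by itself is not enough: its upper bound together with $\nu(B(x_0,r))\le1$ only gives the exponential estimate $|\mathcal{R}_X\cap B(x_0,r)|\le C(n)e^We^{r^2/4}$. So the plan has two parts: a polynomial volume growth bound, and then a sharpening of the constant to $C(n)e^W$ that exploits that $\nu$ is a probability measure.

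For the polynomial bound I would run the Cao--Zhou method \cite{CZ10} on the regular part. Set $\rho:=2\sqrt{f_0-W}$ on $\mathcal{R}_X$ (this is a proper function, by the lower bound of Theorem~\ref{thm:potential-function}). Using the soliton identities valid on $\mathcal{R}_X$, namely $\Delta f_0=\frac n2-R$ and $|\nabla f_0|^2+R=f_0-W$ (cf. \eqref{eq: basic-properties-2}), together with $R\ge0$, one obtains $|\nabla\rho|\le1$ and the Laplacian comparison $\Delta\rho\le\frac{n-1}{\rho}$ on $\{\rho\ge\sqrt2\}$; a Bishop--Gromov comparison for $\rho$ then shows that $a\mapsto a^{-n}\,|\mathcal{R}_X\cap\{\rho<a\}|$ is non-increasing for $a\ge a_0(n)$, with $a_0(n)\ge\sqrt2$ dimensional. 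I expect the step needing the most care to be justifying the coarea and divergence-theorem computations in this comparison, since $\mathcal{R}_X$ is incomplete; this should be handled via the codimension-four bound on the singular set and the local smooth convergence on $\mathcal{R}_X$ \cite{Bam20c} — the latter, in particular, permitting the relevant identities to be verified on the approximating smooth Ricci flows and passed to the limit, which is the very mechanism behind the lower bound of Theorem~\ref{thm:potential-function}.

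Granting this, the sharp constant drops out directly. Writing $dg=(4\pi)^{n/2}e^{f_0}\,d\nu$ and using $f_0<W+\frac14 a_0(n)^2$ on $\{\rho<a_0(n)\}$, the base value satisfies
\[
|\mathcal{R}_X\cap\{\rho<a_0(n)\}|=(4\pi)^{n/2}\!\!\int_{\mathcal{R}_X\cap\{\rho<a_0(n)\}}\!\!e^{f_0}\,d\nu\;\le\;(4\pi)^{n/2}e^{W+\frac14 a_0(n)^2}\,\nu(\mathcal{R}_X)\;=\;C(n)e^W.
\]
Finally, the upper bound of Theorem~\ref{thm:potential-function} gives $\rho\le d_g(x_0,\cdot)+C(n)$, hence $\mathcal{R}_X\cap B(x_0,r)\subseteq\mathcal{R}_X\cap\{\rho<r+C(n)\}$; for $r\ge1$, combining this inclusion with the monotonicity and the base value yields $|\mathcal{R}_X\cap B(x_0,r)|\le C(n)e^W(r+C(n))^n\le C(n)e^W r^n$ (the range where $r+C(n)<a_0(n)$ being trivial since then $B(x_0,r)\subseteq\{\rho<a_0(n)\}$). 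This proves the corollary.
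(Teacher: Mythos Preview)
Your route is genuinely different from the paper's and considerably more involved. The paper never touches the Cao--Zhou comparison for this corollary: it simply notes that in the course of proving Theorem~\ref{thm:potential-function} one has already established $\mathcal{N}_{z_i',-1}(1)\le W+C(n)$ for the images $z_i'\to x_0$ along the approximating smooth flows, and then applies Bamler's Nash-entropy volume bound \cite[Theorem~8.1]{Bam20a} directly on $(M^i,g^i_{-1})$ to get $|B_{g^i_{-1}}(z_i',r)|\le C(n)e^W r^n$ for all $r\ge1$ (uniformly in $i$), passing to the limit by Fatou. No soliton structure, no integration by parts on $\mathcal{R}_X$, is used.

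Your outline can in principle be made to work, but there is a real gap in the crucial step. The Cao--Zhou identity you need (essentially the paper's Lemma~\ref{CZlem}) lives on the incomplete manifold $\mathcal{R}_X$, and justifying the divergence theorem there is the whole difficulty. Your suggested fix, ``verifying the relevant identities on the approximating smooth Ricci flows and passing to the limit'', does not work: the approximants $(M^i,g^i_t)$ are not solitons, so there is no potential $f_0$ and no identity $\Delta f_0=\tfrac n2-R$ to integrate there. The conjugate-heat-kernel potentials $f_i$ satisfy only Perelman's Harnack inequality, not the soliton equation, and this is not enough to run Cao--Zhou. The other option you mention, using the codimension-four structure of the singular set, is the right one, but carrying it out amounts to constructing the cut-off functions of Proposition~\ref{prop:regular cutoff} (which the paper does only later, in Section~6, via \cite[Lemma~15.27]{Bam20c}) and then arguing exactly as in Lemma~\ref{CZlem}. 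That is substantial work, and it is precisely what the paper's approach avoids.

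Two smaller remarks: the exact monotonicity of $a\mapsto a^{-n}|\{\rho<a\}|$ does not quite follow from $\Delta\rho\le(n-1)/\rho$ alone since $|\nabla\rho|\not\equiv1$; what the Cao--Zhou computation gives (cf.\ \eqref{interm}) is $a^{-n}V(a)(1-2na^{-2})\le a_0^{-n}V(a_0)$ for $a\ge a_0\ge\sqrt{2(n+2)}$, which is enough. And your base-value estimate $|\{\rho<a_0(n)\}|\le C(n)e^W$ via $d\nu=(4\pi)^{-n/2}e^{-f_0}dg$ and $\nu(\mathcal{R}_X)=1$ is correct and rather elegant; the paper gets the same control implicitly through the Nash entropy bound at $z_i'$.
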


\begin{Corollary}[Volume ratio upper bound]\label{volume-upper-2}
    In the same settings as Theorem \ref{thm:potential-function}, we have
    \begin{align*}
        \left|\mathcal{R}_X\cap B(x,r)\right|\le C(n) r^n \quad \text{ for all }\quad r\ge 1,
    \end{align*}
where $x$ is any point in $X$.
\end{Corollary}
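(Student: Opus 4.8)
The plan is to deduce Corollary \ref{volume-upper-2} from Corollary \ref{volume-upper-1} by a covering/volume-comparison argument, reducing the statement at an arbitrary point $x\in X$ to the statement at a center $x_0$. Fix $x\in X$ and $r\ge 1$. Since the soliton entropy $W$ is normalized to be nonpositive (it plays the role of $\mu_g<0$, so $e^W\le 1$), Corollary \ref{volume-upper-1} already gives $\left|\mathcal{R}_X\cap B(x_0,\rho)\right|\le C(n)\rho^n$ for every $\rho\ge 1$. Hence it suffices to control $\left|\mathcal{R}_X\cap B(x,r)\right|$ by the volume of a ball of comparable radius centered at $x_0$. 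The triangle inequality gives $B(x,r)\subset B(x_0,r+d_g(x_0,x))$, so if $d_g(x_0,x)\le r$ we are done immediately with constant $C(n)2^n$. The real work is therefore the case $d_g(x_0,x)$ large compared to $r$, where one cannot afford to absorb the distance into the radius.

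For that regime I would run a Vitali-type covering argument on $\mathcal{R}_X\cap B(x,r)$ together with a Bishop--Gromov-type volume monotonicity on the regular part. The metric soliton satisfies $\Ric+\nabla^2 f_0=\tfrac12 g$ on $\mathcal{R}_X$ in the weighted sense, so the weighted measure $d\nu=(4\pi)^{-n/2}e^{-f_0}dg$ has a Bakry--\'Emery--Bishop--Gromov monotonicity (this is exactly the mechanism behind \cite[Theorem 7.2]{Bam20a} and behind the lower bound in Theorem \ref{thm:potential-function}); alternatively one uses the smooth convergence of \cite[Theorem 2.5]{Bam20c} to transfer a Bishop--Gromov statement from the approximating Ricci flows. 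Concretely: cover $B(x,r)$ by boundedly many balls $B(y_i,1)$ with $y_i\in \mathcal{R}_X$, $N\le C(n)r^n$ of them, using a standard packing estimate (the packing estimate itself is justified by the noncollapsing/monotonicity, since it needs an upper volume bound on unit balls, which one gets from Corollary \ref{volume-upper-1} applied after sliding along a geodesic — or more cleanly, it is a known consequence of Bamler's estimates that unit balls in $\mathcal{R}_X$ have volume $\le C(n)$). Then $\left|\mathcal{R}_X\cap B(x,r)\right|\le \sum_i \left|\mathcal{R}_X\cap B(y_i,1)\right|\le N\cdot C(n)\le C(n)r^n$.

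Cleaner still, and probably the version to write: use Corollary \ref{volume-upper-1} at $x_0$ to get, for any $y$ with $d_g(x_0,y)=:D$ and any $s\le 1$, a unit-ball bound, and then invoke the on-diagonal heat kernel lower bound / noncollapsing at unit scale which is uniform over $\mathcal{R}_X$ (a consequence of the bounded Nash entropy hypothesis built into the notion of noncollapsed $\mathbb{F}$-limit metric soliton — see Section 2). Combining the uniform two-sided unit-scale volume control with a covering of $B(x,r)$ by $\le C(n)r^n$ unit balls yields the claim. So the structure is: (i) reduce to the case $d_g(x_0,x)$ large; (ii) establish a uniform upper bound $\left|\mathcal{R}_X\cap B(y,1)\right|\le C(n)$ for all $y\in\mathcal{R}_X$, using Corollary \ref{volume-upper-1} plus monotonicity to propagate away from the center; (iii) cover and sum.

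The main obstacle I anticipate is step (ii): propagating the volume bound from a neighborhood of the center $x_0$ to a unit ball around an arbitrary far-away point $y$. One has an \emph{upper} bound on $|\mathcal{R}_X\cap B(x_0,r)|$, but a Bishop--Gromov comparison for $\Ric\ge$ (something negative, since the soliton identity only gives $\Ric\ge \tfrac12 g-\nabla^2 f_0$ which is not a pointwise lower bound) does not obviously transfer an upper bound outward — monotonicity of $r\mapsto \nu(B(y,r))/\text{(model volume)}$ controls $\nu(B(y,1))$ from above by $\nu(B(y,r))$ for large $r$, and then $\nu(B(y,r))\le \nu(B(x_0,r+D))\le C(n)(r+D)^n e^{-(\text{something})}$, but the weight $e^{-f_0}$ and the volume $dg$ must be disentangled using the potential-function growth from Theorem \ref{thm:potential-function}. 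Carrying out this disentangling carefully — converting a weighted-volume upper bound into an unweighted-volume upper bound while tracking the $e^{f_0}\sim e^{d^2/4}$ factor against the Gaussian in $d\nu$ — is the delicate computation, and it is exactly where Theorem \ref{thm:potential-function} (rather than just Corollary \ref{volume-upper-1}) is needed.
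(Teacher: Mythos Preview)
Your approach has a genuine gap. The covering step (iii) is circular: to bound the number $N$ of unit balls covering $B(x,r)$ by $C(n)r^n$, a packing argument needs an \emph{upper} bound on $|B(x,r+1)\cap\mathcal{R}_X|$ against which to compare the total volume of the disjoint half-balls, and that upper bound is precisely what you are proving. Feeding in Corollary \ref{volume-upper-1} instead yields only $|B(x,r+1)\cap\mathcal{R}_X|\le |B(x_0,r+1+D)\cap\mathcal{R}_X|\le C(n)e^W(r+D)^n$ with $D=d(x_0,x)$, hence $N\le C(n)(r+D)^n$, and the final bound depends on $D$ --- exactly the dependence you wanted to remove. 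The weighted-measure alternative fares no better: converting $\nu(B(y,r))\le 1$ into an unweighted bound costs a factor $e^{\sup_{B(y,r)}f_0}$, which by Theorem \ref{thm:potential-function} is of order $e^{(D+r)^2/4}$. Step (ii) itself, the uniform unit-ball upper bound, is just the $r=1$ case of the Corollary; there is no Bishop--Gromov inequality for the unweighted measure on $\mathcal{R}_X$ (the soliton equation gives no pointwise Ricci lower bound) with which to propagate it outward from the center.

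The paper's intended argument sidesteps all of this by repeating the proof of Corollary \ref{volume-upper-1} verbatim at an arbitrary point rather than at a center. Choose $y'_i\in M^i$ with $(y'_i,-1)\to x$ under the local smooth convergence (first replacing $x$ by a nearby regular point if $x\notin\mathcal{R}_X$). Then \cite[Theorem 8.1]{Bam20a}, together with the scalar lower bound \eqref{trivial-mp-1111}, gives
\[
\big|B_{g^i_{-1}}(y'_i,r)\big|\le C(n)\exp\!\big(\mathcal{N}_{y'_i,-1}(r^2)\big)r^n\le C(n)r^n,
\]
since the pointed Nash entropy is always nonpositive. Fatou's lemma passes the bound to the limit. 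The observation you are missing is that \cite[Theorem 8.1]{Bam20a} applies at \emph{every} point of the approximating flows, not only near $H_n$-centers; away from a center one simply loses the improvement from $C(n)$ to $C(n)e^W$, which is exactly the paper's remark that ``the constant becomes worse if the base point is not a center''.
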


The proofs of the results in this subsection can be found in Section 4.

\subsection{A lower bound for the scalar curvature}

A result of Chow-Lu-Yang \cite{CLY11} shows that the scalar curvature of any nonflat Ricci shrinker has at most quadratic decay. This estimate is sharp due to the existence of asymptotically conical Ricci shrinkers \cite{FIK03}. The Chow-Lu-Yang estimate is generalized to ancient solutions by Chow and the authors in \cite{CCMZ23}, with an additional assumption that the scalar curvature around the $H_n$-centres of a fixed point is uniformly bounded from below. The theorem below shows that the Chow-Lu-Yang estimate also holds on the regular part of the model of a metric soliton, unless it is Ricci flat.

\begin{Theorem}[Scalar curvature lower bound]\label{thm: chow-lu-yang estimate}
    Let $(X,d,\nu)$ be the model of an $n$-dimensional noncollapsed $\mathbb{F}$-limit metric soliton and $(\mathcal{R}_X,g,f_0)$ its regular part. Assume furthermore that $g$ is non-Ricci-flat. Let $W$ be the soliton entropy and define 
    \begin{align}\label{eq:scalar-integral-positivity}
        c_R:=&\ \int_{\mathcal{R}_X} R\,(4\pi)^{-\frac{n}{2}}e^{-f_0}\,dg>0
    \end{align}
    Then there is a constant $c=c(n,W,c_R)$ depending only on $W$ and $c_R$, such that
    \begin{align*}
        R(x)\ge \frac{c}{f_0(x)-W+C_0}\quad\text{ for all  }\quad x\in M,
    \end{align*}
    where $C_0$ is a dimensional constant.
\end{Theorem}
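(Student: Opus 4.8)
The plan is to mimic the Chow--Lu--Yang argument, which rests on the elliptic equation satisfied by the scalar curvature on a shrinker, but carried out on the (possibly incomplete) regular part $\mathcal{R}_X$ with careful attention to the behavior near the singular set. On $\mathcal{R}_X$ the soliton identities hold, so $R$ satisfies a drift-Laplacian inequality of the form $\Delta_{f_0} R = R - 2|\Ric|^2 \le R - \tfrac{2}{n} R^2$ (together with $\Delta_{f_0} f_0 = \tfrac{n}{2} - R$ and $|\nabla f_0|^2 = f_0 - W + R - $ const, after the appropriate normalization). The classical argument considers the function $u = (f_0 - W + C_0) R$ and shows that it cannot be too small; more precisely, one derives a differential inequality for $u$ of the form $\Delta_{f_0} u \le -(\text{something positive}) + (\text{linear in } u)$ and concludes by a maximum-principle/ODE-comparison argument that $u$ is bounded below by a positive constant, using that $R > 0$ somewhere (which is guaranteed here since $c_R > 0$) and the quadratic growth of $f_0$ from Theorem~\ref{thm:potential-function}.

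First I would fix the normalization: since $f_0$ is only defined up to the additive constant coming from $W$, I would record the exact forms of the three soliton identities on $\mathcal{R}_X$ with the constants appearing in Theorem~\ref{thm:potential-function} (in particular $R \ge 0$ on all of $\mathcal{R}_X$, which follows on a metric soliton from Bamler's results, and $\int_{\mathcal{R}_X} R\,(4\pi)^{-n/2}e^{-f_0}\,dg = W + \tfrac{n}{2} - \mu = c_R$ up to the known relations, so that $c_R>0$ means $R\not\equiv 0$). Second, I would establish that $R$ attains its positive values on a definite region: from $c_R > 0$ and the volume ratio upper bound (Corollary~\ref{volume-upper-2}) one gets a lower bound $\sup_{B(x_0,r_0)} R \ge c(n,W,c_R)$ for some controlled radius $r_0$. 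Third, I would run the Chow--Lu--Yang ODE argument for $u$; the key computation is that along the integral curve of $\nabla f_0$ (or at an interior minimum of $u$ on a large sublevel set $\{f_0 \le L\}$), the differential inequality forces $u$ to stay above a positive constant once it is positive somewhere, yielding $R \ge c/(f_0 - W + C_0)$.

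The main obstacle — and the reason this is not a verbatim transcription of \cite{CLY11} — is that $\mathcal{R}_X$ is not complete: the sublevel sets $\{f_0 \le L\}$ need not be compact subsets of $\mathcal{R}_X$, since they may run into the singular set $X \setminus \mathcal{R}_X$, so the maximum principle cannot be applied naively. To handle this I would exploit that the singular set has parabolic (space-time) codimension at least $4$, hence in particular Minkowski/Hausdorff codimension large enough that it is removable for the relevant elliptic inequalities: one uses a cutoff $\eta_\delta$ supported away from the singular set with $\int |\nabla \eta_\delta|^2 \to 0$ (or the stronger capacity estimates available from the codimension-$4$ bound and the smooth convergence of \cite[Theorem 2.5]{Bam20c}), integrates the differential inequality against $\eta_\delta^2$ times a test function, and lets $\delta \to 0$ to recover the global conclusion. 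Equivalently, one can argue on the smooth approximating Ricci flows, apply the Chow--Lu--Yang estimate there with constants depending only on the Nash entropy bound, and pass to the $\mathbb{F}$-limit using local smooth convergence on $\mathcal{R}_X$; I expect the cleanest writeup to combine both, using the approximators to get the inequality on compact pieces of $\mathcal{R}_X$ and the codimension bound to see that this suffices. The constant $c$ ends up depending on $n$, on $W$ (through the quadratic growth constant $C(\varepsilon)$ in Theorem~\ref{thm:potential-function}), and on $c_R$ (through the location and size of the region where $R$ is definitely positive).
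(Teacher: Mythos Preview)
Your primary elliptic strategy has a real gap. The Chow--Lu--Yang argument is a \emph{pointwise} maximum-principle argument: one locates an interior minimum of $u=(f_0-W+C_0)R$ and reads off a contradiction from the differential inequality at that point. Multiplying by a cutoff $\eta_\delta^2$ and integrating gives only an integral identity; the codimension-$4$ capacity estimate lets you send $\delta\to 0$ in such identities, but it does not recover a pointwise minimum, nor does it prevent $u$ from tending to $0$ along a sequence approaching the singular set. You would need a genuine barrier/removable-singularity argument for the drift-elliptic inequality at singular points, and nothing in your outline supplies one.

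Your alternative (``argue on the smooth approximating Ricci flows and pass to the limit'') is the route the paper actually takes, but there are two missing ingredients you have not identified. First, the approximators $(M^i,g^i_t)$ are ordinary Ricci flows, not shrinkers at any fixed time, so \cite{CLY11} does not apply to them; one must use the \emph{parabolic} Chow--Lu--Yang argument of \cite{CCMZ23}, where the auxiliary function is built from the conjugate heat kernel potential $f_i$ rather than from a soliton potential, and the maximum principle is applied in space-time. Second, that parabolic argument requires as input a uniform lower bound on $R$ near the $H_n$-centers of $(x_i,0)$ along the whole sequence; obtaining this from the hypothesis $c_R>0$ on the limit is itself nontrivial. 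The paper carries the integral $\int R\,d\nu_{x_0(t)\,|\,2t}$ from the soliton to the approximators via Fatou and local smooth convergence, then uses Bamler's heat-kernel Harnack estimate \cite[Theorem 4.1]{Bam20a} to upgrade this integral bound to a pointwise bound on balls around the $H_n$-centers. Only after this groundwork can the maximum principle be run on the complete manifolds $M^i$, and the estimate then passes to $\mathcal{R}_X$ by smooth convergence. Your proposal does not surface either of these steps.
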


In combination with Theorem \ref{thm:potential-function}, the lower bound for the scalar curvature can obviously be rewritten as
$$R(x)\ge \frac{c(n,W,c_R)}{d_g^2(x_0,x)+C_0(n)}\qquad \text{ for all }\qquad x\in\mathcal{R}_X,$$
where $x_0\in\mathcal{R}_X$ is a center of the metric soliton.

Recall that the method of \cite{CLY11} is to apply the maximum principle to an auxiliary function constructed with the scalar curvature and the potential function, while in \cite{CCMZ23}, the auxiliary function implemented is a parabolic version constructed with the scalar curvature and the conjugate heat kernel. The proof of Theorem \ref{thm: chow-lu-yang estimate} is more similar to (and more complicated than) the method of \cite{CCMZ23}. In fact, we apply the non-Ricci-flat assumption to obtain a local lower bound for the scalar curvatures along the $H_n$-centers of the limiting sequence. And we argue that the sequence satisfies an almost quadratic scalar curvature lower bound by applying the method in \cite{CCMZ23}. Finally, the local smooth convergence property \cite[Theorem 2.5]{Bam20c} shows that the lower bound of the scalar curvature can be carried to the regular part of the metric soliton; the proof of Theorem \ref{thm: chow-lu-yang estimate} is presented in Section 5.

\subsection{A local gap theorem}

In \cite{CMZ22}, the authors proved a local gap theorem for Ricci shrinkers: if the local $\mu$-functional of a large domain in the shrinker is almost zero, then the shrinker must be an Euclidean space. We shall prove the same result for metric solitons. Note that the region of the local $\mu$-functional must be contained in the regular part, for otherwise the local $\mu$-functional is not well-defined.

\begin{Theorem}[Local gap theorem]\label{thm: local-gap}
    There is a positive number $\delta=\delta(n)>0$ depending only on the dimension $n$, with the following property. Let $(X,d,\nu)$ be the model of an $n$-dimensional noncollapsed $\mathbb{F}$-limit metric soliton and $(\mathcal{R}_X,g,f_0)$ its regular part. Let $x_0\in\mathcal{R}_X$ be a center of the soliton. If
    \begin{align*}
        \mu\Big(B(x_0,\delta^{-1})\cap\mathcal{R}_X,g,1\Big)\ge -\delta,
    \end{align*}
    where $\mu$ is Perelman's local $\mu$-functional defined in Subsection 6.1, then the metric soliton is the Gaussian soliton modelled on the Euclidean space.
\end{Theorem}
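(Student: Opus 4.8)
The plan is to argue by contradiction through an $\mathbb{F}$-compactness argument for metric solitons, followed by an application of Bamler's $\varepsilon$-regularity theorem. Suppose the theorem fails. Then there is a sequence of $n$-dimensional noncollapsed $\mathbb{F}$-limit metric solitons with models $(X_i,d_i,\nu_i)$, regular parts $(\mathcal{R}_{X_i},g_i,f_{0,i})$, centers $x_{0,i}$, soliton entropies $W_i$, and numbers $\delta_i\downarrow0$, such that $\mu(B(x_{0,i},\delta_i^{-1})\cap\mathcal{R}_{X_i},g_i,1)\ge-\delta_i$ for all $i$, yet no $X_i$ is the Gaussian soliton. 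First I would record that $W_i$ is bounded: since $W_i$ is the (constant) Nash entropy of the soliton, $W_i\le0$; and for $i$ large ($\delta_i\le1$), the monotonicity of the local $\mu$-functional in its domain gives $\mu(B(x_{0,i},2)\cap\mathcal{R}_{X_i},g_i,1)\ge-1$, whence the usual passage from a local lower bound on $\mu$ to noncollapsing (via the logarithmic Sobolev inequality, the singular set being negligible) yields $|B(x_{0,i},1)\cap\mathcal{R}_{X_i}|\ge\kappa(n)$, and Corollary~\ref{volume-upper-1} then forces $W_i\ge-\Lambda(n)$.

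Next I would pass to an $\mathbb{F}$-limit. Each $X_i$ is itself an $\mathbb{F}$-limit of smooth $n$-dimensional Ricci flows whose Nash entropies approach $W_i$, so a diagonal argument produces a single sequence of such flows, with uniformly bounded Nash entropies, whose $\mathbb{F}$-limit $(X_\infty,d_\infty,\nu_\infty)$ is simultaneously the $\mathbb{F}$-limit of the $X_i$. Because constancy of the Nash entropy in time is preserved in the limit, $X_\infty$ is again a noncollapsed $\mathbb{F}$-limit metric soliton, with soliton entropy $W_\infty=\lim_iW_i\in[-\Lambda(n),0]$; taking the basepoints of the $\mathbb{F}$-convergence at the centers $x_{0,i}$ and using the characterization of centers from Section~3, we may assume $x_{0,i}\to x_{0,\infty}$ with $x_{0,\infty}$ a center of $X_\infty$.

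Now I would show $X_\infty$ is the Gaussian soliton. For each fixed $R>0$ and all large $i$ we have $B(x_{0,i},R)\subset B(x_{0,i},\delta_i^{-1})$, hence $\mu(B(x_{0,i},R)\cap\mathcal{R}_{X_i},g_i,1)\ge-\delta_i$. Since the $\mathbb{F}$-convergence is smooth on the regular part \cite[Theorem~2.5]{Bam20c}, transplanting a compactly supported test function from $\mathcal{R}_{X_\infty}$ to $\mathcal{R}_{X_i}$ and using the local $C^\infty$-convergence of the metrics---together with the fact that the singular set has codimension at least four and hence vanishing $2$-capacity, so one may take test functions supported away from it---gives $\mu(B(x_{0,\infty},\rho)\cap\mathcal{R}_{X_\infty},g_\infty,1)\ge0$ for every $\rho>0$. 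Conversely, plugging in a cutoff of $(4\pi)^{-n/4}e^{-f_{0,\infty}/2}$ over $B(x_{0,\infty},R)\cap\mathcal{R}_{X_\infty}$ and letting $R\to\infty$---the tails controlled by the quadratic growth of $f_{0,\infty}$ from Theorem~\ref{thm:potential-function}---together with the soliton identity $\mathcal{W}(g_\infty,f_{0,\infty},1)=W_\infty$ on the regular part, gives $\limsup_{R\to\infty}\mu(B(x_{0,\infty},R)\cap\mathcal{R}_{X_\infty},g_\infty,1)\le W_\infty$. Combining the two, $W_\infty\ge0$, so $W_\infty=0$.

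Finally, $W_i\to0$, so $W_i>-\epsilon_0(n)$ for all large $i$, with $\epsilon_0(n)>0$ the constant in Bamler's $\varepsilon$-regularity theorem. As $X_i$ is a metric soliton, its pointed Nash entropy based at any point and any scale equals the constant $W_i>-\epsilon_0(n)$, so applying $\varepsilon$-regularity at an arbitrary point with scale $r\to\infty$ forces $\Rm\equiv0$ and no singular points; thus $X_i$ is a complete smooth flat Ricci shrinker, and $\nabla^2f_{0,i}=\tfrac12g_i$ then forces $X_i$ to be the Gaussian soliton on $\mathbb{R}^n$, contradicting the choice of the sequence. The main obstacle is the third paragraph: transferring the local $\mu$-functional inequality across the $\mathbb{F}$-limit while keeping the singular set under control, and proving the soliton analogue of $\mathcal{W}(g,f_0,1)=W$ with negligible tails, which is exactly where the potential-function estimate of Theorem~\ref{thm:potential-function} is needed; one must also check that the $\mathbb{F}$-limit of metric solitons is again a noncollapsed $\mathbb{F}$-limit metric soliton with the centers converging.
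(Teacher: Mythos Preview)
Your proposal reaches the same endgame as the paper but wraps it in an unnecessary compactness layer. The paper's proof is direct: it shows, for the given soliton itself, that
\[
W \ge \mu\bigl(B(x_0,\delta^{-1})\cap\mathcal{R}_X,g,1\bigr) - \delta - C_n\delta^2
\]
by plugging a cutoff of $(4\pi)^{-n/4}e^{-f_0/2}$ into the local $\mu$-functional; this is exactly your ``step 5'' computation, but performed on $X$ rather than on a limit $X_\infty$. It then invokes a preliminary entropy gap (Theorem~\ref{thm-Nash-gap}: $W>-\varepsilon(n)\Rightarrow$ Gaussian), proved by applying Bamler's $\varepsilon$-regularity to the approximating smooth sequence from Theorem~\ref{thm: metric soliton is ancient}. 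Since your step~5 already works on each $X_i$ individually, your steps~2--4 (bounding $W_i$, extracting an $\mathbb{F}$-limit of metric solitons, transferring $\mu$ across the limit) are redundant: once you know $\mu\le W+o_\delta(1)$ on the given soliton, the hypothesis $\mu\ge-\delta$ gives $W\ge-\varepsilon(n)$ directly.

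Two points to tighten. First, your step~6 asserts that ``the pointed Nash entropy based at any point and any scale equals $W_i$'' and then applies $\varepsilon$-regularity on the metric soliton itself; neither is quite right. The constant $W$ is the Nash entropy of the distinguished conjugate heat flow $\nu_t$, not the pointed entropy at an arbitrary regular point, and \cite[Theorem~10.2]{Bam20a} is stated for smooth Ricci flows. The paper fixes this by pulling back to the approximating sequence $(M^i,g^i_t,x_i)$ with $\mathcal{N}_{x_i,0}(T_i)\to W$ and applying $\varepsilon$-regularity there---which is precisely the content of Theorem~\ref{thm-Nash-gap}. Second, for the cutoff away from the singular set you invoke ``vanishing $2$-capacity of a codimension-$4$ set''; this is morally correct, but the paper instead builds an explicit cutoff $\eta_r$ (Proposition~\ref{prop:regular cutoff}) by slicing Bamler's space-time cutoff from \cite[Lemma~15.27]{Bam20c}, which gives the quantitative bound $\int_{\{0\le\eta_r<1\}}dg\le Cr^{4-\sigma}$ needed to control the error terms (cf.\ \eqref{last-but-one}). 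Your capacity argument would need to reproduce this estimate.
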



In \cite{CMZ22}, the authors presented three different proofs for the local gap theorem of smooth shrinkers. The most easy but least intuitive one is to modify the function $(4\pi)^{-\frac{n}{2}}e^{-f}$ by a cut-off method, and implement it as the test function of the local $\mu$-functional. Then, the smallness of the local $\mu$-functional implies the smallness of the soliton entropy, and the flatness follows from the Carrillo-Ni-Yokota gap theorem \cite{CN09,Y09,Y12}. The other two approaches are more intuitive. One may apply the pseudolocality theorem of \cite{LW20} and show that under the smallness of the local $\mu$-function, the scalar curvature remains bounded until the singular time, which cannot happen to nonflat shrinkers. One may also apply the theory of \cite{CMZ23a}  to see that the shrinker in question must have small Nash entropy everywhere at arbitrary large scales, and the conclusion follows from Bamler's $\varepsilon$-regularity theorem \cite[Theorem 10.2]{Bam20a}.

Our approach to Theorem \ref{thm: local-gap} is more close to the first among the three mentioned above. Although we do not have a Carrillo-Ni-Yokota gap theorem for the soliton entropy, yet the soliton entropy, being the same as the Nash entropy, is equal to the limit of the Nash entropy of the convergence sequence. Thus, we can show a preliminary soliton entropy gap as presented in  Theorem \ref{thm-Nash-gap}. Next, to apply $(4\pi)^{-\frac{n}{4}}e^{-f_0}$ as the test function of the local $\mu$-functional, one needs to modify it, such that it is compactly supported not only on a compact ball, but also on the regular part. Thus, given a proper cut-off function, the rest of the proof follows in the same way as \cite{CMZ22}. We shall modify Bamler's cut-off function \cite[Lemma 15.27]{Bam20c} for metric solitons. Due to its importance for proving other main theorems, we shall include it here.

\begin{Proposition}\label{prop:regular cutoff}
   Let $(X,d,\nu)$ be the model of an $n$-dimensional noncollapsed $\mathbb{F}$-limit metric soliton and $(\mathcal{R}_X,g,f_0)$ its regular part. Let $x_0\in\mathcal{R}_X$ be a center of the soliton and $W$ the soliton entropy. If $$\sigma>0,\quad A>\underline{A}(n),$$
   then for any $r\le \overline{r}(A,W,\sigma)$  there is a continuous function $\eta_r: X\to [0,1]$ satisfying the following properties.
   \begin{enumerate}[(1)]
       \item $\eta_r$ is supported on $\mathcal{R}_X$. Namely, $\eta_r\equiv 0$ on an open neighborhood of $X\setminus\mathcal{R}_X$.
       \item $\eta_r$ is smooth on $\mathcal{R}_X$, satisfying $|\nabla_g\eta_r|\le C_0r^{-1}$, where $C_0$ is a dimensional constant.
       \item For any $L>0$, $B(x_0,L)\cap\{\eta_r>0\}$ is relatively compact in $\mathcal{R}_X$. Here $x_0\in\mathcal{R}_X$ is a center of the soliton.
       \item 
       \begin{align*}
           \int_{\mathcal{R}_X\cap B(x_0,A)\cap\{|\nabla_g\eta_r|\neq0\}}dg\le \int_{\mathcal{R}_X\cap B(x_0,A)\cap\{0\le \eta_r<1\}}dg\le Cr^{4-\sigma},
       \end{align*}
       where $C=C(A,W,\sigma)$.
       \item $\displaystyle \mathcal{R}_X=\bigcup_{r>0}\{\eta_r=1\}$.
   \end{enumerate}
    
\end{Proposition}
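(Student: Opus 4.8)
Step 1: reduce everything to a Minkowski codimension‑four bound. The plan is to build $\eta_r$ directly out of a smoothing of the distance to the singular set $\mathcal{S}:=X\setminus\mathcal{R}_X$, so the one substantial input is the following tubular‑neighborhood refinement of the codimension‑four property of $\mathcal{S}$: for every $\sigma>0$ and every $A>\underline{A}(n)$ there are $C=C(A,W,\sigma)$ and $\overline{r}(A,W,\sigma)>0$ with
$$\big|\{\,x\in\mathcal{R}_X\cap B(x_0,A):\dist(x,\mathcal{S})<s\,\}\big|\ \le\ C\,s^{4-\sigma}\qquad\text{for all }\ 0<s\le\overline{r},$$
where $\dist(\cdot,\mathcal{S})$ is computed in $X$. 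This is exactly the Minkowski version of the fact that the singular set of a noncollapsed $\mathbb{F}$-limit has codimension four; for a metric soliton it follows from Bamler's structure theory, namely the codimension‑four property upgraded to a Minkowski content bound through the quantitative $\varepsilon$-regularity theorem \cite[Theorem 10.2]{Bam20a} applied along the converging sequence of smooth Ricci flows and transported to the static slice via the local smooth convergence on the regular part \cite[Theorem 2.5]{Bam20c}; the self-similar structure turns the parabolic codimension‑four estimate for the space-time into the stated spatial estimate, the bound $W$ on the Nash entropy controls the noncollapsing constants, and the radius $A$ together with the threshold $\underline{A}(n)$ is inherited from the $\varepsilon$-regularity input. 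If $\mathcal{S}=\varnothing$ the proposition is trivial (take $\eta_r\equiv 1$), so assume $\mathcal{S}\neq\varnothing$.

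Step 2: the construction, and properties (1), (2), (5). Fix a smooth $\psi:[0,\infty)\to[0,1]$ with $\psi\equiv0$ on $[0,\tfrac12]$, $\psi\equiv1$ on $[1,\infty)$ and $|\psi'|\le4$; set $\rho:=\dist(\cdot,\mathcal{S})$ and let $\tilde\rho_r$ be a standard smoothing of the $1$-Lipschitz function $\rho$ on the Riemannian manifold $\mathcal{R}_X$ with $|\tilde\rho_r-\rho|\le r/100$ and $|\nabla_g\tilde\rho_r|\le 2$ on $\mathcal{R}_X$. Define $\eta_r:=\psi(\tilde\rho_r/r)$ on $\mathcal{R}_X$ and $\eta_r:=0$ on $X\setminus\mathcal{R}_X$. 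Since $\eta_r$ already vanishes on the open set $\{\rho<\tfrac12 r\}\supset X\setminus\mathcal{R}_X$, it is continuous on $X$ and supported in $\mathcal{R}_X$, which is (1); (2) is the chain rule, with $C_0$ absorbing $\sup|\psi'|$ and the smoothing constant. For (5): $\{\eta_r=1\}\subset\{\tilde\rho_r\ge r\}\subset\{\rho\ge \tfrac{99}{100}r\}\subset\mathcal{R}_X$, while conversely every $x\in\mathcal{R}_X$ has $\rho(x)>0$, hence $\tilde\rho_r(x)\ge\rho(x)-r/100\ge r$ and $\eta_r(x)=1$ as soon as $r\le\tfrac{100}{101}\rho(x)$; thus $\mathcal{R}_X=\bigcup_{r>0}\{\eta_r=1\}$.

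Step 3: properties (3) and (4). I would use the elementary containments
$$\{\eta_r>0\}\subset\{\rho>\tfrac14 r\},\qquad \{|\nabla_g\eta_r|\neq0\}\subset\{0<\eta_r<1\}\subset\{0\le\eta_r<1\}\subset\{\rho<2r\},$$
the first two following from $\psi\equiv 0$ near $0$ and $\psi'\equiv 0$ outside $(\tfrac12,1)$, the last from $|\tilde\rho_r-\rho|\le r/100$. For (3): given $L>0$, the set $B(x_0,L)\cap\{\eta_r>0\}$ is contained in $\overline{B(x_0,L)}\cap\{\rho\ge\tfrac14 r\}$, which is closed and bounded, hence compact (the model of a noncollapsed $\mathbb{F}$-limit metric soliton is a proper metric space by the structure theory of \cite{Bam20c}), and is contained in $\mathcal{R}_X$ since all its points have positive distance to $\mathcal{S}$; therefore $B(x_0,L)\cap\{\eta_r>0\}$ is relatively compact in $\mathcal{R}_X$. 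For (4): the first inequality is immediate from $\{|\nabla_g\eta_r|\neq0\}\subset\{0\le\eta_r<1\}$, and intersecting $\{0\le\eta_r<1\}\subset\{\rho<2r\}$ with $B(x_0,A)$ and invoking Step 1 with $s=2r$ gives
$$\int_{\mathcal{R}_X\cap B(x_0,A)\cap\{0\le\eta_r<1\}}dg\ \le\ \big|\{x\in\mathcal{R}_X\cap B(x_0,A):\rho(x)<2r\}\big|\ \le\ C(A,W,\sigma)\,(2r)^{4-\sigma},$$
valid for $r\le\overline{r}(A,W,\sigma)$; renaming the constant yields (4).

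The main obstacle is Step 1. Bamler's theory gives codimension four of $\mathcal{S}$ fairly directly, but promoting this to the tubular estimate $|\{\rho<s\}\cap B(x_0,A)|\le Cs^{4-\sigma}$ with the correct uniformity requires the quantitative $\varepsilon$-regularity/stratification machinery and a careful passage from the converging sequence of smooth flows to the static soliton slice, keeping track of how the entropy bound $W$ and the radius $A$ enter the constants and the admissible scale $\overline{r}$. Once this estimate is in hand, the construction of $\eta_r$ and the verification of (1)--(5) are routine, as sketched above; one could alternatively replace the single-scale smoothing by Bamler's layered construction in \cite[Lemma 15.27]{Bam20c}, but the distance‑function approach makes conditions (1), (3) and (5) transparent.
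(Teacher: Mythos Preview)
Your approach is genuinely different from the paper's, and the outline is sound. The paper does not establish a spatial Minkowski bound on $\mathcal{S}$ and then smooth the distance function; instead it pulls Bamler's space–time cut-off \cite[Lemma~15.27]{Bam20c} (built from the curvature scale $\tilde r_{\Rm}$, not from $\dist(\cdot,\mathcal{S})$) back to the soliton, uses the parabolic codimension-four estimate
\[
\int_{-2}^{-1/2}\Big|\{\tilde r_{\Rm}\le 2r\}\cap\mathcal{R}_t\cap B_t\big(x_0(t),A\sqrt{|t|}\big)\Big|\,dt\ \le\ C\,r^{4-\sigma},
\]
and then applies Fubini to pick one good time slice $t\in[-2,-\tfrac12]$; the function $\eta_r$ on $X$ is that slice transported by the canonical isometry $\overline{\phi}_t$. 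Property~(5) is handled separately by showing $\tilde r_{\Rm}(x(t))\ge c(x)>0$ uniformly for $t\in[-2,-\tfrac12]$. What you gain with the distance-function construction is transparency of (1), (3), (5) and independence of any time-slice selection; what the paper gains is that it never needs your Step~1 as a standalone statement, since the small-volume estimate (4) comes packaged with $\eta_r$ directly from \cite[Lemma~15.27]{Bam20c}.

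Two points where your write-up should be tightened. First, your Step~1 is correct but not quite what Bamler's theory hands you off the shelf: the quantitative bound in \cite{Bam20c} is stated for the sublevel set $\{\tilde r_{\Rm}\le s\}$ in space–time, and to get the \emph{spatial} estimate on the model slice with constants depending only on $(A,W,\sigma)$ you must either invoke the $1$-Lipschitz property of $\tilde r_{\Rm}$ to get $\{\dist(\cdot,\mathcal{S})<s\}\subset\{\tilde r_{\Rm}<s\}$ and then run the same Fubini/good-slice argument the paper uses, or argue via self-similarity that the space–time integrand is comparable across $t\in[-2,-\tfrac12]$; either way this deserves a sentence, and the reference should be to the quantitative stratification in \cite{Bam20c} rather than to \cite[Theorem~10.2]{Bam20a} alone. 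Second, the ``standard smoothing'' of the $1$-Lipschitz function $\rho$ on the incomplete manifold $\mathcal{R}_X$ with uniform $C^0$-error $r/100$ and gradient bound $2$ is not completely standard; it follows from Greene--Wu/Azagra-type approximation of Lipschitz functions on arbitrary Riemannian manifolds, and you should cite such a result explicitly.
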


The proofs of the results of this subsection are found in Section 6.

\subsection{Global \texorpdfstring{$\nu$}{nu}-functional and Sobolev inequality}

In \cite{CN09}, Carrillo-Ni made a nice observation: on a smooth shrinker, Perelman's global $\mu$-functional at scale $1$ $\mu(g,1)$ is equal to the shrinker entropy $\mu_g$. This is tantamount to saying that the potential function $f$ is exactly the minimizer of Perelman's $\mathcal{W}$-functional at scale $1$. The consequence is a sharp logarithmic Sobolev inequality. More recently, Li-Wang \cite{LW20} pushed this result one step more, and proved that the shrinker entropy $\mu_g$ is equal to Perelman's $\nu$-functional on all scales. Hence, there is a global Sobolev inequality on each shrinker. We shall prove that the same holds for metric solitons.

\begin{Theorem}\label{thm: nu-functional}
    Let $(X,d,\nu)$ be the model of an $n$-dimensional noncollapsed $\mathbb{F}$-limit metric soliton and $(\mathcal{R}_X,g,f_0)$ its regular part. Let $W$ be the soliton entropy. Then we have
    \begin{align*}
        \nu\left(\mathcal{R}_X,g\right)=W,
    \end{align*}
    where $\nu$ is Perelman's local $\nu$-functional defined in Subsection 6.1.
\end{Theorem}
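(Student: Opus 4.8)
The plan is to prove two inequalities. One direction, $\nu(\mathcal{R}_X,g)\le W$, is the easier one: since $W$ is the Nash entropy of the metric soliton computed via the conjugate heat flow $\nu_t$, and the potential function $f_0$ is (after the usual $(4\pi)^{-n/2}e^{-f_0}$ normalization) a natural candidate test function for Perelman's $\mathcal{W}$-functional at scale $1$, I would first plug a cut-off version of $f_0$ into the local $\mathcal{W}$-functional to show $\mu(\mathcal{R}_X,g,1)\le W$, and then bound $\nu$ from above by $\mu$ at scale $1$. Concretely, I would use the regular cut-off $\eta_r$ from Proposition \ref{prop:regular cutoff} to build a compactly supported test function $u_r \sim \eta_r (4\pi)^{-n/4}e^{-f_0/2}$; as $r\to 0$ the support exhausts $\mathcal{R}_X$ by property (5), the gradient error term is controlled by property (4) together with the volume bound (Corollary \ref{volume-upper-1}) — the set where $|\nabla\eta_r|\ne 0$ has measure $O(r^{4-\sigma})$ while $|\nabla\eta_r|^2 = O(r^{-2})$, so the error is $O(r^{2-\sigma})\to 0$ — and the soliton identities together with the definition of $W$ force the limit of the $\mathcal{W}$-energy to be $W$. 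This handles $\nu\le W$ after noting that scaling down in $\tau$ only decreases the functional (the standard monotonicity-in-scale argument for $\nu$ versus $\mu(\cdot,\tau)$).

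The reverse inequality $\nu(\mathcal{R}_X,g)\ge W$ is the substantive part. Here I would argue that for \emph{every} scale $\tau>0$ and every admissible compactly supported test function $u$ on $\mathcal{R}_X$, the local $\mathcal{W}$-energy $\mathcal{W}(\mathcal{R}_X,g,u,\tau)\ge W$. The natural route, following Li-Wang \cite{LW20} and using the machinery already developed in this paper, is to realize $W$ as the limit of the Nash entropies $\mathcal{N}_{x_i,t_i}$ of the approximating sequence of smooth Ricci flows, and to transfer the desired lower bound from the smooth flows. On each smooth Ricci flow, Li-Wang's theorem (or Bamler's entropy monotonicity) gives $\nu(g_i)\ge \mathcal{N}_{x_i,t_i}(\tau)$ for the appropriate pointed Nash entropy at scale $\tau$; since the test function $u$ lives on a relatively compact region of $\mathcal{R}_X$ (guaranteed by property (3) of Proposition \ref{prop:regular cutoff}), the local smooth convergence \cite[Theorem 2.5]{Bam20c} lets me pull $u$ back to test functions $u_i$ on the smooth flows with $\mathcal{W}(g_i,u_i,\tau)\to\mathcal{W}(\mathcal{R}_X,g,u,\tau)$, while $\nu(g_i)\to W$. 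Combining, $\mathcal{W}(\mathcal{R}_X,g,u,\tau)\ge W$, and taking the infimum over $u$ and $\tau$ gives $\nu(\mathcal{R}_X,g)\ge W$.

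The main obstacle I anticipate is the transfer step in the reverse inequality: making sure that a fixed compactly supported test function on $\mathcal{R}_X$ can be approximated by test functions on the smooth flows in a way that controls \emph{all} terms in the $\mathcal{W}$-energy — in particular the term involving the scalar curvature $R$, which is only locally controlled and which enters with the "wrong" sign for lower bounds. This is exactly where the quadratic scalar curvature lower bound (Theorem \ref{thm: chow-lu-yang estimate}) and the potential/volume growth estimates (Theorem \ref{thm:potential-function}, Corollary \ref{volume-upper-2}) should be invoked to guarantee that the energy of the tail (outside the region of smooth convergence) is negligible, so that it suffices to work with test functions supported where the convergence is smooth. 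A secondary technical point is that Perelman's \emph{local} $\nu$-functional on a noncompact incomplete manifold like $\mathcal{R}_X$ must be set up so that the infimum is genuinely over compactly supported test functions and so that the scale-monotonicity $\nu(\mathcal{R}_X,g)=\inf_{\tau>0}\mu(\mathcal{R}_X,g,\tau)$ holds; I would handle this in Subsection 6.1 where the functional is defined, and then the argument above goes through scale by scale.
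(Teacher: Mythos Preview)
Your overall two-inequality strategy matches the paper's, and the $\nu\le W$ direction is exactly what the paper does (Corollary \ref{coro:mu-functional-upper}, obtained by plugging the cut-off of $(4\pi)^{-n/4}e^{-f_0/2}$ into $\overline{\mathcal{W}}$ at scale $1$; no scale-monotonicity argument is needed since $\nu\le\mu(\cdot,1)$ is immediate from the definition).

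For the reverse inequality $\nu\ge W$, your plan to pull a compactly supported $u$ back to the approximating flows and pass to the limit is also the paper's plan, but the tool you invoke does not apply. Li--Wang's theorem is a statement about smooth Ricci shrinkers, whereas the approximating flows $(M^i,g^i_t)_{t\in[-T_i,0]}$ are generic finite-time Ricci flows with bounded Nash entropy --- not ancient, not shrinkers. The paper makes this point explicitly: the uniform Sobolev inequality for ancient flows is unavailable, so one uses instead the \emph{local} Sobolev inequality of \cite{CMZ23a} (Proposition \ref{prop:local Sobolev}), which gives
\[
\nu\big(B_{g^i_{-1}}(z_i,\varepsilon\sqrt{T_i/2}),\,g^i_{-1},\,\varepsilon T_i/2\big)\ \ge\ \mathcal{N}_{z_i,-1}(T_i/2)-\Psi(\varepsilon),
\]
and then compares $\mathcal{N}_{z_i,-1}(T_i/2)$ to $W$ via Bamler's Nash-entropy comparison \cite[Corollary 5.11]{Bam20a} (Lemma \ref{lm:almost-converging-nash-entropy}). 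Your claim ``$\nu(g_i)\to W$'' is not what is established; one only gets a lower bound $\ge W-\Psi(\varepsilon)$, which is all that is needed.

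Your anticipated obstacle --- controlling the $Ru^2$ term via the Chow--Lu--Yang lower bound --- is a red herring. Since $u$ is compactly supported in $\mathcal{R}_X$, the smooth convergence on compact subsets (Theorem 2.5 of \cite{Bam20c}) already makes $R$ converge uniformly on $\operatorname{spt}u$, so every term in $\overline{\mathcal{W}}(g^i_{-1},u_i,\tau)$ converges to the corresponding term for $(\mathcal{R}_X,g,u,\tau)$ with no tail to estimate. Theorem \ref{thm: chow-lu-yang estimate} plays no role here.
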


Note that according to the definition of the local $\nu$-functional, the test functions of $\nu\left(\mathcal{R}_X,g\right)$ must be compactly supported in $\mathcal{R}_X$. Thus the logarithmic Sobolev inequalities and the Sobolev inequality which follow from the theorem above can only be applied to functions compactly supported on $\mathcal{R}_X$.

\begin{Corollary}[Logarithmic Sobolev inequalities]\label{coro: log Sobolev}
    In the same settings as Theorem \ref{thm: nu-functional}, we have
    \begin{align}\label{eq: the-log-Sobolev}
        &\ \int_{\mathcal{R}_X} u^2\log u^2\,dg-\int_{\mathcal{R}_X} u^2\,dg\cdot \log\left(\int_{\mathcal{R}_X} u^2\,dg\right)
        \\\nonumber
        \le &\ \tau\int_{\mathcal{R}_X}\left(4|\nabla u|^2+Ru^2\right)\,dg-\left(W+n+\frac{n}{2}\log(4\pi\tau)\right)\int_{\mathcal{R}_X}u^2\,dg
    \end{align}
    for any $\tau>0$ and any $u\in C^{0,1}_c(\mathcal{R}_X)$.
\end{Corollary}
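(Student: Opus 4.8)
The plan is to obtain Corollary~\ref{coro: log Sobolev} as the standard translation of Theorem~\ref{thm: nu-functional} into the language of logarithmic Sobolev inequalities, by unwinding the definition of Perelman's local $\nu$-functional. Recall that for $\tau>0$ the local $\mu$-functional $\mu(\mathcal{R}_X,g,\tau)$ is the infimum, over all $w\in C^{0,1}_c(\mathcal{R}_X)$ with $\int_{\mathcal{R}_X}w^2\,dg=1$, of the localized entropy
\begin{align*}
\mathcal{W}(g,w,\tau)=\int_{\mathcal{R}_X}\tau\big(4|\nabla_g w|^2+Rw^2\big)\,dg-\int_{\mathcal{R}_X}w^2\log w^2\,dg-\frac{n}{2}\log(4\pi\tau)-n,
\end{align*}
which is Perelman's $\mathcal{W}$-functional after the substitution $w=(4\pi\tau)^{-n/4}e^{-f/2}$, and that $\nu(\mathcal{R}_X,g)=\inf_{\tau>0}\mu(\mathcal{R}_X,g,\tau)$. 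Hence Theorem~\ref{thm: nu-functional} is equivalent to the assertion that $\mathcal{W}(g,w,\tau)\ge W$ for every $\tau>0$ and every normalized $w$ as above.

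From this the corollary follows by a one-line rearrangement plus a scaling argument. For $\tau>0$ and $u\in C^{0,1}_c(\mathcal{R}_X)$ with $u\not\equiv 0$, I would set $\lambda:=\int_{\mathcal{R}_X}u^2\,dg>0$ and $w:=|u|/\sqrt{\lambda}$, which is an admissible competitor: it is Lipschitz, compactly supported in $\mathcal{R}_X$, satisfies $\int_{\mathcal{R}_X}w^2\,dg=1$, and obeys $|\nabla_g w|=|\nabla_g u|/\sqrt{\lambda}$ almost everywhere. Substituting $w$ into $\mathcal{W}(g,w,\tau)\ge W$, using the pointwise identity $w^2\log w^2=\lambda^{-1}\big(u^2\log u^2-u^2\log\lambda\big)$, and then multiplying through by $\lambda$, produces exactly \eqref{eq: the-log-Sobolev}; the degenerate case $u\equiv 0$ holds trivially with the convention $0\log 0=0$.

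I do not expect a genuine obstacle: once Theorem~\ref{thm: nu-functional} is granted, this is bookkeeping. The only point that merits (routine) care is the legitimacy of $C^{0,1}_c(\mathcal{R}_X)$ functions as test functions for $\nu(\mathcal{R}_X,g)$. This holds because $R$ is locally bounded on the smooth manifold $\mathcal{R}_X$, so $\mathcal{W}(g,\cdot,\tau)$ is continuous with respect to the $W^{1,2}$-topology on functions supported in a fixed relatively compact open subset of $\mathcal{R}_X$; consequently smooth compactly supported test functions and Lipschitz compactly supported ones yield the same infimum, and the inequality extends from the former class to the latter.
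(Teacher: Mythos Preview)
Your proposal is correct and follows the same path as the paper: the corollary is an immediate unwinding of Theorem~\ref{thm: nu-functional} via the definition of the local $\mu$- and $\nu$-functionals (cf.\ Proposition~\ref{prop: local-log-Sobolev-of-mu}), together with the normalization $w=|u|/\sqrt{\lambda}$. The paper does not spell out the proof, treating it as a direct consequence, so your extra remark on passing from $C^\infty_c$ to $C^{0,1}_c$ test functions is a welcome clarification rather than a deviation.
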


\begin{Corollary}[Sobolev inequality]\label{coro: Sobolev}
    In the same settings as Theorem \ref{thm: nu-functional}, we have
    \begin{align}\label{eq: the-uniform-Sobolev}
        \left(\int_{\mathcal{R}_X} |u|^{\frac{2n}{n-2}}\,dg\right)^{\frac{n-2}{n}}\le C(n)e^{-\frac{2W}{n}}\int_{\mathcal{R}_X}\left(4|\nabla u|^2+ Ru^2\right)\,dg
    \end{align}
    for any $u\in C^{0,1}_c(\mathcal{R}_X)$.
\end{Corollary}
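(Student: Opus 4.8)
The plan is to derive \eqref{eq: the-uniform-Sobolev} from the family of logarithmic Sobolev inequalities in Corollary~\ref{coro: log Sobolev} by the classical ``optimize in $\tau$, then iterate'' scheme; since the analytic substance has already been absorbed into Theorem~\ref{thm: nu-functional}, what remains is essentially formal. First, by homogeneity both sides of \eqref{eq: the-uniform-Sobolev} transform the same way under $u\mapsto\lambda u$, so I may normalize $\int_{\mathcal{R}_X}u^2\,dg=1$ and set $E(u):=\int_{\mathcal{R}_X}(4|\nabla u|^2+Ru^2)\,dg$. Recall $R\ge 0$ on $\mathcal{R}_X$ (by Theorem~\ref{thm: chow-lu-yang estimate} in the non-Ricci-flat case, and trivially in the Ricci-flat case, which is the Gaussian soliton), so $E(u)\ge 0$. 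Minimizing the right-hand side of \eqref{eq: the-log-Sobolev} over $\tau>0$: the function $\tau\mapsto\tau E(u)-\tfrac n2\log\tau$ attains its minimum at $\tau=\tfrac{n}{2E(u)}$, and plugging this value back in yields the \emph{scale-invariant} logarithmic Sobolev inequality
\begin{align*}
    \int_{\mathcal{R}_X}u^2\log u^2\,dg\ \le\ \frac n2\log\!\Big(C(n)\,e^{-\frac{2W}{n}}\,E(u)\Big),\qquad \int_{\mathcal{R}_X}u^2\,dg=1,
\end{align*}
with $C(n)=(2\pi n e)^{-1}$.

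Next I would invoke the well-known equivalence, valid for $n\ge 3$, between a scale-invariant logarithmic Sobolev inequality of the above shape and a Sobolev inequality $\|u\|_{L^{2n/(n-2)}}^2\le A\,E(u)$ with $A\simeq_n C(n)e^{-2W/n}$; this is precisely the mechanism used to obtain the global Sobolev inequality on smooth Ricci shrinkers in \cite{LW20}. Concretely, one route is the Davies--Gross hypercontractivity argument: the scale-invariant log-Sobolev inequality for the quadratic form $E$ — which is a Dirichlet form on $\mathcal{R}_X$, associated with the Friedrichs extension of $-\Delta_g+\tfrac R4$ on $C^{0,1}_c(\mathcal{R}_X)$ — implies the ultracontractivity bound $\|e^{-tL}\|_{L^1\to L^\infty}\le \big(C(n)e^{-2W/n}\big)^{n/2}t^{-n/2}$ for all $t>0$, which is in turn equivalent to the Nash inequality $\|u\|_2^{2+4/n}\le A\,E(u)\,\|u\|_1^{4/n}$ and hence to \eqref{eq: the-uniform-Sobolev}. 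Alternatively, one may iterate the scale-invariant log-Sobolev inequality applied to the powers $|u|^{p}$, à la Moser, and reach the same conclusion; I would use whichever of these bookkeeping schemes is cleanest to write out.

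The only point that genuinely needs attention — and the closest thing to an obstacle — is that these classical implications are usually phrased over $\mathbb{R}^n$ or over a \emph{complete} manifold, whereas $\mathcal{R}_X$ is incomplete. This is not a real difficulty: every admissible test function is compactly supported strictly inside $\mathcal{R}_X$, so the heat-semigroup/Dirichlet-form formalism applies verbatim with ``Dirichlet conditions at the singular set'' $X\setminus\mathcal{R}_X$, and no completeness, curvature-at-infinity, or volume-growth input is required. I would therefore expect the proof to go through with only routine verifications once Corollary~\ref{coro: log Sobolev} is in hand, and I would record that the corollary is, in effect, a restatement of Theorem~\ref{thm: nu-functional}.
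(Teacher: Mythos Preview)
Your proposal is correct and follows essentially the same route as the paper: the paper does not write out a separate proof of Corollary~\ref{coro: Sobolev} but simply records it as a consequence of the $\nu$-functional bound via the ``standard argument in analysis (cf.~\cite{Zh10})'' stated as the unlabeled Proposition in Subsection~6.1, and that standard argument is precisely the optimize-in-$\tau$ step followed by the log-Sobolev-to-Sobolev conversion (Davies--Gross or Moser iteration) that you outline. One cosmetic remark: for $R\ge 0$ you may cite Corollary~\ref{Coro: scalar-positivity} directly rather than splitting into Ricci-flat and non-Ricci-flat cases (and the Ricci-flat case need not be the Gaussian soliton in general), but this has no bearing on the argument.
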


The proofs of the results in this subsection are found in Section 7.

\subsection{Volume growth lower estimate}

Lastly, we shall prove an optimal volume growth lower bound for metric solitons, similar to the smooth case considered by Munteanu-Wang \cite{MW12} and Li-Wang \cite{LW20}. The following theorem shows that a noncompact metric soliton, similar to a Ricci shrinker, has at least linear volume growth. This volume growth  lower bound is obviously optimal since it is fulfilled by the standard cylinder $\mathbb{S}^{n-1}\times\mathbb{R}$.

\begin{Theorem}\label{thm:linear-volume-lower-bound} Let $(X,d,\nu)$ be the model of an $n$-dimensional noncollapsed $\mathbb{F}$-limit metric soliton and $(\mathcal{R}_X,g,f_0)$ its regular part. Let $x_0\in\mathcal{R}_X$ be a center of the soliton and $W$ the soliton entropy. If $X$ is noncompact (in the sense of infinite diameter), then there exists positive dimensional constant $c(n)$ such that
\[
|B(x_0,r)\cap \mathcal{R}_X|\ge c(n)e^{W}r \quad \text{ for all  } \quad r\ge c(n)^{-1}.
\]
\end{Theorem}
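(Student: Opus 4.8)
The plan is to mimic the Munteanu--Wang/Li--Wang argument for smooth shrinkers, but carried out entirely on the regular part $\mathcal{R}_X$, using the soliton identities available there together with the quantitative input already established in this paper. Recall that on $\mathcal{R}_X$ the potential $f_0$ satisfies the analogues of the smooth shrinker equations (the equations listed as \eqref{eq: basic-properties-2}), in particular $\Delta_f f_0 = \tfrac{n}{2} - f_0 + W$ and $|\nabla f_0|^2 = f_0 - W - R$ (up to the normalization used in the paper), and $R \ge 0$. First I would set $V(r) := |B(x_0,r)\cap \mathcal{R}_X|$ and introduce the weighted volume $V_f(r) := \int_{B(x_0,r)\cap\mathcal{R}_X} e^{-f_0}\,dg$. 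The strategy is: (i) prove a linear \emph{lower} bound for $V_f(r)$ by a coarea/Laplacian-comparison argument with the weight; (ii) convert this into a lower bound for the unweighted $V(r)$ using the quadratic upper bound $f_0 - W \le \tfrac14(d_g(x_0,x)+C(n))^2$ from Theorem \ref{thm:potential-function}, which makes $e^{-f_0}\ge c\,e^{-W}e^{-Cr^2}$ on $B(x_0,r)$ — but crucially one uses the refined integration-by-parts in annuli so that the Gaussian loss is cancelled by the Gaussian gain coming from the growth of $f_0$, exactly as in the smooth case.

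More concretely, the core computation I would run is the following. Using the soliton identity on $\mathcal{R}_X$, one has $\operatorname{div}(e^{-f_0}\nabla f_0) = e^{-f_0}(\Delta_f f_0) = e^{-f_0}\big(\tfrac n2 - (f_0 - W)\big)$. Integrate this over the sublevel set $D(r) := \{\,x\in\mathcal{R}_X : f_0(x) - W < r\,\}$ (these are the natural domains here, not metric balls). One must first argue that $D(r)$ is relatively compact in $\mathcal{R}_X$ with smooth boundary for a.e.\ $r$: relative compactness in $X$ follows from the lower bound in Theorem \ref{thm:potential-function} ($f_0 - W \ge \frac{1}{8+\varepsilon}d_g^2 - C(\varepsilon)$ forces sublevel sets to be bounded), and non-accumulation at the singular set $X\setminus\mathcal{R}_X$ is where Proposition \ref{prop:regular cutoff} enters — one uses the cutoff $\eta_\rho$ to justify the divergence-theorem computation on $D(r)$ (integrate against $\eta_\rho$, then let $\rho\to 0$; property (4) of Proposition \ref{prop:regular cutoff} controls the error terms $\int |\nabla\eta_\rho|\,|\nabla f_0|\,e^{-f_0}$ since on $B(x_0,A)$ the bad set has measure $\le C\rho^{4-\sigma}\to 0$ while $|\nabla f_0|$ and $e^{-f_0}$ are bounded there). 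Having justified this, the divergence theorem gives $\int_{\partial D(r)} |\nabla f_0|\,e^{-f_0}\,d\sigma = \int_{D(r)} e^{-f_0}\big(\tfrac n2 - (f_0-W)\big)\,dg$, and since $|\nabla f_0|^2 = f_0 - W - R \le r$ on $\partial D(r)$, one derives a differential inequality for $v(r) := \int_{D(r)} e^{-f_0}\,dg$, namely (via coarea) $v'(r) = \int_{\partial D(r)} |\nabla f_0|^{-1} e^{-f_0}\,d\sigma \ge r^{-1/2}\big(\tfrac n2 v(r) - \int_{D(r)}(f_0-W)e^{-f_0}\,dg\big)$, which after the substitution leading to $w(r) = e^{-r} v(r)$ or a comparison with the Gaussian reduces, in the noncompact case, to the assertion that $v(r)$ cannot decay and in fact $v(r) \gtrsim c(n) e^{W}\sqrt r$ or similar — the precise bookkeeping here is the routine part that matches \cite{MW12,LW20}.

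The final step is to pass from the weighted statement to $|B(x_0,r)\cap\mathcal{R}_X|\ge c(n)e^{W}r$. By Theorem \ref{thm:potential-function}, $D(r) \subset B(x_0, C(\varepsilon)\sqrt{r+C})$ and conversely $B(x_0, c\sqrt r)\subset D(r)$ for appropriate constants, so the sublevel sets and the metric balls are comparable up to taking square roots of radii; thus a linear-in-$r$ lower bound for the weighted volume of $D(r)$ translates (after relabeling $r\mapsto r^2$) into a linear-in-$r$ lower bound for the unweighted volume of $B(x_0,r)$, once one checks that on $D(r)$ the weight $e^{-f_0-W}$ is bounded below by $e^{-r}$ and the Gaussian factor is absorbed exactly as in the smooth proof. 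The role of the noncompactness hypothesis is to rule out the alternative in the ODE analysis: for a compact model $v(r)$ stabilizes to the total weighted volume and one gets no growth, whereas infinite diameter (equivalently, $D(r)\ne \mathcal{R}_X$ for all $r$) forces $\partial D(r)\ne\emptyset$ and drives the monotone growth.

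\medskip

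I expect the main obstacle to be \textbf{not} the ODE/coarea computation (that is essentially \cite{MW12,LW20}) but rather the analytic justification of the integration by parts on $\mathcal{R}_X$: one must ensure the boundary $\partial D(r)$ genuinely stays inside $\mathcal{R}_X$ and that no flux escapes through the singular set $X\setminus\mathcal{R}_X$. This is precisely what Proposition \ref{prop:regular cutoff} is designed for — the gain of $r^{4-\sigma}$ (space-time codimension four of the singular set) beats the potentially unbounded contribution one would fear — so the real work is to set up the cutoff approximation carefully and verify every error term vanishes in the limit. A secondary subtlety is confirming that the soliton identities \eqref{eq: basic-properties-2} hold in the strong pointwise form needed on all of $\mathcal{R}_X$, and that $f_0$ is proper as a function on $\mathcal{R}_X$ (so that $D(r)$ exhausts $\mathcal{R}_X$), both of which should follow from Theorem \ref{thm:potential-function} and the smooth convergence property \cite[Theorem 2.5]{Bam20c}.
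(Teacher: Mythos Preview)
Your proposal has a genuine gap at the heart of the argument. The quantity you call $v(r)=\int_{D(r)}e^{-f_0}\,dg$ is the $(4\pi)^{n/2}$-multiple of $\nu(D(r))$, hence is bounded above by $(4\pi)^{n/2}$ for all $r$; the assertion ``$v(r)\gtrsim c(n)e^{W}\sqrt{r}$'' therefore cannot hold for large $r$, and no amount of ``routine bookkeeping'' will extract unbounded growth from a bounded function. This is not how \cite{MW12,LW20} proceed: both work with the \emph{unweighted} volume $V(s)=|D(s)\cap\mathcal{R}_X|$ of the sublevel sets of $\rho=2\sqrt{f_0-W}$, together with the scalar-curvature integral $\chi(s)=\int_{D(s)\cap\mathcal{R}_X}R\,dg$, and integrate the \emph{unweighted} identity $\Delta f_0=\tfrac{n}{2}-R$ (not $\operatorname{div}(e^{-f_0}\nabla f_0)$) to obtain the relation $\tfrac{n}{2}V(s)-\tfrac{s}{2}V'(s)=\chi(s)-\tfrac{2}{s}\chi'(s)$.

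Two further ingredients are missing from your outline and are essential, not cosmetic. First, one must establish separately that a noncompact model has infinite volume: this is done by contradiction, showing via the ODE that finite total volume would force \emph{exponential} decay of $|D(t+1)\setminus D(t)|$, which is then ruled out by a \emph{polynomial} lower bound on unit-ball volumes obtained from the global Sobolev inequality (Corollary~\ref{coro: Sobolev}). Second, the passage from ``infinite volume'' to ``at least linear growth'' again uses the Sobolev inequality in an indispensable way (it produces an inequality of the schematic form $|D(t+1)\setminus D(t)|^{(n-2)/n}\le C e^{-2W/n}\big(\text{neighboring annuli}+\chi\text{-differences}\big)$, which is combined with the ODE to close the argument). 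Your proposal never invokes Theorem~\ref{thm: nu-functional} or Corollary~\ref{coro: Sobolev}, yet these are precisely the tools that replace the role of completeness in the smooth case. Your instinct about Proposition~\ref{prop:regular cutoff}---that it is what justifies the divergence theorem on $D(s)\cap\mathcal{R}_X$ by killing boundary flux through the singular set---is correct and is indeed the main new analytic input, but it must be paired with the right (unweighted) quantities and with the Sobolev inequality.
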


There is not much to be commented on the proof of Theorem \ref{thm:linear-volume-lower-bound}, since it follows from the argument of \cite{MW12,LW20}. However, we mention that the prerequisite tools that make this argument possible are the cut-off functions in Proposition \ref{prop:regular cutoff} and the global Sobolev inequality in Theorem \ref{thm: nu-functional}. The proof of Theorem \ref{thm:linear-volume-lower-bound} is presented in Section 8.
\vskip 2mm \vskip 0pt
\noindent\textbf{Acknowledgements.} The authors would like to thank Felix Schulze for asking the question of the scalar curvature lower bound on metric solitons and for some helpful discussions.

\section{Metric soliton as an \texorpdfstring{$\mathbb{F}$-limit}{F-limit}}

According to Bamler \cite[Definition 3.57]{Bam20b}, a metric soliton is no more than a self-similar metric flow; this notion is too general, and could  either be related to or have nothing to do with the Ricci flow. For this reason, we shall restrict our attention to those metric solitons that could possibly arise in the singularity analysis of the Ricci flow, namely, the ones which are $\mathbb{F}$-limits of noncollapsed sequences of smooth Ricci flows. In this section, we shall survey the background and basic properties of such metric solitons.

Let $\displaystyle\left\{(M^i,g^i_t,x_i)_{t\in(-T_i,0]}\right\}_{i=1}^\infty$ be a sequence of $n$-dimensional Ricci flows, where $x_i\in M^i$ is a fixed base point. To ensure that Bamler's theory \cite{Bam20a, Bam20b, Bam20c} is applicable, we assume that \emph{each Ricci flow in this sequence has bounded curvature within each compact time-interval} (but the bounds need not to be uniform), see the appendix of \cite{Bam21} for justifications.  Assume that the sequence satisfies the $(\tau_0,Y_0)$-noncollapsed condition: there are positive numbers $\tau_0$ and $Y_0$, such that
\begin{align*}
    \mathcal{N}_{x_i,0}(\tau_0)\ge -Y_0 \quad \text{ for all }\quad i \in\mathbb{N}.
\end{align*}
It appears that the exact values of $\tau_0$ and $Y_0$ become inconsequential if the limit is a metric soliton with finite entropy; the reason will be explained below.

According to \cite{Bam20b,Bam20c}, after passing to a subsequence, we have the following convergence
\begin{align}\label{def-limiting-condition}
    \left((M^i,g^i_t)_{t\in(-T_i,0]},(\nu_{x_i,0\,|\,t})_{t\in(-T_i,0]}\right)\xrightarrow[i\to\infty]{\quad\mathbb{F},\ \mathfrak{C}\quad}\big(\mathcal{X},(\nu_t)_{t\in(-T,0)}\big),
\end{align}
where $\mathfrak{C}$ is some correspondence, $\displaystyle T=\lim_{i\to \infty}T_i\in(0,+\infty]$ (in case $T=+\infty$, the convergence above is on compact intervals), $\mathcal{X}$ is a metric flow over $(-T,0]$ whose singular part $\mathcal{X}_{<0}\setminus\mathcal{R}_{<0}$ has space-time codimension no less than $4$, and $\nu_t$ is a conjugate heat flow satisfying
\begin{align}\label{eq: Hn-concentration}
    \operatorname{Var}(\nu_t)\le H_n|t|.
\end{align}
Here (and afterwards) $\operatorname{Var}$ represents the variance and $H_n$ stands for the constant $\frac{(n-1)\pi^2}{2}+4$. We point out that the finial slice $\mathcal{X}_0$ of $\mathcal{X}$ consists of one point only, and we will often regard $\mathcal{X}$ as a metric flow over the interval $(-T,0)$.

The regular part $\mathcal{R}=\mathcal{R}_{<0}$ of $\mathcal{X}$ is a Ricci flow space-time, and we shall always denote by $g_t$ the metric on $\mathcal{R}_t$ for $t\in(-T,0)$. Then, on the regular part, the conjugate heat flow $\nu_t$ can be written as ($\tau=-t$)
\begin{align}\label{nu-in-Regular-part}
    d\nu_{t}=(4\pi\tau)^{-\frac{n}{2}}e^{-f}dg_{t},
\end{align} 
where $f$ is a smooth function on $\mathcal{R}$. Since the singular part is a null-set, the Nash entropy is well-defined for $\nu_t$.
\begin{align*}
    \mathcal{N}_{\nu}(\tau):=\int_{\mathcal{R}_{-\tau}}fd\nu_{-\tau}-\frac{n}{2} \quad \text{ for all }\quad \tau\in(0,T).
\end{align*}

Similar to a consequence of Perelman's monotonicity formula, whenever $\mathcal{N}_\nu(\tau)\equiv\operatorname{const}$ for all $\tau\in(0,T)$, $\big(\mathcal{X},(\nu_t)_{t\in(-T,0)}\big)$ is a metric soliton, which is a self-similar metric flow in the sense of \cite[Definition 3.57]{Bam20b}; this type of metric solitons are that which shall be studied in this article. Henceforth, for a metric soliton obtained in this way we shall always define
\begin{align}\label{soliton-entropy}
    W:\equiv \mathcal{N}_\nu(\tau) \in(-\infty,0],\quad \tau\in(0,T), 
\end{align}
and call this number $W$ the \emph{soliton entropy}. The continuity of the Nash entropy with respect to the $\mathbb{F}$-convergence \cite[Theorem 2.10]{Bam20b} immediately implies that
\begin{align}
    \mathcal{N}_{x_i,0}(\tau)>W-1\quad \text{ for all } \tau \in(0,T) \text{ whenever $i$ is large enough. } 
\end{align}
Thus we see that the $(\tau_0,Y_0)$-noncollapsing assumption can be replaced by $(T/2,W-1)$-noncollapsing assumption, and we shall always refer to such a soliton as an \emph{$n$-dimensional noncollapsed $\mathbb{F}$-limit metric soliton} (instead of a $(\tau_0,Y_0)$-noncollapsed $\mathbb{F}$-limit as we did in \cite{CMZ23b}). 

At this point, it is helpful to recall the important properties of a metric soliton as summarized in \cite[Theorem 2.18,Theorem 15.69]{Bam20c}.

\begin{Theorem}[{\cite[Theorem 2.18, Theorem 15.69]{Bam20c}}, basic properties of the metric soliton]\label{thm: soliton-basic-properties}
    Let $\big(\mathcal{X},(\nu_t)_{t\in(-T,0)}\big)$ be the metric soliton arising in the scenario of this section. 
    \begin{enumerate}
        \item  On the metric part $\mathcal{R}$, we have
    \begin{gather}\label{eq: basic-properties}
        \Ric+\nabla^2f-\tfrac{1}{2\tau}g=0,
        \\\nonumber
        \tau\left(2\Delta f-|\nabla f|^2+R\right)+f-n\equiv W,
        \\\nonumber
        -\tau\left(|\nabla f|^2+R\right)+f\equiv W,
        \\\nonumber
        \partial_{\mathfrak{t}}f=|\nabla f|^2,
    \end{gather}
    where $f$ is defined in \eqref{nu-in-Regular-part} and $W$ is the soliton entropy defined in \eqref{soliton-entropy}.

\item The heat kernel $K$ satisfies the following identity on $\mathcal{R}$
\begin{align}\label{self-similarity-of-CHK}
    (\tau\partial_{\mathfrak{t}}-\tau\nabla f)_{x_1}K(x_1\,|\,x_0)+ (\tau\partial_{\mathfrak{t}}-\tau\nabla f)_{x_0}K(x_1\,|\,x_0)=\frac{n}{2}K(x_1\,|\,x_0).
\end{align}

\item There is a singular space $(X,d)$ of dimension $n$, a probability measure $\nu$ on $X$, and an identification 
$$\mathcal{X}_{<0}=X\times(-T,0),$$
such that the following holds for all $t\in(-T,0)$

\begin{enumerate}[(a)]
    \item $(\mathcal{X}_t,d_t)=(X\times\{t\},|t|^{1/2}d)$.
    \item $\mathcal{R}=\mathcal{R}_X\times(-T,0)$ and $\partial_{\mathfrak{t}}-\nabla f$ corresponds to the standard vector field on the second factor; here $\mathcal{R}_X$ is the regular part of $X$.
    \item $(\mathcal{R}_t,g_t)=(\mathcal{R}_X\times\{t\},|t|g)$, where $g$ is the Riemannian metric on $\mathcal{R}_X$.
    \item $\nu_t = \nu$.
\end{enumerate}
Moreover, there is a unique family of probability measures $(\nu'_{x\,|\,t})_{x\in X,\, t\ge 0}$ such that the tuple $(X,d,\nu,(\nu'_{x\,|\,t})_{x\in X,\, t\ge 0})$ is a model for $\big(\mathcal{X},(\nu_t)_{t\in(-T,0)}\big)$ corresponding to the same identification (in the sense of \cite[Definition 3.57]{Bam20b}).

\item The singular part of $(X,d,\nu)$ is a null set with codimension no less than $4$.
\end{enumerate}
\end{Theorem}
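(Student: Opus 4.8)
The statement is a repackaging of \cite[Theorem 2.18, Theorem 15.69]{Bam20c}, so the plan is to extract each item from Bamler's self-similar metric-flow theory together with the structure theory of noncollapsed $\mathbb{F}$-limits, supplying the short computations that link his normalizations to the ones used here. Item (4) is immediate: $\mathcal{X}$ is by hypothesis the $\mathbb{F}$-limit of a noncollapsed sequence as in \eqref{def-limiting-condition}, so the singular-set structure theory for such limits \cite{Bam20c} already gives that $\mathcal{X}_{<0}\setminus\mathcal{R}$, equivalently the singular part of $(X,d,\nu)$, has codimension $\ge 4$ and in particular is a $\nu$-null set. For item (3) one uses that $\mathcal{N}_\nu(\tau)\equiv W$ makes $(\mathcal{X},(\nu_t))$ a self-similar metric flow in the sense of \cite[Definition 3.57]{Bam20b}; unwinding that definition, the parabolic rescaling of the flow agrees with the flow itself, which forces every time slice to be a fixed rescaling of a single metric space, $(\mathcal{X}_t,d_t)=(X,|t|^{1/2}d)$, and forces the conjugate heat flow to be stationary, $\nu_t\equiv\nu$. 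On $\mathcal{R}$ the convergence is smooth \cite[Theorem 2.5]{Bam20c}, so $\mathcal{R}$ is a genuine Ricci-flow space-time; the same self-similar symmetry then gives $\mathcal{R}=\mathcal{R}_X\times(-T,0)$ with $(\mathcal{R}_t,g_t)=(\mathcal{R}_X,|t|g)$ and identifies the generating vector field of the symmetry with $\partial_{\mathfrak t}-\nabla f$, which by construction is the standard field on the second factor; uniqueness of the model is part of Bamler's machinery.

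Granting (3), item (1) is a direct unwinding. On $\mathcal{R}$ the Ricci-flow-space-time equation reads $\mathcal{L}_{\partial_{\mathfrak t}}g_t=-2\Ric_{g_t}$; writing $\partial_{\mathfrak t}=\partial_t+\nabla f$ and using $g_t=|t|g$ gives $\mathcal{L}_{\partial_{\mathfrak t}}g_t=-\tfrac1\tau g_t+2\nabla^2 f$, hence $\Ric+\nabla^2 f-\tfrac1{2\tau}g=0$. Since $d\nu_t=(4\pi\tau)^{-n/2}e^{-f}dg_t$ and $\nu_t\equiv\nu=(4\pi)^{-n/2}e^{-f_0}dg$, the function $f$ is $t$-independent on $\mathcal{R}_X$ (so $f=f_0$), whence $\partial_{\mathfrak t}f=(\partial_t+\nabla f)f=|\nabla f|^2$. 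The remaining two identities are the classical pointwise identities for gradient shrinkers: tracing the soliton equation gives $R+\Delta f=\tfrac n{2\tau}$, and the Hamilton identity (contracted second Bianchi) gives that $-\tau(|\nabla f|^2+R)+f$ is locally constant on $\mathcal{R}_X$; an algebraic manipulation using $\Delta f=\tfrac n{2\tau}-R$ shows $\tau(2\Delta f-|\nabla f|^2+R)+f-n$ equals the same constant. Finally, integrating that identity against $\nu$, using $\int f\,d\nu=W+\tfrac n2$ from the definition of $W$, and $\int\Delta f\,d\nu=\int|\nabla f|^2\,d\nu$, $\int R\,d\nu=\tfrac n{2\tau}-\int|\nabla f|^2\,d\nu$, identifies the constant as $W$; here one must know that these integrations by parts on $\mathcal{R}_X$ produce no boundary contribution, which is where the smallness of the singular set of $(X,d,\nu)$ enters.

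For item (2), on $\mathcal{R}$ the conjugate heat kernel $K(x_1\,|\,x_0)$ is smooth and satisfies the conjugate heat equation in $x_1$ and the heat equation in $x_0$; combining these with the soliton identities of item (1)—equivalently, differentiating Bamler's parabolic self-similarity of $K$ at scale $1$—gives \eqref{self-similarity-of-CHK}. The operator $\tau\partial_{\mathfrak t}-\tau\nabla f=-t\,\partial_t$ is the Euler field in the time variable, and the factor $\tfrac n2$ is the scaling weight of the $(4\pi\tau)^{-n/2}$ prefactor, i.e. $K$ is homogeneous of degree $-\tfrac n2$ jointly in $(-t_0,-t_1)$.

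The genuinely hard content—that a self-similar metric flow rigidly splits as a rescaled product, and that the convergence is smooth on the regular part so that all of the PDE manipulations above are legitimate—is exactly \cite{Bam20c}; everything else is bookkeeping. For a self-contained write-up the one step I expect to require care is the last part of item (1): verifying that the constant in Hamilton's soliton identity is the soliton entropy $W$, which needs the integration-by-parts identities on $\mathcal{R}_X$ to hold with no boundary terms near the singular set. The remaining work is matching Bamler's normalization conventions and checking that the potential $f$ defined here via $d\nu_t$ coincides with his.
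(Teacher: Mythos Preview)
The paper does not supply its own proof of this theorem: it is stated purely as a citation of \cite[Theorem 2.18, Theorem 15.69]{Bam20c}, with no argument beyond the attribution. Your proposal is therefore not competing with a proof in the paper but rather unpacking what lies behind the citation, and as such your sketch is correct and well-organized. The logical order you adopt (deduce the product structure (3) from self-similarity, then read off the PDE identities (1) on $\mathcal{R}_X$, then obtain (2) as a homogeneity statement for $K$, with (4) coming straight from the structure theory) matches how Bamler's two cited theorems fit together.

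Your one flagged concern---that identifying the constant in Hamilton's identity with $W$ requires integration by parts on $\mathcal{R}_X$ without boundary contribution from the singular set---is legitimate and is indeed the only place where genuine analytic work beyond algebra is needed. In Bamler's treatment this is handled not by a direct integration-by-parts on the limit but by passing the corresponding identities from the smooth approximating flows (where Perelman's differential Harnack gives the pointwise $\mathcal W$-quantity a sign, and the Nash entropy converges by \cite[Theorem 2.10]{Bam20c}); this sidesteps the boundary issue. If you were to write this up self-containedly, that limiting route is cleaner than justifying the integration by parts directly on $\mathcal{R}_X$, though the latter can also be done using the codimension-$4$ estimate and the cut-off technology developed later in the present paper (Proposition~\ref{prop:regular cutoff}).
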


Henceforth, we shall call $(X,d,\nu)$ (where $\nu=\nu_{-1}$) the \emph{model} of the metric soliton and $(\mathcal{R}_X,g,f_0)$ (where $f_0=f(\cdot,-1)$) its \emph{regular part}. It is clear that a metric soliton is completely determined by the regular part of its model. The equations in \eqref{eq: basic-properties} are constantly applied in the proof of the main theorems, so it is helpful to restate them. 

\begin{Corollary}
Let $(X,d,\nu)$ be the model of an $n$-dimensional noncollapsed $\mathbb{F}$-limit metric soliton and $(\mathcal{R}_X,g,f_0)$ the regular part of the model. Then the following equations hold on $\mathcal{R}_X$
\begin{gather}\label{eq: basic-properties-2}
        \Ric+\nabla^2f_0=\frac{1}{2}g,
        \\\nonumber
        2\Delta f_0-|\nabla f_0|^2+R+f_0-n=W,
        \\\nonumber
        |\nabla f_0|^2+R=f_0- W.
    \end{gather}
\end{Corollary}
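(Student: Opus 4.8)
The corollary is nothing more than the specialization of the system \eqref{eq: basic-properties} in Theorem \ref{thm: soliton-basic-properties} to the time-slice $t=-1$, together with the self-similar product structure recorded in part (3) of that theorem. So the plan is: first, recall from (3)(c) that $(\mathcal{R}_{-1},g_{-1})=(\mathcal{R}_X\times\{-1\},|-1|\,g)=(\mathcal{R}_X,g)$, so that the Levi-Civita connection $\nabla$, the Hessian $\nabla^2$, the Laplacian $\Delta$, the Ricci tensor $\Ric$, and the scalar curvature $R$ computed on $\mathcal{R}_X$ with respect to $g$ coincide exactly with the corresponding quantities on $\mathcal{R}_{-1}$ with respect to $g_{-1}$ — there is \emph{no} rescaling factor to track because $|t|=1$ at $t=-1$. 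Second, recall that by definition $f_0=f(\cdot,-1)$, where $f$ is the function appearing in \eqref{nu-in-Regular-part}.

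With these identifications in hand, set $\tau=-t=1$ in the first three identities of \eqref{eq: basic-properties} and evaluate at the slice $t=-1$. The first identity $\Ric+\nabla^2 f-\tfrac{1}{2\tau}g=0$ becomes $\Ric+\nabla^2 f_0=\tfrac12 g$; the second identity $\tau(2\Delta f-|\nabla f|^2+R)+f-n\equiv W$ becomes $2\Delta f_0-|\nabla f_0|^2+R+f_0-n=W$; and the third identity $-\tau(|\nabla f|^2+R)+f\equiv W$ becomes $|\nabla f_0|^2+R=f_0-W$. These are precisely the three equations of \eqref{eq: basic-properties-2}. (As a consistency check, the third equation can also be recovered from the first two: tracing $\Ric+\nabla^2 f_0=\tfrac12 g$ gives $\Delta f_0=\tfrac n2-R$, and substituting into the second equation eliminates $\Delta f_0$ and yields $|\nabla f_0|^2+R=f_0-W$.)

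There is essentially no obstacle here beyond bookkeeping: the one point requiring care is the scaling convention, namely confirming that at $t=-1$ the parabolic rescaling factor $|t|$ equals $1$ so that the Riemannian data on $\mathcal{R}_X$ is literally the data on the slice $\mathcal{R}_{-1}$, and that the spatial restriction of $f$ to that slice is exactly $f_0$. Once this is fixed, the corollary is immediate from Theorem \ref{thm: soliton-basic-properties}. (The fourth identity of \eqref{eq: basic-properties}, $\partial_{\mathfrak{t}}f=|\nabla f|^2$, is not needed for the three static equations on $\mathcal{R}_X$, though it is of course compatible with them.)
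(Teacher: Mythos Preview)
Your proposal is correct and matches the paper's intent: the paper does not give a separate proof of this corollary, presenting it simply as a restatement of \eqref{eq: basic-properties} at the slice $t=-1$ (so $\tau=1$) under the identifications in part (3) of Theorem \ref{thm: soliton-basic-properties}. Your bookkeeping of the scaling and the identification $f_0=f(\cdot,-1)$ is exactly what is needed.
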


\section{Metric soliton as an ancient \texorpdfstring{$\mathbb{F}$-limit}{F-limit}}

It is well-known that a smooth skrinking gradient Ricci soliton always generates an ancient solution, and this fact is essential to the study of Ricci shrinkers. For instance, the positivity of the scalar curvature is a consequence of the ancientness \cite{Che09}. However, it is not immediately clear from the limiting definition \eqref{def-limiting-condition} that a metric soliton defined on a finite interval $(-T,0)$ can be extended to an ancient metric flow. Even if it could, it is not clear whether such ancient metric flow is a noncollapsed $\mathbb{F}$-limit of a sequence of Ricci flows. In this section, we shall apply a blow-up argument to verify this fact.

\begin{Theorem}\label{thm: metric soliton is ancient}
    Let $\big(\mathcal{X},(\nu_t)_{t\in(-T,0)}\big)$ be an $n$-dimensional noncollapsed $\mathbb{F}$-limit metric soliton defined over a finite interval $(-T,0)$. Then it can be extended to an ancient metric flow $\big(\mathcal{X},(\nu_t)_{t\in(-\infty,0)}\big)$, which is also a metric soliton with the same model. Further more, there is a sequence of $n$-dimensional smooth Ricci flows $\{(M^i,g^i_t,x_i)_{t\in[-T_i,0]}\}_{i=1}^\infty$ with $T_i\nearrow +\infty$ such that
    \begin{align}\label{eq:nash-entropy-converge}
        \lim_{i\to\infty}\mathcal{N}_{x_i,0}(T_i)=W,
    \end{align}
    where $W$ is the soliton entropy
, and that
    \begin{gather}\label{eq: limiting-definition-refined}
      \left((M^i,g^i_t)_{t\in[-T_i,0]},(\nu_{x_i,0\,|\,t})_{t\in[-T_i,0]}\right)\xrightarrow[i\to\infty]{\quad\mathbb{F}\quad}\big(\mathcal{X},(\nu_t)_{t\in(-\infty,0)}\big),  
    \end{gather}
    over compact intervals.
\end{Theorem}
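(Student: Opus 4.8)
The plan is to exploit the self-similarity of the metric soliton to manufacture, by parabolic rescaling of the original approximating sequence, a new sequence of Ricci flows whose lifespans tend to infinity, and whose $\mathbb{F}$-limit is an ancient extension of the given soliton. Concretely, fix the approximating sequence $\{(M^i, g^i_t, x_i)_{t\in(-T_i,0]}\}$ with $\nu_{x_i,0\,|\,t}$ converging to $(\mathcal{X},(\nu_t))$ as in \eqref{def-limiting-condition}, and fix a large parameter $\lambda > 1$. Consider the parabolically rescaled flows $\tilde g^i_t := \lambda^{-1} g^i_{\lambda t}$ on the time interval $(-T_i/\lambda, 0]$, with the same base point $x_i$ and the conjugate heat kernel based at $(x_i,0)$, which rescales compatibly. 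By the scale-invariance of the Nash entropy, $\mathcal{N}^{\tilde g^i}_{x_i,0}(\tau) = \mathcal{N}^{g^i}_{x_i,0}(\lambda\tau)$; since $\mathcal{N}^{g^i}_{x_i,0}(\tau) \to W$ uniformly on compact subsets of $(0,T)$ and the Nash entropy is monotone, one checks (using \eqref{soliton-entropy} and the continuity in \cite[Theorem 2.10]{Bam20b}) that the rescaled sequence still satisfies a uniform noncollapsing bound, hence subconverges in the $\mathbb{F}$-sense to some metric flow $(\mathcal{X}^\lambda,(\nu^\lambda_t)_{t\in(-\lambda T,0)})$. The key point is that, because the original limit is self-similar, this rescaled limit is \emph{isometric as a metric flow} to $(\mathcal{X},(\nu_t))$ on the overlap $(-T,0)$: indeed, by Theorem \ref{thm: soliton-basic-properties}(3) the soliton is identified with $X\times(-T,0)$ with $(\mathcal{X}_t,d_t) = (X,|t|^{1/2}d)$ and $\nu_t \equiv \nu$, which is manifestly invariant under parabolic dilation, so the rescaled flow is just the same soliton viewed over the longer interval $(-\lambda T, 0)$. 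Thus $\mathcal{X}^\lambda$ is a metric soliton with the same model $(X,d,\nu)$, extending $\mathcal{X}$ to the interval $(-\lambda T, 0)$.

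Next I would run this for $\lambda = \lambda_j \nearrow \infty$ and take a diagonal limit: each $(\mathcal{X}^{\lambda_j},(\nu^{\lambda_j}_t))$ extends the previous one and all have the same model, so they glue to a single metric flow $(\mathcal{X},(\nu_t)_{t\in(-\infty,0)})$ over $(-\infty,0)$, which is again self-similar (the soliton equations \eqref{eq: basic-properties} persist on $\mathcal{R}_X\times(-\infty,0)$ since they hold on every finite sub-interval), hence a metric soliton with the same model. To produce the desired approximating sequence $\{(M^i,g^i_t,x_i)_{t\in[-T_i,0]}\}$ with $T_i\nearrow\infty$ and \eqref{eq:nash-entropy-converge}, I would extract a diagonal subsequence from the doubly-indexed family $\{(M^i,\tilde g^{i,\lambda_j}_t)\}$: for each $j$ pick $i(j)$ large so that the $\mathbb{F}$-distance between $(M^{i(j)}, \tilde g^{i(j),\lambda_j})$ (over a time interval of length $\approx \lambda_j T / 2$) and $\mathcal{X}$ is at most $1/j$, and so that $|\mathcal{N}^{g^{i(j)}}_{x_{i(j)},0}(\lambda_j T/2) - W| \le 1/j$; relabel these as the new sequence with $T_{j} := \lambda_j T/2$. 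The convergence \eqref{eq: limiting-definition-refined} over compact intervals and the entropy convergence \eqref{eq:nash-entropy-converge} then follow from this choice together with the facts established above.

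The main obstacle, and the step demanding the most care, is the \textbf{uniform noncollapsing of the rescaled sequence} and the \textbf{identification of the rescaled $\mathbb{F}$-limit with the original soliton}. For the first: one knows $\mathcal{N}^{g^i}_{x_i,0}(\tau) > W - 1$ for $\tau \in (0,T)$ and $i$ large, but after rescaling by $\lambda$ one needs control of $\mathcal{N}^{g^i}_{x_i,0}(\lambda \tau)$ for $\tau$ up to nearly $T$, i.e.\ at times approaching $\lambda T$ — a scale at which the original flow $g^i$ may not even be defined for fixed $i$. This is precisely why the diagonal extraction must interleave the indices $i$ and $\lambda$: for each target interval length one only uses those $i$ with $T_i$ large enough, invoking monotonicity of $\mathcal{N}$ in $\tau$ and its $\mathbb{F}$-continuity to keep the bound near $W$. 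For the second: one must verify that the $\mathbb{F}$-limit of the rescaled sequence genuinely \emph{is} the soliton and not merely a flow sharing its finite-time restriction; here the self-similarity in Theorem \ref{thm: soliton-basic-properties}(3) — the explicit product identification $\mathcal{X}_{<0} = X\times(-T,0)$ with the dilation-covariant metrics and the constant measure $\nu_t\equiv\nu$ — is exactly what makes the rescaled limit canonically isometric to $\mathcal{X}$, so uniqueness of $\mathbb{F}$-limits (within the given correspondence) closes the argument. The remaining verifications — that the glued flow satisfies the soliton equations on $(-\infty,0)$, that $\mathcal{X}_0$ is still a single point, and that the codimension-$4$ bound on the singular set is preserved — are routine consequences of the corresponding statements on each finite sub-interval.
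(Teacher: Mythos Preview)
Your approach is correct and essentially identical to the paper's: rescale parabolically, invoke the model structure of Theorem \ref{thm: soliton-basic-properties}(3) to identify the rescaled soliton with the original on the overlap, and run a diagonal extraction with Nash-entropy convergence via \cite[Theorem 2.10]{Bam20c}. Two small points worth flagging: your rescaling $\tilde g^i_t = \lambda^{-1}g^i_{\lambda t}$ with $\lambda>1$ \emph{shortens} the interval to $(-T_i/\lambda,0]$ rather than lengthening it---you want $\tilde g^i_t = \lambda g^i_{t/\lambda}$ (the paper writes $k^2 g^i_{k^{-2}t}$); and what you call ``manifestly invariant under parabolic dilation'' is made precise in the paper as a Claim verifying that the dilation map preserves the conjugate heat kernels, i.e.\ is a metric-flow isometry in the sense of \cite[Definition 3.13]{Bam20b}---this is the one nontrivial check, and it uses the model measures $(\nu'_{x\,|\,t})$ from Theorem \ref{thm: soliton-basic-properties}(3) rather than just the time-slice metrics.
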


The strategy is to show that the blow-up limit of $\big(\mathcal{X},(\nu_t)_{t\in(-T,0)}\big)$ is a metric soliton with the same model and defined over the interval $(-\infty,0)$. To begin with, the following lemma is a trivial fact from \cite[Theorem 7.4, Theorem 7.6]{Bam20b}.

\begin{Lemma}
There is a metric flow pair $\left(\mathcal{X}^\infty,(\nu^\infty_t)_{t<0}\right)$ defined over $(-\infty,0)$, such that
    \begin{align}\label{eq-blow-up-limit}
    \left(\mathcal{X}^k,(\nu^k_{t})_{t\in(-Tk^2,0)}\right)\xrightarrow[k\to\infty]{\qquad\mathbb{F}\qquad} \left(\mathcal{X}^\infty,(\nu^\infty_t)_{t<0}\right),
\end{align}
on compact time intervals, where $\mathcal{X}^k$ and $\nu^k_{t}$ are the scaling notations defined in \cite[Definition 6.55]{Bam20b}.
\end{Lemma}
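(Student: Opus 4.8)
The plan is to obtain $\left(\mathcal{X}^\infty,(\nu^\infty_t)_{t<0}\right)$ as a subsequential $\mathbb{F}$-limit of the parabolically rescaled metric flow pairs $\left(\mathcal{X}^k,(\nu^k_t)_{t\in(-Tk^2,0)}\right)$ via Bamler's compactness theorem \cite[Theorem 7.4, Theorem 7.6]{Bam20b}; since this is advertised as a triviality, the only thing to do is check that the hypotheses of that theorem hold with constants independent of $k$.

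First I would unwind the scaling conventions of \cite[Definition 6.55]{Bam20b}: $\mathcal{X}^k$ is the parabolic rescaling of $\mathcal{X}$ by the factor $k$, so distances are multiplied by $k$ and time by $k^2$; as $\mathcal{X}$ is a metric flow over $(-T,0)$, the rescaled flow $\mathcal{X}^k$ is a metric flow over $(-Tk^2,0)$, and $\nu^k_t$ is the pushforward of $\nu_{k^{-2}t}$ under the rescaling map, which is again a conjugate heat flow on $\mathcal{X}^k$. Because $T>0$ is fixed and $k\to\infty$, the intervals $(-Tk^2,0)$ exhaust $(-\infty,0)$; hence by \cite[Theorem 7.6]{Bam20b} it suffices to produce an $\mathbb{F}$-limit over each compact subinterval $[a,b]\subset(-\infty,0)$ and then diagonalize along an exhaustion such as $[-j,-j^{-1}]\uparrow(-\infty,0)$, which yields exactly the asserted convergence on compact time intervals.

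Next I would verify the two scale-invariant structural conditions behind the compactness theorem. The $H_n$-concentration $\operatorname{Var}(\nu_t)\le H_n|t|$ from \eqref{eq: Hn-concentration} transforms to $\operatorname{Var}(\nu^k_t)\le H_n|t|$ under parabolic rescaling, uniformly in $k$. The Nash entropy of the metric soliton is the constant $W\in(-\infty,0]$ by \eqref{soliton-entropy}, and the Nash entropy based at a conjugate heat flow is invariant under parabolic rescaling; hence $\mathcal{N}_{\nu^k}(\tau)=W$ for every $k$ and every admissible $\tau$, supplying the uniform noncollapsing bound needed to invoke \cite[Theorem 7.4]{Bam20b}. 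With these two uniform bounds, Bamler's theorem produces, after passing to a subsequence, a metric flow pair $\left(\mathcal{X}^\infty,(\nu^\infty_t)_{t<0}\right)$ over $(-\infty,0)$ together with the convergence \eqref{eq-blow-up-limit}.

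I do not expect a genuine obstacle here, which is why the statement is flagged as a trivial consequence of \cite[Theorem 7.4, Theorem 7.6]{Bam20b}. The only points warranting a little care are bookkeeping: one should make sure it is the basepoint conjugate heat flows $\nu^k_t$ that are fed into the compactness theorem, so that the limit comes equipped with an honest conjugate heat flow $\nu^\infty_t$ and not merely a limiting metric flow; and one should record that, since the rescaled flows live over ever-larger intervals, the $\mathbb{F}$-convergence is of the ``on compact time intervals'' type, obtained by the usual diagonal extraction.
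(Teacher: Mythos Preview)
Your proposal is correct and follows exactly the approach indicated by the paper, which simply cites \cite[Theorem 7.4, Theorem 7.6]{Bam20b} without further argument; you have merely spelled out the scale-invariance of the $H_n$-concentration bound and the Nash entropy that make this citation valid.
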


The following proposition is the key ingredient of the proof of Theorem \ref{thm: metric soliton is ancient}

\begin{Proposition}
    The limit $\left(\mathcal{X}^\infty,(\nu^\infty_t)_{t<0}\right)$ in \eqref{eq-blow-up-limit} is a metric soliton defined over $(-\infty,0)$ which is isometric to $\big(\mathcal{X},(\nu_t)_{t\in(-T,0)}\big)$ on the interval $(-T,0)$.
\end{Proposition}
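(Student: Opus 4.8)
The plan is to show that the blow-up limit $\left(\mathcal{X}^\infty,(\nu^\infty_t)_{t<0}\right)$ agrees with the original metric soliton on $(-T,0)$ and is self-similar on all of $(-\infty,0)$, so that in particular it has the same model. The key observation is that a metric soliton is \emph{exactly} invariant under parabolic rescaling: since $\big(\mathcal{X},(\nu_t)_{t\in(-T,0)}\big)$ is self-similar in the sense of \cite[Definition 3.57]{Bam20b}, the rescaled flows $\left(\mathcal{X}^k,(\nu^k_t)_{t\in(-Tk^2,0)}\right)$ are, when restricted to $(-T,0)$, isometric to the original $\big(\mathcal{X},(\nu_t)_{t\in(-T,0)}\big)$ via the self-similarity identification (using $(\mathcal{X}_t,d_t)=(X\times\{t\},|t|^{1/2}d)$, $(\mathcal{R}_t,g_t)=(\mathcal{R}_X\times\{t\},|t|g)$, and $\nu_t=\nu$ from Theorem \ref{thm: soliton-basic-properties}(3); the conjugate heat flow $(\nu^k_{t})$ is the pushforward of $(\nu_t)$ under the scaling, and the self-similar vector field $\partial_{\mathfrak t}-\nabla f$ intertwines the two). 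Hence the sequence of metric flow pairs on the left of \eqref{eq-blow-up-limit} is, on any fixed compact subinterval $[a,b]\subset(-T,0)$, a \emph{constant} sequence (up to the canonical isometric identification), so its $\mathbb{F}$-limit restricted to $[a,b]$ is isometric to $\big(\mathcal{X},(\nu_t)\big)|_{[a,b]}$.

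First I would set up the scaling identifications carefully: record how $\mathcal{X}^k$, $\nu^k_t$, $\mathfrak{t}^k$ and $d^k_t$ are defined from \cite[Definition 6.55]{Bam20b}, and then use Theorem \ref{thm: soliton-basic-properties}(3) to write down an explicit isometry $\Phi_k:\mathcal{X}^k|_{(-T,0)}\to\mathcal{X}|_{(-T,0)}$ of metric flow pairs. The point is that the model $(X,d,\nu,(\nu'_{x|t}))$ is literally scale-invariant, so $\Phi_k$ exists and is the identity on the underlying set $X\times(-T,0)$. Second, I would invoke uniqueness of $\mathbb{F}$-limits up to isometry (a constant sequence $\mathbb{F}$-converges to itself, by \cite[Theorem 7.4, Theorem 7.6]{Bam20b} combined with the fact that $\mathbb{F}$-convergence is compatible with isometric identifications) to conclude that $\left(\mathcal{X}^\infty,(\nu^\infty_t)_{t<0}\right)|_{(-T,0)}$ is isometric to $\big(\mathcal{X},(\nu_t)_{t\in(-T,0)}\big)$.

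Next I would argue that the extension is itself a metric soliton over $(-\infty,0)$. The flow $\left(\mathcal{X}^\infty,(\nu^\infty_t)_{t<0}\right)$ is a priori only known to be a metric flow pair; I would show it is self-similar by exploiting that it is manifestly invariant under the scaling $t\mapsto \lambda^2 t$ for every $\lambda>0$. Indeed, $\mathcal{X}^\infty$ is a blow-up limit, so rescaling it by any $\lambda$ again gives a blow-up limit of the same sequence; by the uniqueness just used, $(\mathcal{X}^\infty)^\lambda\cong\mathcal{X}^\infty$. Scale-invariance of a metric flow pair, together with the regularity theory of \cite{Bam20c} (the regular part is a Ricci flow spacetime, and $\mathbb{F}$-convergence is smooth there by \cite[Theorem 2.5]{Bam20c}), forces the smooth identity $\Ric+\nabla^2 f-\tfrac1{2\tau}g=0$ on $\mathcal{R}^\infty$ and the self-similarity identity \eqref{self-similarity-of-CHK} for the heat kernel, which is precisely what makes $\mathcal{X}^\infty$ a metric soliton; and since $\mathcal{X}^\infty|_{(-T,0)}\cong\mathcal{X}$, its model must coincide with that of $\mathcal{X}$, namely $(X,d,\nu)$ with regular part $(\mathcal{R}_X,g,f_0)$.

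The main obstacle I expect is purely bookkeeping rather than conceptual: making the claim ``a metric soliton is isometric to each of its parabolic rescalings as a metric flow \emph{pair}'' fully rigorous requires chasing through Bamler's definitions of metric flow pairs, correspondences, isometry, and the scaling conventions of \cite[Definition 6.55]{Bam20b}, and verifying that the self-similarity data $(\nu'_{x|t})$ transform correctly under scaling (not just the metrics $d_t$ and the measures $\nu_t$). A secondary subtlety is that \eqref{eq-blow-up-limit} a priori only asserts $\mathbb{F}$-convergence on compact intervals contained in $(-\infty,0)$, and one must check that the limit restricted to $(-T,0)$ is the \emph{whole} of $\mathcal{X}|_{(-T,0)}$ and not a proper sub-flow — this follows from uniqueness of $\mathbb{F}$-limits but should be stated. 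Once these identifications are in place, the proposition is immediate.
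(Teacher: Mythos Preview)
Your proposal is correct and follows essentially the same approach as the paper: both arguments hinge on the observation that the self-similarity of the soliton furnishes explicit isometries between $\mathcal{X}$ and each rescaling $\mathcal{X}^k$ as metric flow pairs, and both recognize that the only nontrivial verification is that these isometries respect the conjugate heat kernels (your ``bookkeeping obstacle'' is precisely the content of the paper's Claim, which checks $(\psi_s)_*\nu_{x\,|\,s}=\nu^\lambda_{\psi_t(x)\,|\,s}$ using the model data $(\nu'_{x\,|\,t})$). The paper's execution is slightly more streamlined: rather than invoking an abstract ``uniqueness of $\mathbb{F}$-limits'' principle and then separately arguing self-similarity of $\mathcal{X}^\infty$, it builds a single correspondence $\mathfrak{C}$ over each compact interval in which all the $\mathcal{X}^k$ simultaneously sit at $\mathbb{F}$-distance exactly zero from one another, so the limit is immediately identified with (the ancient extension of) $\mathcal{X}$ in one stroke.
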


\begin{proof}

Let $(X,d,\nu)$ be the model of the metric soliton $\big(\mathcal{X},(\nu_t)_{t\in(-T,0)}\big)$ and  $(\mathcal{R}_X,g,f_0)$ its regular part. Without of loss of generality, we may assume that $T>1$. By Theorem \ref{thm: soliton-basic-properties}, the vector field $\partial_{\mathfrak{t}}-\nabla f$ is complete on $\mathcal{R}$, and the $1$-parameter family of maps generated by it
\begin{align*}
    \phi_t&\ : \mathcal{R}_X(\cong\mathcal{R}_{-1}) \to \mathcal{R},
    \\
    \phi_{-1}&\ =\operatorname{id},
    \\
    \tfrac{\partial}{\partial t}\phi_t&\ = \left(\partial_{\mathfrak{t}}-\nabla f \right)\circ \phi_t.
\end{align*}
defines isometries 
$$\phi_t: \big(\mathcal{R}_X,|t|g, f_0\big)\longrightarrow \big(\mathcal{R}_t,g_t,f(\cdot,t)\big),\quad t\in(-T,0),$$
which can be extended to isometries
\begin{align}\label{eq:canonical-form}
    \overline{\phi}_t: (X,|t|^{1/2}d,\nu)\longrightarrow(\mathcal{X}_t,\nu_t), \quad t\in(-T,0).
\end{align}
Indeed, the fact that $\phi_t$ is an isometry between $(\mathcal{R}_X,|t|g,f_0)$ and $\big(\mathcal{R}_t,g_t,f(\cdot,t)\big)$ can also be verified by direct computation:
\begin{gather*}
    \tfrac{\partial}{\partial t}|t|^{-1}\phi_t^*g_t =|t|^{-1}\phi_t^*\left(\mathcal{L}_{(\partial_{\mathfrak{t}}-\nabla f)}g_t\right)+|t|^{-2}\phi_t^*g_t=-2|t|^{-1}\phi_t^*\left(\Ric+\nabla^2f-\frac{1}{2|t|}g_t\right)=0,
    \\
    \tfrac{\partial}{\partial t}f\circ\phi_t = \left(\partial_{\mathfrak{t}}f-|\nabla f|^2\right)\circ\phi_t=0.
\end{gather*}
Here we have applied \eqref{eq: basic-properties}.

Next, we define
\begin{align*}
    \overline{\phi}_t^k : (X,d,\nu)\to \left(\mathcal{X}^k,(\nu^k_{t})_{t\in(-Tk^2,0)}\right)
\end{align*}
by letting
\begin{align*}
    \overline{\phi}_t^k:=\overline{\phi}_{k^{-2}t}
\end{align*}
for all $t\in (-Tk^2,0)$ and for all $k\in\mathbb{N}$; it is clear that each $\overline{\phi}_t^k$ is an isometry onto $\mathcal{X}^k_t$. For any interval of the form $I=[-S,-S^{-1}]$, where $S>1$, we may construct a correspondence between $\{\left(\mathcal{X}^k,(\nu^k_{t})_{t\in(-Tk^2,0)}\right)\}_{k>S}$ by
\begin{align*}
    \mathfrak{C}:=\left((X,|t|d)_{t\in I},\left(\overline{\phi}^{k,-1}_t\right)_{t\in I}\right),
\end{align*}
where $\overline{\phi}^{k,-1}_t$ is the inverse of $\overline{\phi}^{k}_t$. We observe that
\begin{align}\label{eq: blow-up-equivalence}
    \dist^{\mathfrak{C}}_{\mathbb{F}}\left(\left(\mathcal{X}^k_I,(\nu^k_{t})_{t\in I}\right),\left(\mathcal{X}^j_I,(\nu^j_{t})_{t\in I}\right)\right)\equiv 0\qquad\text{ for all }\qquad k,\ j>S.
\end{align}
To see this, we need only to verify the following claim. 
\\

\noindent\textbf{Claim. } \emph{For any $\lambda>1$,
\begin{align*}
\psi_t:=\overline{\phi}^{\lambda}_t\circ \overline{\phi}^{-1}_t: \mathcal{X}_t\to \mathcal{X}^\lambda_t,\quad t\in(-T,0)
\end{align*}
defines an isometry between $\left(\mathcal{X}^\lambda,(\nu^\lambda_{t})_{t\in(-T\lambda^2,0)}\right)$ and $\left(\mathcal{X},(\nu_{t})_{t\in(-T,0)}\right)$ over the interval $(-T,0)$. }

\begin{proof}[Proof of the claim] 
It is already clear that each $\psi_t$ is an isometry between metric spaces. We need only to show that it preserves the conjugate heat kernels. Let $(\nu'_{x\,|\, t})_{x\in X,\, t\le 0}$ be the family of probability measures given in Theorem \ref{thm: soliton-basic-properties}. Fix any $x=\phi_t(x')\in\mathcal{X}_t$ and any $-T<s< t<0$, we may compute according to \cite[Definition 3.57]{Bam20b}
\begin{align*}
    (\psi_s)_* \nu_{x\,|\,s}=&\ (\overline{\phi}^\lambda_t)_* \nu'_{x'\,|\,\log(s/t)}=(\overline{\phi}_{\lambda^{-2}t})_*\nu'_{x'\,|\,\log(\lambda^{-2}s/\lambda^{-2}t)}
    \\
    =&\ \nu_{\overline{\phi}_{\lambda^{-2}t}(x')\,|\,\lambda^{-2}s}=\nu^\lambda_{\psi_t(x)\,|\,s}.
\end{align*}
Thus, $\psi_t$ satisfies \cite[Definition 3.13]{Bam20b}; this proves the claim. 
    
\end{proof}

Finally, the proposition follows from \eqref{eq: blow-up-equivalence}.

\end{proof}

\begin{proof}[Proof of Theorem \ref{thm: metric soliton is ancient}]

Since the metric soliton is obtained as an $\mathbb{F}$-limit \eqref{def-limiting-condition}, for each $k$ we have that, 
\begin{align*}
     \left((M^i,k^2g^i_{k^{-2}t})_{t\in(-k^2T_i,0]},(\nu_{x_i,0\,|\,k^{-2}t})_{t\in(-k^2T_i,0]}\right)\xrightarrow[i\to\infty]{\qquad\mathbb{F}\qquad}\left(\mathcal{X}^k,(\nu^k_{t})_{t\in(-Tk^2,0)}\right).
\end{align*}
We may, by a diagonal argument, find a new sequence of $n$-dimensional Ricci flows with uniformly bounded Nash entropy, such that
    $$ \left((\overline{M}^i,\overline{g}^i_t)_{t\in(-\overline{T}_i,0]},(\overline{\nu}_{x_i,0\,|\,t})_{t\in(-\overline{T}_i,0]}\right)\xrightarrow[i\to\infty]{\qquad\mathbb{F}\qquad}\big(\mathcal{X}^\infty,(\nu^\infty_t)_{t<0}\big).$$
    Since the soliton entropy depends only on  (the regular part of) the model, we have that
    $$\mathcal{N}_{\nu^\infty}(\tau)\equiv W\quad \text{ for all }\quad \tau>0.$$
By the continuity of the Nash entropy with respect to the $\mathbb{F}$-convergence \cite[Theorem 2.10]{Bam20c}, we may, after discarding finitely many terms and modifying $\overline{T}_i$, ensure that \eqref{eq:nash-entropy-converge} holds; this finishes the proof of the theorem.
\end{proof}

Since for the Ricci flow $(M^i,g^i_t)_{t\in[-T_i,0]}$, the scalar curvature always satisfies
\begin{align}\label{trivial-mp-1111}
R_{g^i_t}\ge -\frac{n}{T_i}\qquad \text{ for all }\qquad t\in[-T_i/2,0]
\end{align}
by the maximum principle, and since the $\mathbb{F}$-convergence can be upgraded to the smooth convergence on the regular part of the limit \cite[Theorem 2.5]{Bam20c}, we immediately have the following corollary (which is contained in \cite[Theorem 2.18]{Bam20c}), generalizing \cite{Che09}.

\begin{Corollary}\label{Coro: scalar-positivity}
    Let $(X,d,\nu)$ be the model of any $n$-dimensional noncollapsed $\mathbb{F}$-limit metric soliton, and let $(\mathcal{R}_X,g,f_0)$ be its regular part. Then we have that 
    \begin{align*}
        R\ge 0 \quad \text{ everywhere on }\quad \mathcal{R}_X.
    \end{align*}
Furthermore, if $R=0$ is somewhere on $\mathcal{R}_X$, then $g$ is a Ricci-flat metric.
\end{Corollary}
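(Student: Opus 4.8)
The plan is to deduce Corollary~\ref{Coro: scalar-positivity} by combining the local smooth $\mathbb{F}$-convergence of Ricci flows to the metric soliton (Theorem~\ref{thm: metric soliton is ancient}) with the elementary maximum principle bound \eqref{trivial-mp-1111} on the approximating sequence, and then upgrading the nonnegativity to a rigidity statement via the strong maximum principle applied to the soliton's own scalar curvature equation.

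First I would fix a point $x\in\mathcal{R}_X$ and a time $t_0\in(-\infty,0)$, say $t_0=-1$, and work with the ancient approximating sequence $\{(M^i,g^i_t,x_i)_{t\in[-T_i,0]}\}$ with $T_i\nearrow\infty$ furnished by Theorem~\ref{thm: metric soliton is ancient}. By \cite[Theorem 2.5]{Bam20c}, the $\mathbb{F}$-convergence \eqref{eq: limiting-definition-refined} is a smooth Cheeger--Gromov-type convergence on the regular part $\mathcal{R}=\mathcal{R}_X\times(-\infty,0)$: there are open sets $U_i\subset M^i$ exhausting (suitable neighborhoods in) $\mathcal{R}$ and diffeomorphisms onto their images under which $g^i_t$ converges in $C^\infty_{loc}$ to $g_t$. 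In particular the scalar curvatures converge locally, so $R_{g_t}(x) = \lim_{i\to\infty} R_{g^i_t}(\Phi_i(x))$ at the corresponding points. Since $T_i\to\infty$, for $t\in[-T_i/2,0]$ (which eventually contains our fixed $t$) the bound \eqref{trivial-mp-1111} gives $R_{g^i_t}\ge -n/T_i\to 0$; passing to the limit yields $R_{g_t}\ge 0$ on $\mathcal{R}$, and restricting to $t=-1$ gives $R\ge 0$ on $\mathcal{R}_X$. (This reproves \cite[Theorem 2.18]{Bam20c}, hence \cite{Che09}.)

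For the rigidity statement, suppose $R(p)=0$ for some $p\in\mathcal{R}_X$. On the regular part the metric soliton is a smooth Ricci flow space-time, so along the flow $\partial_{\mathfrak t}R=\Delta R+2|\Ric|^2\ge \Delta R+\tfrac{2}{n}R^2\ge \Delta R$; equivalently, using the shrinker structure \eqref{eq: basic-properties-2} one has the static identity $\Delta_f R := \Delta R - \langle\nabla f_0,\nabla R\rangle = R - 2|\Ric|^2$ on $\mathcal{R}_X$ (here $\Delta_f$ is the drift Laplacian with respect to the weighted measure $e^{-f_0}dg$). Since $R\ge 0$ attains an interior minimum value $0$ at $p$, the strong maximum principle (applied to the parabolic inequality $\partial_{\mathfrak t}R\ge\Delta R$ on a connected open set of the space-time, or to the elliptic operator $\Delta_f$ on the connected manifold $\mathcal{R}_X$) forces $R\equiv 0$ on the connected component of $\mathcal{R}_X$ containing $p$; since the model $X$ is connected and $\mathcal{R}_X$ is dense and connected, $R\equiv0$ on all of $\mathcal{R}_X$. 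Feeding $R\equiv 0$ back into the identity $\Delta_f R = R-2|\Ric|^2$ gives $|\Ric|^2\equiv 0$, i.e.\ $g$ is Ricci-flat on $\mathcal{R}_X$.

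I expect the only genuinely delicate point to be the justification that the strong maximum principle applies across the (dense, connected but possibly noncompact and incomplete) regular part $\mathcal{R}_X$ — one must be careful that the space-time $\mathcal{R}$, though not complete, is connected as a manifold, so that the classical interior strong maximum principle (which is purely local) propagates $R\equiv0$ from a point to the whole connected component, and then invoke connectedness of $X$ (equivalently of $\mathcal{R}_X$, which holds since $X$ is the metric completion of $\mathcal{R}_X$ and singular points form a codimension-$\ge4$ null set) to conclude globally. The convergence and maximum-principle inputs themselves are standard once Theorem~\ref{thm: metric soliton is ancient} and \cite[Theorem 2.5]{Bam20c} are in hand, so the corollary is essentially immediate.
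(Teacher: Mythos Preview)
Your proposal is correct and follows essentially the same approach as the paper: nonnegativity from Theorem~\ref{thm: metric soliton is ancient} together with \eqref{trivial-mp-1111} passed to the limit via local smooth convergence, and rigidity from the strong maximum principle (the paper simply notes this is ``but a local argument''). Your discussion of connectedness of $\mathcal{R}_X$ and the explicit drift-Laplacian identity are useful elaborations, but the underlying strategy is identical.
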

\begin{proof}
    That $R$ is nonnegative follows from Theorem \ref{thm: metric soliton is ancient} and \eqref{trivial-mp-1111}. That $R$ is positive unless $g$ is Ricci flat follows from the strong maximim principle (which is but a local argument).
\end{proof}

\section{Center and the potential function}

It is well known \cite{CZ10} that the potential function of a smooth Ricci shrinker has exactly quadratic growth, and its minimum point (which may not be unique) is usually used as a center or an origin of the soliton. In like manner, it is also convenient to find a center for a metric soliton. Although, as we shall see below, on the regular part of the model of a metric soliton, the potential function has almost quadratic growth, yet it is much more convenient to pick an $H_n$-center of $\nu_t$ instead of a minimum point of the potential function as an origin.

\subsection{Center of metric soliton}

Let $\big(\mathcal{X},(\nu_t)_{t<0}\big)$ be an $n$-dimensional noncollapsed $\mathbb{F}$-limit metric soliton that arises as a limit in \eqref{def-limiting-condition}. Write $d\nu_t:=(4\pi\tau)^{-\frac{n}{2}}e^{-f}dg_t$ on the regular part $\mathcal{R}\subset\mathcal{X}$. By Theorem \ref{thm: metric soliton is ancient}, we may assume that the soliton is defined over the interval $(-\infty,0)$. Let $(X,d,\nu)$ be the model of the metric soliton and $(\mathcal{R}_X,g,f_0)$ its regular part. 

By \eqref{eq: Hn-concentration}, we have 
\begin{align*}
    \int_{\mathcal{X}_{-1}}\operatorname{Var}(\delta_x,\nu_{-1})d\nu_{-1}=\int_{\mathcal{X}_{-1}\times\mathcal{X}_{-1}}d_{-1}^2(x,y)\,d\nu_{-1}(x)d\nu_{-1}(y)=\operatorname{Var}(\nu_{-1})\le H_n.
\end{align*}
Thus, we can always find a point $x_0\in\mathcal{X}_{-1}$, such that
\begin{align}\label{space-time-H_n-Var}
    \operatorname{Var}(\delta_{x_0},\nu_{-1})\le H_n.
\end{align}
Since $\mathcal{R}_{-1}$ is of full measure and $\mathcal{X}_{-1}$ is its metric completion, we may, without loss of generality, assume that $x_0\in\mathcal{R}_{-1}$.  
For otherwise we may replace it with a point arbitrarily close by, such that \eqref{space-time-H_n-Var} still holds with a slightly larger bound (we shall not rewrite this bound, since the constant $H_n$ itself does not have much significance except that it is a dimensional constant). Note that $x_0$ can be identified with a point on $\mathcal{R}_X$ which we will also call $x_0$. Henceforth, the point $x_0$ defined by \eqref{space-time-H_n-Var} will be called a \emph{center (or origin) of the metric soliton}; a metric soliton may have multiple centers, but their distance from each other cannot exceed $\sqrt{2H_n}$.

Since $\partial_{\mathfrak{t}}-\nabla f$ is a complete vector field on $\mathcal{R}=\mathcal{R}_X\times(-\infty,0)$ corresponding to the second factor, we may find an integral curve $x_0(t)$ in $\mathcal{R}$ of  $\partial_{\mathfrak{t}}-\nabla f$ with $x_0(-1)=x_0$. Obviously, $x_0(t)\in\mathcal{R}_{t}$ for all $t\in(-\infty,0)$.  We can apply this self-similarity to verify the following facts, which indicate why $x_0$ is called a ``center'' by us.

\begin{Lemma}\label{lm: Hn-center-on-metric-soliton}
    \begin{align}
        \operatorname{Var}\big(\nu_t,\delta_{x_0(t)}\big)\le H_n|t| \quad \text{ for all }\quad t\in(-\infty,0).
    \end{align}
\end{Lemma}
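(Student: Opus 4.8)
The plan is to exploit the self-similarity of the metric soliton encoded in the maps $\overline{\phi}_t$ from Section 3, which transport the time $-1$ slice $(X,d,\nu)$ onto the slice $(\mathcal{X}_t,\nu_t)$ via a dilation by $|t|^{1/2}$, and which carry the integral curve $x_0(t)$ of $\partial_{\mathfrak{t}}-\nabla f$ onto the center $x_0=x_0(-1)$. Concretely, $\overline{\phi}_t:(X,|t|^{1/2}d,\nu)\to(\mathcal{X}_t,\nu_t)$ is an isometry of metric measure spaces with $\overline{\phi}_t(x_0)=x_0(t)$ (after the identification of $\mathcal{R}_X$ with $\mathcal{R}_{-1}$). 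Since an isometry pushes the variance functional forward verbatim, I would compute
\begin{align*}
    \operatorname{Var}\big(\nu_t,\delta_{x_0(t)}\big)
    &=\int_{\mathcal{X}_t}d_t^2\big(x_0(t),y\big)\,d\nu_t(y)
    =\int_{X}\big(|t|^{1/2}d\big)^2(x_0,z)\,d\nu(z)\\
    &=|t|\int_{X}d^2(x_0,z)\,d\nu(z)
    =|t|\operatorname{Var}\big(\nu_{-1},\delta_{x_0}\big)\le H_n|t|,
\end{align*}
where the last inequality is the defining property \eqref{space-time-H_n-Var} of a center. This is the whole argument in essence.

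In more detail, the key steps in order are: (i) recall from Theorem \ref{thm: metric soliton is ancient} and the preceding discussion that we may assume the soliton is defined on $(-\infty,0)$ and that $\overline{\phi}_t$ as in \eqref{eq:canonical-form} is, for each $t\in(-\infty,0)$, an isometry of $(X,|t|^{1/2}d,\nu)$ onto $(\mathcal{X}_t,\nu_t)$; (ii) check that the integral curve $x_0(t)$ of $\partial_{\mathfrak{t}}-\nabla f$ with $x_0(-1)=x_0$ is precisely $\overline{\phi}_t$ applied to the point $x_0\in\mathcal{R}_X\cong\mathcal{R}_{-1}$ — this is immediate because, by Theorem \ref{thm: soliton-basic-properties}(3b), under the identification $\mathcal{R}=\mathcal{R}_X\times(-\infty,0)$ the vector field $\partial_{\mathfrak{t}}-\nabla f$ is the coordinate vector field on the $(-\infty,0)$ factor, so its flow is exactly the map defining $\phi_t$; (iii) change variables in the integral defining $\operatorname{Var}(\nu_t,\delta_{x_0(t)})$ through $\overline{\phi}_t$, using that $(\overline{\phi}_t)_*\nu=\nu_t$ and $d_t(\overline{\phi}_t(x),\overline{\phi}_t(y))=|t|^{1/2}d(x,y)$; (iv) pull out the factor $|t|$ and invoke \eqref{space-time-H_n-Var}.

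There is essentially no serious obstacle here; the statement is a direct corollary of the canonical-form isometries already established in Section 3. The only mild point of care is purely measure-theoretic bookkeeping: $\overline{\phi}_t$ is a priori an isometry of the metric completions $(\mathcal{X}_t,d_t)$, while the variance is an integral against the probability measure $\nu_t$ which is supported (up to a null set) on the regular part; one should note that $\operatorname{Var}(\delta_{x_0(t)},\nu_t)$ is unchanged whether one integrates over $\mathcal{X}_t$ or $\mathcal{R}_t$, since the singular set is $\nu_t$-null by Theorem \ref{thm: soliton-basic-properties}(4), and that the distance function $d_t$ on $\mathcal{X}_t$ restricts on $\mathcal{R}_t$ to the one induced by $|t|g$. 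With that remark the change of variables is justified and the proof is complete. I would also record, as in the surrounding text, that since the various centers differ by at most $\sqrt{2H_n}$ in $d$-distance, the statement is insensitive to the choice of $x_0$.
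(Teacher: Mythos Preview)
Your proof is correct and follows essentially the same self-similarity argument as the paper: both reduce $\operatorname{Var}(\nu_t,\delta_{x_0(t)})$ to $|t|\operatorname{Var}(\nu_{-1},\delta_{x_0})$ via the canonical identification of time slices. The only cosmetic difference is that the paper writes out the change of variables explicitly on $\mathcal{R}_t$ (noting $\partial_{\mathfrak{t}}f-|\nabla f|^2=0$ so that $f$ and hence the integrand scale correctly along the flow of $\partial_{\mathfrak{t}}-\nabla f$), whereas you invoke the packaged isometry $\overline{\phi}_t$ from \eqref{eq:canonical-form} directly.
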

\begin{proof}
    Note that, by Theorem \ref{thm: soliton-basic-properties}, $f$ is constant along the integral curvatures of $\partial_{\mathfrak{t}}-\nabla f$ since
    \begin{align*}
        \partial_{\mathfrak{t}}f-|\nabla f|^2=0.
    \end{align*}
Therefore, 
\begin{align*}
    \operatorname{Var}(\nu_t,\delta_{x_0(t)})=&\ \int_{\mathcal{X}_t}d^2_t\big(x_0(t),y\big)\,d\nu_t(y)=\int_{\mathcal{R}_t}d_t^2\big(x_0(t),y\big)(4\pi|t|)^{-\frac{n}{2}}e^{-f(y)}\,dg_{t}(y)
    \\
    =&\  \int_{\mathcal{R}_X}|t|d^2_{g}(x_0,y)(4\pi)^{-\frac{n}{2}}e^{-f_0(y)}dg(y)
    \\
    =&\ |t|\int_{\mathcal{R}_{-1}}d_{-1}^2(x_0,y)\,d\nu_{-1}(y)
    \\
    =&\ |t|\operatorname{Var}(\nu_{-1},\delta_{x_0}).
\end{align*}
Here we have applied the facts that $\mathcal{X}_t\setminus\mathcal{R}_t$ is a null set, that $\mathcal{X}_t$ is the metric completion of $\mathcal{R}_t$, and that the integrand 
$$d_t^2\big(x_0(t),y\big)(4\pi|t|)^{-\frac{n}{2}}e^{-f(y)}\,dg_{t}(y)$$
varies by scaling along integral curves of $\partial_{\mathfrak{t}}-\nabla f$.
    
\end{proof}

Similarly, we also have the following lemma. Its significance may not be clear at this point, yet it is an important ingredient of the proof of Theorem \ref{thm: chow-lu-yang estimate}.

\begin{Lemma}\label{lm: R-integral-const}
    We have
    \begin{align*}
        |t|\int_{\mathcal{R}_{t}}R_{g_{t}}\,d\nu_{x_0(t/2)\,|\,t}\equiv \operatorname{const}\quad \text{ for all }\quad t\in(-\infty,0).
    \end{align*}
\end{Lemma}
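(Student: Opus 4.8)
The plan is to exploit the self-similarity of the metric soliton exactly as in the proof of Lemma~\ref{lm: Hn-center-on-metric-soliton}, reducing the claimed identity to the statement that a single integral quantity is invariant under the scaling maps $\phi_t$ generated by $\partial_{\mathfrak{t}}-\nabla f$. The point is that two things scale compatibly: first, the curve $x_0(t/2)$ is itself an integral curve of $\partial_{\mathfrak{t}}-\nabla f$ (a time-reparametrized copy of $x_0(\cdot)$), so the conjugate heat kernel measure $\nu_{x_0(t/2)\,|\,t}$ is carried to $\nu_{x_0((t/2)\lambda^{-2})\,|\,t\lambda^{-2}}$ under $\overline\phi_{\lambda^{-2}t}\circ\overline\phi_t^{-1}$ by the self-similarity of the conjugate heat kernel (Theorem~\ref{thm: soliton-basic-properties}(2), cf.\ \eqref{self-similarity-of-CHK}); second, the scalar curvature scales as $R_{g_t}=|t|^{-1}R_g$ on $\mathcal{R}_t=\mathcal{R}_X\times\{t\}$ by Theorem~\ref{thm: soliton-basic-properties}(3)(c). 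Combining these, the factor $|t|$ in front of the integral exactly compensates the scaling of $R_{g_t}$, and the measure is transported correctly, so the whole expression is constant along $t\mapsto t\lambda^{-2}$, hence constant in $t$.

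Concretely, I would first record that the vector field $\partial_{\mathfrak{t}}-\nabla f$ restricted to the curve $t\mapsto x_0(t/2)$ generates, after the change of variable $s=t/2$, an integral curve; equivalently, under the isometry $\psi_t=\overline\phi^\lambda_t\circ\overline\phi^{-1}_t:\mathcal{X}_t\to\mathcal{X}^\lambda_t$ from the Claim in Section~3, we have $\psi_t(x_0(t/2)) = x_0(\lambda^{-2}t/2)$ viewed in $\mathcal{X}^\lambda$. Next, since $\psi_t$ preserves the conjugate heat kernels (this is exactly what the Claim establishes), it pushes $\nu_{x_0(t/2)\,|\,t}$ forward to the corresponding conjugate heat kernel on $\mathcal{X}^\lambda$ based at $x_0(\lambda^{-2}t/2)$ at time $\lambda^{-2}t$. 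Then I would write out the integral on the regular part, using $d\nu_{x_0(t/2)\,|\,t} = (\text{density})\,dg_t$, substitute $g_t = |t|g$, $R_{g_t}=|t|^{-1}R_g$, and change variables by $\phi_t$ to land on $\mathcal{R}_X$; the $|t|$ in front cancels the $|t|^{-1}$ from $R_{g_t}$, and what remains is an integral over $\mathcal{R}_X$ of $R_g$ against a fixed probability measure that does not depend on $t$. This shows $|t|\int_{\mathcal{R}_t}R_{g_t}\,d\nu_{x_0(t/2)\,|\,t}$ equals the same expression evaluated at $\lambda^{-2}t$ for every $\lambda>1$, and since such a function of $t$ that is invariant under all rescalings $t\mapsto\lambda^{-2}t$ must be constant, we are done.

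The main technical obstacle is the bookkeeping for the conjugate heat kernel $\nu_{x_0(t/2)\,|\,t}$: one must make sure that the base point $x_0(t/2)$ transforms correctly under the scaling maps and under $\psi_t$, i.e.\ that the ``half-time offset'' $t/2$ is compatible with the reparametrization $t\mapsto\lambda^{-2}t$ (it is, since $(\lambda^{-2}t)/2 = \lambda^{-2}(t/2)$, but this needs to be stated). A second point requiring a little care is that the conjugate heat kernel $\nu_{x_0(t/2)\,|\,t}$ is a priori a measure on $\mathcal{X}_t$, not on $\mathcal{R}_t$; one uses that $\mathcal{X}_t\setminus\mathcal{R}_t$ is a null set (Theorem~\ref{thm: soliton-basic-properties}(4)) together with the fact that $R$ extends smoothly to $\mathcal{R}_t$, so the integral is legitimately an integral over the regular part and the change of variables by the diffeomorphism $\phi_t$ applies. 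Everything else is the same self-similar scaling computation already carried out in Lemma~\ref{lm: Hn-center-on-metric-soliton}, just with the integrand $d_t^2(x_0(t),\cdot)$ replaced by $R_{g_t}$ and the measure $\nu_t$ replaced by the conjugate heat kernel $\nu_{x_0(t/2)\,|\,t}$.
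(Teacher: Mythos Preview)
Your proposal is correct and follows essentially the same self-similarity argument as the paper: both show that the integrand is invariant under the flow of $\partial_{\mathfrak{t}}-\nabla f$, with the paper carrying this out infinitesimally by directly differentiating $|t|^{n/2}K(x_0(t/2)\,|\,y)$ and invoking \eqref{self-similarity-of-CHK}, while you use the integrated form via the scaling isometry $\psi_t$ from the Claim in Section~3. These are two equivalent packagings of the same computation, and your bookkeeping for the base point $x_0(t/2)$ and the null singular set is exactly what is needed.
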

\begin{proof}
    Let us write the integral as 
    \begin{align*}
        \int_{\mathcal{R}_{t}}|t|R_{g_{t}}(y)K(x_0(t/2)\,|\,y)\,dg_{t}(y).
    \end{align*}
    We shall verify that the integrand is invariant along the integral curves of $\partial_{\mathfrak{t}}-\nabla f$. It is clear that both $|t|R_{g_t}$ and $|t|^{-\frac{n}{2}}dg_t$ satisfy this property. We need only to consider $|t|^{\frac{n}{2}}K(x_0(t/2)\,|\,y)$. Indeed, we have
    \begin{align*}
        &\ \left(\frac{\partial}{\partial t} -(\nabla f)_y\right)\left(|t|^{\frac{n}{2}}K(x_0(t/2)\,|\,y)\right)
        \\
        = &\ |t|^{\frac{n}{2}}\left(-\frac{n}{2|t|}K(x_0(t/2)\,|\,y)+(\partial_{\mathfrak{t}}-\nabla f)_yK(x_0(t/2)\,|\,y)+\left.\frac{1}{2}\partial_sK(x_0(s)\,|\,y)\right|_{s=\frac{t}{2}}\right)
        \\
        =&\ |t|^{\frac{n}{2}+1}\left(-\frac{n}{2}K(x_0(t/2)\,|\,y)+(\tau\partial_{\mathfrak{t}}-\tau\nabla f)_yK(x_0(t/2)\,|\,y)+\left.(\tau\partial_{\mathfrak{t}}-\tau\nabla f)_xK(x\,|\,y)\right|_{x=x_0(t/2)}\right)
        \\
        =&\ 0,
    \end{align*}
    where we have applied \eqref{self-similarity-of-CHK}. The lemma then follows in the same way as the previous lemma.
\end{proof}

\subsection{Estimates of the potential function}

Next, we prove the quadratic growth estimate for the potential function of a metric soliton. Note that the regular part is not a complete Riemannian manifold, so the second variation along a geodesic is not available. This is the reason why we have a weaker lower bound.

\begin{proof}[Proof of Theorem \ref{thm:potential-function}]
    Let us consider the limiting definition of the metric soliton given by Theorem \ref{thm: metric soliton is ancient}. Let $\{(M^i,g^i_t,x_i)_{t\in[-T_i,0]}\}_{i=1}^\infty$ be the sequence in \eqref{eq: limiting-definition-refined} that converges to the metric soliton $(\mathcal{X},(\nu_t)_{t<0})$ in the $\mathbb{F}$-sense. 
    
    The lower bound estimate of $f_0$ is simply a consequence of \cite[Lemma 15.9(a)]{Bam20c}. In fact, applying the second inequality of \cite[Lemma 15.9(a)]{Bam20c} with $(x_i',t_i')=(x_i,0)$ and $s=-1$, we immediately have
    \begin{align}\label{eq:BamcLm15.9}
    (4\pi)^{-\frac{n}{2}}e^{-f_0(x)}\le C(\varepsilon)\exp\left(-W-\frac{1}{8+\varepsilon}\big(d^g_{W_1}(\nu,\delta_x)\big)^2\right)\qquad \text{ for all }\quad x\in\mathcal{R}_X(\cong\mathcal{R}_{-1}).
    \end{align}
  On the other hand, the choice of $x_0$ \eqref{space-time-H_n-Var} implies that (recall that $\nu_{-1}=\nu$)
  \begin{align}\label{eq:BamcLm15.9-2}
  d_g(x,x_0)\le d^g_{W_1}(\nu,\delta_x)+d^g_{W_1}(\nu,\delta_{x_0})\le d^g_{W_1}(\nu,\delta_x)+\sqrt{H_n}\quad \text{ for all }\quad x\in\mathcal{R}_X.
\end{align}    
  The lower bound of $f_0$ follows from the combination of \eqref{eq:BamcLm15.9} and  \eqref{eq:BamcLm15.9-2}.

   Next, we consider the upper bound. Note that the positivity of the scalar curvature and the third equation in \eqref{eq: basic-properties-2} imply that
    \begin{align}\label{eq: potential-differential-ineq-for-the-upper-bound}
        |\nabla_gf_0|^2 \le f_0-W.
    \end{align}
    Thus, it remains to show that $f_0(x_0)-W$ is bounded from above by a constant depending only on $n$, and then the upper bound of $f_0$ follows from integration along almost-distance-realizing curves on $\mathcal{R}_X$. 
    
    Since $x_0\in\mathcal{R}_{-1}(\cong\mathcal{R}_X)$, where $\mathcal{R}\subset\mathcal{X}$ is the region of smooth convergence, we may, according to \cite[Theorem 9.31]{Bam20b}, find a sequence $z'_i\in M^i$, such that
    $$(z'_i,-1)\xrightarrow{\quad i\to\infty\quad}x_0.$$
This convergence is either via the local diffeomorphisms which define the local smooth convergence, or is a convergence of points within a correspondence. The points $(z_i',-1)$ are ``almost $H_n$-centers'' of $(x_i,0)$. To see this, we argue in the same way as the proof of \cite[Proposition 3.13]{Bam20a}. \eqref{space-time-H_n-Var} implies that
\begin{align}\label{eq: local-measure-near-center-on-the-soliton}
    \int_{\mathcal{R}_X\cap B(x_0,\sqrt{2H_n})} (4\pi)^{-\frac{n}{2}}e^{-f_0}\,dg\ge \frac{1}{2}.
\end{align}
By Fatou's lemma and the nature of local smooth convergence, we have
\begin{align*}
    \liminf_{i\to\infty}\nu_{x_i,0\,|\,-1}\left(B_{-1}\left(z_i',\sqrt{2H_n}\right)\right)\ge \int_{\mathcal{R}_X\cap B(x_0,\sqrt{2H_n})} (4\pi)^{-\frac{n}{2}}e^{-f_0}\,dg\ge \frac{1}{2}.
\end{align*}
Let $(z_i,-1)$ be an $H_n$-center of $(x_i,0)$ with respect to the Ricci flow $g^i_t$ for each $i$. Then, discarding finitely many terms if necessary, we may apply \cite[Proposition 3.13]{Bam20a} again, to obtain that
\begin{align}\label{near-the-Hn-center-1}
    d_{g^i_{-1}}(z_i,z_i')\le 2\sqrt{2H_n}\le 3\sqrt{H_n}\quad \text{ for all }\quad i.
\end{align}
(This is because, if \eqref{near-the-Hn-center-1} were false, then the total measure of $\nu_{x_i,0\,|\,-1}$ would exceed $1$). Consequently, we have
    \begin{align*}
          d^{g^i_{-1}}_{W_1}\left(\nu_{x_i,0\,|\,-1},\delta_{z_i'}\right)\le &\ d^{g^i_{-1}}_{W_1}\left(\nu_{x_i,0\,|\,-1},\delta_{z_i}\right)+d_{g^i_{-1}}(z_i',z_i)
        \\
        \le &\ 4\sqrt{H_n}.
    \end{align*}
    Taking into account \cite[Corollary 5.11]{Bam20a} and the scalar curvature lower bound \eqref{trivial-mp-1111}, we obtain
    \begin{align*}
        \mathcal{N}_{z_i',-1}(1)\le \mathcal{N}_{x_i,0}(2)+C(n)\quad \text{ whenever $i$ is large enough}.
    \end{align*}
    The fact that $\lim_{i\to\infty }\mathcal{N}_{x_i,0}(2)=W$ then implies that
    $$\mathcal{N}_{z_i',-1}(1)\le W+C(n)\quad\text{ whenever $i$ is large enough.}$$
    Thus, by \cite[Theorem 8.1]{Bam20a}, we have
    \begin{align}\label{eq: volume-upper-intermediate}
        \left|B_{g^i_{-1}}(z_i',\sqrt{2H_n})\right|\le C(n)\exp(\mathcal{N}_{z_i',-1}(2H_n))\le C(n)\exp(\mathcal{N}_{z_i',-1}(1))\le C(n)e^{W},
    \end{align}
    for all large $i$. Here we have also applied the scalar curvature bound \eqref{trivial-mp-1111}. Taking this to the limit and applying Fatou's lemma, we have
    \begin{align*}
        \left|\mathcal{R}_X\cap B(x_0,\sqrt{2H_n})\right|\le C(n)e^W.
    \end{align*}
This estimate, combined with \eqref{eq: local-measure-near-center-on-the-soliton}, implies that
\begin{align*}
    \frac{1}{2}\le \sup_{\mathcal{R}_X\cap B(x_0,\sqrt{2H_n})}(4\pi)^{-\frac{n}{2}}e^{-f_0}\cdot\left|\mathcal{R}_X\cap B(x_0,\sqrt{2H_n})\right|\le C(n)\sup_{\mathcal{R}_X\cap B(x_0,\sqrt{2H_n})}e^{-f_0+W}.
\end{align*}
Hence
$$\inf_{\mathcal{R}_X\cap B(x_0,\sqrt{2H_n})}(f_0-W)\le C(n),$$
and by \eqref{eq: potential-differential-ineq-for-the-upper-bound},
$$f_0(x_0)-W\le C(n).$$
This finishes the proof of the upper bound.
\end{proof}

In the proof above, we have already shown the volume upper estimate Corollary \ref{volume-upper-1}.

\begin{proof}[Proof of Corollary \ref{volume-upper-1}]
In fact, when proving \eqref{eq: volume-upper-intermediate}, we can apply \cite[Theorem 8.1]{Bam20a} to obtain a stronger inequality
\begin{align*}
    \left|B_{-1}(z_i',r)\right|\le C(n)e^Wr^n,\quad\text{ for all }\quad 1\le r^2\le T_i/4,
\end{align*}
whenever $i$ is sufficiently large; taking this to the limit, the corollary follows.

\end{proof}

A similar argument can be applied to estimate the volume ratio at each point. Nevertheless, the constant becomes worse if the base point is not a center. We shall leave the proof of Corollary \ref{volume-upper-2}  to the reader.

\subsection{Scalar curvature near a center}

In this subsection, we shall estimate the constant in Lemma \ref{lm: R-integral-const} in terms of 
\begin{align*}c_R:=&\ \int_{\mathcal{R}_X}R_g(4\pi)^{-\frac{n}{2}}e^{-f_0}\,dg
\end{align*}
and the soliton entropy $W$ ($c_R$ is the constant defined in \eqref{eq:scalar-integral-positivity}). Note that $c_R\ge 0$ because of Corollary \ref{Coro: scalar-positivity}, and $c_R\le\frac{n}{2}$ due to \cite[Proposition 5.13]{Bam20a}, Fatou's lemma, and the nature of local smooth convergence on the regular part. The following lemma shall be used in the proof of Theorem \ref{thm: chow-lu-yang estimate}. 

\begin{Lemma}\label{lm: lower-estimate-of-R-central-integral}
    Whenever $c_R>0$, we have
    \begin{align*}
          |2t|\int_{\mathcal{R}_{2t}}R_{g_{2t}}\,d\nu_{x_0(t)\,|\,2t}\ge c(n,c_R,W)>0\quad \text{ for all }\quad t\in(-\infty,0).
    \end{align*}
\end{Lemma}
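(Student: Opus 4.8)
The plan is to reduce to a single time-slice and then combine a localization estimate for $R\,d\nu$ with Bamler's Gaussian lower bound for the conjugate heat kernel. By Lemma~\ref{lm: R-integral-const} (applied with ``$t$'' replaced by $2t$), the quantity $|2t|\int_{\mathcal{R}_{2t}}R_{g_{2t}}\,d\nu_{x_0(t)\,|\,2t}$ is independent of $t$, so it suffices to bound it from below at $t=-\tfrac12$, i.e.\ to show
\[
\kappa:=\int_{\mathcal{R}_X}R_g\,d\nu_{x_0(-1/2)\,|\,-1}\ \ge\ c(n,W,c_R)>0,
\]
where we use the identifications $\mathcal{R}_{-1}\cong\mathcal{R}_X$, $g_{-1}=g$, and $\nu_{x_0(-1/2)\,|\,-1}$ is the conjugate heat kernel of the soliton based at $(x_0(-1/2),-1/2)$; here $x_0(-1/2)\in\mathcal{R}_{-1/2}$ lies on the integral curve of $\partial_{\mathfrak{t}}-\nabla f$ through the center $x_0=x_0(-1)$, and by \cite{Bam20c} this measure gives full mass to $\mathcal{R}_{-1}$. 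Writing $d\nu_{x_0(-1/2)\,|\,-1}=u\,d\nu$ on $\mathcal{R}_X$, we have $u\ge0$, $\int_{\mathcal{R}_X}u\,d\nu=1$, and $u(y)=(4\pi)^{n/2}e^{f_0(y)}K(x_0(-1/2)\,|\,y,-1)$.

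\emph{Step 1 (localizing the mass of $R\,d\nu$).} From the soliton identity $|\nabla f_0|^2+R=f_0-W$ in \eqref{eq: basic-properties-2} and $R\ge0$ (Corollary~\ref{Coro: scalar-positivity}) we get $0\le R\le f_0-W$ and $f_0\ge W$. Feeding in the bounds $\tfrac19 d_g^2(x_0,\cdot)-C\le f_0-W\le\tfrac14(d_g(x_0,\cdot)+C(n))^2$ of Theorem~\ref{thm:potential-function} (with $\varepsilon=1$) together with the volume ratio bound $|B(x_0,r)\cap\mathcal{R}_X|\le C(n)e^W r^n$ of Corollary~\ref{volume-upper-1}, a dyadic summation gives
\[
\int_{\mathcal{R}_X\setminus B(x_0,\Lambda)}R\,d\nu\ \le\ \int_{\mathcal{R}_X\setminus B(x_0,\Lambda)}(f_0-W)\,d\nu\ \le\ C(n)\,\Theta_n(\Lambda),
\]
with $\Theta_n(\Lambda)\to0$ as $\Lambda\to\infty$ at a purely dimensional rate (the factors $e^W$ and $e^{-W}$ cancel). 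Hence there is $\Lambda_0=\Lambda_0(n,c_R)$ with $\int_{B(x_0,\Lambda_0)\cap\mathcal{R}_X}R\,d\nu\ge c_R/2$.

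\emph{Step 2 (a lower bound for $u$ on $B(x_0,\Lambda_0)$).} Since $f_0\ge W$ we have $(4\pi)^{-n/2}e^{-f_0(y)}\le(4\pi)^{-n/2}e^{-W}$, so it suffices to bound $K(x_0(-1/2)\,|\,y,-1)$ from below by $c_2(n,W,c_R)e^{-W}$ for $y\in B(x_0,\Lambda_0)\cap\mathcal{R}_X$. This is Bamler's two-sided heat kernel estimate \cite[Theorem~7.2]{Bam20a}, applied first along the approximating sequence of Ricci flows from Theorem~\ref{thm: metric soliton is ancient} and then transported to $\mathcal{R}$ via the smooth convergence \cite[Theorem~2.5]{Bam20c} (equivalently, via its metric-flow version): the time-lag is $\tfrac12$, the Nash entropy is $\equiv W$, the ball volume appearing in the estimate is $\le C(n)e^W$ by Corollary~\ref{volume-upper-1}, and for $y\in B(x_0,\Lambda_0)$
\[
d^{g_{-1}}_{W_1}\big(\nu_{x_0(-1/2)\,|\,-1},\delta_y\big)\le d^{g_{-1}}_{W_1}\big(\nu_{x_0(-1/2)\,|\,-1},\nu_{-1}\big)+d^{g_{-1}}_{W_1}(\nu_{-1},\delta_{x_0})+d_g(x_0,y)\le 2\sqrt{H_n}+\Lambda_0,
\]
where the first term is $\le\big(\operatorname{Var}(\nu_{-1/2},\delta_{x_0(-1/2)})\big)^{1/2}\le\sqrt{H_n}$ by the convexity of $d_{W_1}$ and the $W_1$-contractivity of the conjugate heat flow applied to the reproduction formula $\nu_{-1}=\int\nu_{y\,|\,-1}\,d\nu_{-1/2}(y)$, together with Lemma~\ref{lm: Hn-center-on-metric-soliton}, and the second is $\le\sqrt{H_n}$ by \eqref{space-time-H_n-Var}. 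This yields $u\ge c_1(n,W,c_R)>0$ on $B(x_0,\Lambda_0)\cap\mathcal{R}_X$.

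Combining the two steps,
\[
\kappa=\int_{\mathcal{R}_X}R\,u\,d\nu\ \ge\ c_1(n,W,c_R)\int_{B(x_0,\Lambda_0)\cap\mathcal{R}_X}R\,d\nu\ \ge\ \tfrac12\,c_1(n,W,c_R)\,c_R,
\]
which is the desired bound. The main obstacle is Step~2: one must make Bamler's heat-kernel lower bound genuinely available on the incomplete, singular regular part of the metric soliton and keep careful track of the ``bounded $W_1$-distance to an $H_n$-center'' hypotheses through the limiting procedure; Step~1 is elementary once Theorem~\ref{thm:potential-function} and Corollary~\ref{volume-upper-1} are in hand.
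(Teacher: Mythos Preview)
Your argument is correct and takes a genuinely different route from the paper's. The paper argues by contradiction and compactness: assuming a sequence of metric solitons with $c_R^i\ge c_0>0$, $W^i\ge W_0>-\infty$, and integral tending to $0$, it passes to an $\mathbb{F}$-limit (again a noncollapsed $\mathbb{F}$-limit metric soliton), uses the local smooth convergence of \cite{CMZ23b} together with the same tail estimate you prove in Step~1 to show $c_R^\infty\ge c_0$, and reaches a contradiction because the limiting conjugate heat flow $\mu^\infty_{-2}$ is fully supported on $\mathcal{R}^\infty_{-2}$ while Fatou's lemma forces $\int R_{g^\infty_{-2}}\,d\mu^\infty_{-2}=0$. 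This soft argument never needs a pointwise heat-kernel lower bound on the singular limit space; it only uses the qualitative strict positivity of $R$ from Corollary~\ref{Coro: scalar-positivity} and full support of conjugate heat flows.

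Your direct approach, by contrast, yields an (in principle) effective constant $c(n,W,c_R)$, at the price of importing a quantitative Gaussian lower bound for the conjugate heat kernel onto $\mathcal{R}_X$. One caveat: in \cite{Bam20a}, Theorem~7.2 is the Gaussian \emph{upper} bound --- that is exactly how the present paper invokes it in \eqref{eq:BamcLm15.9} and \eqref{CHK-upper} --- so the ``two-sided'' citation is off; the pointwise lower bound you need is a separate statement in \cite{Bam20a} and you should locate the correct reference. Apart from that, your $W_1$-bookkeeping and the passage to the limit through the approximating sequence and smooth convergence are the right mechanism, exactly parallel to how the paper handles the analogous transfers in \eqref{near-the-Hn-center-1} and in Section~5.
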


\begin{proof}
    By Lemma \eqref{lm: R-integral-const}, we need only to prove the lemma for $t=-1$. We argue by contradiction. Let $\{(\mathcal{X}^i,(\nu^i_t)_{t<0})\}_{i=1}^\infty$ be a sequence of counterexamples (with models $(X^i,d^i,\nu^i)$ and regular parts $(\mathcal{R}_X^i,g^i,f_0^i)$). Namely, we have
    \begin{align*}
        c_R^i\ge c_0>0,\quad W^i\ge W_0>-\infty, \quad \text{ for each }i,
    \end{align*}
    but
    \begin{align}\label{eq: integral-converging-to-zero-contradictory-assumption}
        \int_{\mathcal{R}^i_{-2}}R_{g^i_{-2}}\,d\nu_{x_0^i\,|\,-2}\searrow 0.
    \end{align}

    We may now extract a subsequence which $\mathbb{F}$-converges to a metric flow $(\mathcal{X}^\infty,(\nu^\infty_t)_{t<0})$. We also assume that the convergence is time-wise at both $t=-1$ and $t=-2$. It is easy to see that the limit is also a noncollapsed $\mathbb{F}$-limit metric soliton. Let $(X^\infty,d^\infty,\nu^\infty)$ be its model and $(\mathcal{R}_X^\infty,g^\infty,f_0^\infty)$ the regular part. 

    By \cite{CMZ23b}, we actually have a local smooth convergence from $(\mathcal{R}_X^i,g^i,f_0^i)$ to $(\mathcal{R}_X^\infty,g^\infty,f_0^\infty)$. Combining the quadratic estimate of the potential function (Theorem \ref{thm:potential-function}), the quadratic upper bound of $R$ given by the third equation in \eqref{eq: basic-properties-2}, and the volume upper bound Corollary \ref{volume-upper-1}, we know that the integrals  
$$c_R^i:= \int_{\mathcal{R}_X^i}R_{g^i}(4\pi)^{-\frac{n}{2}}e^{-f_0^i}\,dg^i$$
are uniformly negligible outside uniformly large compact sets. This then implies that
\begin{align*}
    \int_{\mathcal{R}_X^\infty}R_{g^\infty}(4\pi)^{-\frac{n}{2}}e^{-f_0^\infty}\,dg^\infty=\lim_{i\to\infty}c_R^i\ge c_0>0.
\end{align*}
Thus, the scalar curvature is positive everywhere on $\mathcal{R}^\infty\subset\mathcal{X}^\infty$ by Corollary \ref{Coro: scalar-positivity}.

On the other hand, by Lemma \ref{lm: Hn-center-on-metric-soliton} and \cite[Theorem 6.49]{Bam20b}, the conjugate heat kernels $(\nu_{x^i_0\,|\,t})_{t<-1}$ converge to a conjugate heat flow $(\mu^\infty_t)_{t<-1}$ on $\mathcal{X}^\infty$. The convergence can also be regarded as  a local smooth one according to \cite[Theorem 9.31]{Bam20c}. By Fatou's lemma and \eqref{eq: integral-converging-to-zero-contradictory-assumption}, we have
$$\int_{\mathcal{R}^\infty_{-2}}R_{g^\infty_{-2}}\,d\mu^\infty_{-2}=0.$$
Being a conjugate heat flow, $\mu^\infty_{-2}$ is fully-supported, which contradicts the fact that $R_{g^\infty}$ is positive everywhere on $\mathcal{R}^\infty$.

\end{proof}

\section{A lower bound for the scalar curvature}

We shall prove Theorem \ref{thm: chow-lu-yang estimate} in this section. Consider a non-Ricci-flat metric soliton in the statement of the theorem. By Lemma \ref{lm: R-integral-const} and Lemma \ref{lm: lower-estimate-of-R-central-integral}, we fix the constant $c_0$ by
\begin{align}\label{limiting-scalar-integral lower-bound}
    |2t|\int_{\mathcal{R}_{2t}}R_{g_{2t}}\,d\nu_{x_0(t)\,|\,2t}\equiv 2c_0\ge 2c(n,W,c_R)>0 \quad\text{ for all }\quad t\in(-\infty, 0). 
\end{align}
By Theorem \ref{thm: metric soliton is ancient}, we pick a sequence of $n$-dimensional Ricci flows $\{(M^i,g_{i,t})_{t\in[-T_i,0]}\}_{i=1}^\infty$ satisfying \eqref{eq:nash-entropy-converge} and \eqref{eq: limiting-definition-refined}. In particular, by discarding finitely many terms, we assume
\begin{align}\label{eq:noncollapsing-assumption-CLY-section}
    \mathcal{N}_{x_i,0}(\tau)\ge W-1\quad \text{ for all }\quad \tau\in(0,T_i].
\end{align}
By the standard scalar curvature lower estimate \eqref{trivial-mp-1111}, we may, after replacing each $T_i$ with $T_i/2n$, assume that for each $i$,
\begin{align}\label{Sc-almost-positivity}
    R_{g^i_t}\ge -\frac{1}{T_i} \quad \text{ on }\quad M^i\times[-T_i,0].
\end{align}
Let $z_{i,t}$ ($t\in[-T_i,0)$) be the points defined by the property that
\begin{align*}
    (z_{i,t},t) \text{ \emph{is an $H_n$-center of $(x_i,0)$ with respect to the Ricci flow $g_{i,t}$.}}
\end{align*}

\subsection{Local lower bound of the scalar curvature near \texorpdfstring{$H_n$}{Hn}-centers}

In order to apply the argument in \cite{CCMZ23}, we will need to obtain a uniform lower bound near the $H_n$-centers of the limiting sequence. Precisely, we shall prove the following theorem in this subsection.

\begin{Theorem}\label{thm: sequential-local-lower-bound-along-Hn-centers}
    By reducing $T_i$ if necessary, we have $T_i\nearrow +\infty$ satisfying the following property. For any $D>0$, there is a positive number $a=a(D)$ (depending also on the constant $c_0$ in \eqref{limiting-scalar-integral lower-bound}), such that whenever $i$ is large enough, we have
    \begin{align*}
        (1+|t|)R_{g_{i,t}}(x)\ge a\quad \text{ for all } \quad x\in B_{g_{i,t}}\left(z_{i,t},D\sqrt{1+|t|}\right) \quad \text{ and }\quad t\in [-T_i,0).
    \end{align*}
\end{Theorem}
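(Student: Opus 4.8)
The plan is to establish Theorem~\ref{thm: sequential-local-lower-bound-along-Hn-centers} by the contradiction/compactness scheme used in \cite{CCMZ23}, transplanted to the $\mathbb{F}$-convergence setting. First I would record the preliminary reductions: by Lemma~\ref{lm: R-integral-const} and Lemma~\ref{lm: lower-estimate-of-R-central-integral} we have the uniform lower bound \eqref{limiting-scalar-integral lower-bound}, and by \cite[Proposition 3.13]{Bam20a}, combined with the convergence of Nash entropies \eqref{eq:nash-entropy-converge}, the points $z_{i,t}$ (honest $H_n$-centers of $(x_i,0)$) are, at any fixed time $t=-1$, within bounded distance of the points $z_i'$ coming from local smooth convergence to the center $x_0$, exactly as in the proof of Theorem~\ref{thm:potential-function}. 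Using self-similarity and the scaling invariance of the conjugate heat kernel \eqref{self-similarity-of-CHK}, the integral quantity $|2t|\int R\,d\nu_{x_0(t)\,|\,2t}$ is constant, so a uniform positive lower bound on the scalar curvature at one (fixed, rescaled) time automatically upgrades to all times $t\in[-T_i,0)$ at the corresponding scale.

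The core is the contradiction argument. Suppose the conclusion fails: there is $D>0$, a subsequence of the $(M^i,g_{i,t})$, times $t_i\in[-T_i,0)$ and points $y_i\in B_{g_{i,t_i}}(z_{i,t_i},D\sqrt{1+|t_i|})$ with $(1+|t_i|)R_{g_{i,t_i}}(y_i)\to 0$. After parabolically rescaling each flow so that $1+|t_i|$ becomes $1$ (i.e. centering the time window appropriately), the noncollapsing \eqref{eq:noncollapsing-assumption-CLY-section}, the scalar lower bound \eqref{Sc-almost-positivity} (which now reads $R\ge -\varepsilon_i\to 0$ on the relevant window since $T_i\nearrow\infty$), and Bamler's compactness give, after passing to a further subsequence, $\mathbb{F}$-convergence to a metric flow $(\mathcal{X}^\infty,(\nu^\infty_t))$ with bounded Nash entropy and $R\ge 0$ on its regular part. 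The $H_n$-centers converge (Lemma~\ref{lm: Hn-center-on-metric-soliton}, \cite[Proposition 3.13]{Bam20a}), so the rescaled conjugate heat flows $\nu_{x_i,0\,|\,\cdot}$ converge to a conjugate heat flow $\mu^\infty$ on $\mathcal{X}^\infty$; and the limit points $y_\infty$ of $y_i$ lie in a bounded ball around the limiting $H_n$-center. Here is the point where I must be careful: to get a contradiction I want $R_{g^\infty}(y_\infty)=0$ at a regular point $y_\infty$, which via the strong maximum principle on the (smooth, ancient) Ricci flow space-time $\mathcal{R}^\infty$ forces $R_{g^\infty}\equiv 0$, hence $\mu^\infty$-a.e.\ vanishing of $R$; but $\mu^\infty$ is fully supported, whereas the persistence of the integral lower bound \eqref{limiting-scalar-integral lower-bound} in the limit (which survives because, by the potential-function and volume estimates, the integrand is uniformly concentrated on compact sets, exactly as in the proof of Lemma~\ref{lm: lower-estimate-of-R-central-integral}) forces $R_{g^\infty}$ to be positive somewhere on $\mathcal{R}^\infty$ — a contradiction with $R_{g^\infty}\equiv 0$.

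The main obstacle is the step that was trivial in the smooth shrinker case: ensuring that $y_\infty$ is a \emph{regular} point of $\mathcal{X}^\infty$, so that $R_{g^\infty}(y_\infty)$ is even defined and the maximum principle applies. A priori $y_i$ could drift toward the singular set. The remedy is Bamler's $\varepsilon$-regularity \cite[Theorem 10.2]{Bam20a}: if $R_{g_{i,t_i}}(y_i)$ is very small on a definite scale and the flow is noncollapsed, then a definite-size parabolic neighborhood of $(y_i,t_i)$ has controlled curvature, so the limit is smooth near $y_\infty$ and $R_{g^\infty}(y_\infty)=\lim R_{g_{i,t_i}}(y_i)=0$ at a genuine regular point. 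Alternatively one applies the pointwise-to-local upgrade along $H_n$-centers from \cite{CMZ23a}. A secondary technical point is justifying that the rescaled times can always be arranged so the parabolic window $[-2,-1/2]$ (say) sits inside $[-T_i/(1+|t_i|),0]$; this follows because $T_i\nearrow\infty$ forces $T_i/(1+|t_i|)$ to be eventually large when $|t_i|$ stays bounded, and when $|t_i|$ is comparable to $T_i$ the scalar lower bound \eqref{Sc-almost-positivity} and monotonicity of the Nash entropy still give all the needed uniform estimates on the shrinking window. Once $R_{g^\infty}\equiv 0$ is derived, Corollary~\ref{Coro: scalar-positivity} (or its proof) yields Ricci-flatness, still contradicting the positive integral bound inherited from $c_R>0$.
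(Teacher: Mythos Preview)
Your compactness/contradiction scheme is a natural idea, but it contains a genuine gap at the step you yourself flag as ``the main obstacle.'' You propose to force the limit point $y_\infty$ to be regular by invoking Bamler's $\varepsilon$-regularity \cite[Theorem~10.2]{Bam20a}, asserting that ``if $R_{g_{i,t_i}}(y_i)$ is very small on a definite scale and the flow is noncollapsed, then a definite-size parabolic neighborhood of $(y_i,t_i)$ has controlled curvature.'' That is not what Theorem~10.2 says: its hypothesis is that the \emph{Nash entropy} $\mathcal{N}_{y_i,t_i}(r^2)$ is close to zero, not that the scalar curvature is small. Small (even vanishing) scalar curvature does not by itself bound $|\Rm|$ or guarantee regularity of the limit---Ricci-flat ALE spaces and orbifold shrinkers already show this in the static setting. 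Without regularity of $y_\infty$, you cannot evaluate $R_{g^\infty}$ there, and the strong maximum principle argument does not get off the ground. The alternative you mention (``the pointwise-to-local upgrade along $H_n$-centers from \cite{CMZ23a}'') concerns Nash entropy and Sobolev constants, not a scalar-curvature-to-full-curvature implication, so it does not close the gap either.

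The paper avoids this difficulty entirely by taking a direct, constructive route on the smooth approximating flows $(M^i,g_{i,t})$ rather than passing to a limit. For each fixed scale $\tau$, one transfers the integral bound \eqref{limiting-scalar-integral lower-bound} to $\int_{M^i} R_{g^i_{-4\tau}}\,d\nu_{z_i',-2\tau\,|\,-4\tau}\ge c_0/(4\tau)$ via Fatou and smooth convergence, then views $u_i(x,t):=\int R_{g^i_{-4\tau}}\,d\nu_{x,t\,|\,-4\tau}+T_i^{-1}$ as a positive solution of the heat equation with a uniform upper bound from \cite[Proposition~5.13]{Bam20a}. Bamler's Gaussian oscillation estimate \cite[Theorem~4.1]{Bam20a} (the $\Phi$-function inequality) then spreads the lower bound from the single point $z_i'$ to a ball of radius $D'\sqrt{\tau}$ at time $-2\tau$, and an $H_n$-center/$W_1$-distance argument together with $R\ge u_i-T_i^{-1}$ transfers this to the whole time window $[-(1+\alpha)\tau,-(1-\alpha)\tau]$. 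A separate short argument covers $t\in[-1,0)$. Because everything happens on smooth closed flows, no regularity issue for a limit point ever arises; the role you hoped the strong maximum principle would play is instead played by the quantitative Harnack-type inequality \cite[Theorem~4.1]{Bam20a}.
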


The key technique of the subsection is presented in the proposition below. 

\begin{Proposition}\label{prop:local-lower-bound-1}
    For any $D>0$ there is a positive number $a=a(D)$ (depending also on the constant $c_0$ in \eqref{limiting-scalar-integral lower-bound}) with the following property. For any $\tau>0$ and any $\alpha\in(0,1/2]$, if $i\ge \underline{I}(D,c_0,\tau)$, then
    \begin{align*}
        |t|R_{g_{i,t}}(x)\ge a \quad \text{ for all } \quad x\in B_{g_{i,t}}(z_{i,t},D\sqrt{|t|}) \text{ and }\quad t\in [-(1+\alpha)\tau,-(1-\alpha)\tau].
    \end{align*}
\end{Proposition}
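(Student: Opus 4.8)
The plan is to run the parabolic version of the Chow–Lu–Yang argument (as in \cite{CCMZ23}) on each Ricci flow $(M^i,g_{i,t})$, using the conjugate heat kernel based at a point on the integral curve $x_0(t)$ as the weight, and to extract the positive lower bound from the integral quantity fixed in \eqref{limiting-scalar-integral lower-bound}. First, fix $D>0$ and the target time $\tau$. Let $u_i$ be the conjugate heat kernel on $(M^i,g_{i,t})$ based at (an approximation of) the point $x_0(-\tau/2)$; this is the sequence-level analogue of the kernel $\nu_{x_0(t/2)\,|\,t}$ appearing in Lemma \ref{lm: R-integral-const}. By the local smooth convergence \cite[Theorem 2.5]{Bam20c} and Lemma \ref{lm: lower-estimate-of-R-central-integral}, for $i\ge\underline I(D,c_0,\tau)$ the quantity $|t|\int_{M^i} R_{g_{i,t}}\,u_i\,dg_{i,t}$ is bounded below by a positive constant comparable to $c_0$, uniformly for $t$ near $-\tau$. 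This is the only place where the non-Ricci-flat hypothesis enters, and it is the reason the bound $a$ is allowed to depend on $c_0$.

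Next I would set up the auxiliary function. Following \cite{CCMZ23}, consider a quantity of the schematic form $\Phi = t\bigl(R + |\nabla \log u|^2\bigr) + $ (lower-order terms adjusted so that the maximum principle applies), or more simply track $P := (1+|t|)R$ against the heat kernel weight and use that $P$ satisfies a favorable drift-diffusion inequality along the conjugate heat flow. Concretely: $R$ evolves by $\partial_t R = \Delta R + 2|\Ric|^2 \ge \Delta R + \tfrac{2}{n}R^2$, so under the conjugate heat operator the weighted integral $\int R\,u\,dg$ is controlled from below, and a pointwise differential-Harnack-type estimate propagates a lower bound on $R$ from where the kernel concentrates (near the $H_n$-center $z_{i,t}$) outward to the ball $B_{g_{i,t}}(z_{i,t},D\sqrt{|t|})$. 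The heat kernel lower bounds of \cite{Bam20a} (Gaussian lower bounds near $H_n$-centers, valid here because of the Nash entropy bound \eqref{eq:noncollapsing-assumption-CLY-section} and the scalar lower bound \eqref{Sc-almost-positivity}) guarantee $u_i \ge c(D)|t|^{-n/2}$ on that ball, so a lower bound on $\int R\,u\,dg$ together with an upper bound on $R$ away from the bad set forces $R$ itself to be $\gtrsim a/|t|$ somewhere, and then the maximum-principle/Harnack propagation spreads it to the whole ball and to the whole time interval $[-(1+\alpha)\tau,-(1-\alpha)\tau]$.

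To pass from a single base time $\tau$ to all of $[-(1+\alpha)\tau,-(1-\alpha)\tau]$, I would exploit that the integral in \eqref{limiting-scalar-integral lower-bound} is \emph{constant} in $t$ (Lemma \ref{lm: R-integral-const}), so the same lower bound $2c_0$ is available at every time and the argument is genuinely uniform; the parameter $\alpha\le 1/2$ just keeps us on a compact subinterval where the heat-kernel estimates have uniform constants. Finally, Theorem \ref{thm: sequential-local-lower-bound-along-Hn-centers} follows from the proposition by a rescaling/covering argument: rescale so that the relevant time sits in a fixed window, apply the proposition at each dyadic scale, and use that $T_i\nearrow\infty$ to cover all of $[-T_i,0)$; passing to the $\mathbb{F}$-limit and invoking \cite[Theorem 2.5]{Bam20c} then transfers the bound to $\mathcal{R}_X$, which is Theorem \ref{thm: chow-lu-yang estimate}.

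I expect the main obstacle to be the bookkeeping in the maximum-principle step: on a noncompact Ricci flow one must justify that the auxiliary function attains its relevant extremum (or apply a localized maximum principle with cutoffs, absorbing the cutoff errors using the bounded-curvature-on-compact-intervals assumption and the Gaussian decay of the kernel), and one must carefully track how the error terms in the differential inequality for $\Phi$ interact with the $-1/T_i$ lower bound on $R$ from \eqref{Sc-almost-positivity} so that they vanish in the limit $i\to\infty$ rather than destroying the positivity of $a$. Making the dependence $a=a(D)$ (and not $a(D,\tau)$ after rescaling) come out cleanly is the delicate point.
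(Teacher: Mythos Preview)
Your proposal conflates the two stages of the paper's argument. The auxiliary-function-plus-maximum-principle machinery \`a la \cite{CCMZ23} is \emph{not} what proves this proposition; that machinery appears only later (Section 5.2) to pass from the local bound near $H_n$-centers to the global quadratic bound. For Proposition \ref{prop:local-lower-bound-1} itself the paper uses a quite different and more elementary mechanism which your proposal misses.

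The key point you are missing is this: for each $i$ the function
\[
u_i(x,t):=\int_{M^i} R_{g^i_{-4\tau}}\,d\nu_{x,t\,|\,-4\tau}+\tfrac{1}{T_i}
\]
is a genuine positive \emph{solution} of the heat equation on $M^i\times[-4\tau,-2\tau]$, bounded above by \cite[Proposition 5.13]{Bam20a}. One then applies Bamler's weak Harnack inequality for bounded heat solutions, \cite[Theorem 4.1]{Bam20a}, to $u_i$: knowing $u_i(z_i',-2\tau)\ge c_0/(4\tau)$ at a single point yields a lower bound $u_i(\cdot,-2\tau)\ge c(D')/\tau$ on the whole ball $B_{g_{i,-2\tau}}(z_i',D'\sqrt{\tau})$. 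Since $R$ is a \emph{super}solution, $R\ge u_i-1/T_i$, and the bound transfers to $R$ at time $-2\tau$. The extension to $t\in[-(1+\alpha)\tau,-(1-\alpha)\tau]$ is then just the mean-value inequality $R(x,t)\ge\int R\,d\nu_{x,t\,|\,-2\tau}$, together with the fact that $\nu_{x,t\,|\,-2\tau}$ places mass $\ge 1/2$ on the good ball (via $H_n$-center geometry and \cite[Proposition 3.13]{Bam20a}).

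Your alternative route---Gaussian heat kernel \emph{lower} bounds plus ``$R$ is big somewhere, then Harnack spreads it''---has a real gap. A lower bound on $\int R\,u_i\,dg$ together with $u_i\ge c(D)|t|^{-n/2}$ on a ball does not force $R$ to be large on that ball; you would need concentration of $u_i$ (an upper bound outside), and even then you only get $R$ large at some point. There is no general Harnack inequality for $R$ itself on a Ricci flow without curvature hypotheses, so you have no way to spread a one-point lower bound to the whole ball. The paper avoids this entirely by applying the Harnack not to $R$ but to the bounded heat solution $u_i$, for which \cite[Theorem 4.1]{Bam20a} is available without further assumptions.
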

\begin{proof}
    Let us fix $\tau>0$ and $\alpha\in(0,1/2]$. Assume that $i$ is large enough such that $T_i\gg \tau$. By \cite[Theorem 2.5]{Bam20c} and  \cite[Theorem 9.31]{Bam20a}, we can find a sequence of points $z_i'\in M^i$, such that
    \begin{align*}
        (z_i',-2\tau)\xrightarrow{\quad i\to\infty\quad}x_0(-2\tau).
    \end{align*}
The convergence of points is either via the diffeomorphisms in the definition of the local smooth convergence, or understood to be the convergence of points within a correspondence. Note that, arguing as the proof of \eqref{near-the-Hn-center-1} (at scale $\sqrt{\tau}$ instead of $1$), we also have
    \begin{align}\label{near-the-Hn-center}
        \dist_{g_{i,-2\tau}}(z_{i,-2\tau},z'_i)\le C(n)\sqrt{\tau} \quad\text{ for all $i$ large enough}.
    \end{align}

Since, by \cite[Theorem 9.31]{Bam20a}, we have
$$R_{g_{i,-4\tau}}\longrightarrow R_{g_{-4\tau}},\qquad \qquad K^i(z_i',-2\tau\,|\,\cdot,-4\tau)\longrightarrow  K(x_0(-2\tau)\, |\ \cdot\ )\bigg|_{\mathcal{R}_{-4\tau}},  $$
locally smoothly on $\mathcal{R}_{-4\tau}$, Fatou's lemma and \eqref{Sc-almost-positivity} imply that
\begin{align*}
    \liminf_{i\to\infty}4\tau\int_{M^i}\left(R_{g^i_{-4\tau}}+\frac{1}{T_i}\right)\,d\nu_{z'_i,-2\tau\,|\,-4\tau}\ge 4\tau\int_{\mathcal{R}_{-2\tau}}R_{g_{-4\tau}}d\nu_{x_0(-2\tau)\,|\,-4\tau}=2c_0.
\end{align*}
It then follows that
\begin{align}\label{H_n-center-one-point-bound}
    \int_{M^i} R_{g^i_{-4\tau}}d\nu_{z'_i,-2\tau\,|\,-4\tau}\ge \frac{c_0}{4\tau}\quad\text{ for all $i$ large enough.}
\end{align}

Next, we define
\begin{align*}
    u_i: M^i\times[-4\tau,-2\tau] &\ \to \mathbb{R}_+,
    \\
    (x,t)&\ \mapsto \int_{M^i} R_{g^i_{-4\tau}}d\nu_{x,t\,|\,-4\tau}+\frac{1}{T_i}.
\end{align*}
    Note that this is a positive solution to the heat equation. While the scalar curvature is a supersolution to the heat equation,  we always have $R_{g_{i,t}}(x)\ge u_i(x,t)-\frac{1}{T_i}$ for $t\ge -4\tau$.  \cite[Proposition 5.13]{Bam20a} provides an upper bound for $u_i$, namely,
    \begin{align*}
        0<u_i\le \frac{1}{\tau}\left(\frac{n}{2}+\frac{\tau}{T_i}\right)\quad \text{ on }\quad M^i\times[-3\tau,-2\tau].
    \end{align*}
    Since, by \eqref{H_n-center-one-point-bound}, $u_i(z_i',-2\tau)\ge \frac{1}{\tau}\left(\frac{c_0}{4}+\frac{\tau}{T_i}\right)$, we can apply \cite[Theorem 4.1]{Bam20a} to $u_i$ on $M^i\times[-3\tau,-2\tau]$ to obtain a local lower bound.  For any $D'>0$, the following holds whenever $i$ is large enough,
    \begin{align*}
        u_i(x,-2\tau)\ge&\ \frac{1}{\tau}\left(\frac{n}{2}+\frac{\tau}{T_i}\right)\Phi\left(\Phi^{-1}\left(\frac{\frac{c_0}{4}+\frac{\tau}{T_i}}{\frac{n}{2}+\frac{\tau}{T_i}}\right)-D'\right)
        \\
        \ge &\ \frac{1}{\tau}\left(\frac{n}{2}+\frac{\tau}{T_i}\right)\Phi\left(\Phi^{-1}\left(\frac{c_0}{2n}\right)-D'\right)\quad\text{ for all}\quad x\in B_{g_{i,-2\tau}}(z_i',D'\sqrt{\tau}).
    \end{align*}
 Here $\Phi$ is the function defined in \cite[\S 4.1]{Bam20a}   In view of \eqref{near-the-Hn-center}, this can also be written as
    \begin{align}\label{local-estimate-in-space}
        R_{g_{i,-2\tau}}(x)\ge \frac{1}{\tau}\left(\frac{n}{2}+\frac{\tau}{T_i}\right)\Phi\left(\Phi^{-1}\left(\frac{c_0}{2n}\right)-D'-C(n)\right)-\frac{1}{T_i}\\\nonumber
        \text{ for all}\quad x\in B_{g_{i,-2\tau}}(z_{i,-2\tau},D'\sqrt{\tau}).
    \end{align}

    Finally, it remains to extend the local estimate in space \eqref{local-estimate-in-space} to a space-time neighborhood. To this end, let us fix an arbitrary point $(x,t)\in M^i\times[-(1+\alpha)\tau,-(1-\alpha)\tau]$ such that $x\in B_{g_{i,t}}(z_{i,t},D\sqrt{|t|})$. Recall that $\alpha\in(0,1/2]$. Let $(z',-2\tau)$ be an $H_n$-center of $(x,t)$ and let $(z'',-2\tau)$ be an $H_n$-center of $(z_{i,t},t)$. Since, by \cite[Lemma 6.3]{CMZ23a}, we have $\dist_{g_{i,-2\tau}}(z_{i,-2\tau},z'')\le C_0(n)\sqrt{\tau}$, it holds that
    \begin{align*}
        \dist_{g_{i,-2\tau}}(z_{i,-2\tau},z')\le  &\ \dist_{g_{i,-2\tau}}(z_{i,-2\tau},z'')+\dist_{g_{i,-2\tau}}(z',z'')
        \\
         \le  &\ C_0(n)\sqrt{\tau}+\dist_{W_1}^{g_{i,-2\tau}}(\delta_{z'},\nu_{x,t\,|\,-2\tau})+\dist_{W_1}^{g_{i,-2\tau}}(\delta_{z''},\nu_{z_{i,t},t\,|\,-2\tau})
        \\
        &\ +\dist_{W_1}^{g_{i,-2\tau}}(\nu_{z_{i,t},t\,|\,-2\tau},\nu_{x,t\,|\,-2\tau})
        \\
     \le&\ (C_1(n)+D)\sqrt{\tau}.
    \end{align*}
In combination with \cite[Proposition 3.13]{Bam20a}, we have if $D':= D+C_1(n)+\sqrt{2H_n}$, then
\begin{align}\label{local-measure-lower-bound}
    \nu_{x,t\,|\,-2\tau}\left(B_{g_{i,-2\tau}}\left(z_{i,-2\tau},D'\sqrt{\tau}\right)\right)\ge \tfrac{1}{2}.
\end{align}
Taking into account \eqref{local-estimate-in-space}, we have the following estimate whenever $i$ is large enough.
\begin{align*}
    \int_{M^i}R_{g_{i,-2\tau}}\,d\nu_{x,t\,|\,-2\tau}=&\  \int_{B_{g_{i,-2\tau}}\left(z_{i,-2\tau},D'\sqrt{\tau}\right)}R_{g_{i,-2\tau}}\,d\nu_{x,t\,|\,-2\tau}
    \\
    &\ +\int_{M^i\setminus B_{g_{i,-2\tau}}\left(z_{i,-2\tau},D'\sqrt{\tau}\right)}R_{g_{i,-2\tau}}\,d\nu_{x,t\,|\,-2\tau}
    \\
    \ge &\ \frac{1}{2}\left(\frac{1}{\tau}\left(\frac{n}{2}+\frac{\tau}{T_i}\right)\Phi\left(\Phi^{-1}\left(\frac{c_0}{2n}\right)-D'-C(n)\right)-\frac{1}{T_i}\right)-\frac{1}{2}\cdot\frac{1}{T_i}
    \\
    =&\ \frac{1}{2\tau}\left(\frac{n}{2}+\frac{\tau}{T_i}\right)\Phi\left(\Phi^{-1}\left(\frac{c_0}{2n}\right)-D-C_2(n)\right)-\frac{1}{T_i}.
\end{align*}
    Being a supersolution to the heat equation, $R$ always satisfies
    \begin{align*}
        R_{g_{i,t}}(x)\ge \int_{M^i}R_{g_{i,-2\tau}}\,d\nu_{x,t\,|\,-2\tau}.
    \end{align*}
    This finishes the proof of the proposition.
\end{proof}

Finally, to prove Theorem \ref{thm: sequential-local-lower-bound-along-Hn-centers}, it remains only to deal with the scalar curvature near the $0$-time-slices. This is covered by the following Proposition.

\begin{Proposition}
    For any $D>0$ there is a positive number $a=a(D)$ (depending also on the constant $c_0$ in \eqref{limiting-scalar-integral lower-bound}) with the following property. If $i\ge \underline{I}(D,c_0)$, then
    \begin{align*}
        R_{g_{i,t}}(x)\ge a \quad \text{ for all } \quad x\in B_{g_{i,t}}(z_{i,t},D) \text{ and }\quad t\in [-1,0).
    \end{align*}
\end{Proposition}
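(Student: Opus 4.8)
The strategy is to deduce the bound near the time-$0$ slices from Proposition~\ref{prop:local-lower-bound-1} by propagating a scalar curvature lower bound \emph{forward} in time with the heat equation. Concretely, Proposition~\ref{prop:local-lower-bound-1} already controls $R$ on large balls around the $H_n$-centers $z_{i,s}$ at a fixed negative time $s$, and since $R$ is a supersolution of the heat equation, its value at a later time dominates the conjugate-heat-kernel average of its values at time $s$; it then suffices to check that this conjugate heat kernel still concentrates near $z_{i,s}$.

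I would first fix the time $s=-\tfrac32$ and apply Proposition~\ref{prop:local-lower-bound-1} with $\tau=1$, $\alpha=\tfrac12$ (so that $-\tfrac32$ is an endpoint of the interval $[-\tfrac32,-\tfrac12]$) and with its radius parameter replaced by a large constant $D'=D'(n,D)$ to be fixed below. This produces, for all $i\ge\underline I(D',c_0)$, a positive constant $a_1=a_1(D')$ with
\[ R_{g_{i,-3/2}}\ \ge\ a_1 \qquad\text{on}\qquad B_{g_{i,-3/2}}\big(z_{i,-3/2},\,D'\big). \]
Next, for any $t\in[-1,0)$ and any $x\in B_{g_{i,t}}(z_{i,t},D)$, I would write, using that $R$ is a supersolution of the heat equation and that $R_{g_{i,-3/2}}\ge-1/T_i$ by \eqref{Sc-almost-positivity},
\[ R_{g_{i,t}}(x)\ \ge\ \int_{M^i} R_{g_{i,-3/2}}\,d\nu_{x,t\,|\,-3/2}\ \ge\ a_1\,\nu_{x,t\,|\,-3/2}\!\left(B_{g_{i,-3/2}}(z_{i,-3/2},D')\right)-\frac1{T_i}. \]
The main point is to show that $\nu_{x,t\,|\,-3/2}$ puts mass at least $\tfrac12$ on $B_{g_{i,-3/2}}(z_{i,-3/2},D')$ for a suitable $D'$ depending only on $n$ and $D$. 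This I would do exactly as in the proof of Proposition~\ref{prop:local-lower-bound-1}: if $(z',-\tfrac32)$ denotes an $H_n$-center of $(x,t)$, then since $\operatorname{Var}(\delta_{z'},\nu_{x,t\,|\,-3/2})\le H_n(t+\tfrac32)\le\tfrac32 H_n$, Chebyshev's inequality (cf.\ \cite[Proposition~3.13]{Bam20a}) gives $\nu_{x,t\,|\,-3/2}(B_{g_{i,-3/2}}(z',\sqrt{3H_n}))\ge\tfrac12$; and combining the triangle inequality for $\dist^{g_{i,-3/2}}_{W_1}$, the $H_n$-center estimates, the monotonicity of the $W_1$-distance along conjugate heat flows, \cite[Lemma~6.3]{CMZ23a}, and $\dist_{g_{i,t}}(x,z_{i,t})\le D$ with $|t|\le1$, one bounds $\dist_{g_{i,-3/2}}(z',z_{i,-3/2})\le D+C_0(n)$. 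Taking $D':=D+C_0(n)+\sqrt{3H_n}$ then yields the desired concentration.

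Finally, since $T_i\nearrow+\infty$, for all $i$ large we have $1/T_i\le a_1/4$, and the two displayed inequalities combine to give $R_{g_{i,t}}(x)\ge\tfrac12 a_1-1/T_i\ge\tfrac14 a_1$ for all $t\in[-1,0)$ and all $x\in B_{g_{i,t}}(z_{i,t},D)$; this proves the proposition with $a:=\tfrac14 a_1(D')$ and a suitable threshold $\underline I(D,c_0)$. The only step requiring care is the concentration estimate for $\nu_{x,t\,|\,-3/2}$, but since it is a verbatim repetition of the $H_n$-center bookkeeping already performed in Proposition~\ref{prop:local-lower-bound-1} — with the single change that here $-\tfrac32$ is fixed while $t$ varies over $[-1,0)$ — I do not anticipate any genuinely new obstacle.
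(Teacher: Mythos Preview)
Your proposal is correct and follows essentially the same route as the paper: both fix a reference time slightly below $-1$, invoke Proposition~\ref{prop:local-lower-bound-1} to obtain a scalar curvature lower bound on a large ball around the $H_n$-center there, and then propagate forward via the supersolution property together with the $H_n$-center bookkeeping that controls where $\nu_{x,t\,|\,\cdot}$ concentrates. The only cosmetic difference is the choice of reference time: you take $s=-\tfrac32$ (i.e.\ $\tau=1$, $\alpha=\tfrac12$), whereas the paper takes $s=-2$ (i.e.\ $\tau=2$); this changes nothing of substance.
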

\begin{proof}
    The proof is a simpler version of the previous proposition. Fix $\tau=2$ in Proposition \ref{prop:local-lower-bound-1}. Then for any $D'>0$ and $i\ge \underline{I}(D',c_0)$, we have that
    \begin{align*}
        R_{g_{i,-2}}(x)\ge a(D')\quad \text{ for all }\quad x\in B_{g_{i,-2}}(z_{i,-2},D').
    \end{align*}
    Arguing in the same way as the proof of \eqref{local-measure-lower-bound}, we have that, for any $(x,t)\in B_{g_{i,t}}(z_{i,t},D)$, where $D>0$ and $t\in[-1,0)$, if $D'=D+C(n)$, then
    \begin{align*}
        \nu_{x,t\,|\,-2}\left(B_{g_{i,-2}}(z_{i,-2},D')\right)\ge \frac{1}{2}.
    \end{align*}
    Thus,
    \begin{align*}
        \int_{M_i}R_{g_{i,-2}}\, d \nu_{x,t\,|\,-2}=&\  \int_{B_{g_{i,-2}}(z_{i,-2},D')}R_{g_{i,-2}}\, d \nu_{x,t\,|\,-2}+\int_{M^i\setminus B_{g_{i,-2}}(z_{i,-2},D')}R_{g_{i,-2}}\, d \nu_{x,t\,|\,-2}
        \\
        \ge &\ \frac{1}{2}a(D+C(n))-\frac{1}{2T_i}.
    \end{align*}
    This finishes the proof in the same way as Proposition \ref{prop:local-lower-bound-1}.
\end{proof}

\subsection{Proof of the scalar curvature lower bound}

In this subsection, we conclude the proof of Theorem \ref{thm: chow-lu-yang estimate} following the argument of \cite{CCMZ23}. Let us recall the following conjugate heat kernel estimate \cite[Theorem 7.2]{Bam20a}.
\begin{align*}
      (4\pi|t|)^{-\frac{n}{2}}e^{-f_i(x,t)}:=K_i(x_i,0\,|\,x,t)\le \frac{C(n)}{(4\pi|t|)^{\frac{n}{2}}}\exp\left(-W-\frac{\dist^2_{g_{i,t}}(z_{i,t},x)}{9|t|}\right)
\end{align*}
for all $(x,t)\in M^i\times[-T_i,0)$. Note that we have applied \eqref{eq:noncollapsing-assumption-CLY-section} and  \eqref{Sc-almost-positivity}.   This estimate is equivalent to
\begin{align}\label{CHK-upper}
    f_i(x,t)\ge -C_0+1+W+\frac{\dist^2_{g_{i,t}}(z_{i,t},x)}{9|t|}\quad \text{ for all }\quad (x,t)\in M^i\times[-T_i,0),
\end{align}
where $C_0$ is a dimensional constant. Let $$\tau=-t=|t|$$
throughout the rest of the subsection. Define the auxiliary function
\begin{align*}
    \rho_i(x,t):=\sqrt{1+\tau}\left(f_i(x,t)+C_0-W\right).
\end{align*}
Obviously, \eqref{CHK-upper} implies that
\begin{align}\label{rho-lower}
    \rho_i(x,t)\ge \sqrt{1+\tau} \left(1 + \frac{\dist^2_{g_{i,t}}(z_{i,t},x)}{9\tau}\right)\quad\text{ for all }\quad (x,t)\in M^i\times[-T_i,0).
\end{align}

Next, we define the function with which we shall apply the maximum principle. Let $$A=A(n,W):=C_0 - W + \tfrac{n}{2}$$ and
\begin{align*}
    D=D(A,n)<+\infty,\quad \quad a_0=a(D)>0, \quad \quad c=c(a_0,D)>0
\end{align*}
to be fixed in the course of the proof; here $a(D)$ is the constant given by Theorem \ref{thm: sequential-local-lower-bound-along-Hn-centers}.
For each $i$ large enough, we define
\begin{align*}
    F_i(x,t):=\sqrt{1+\tau} \, R_{g_{i,t}}(x) - c \rho_i^{-1}(x,t) - 4cA\sqrt{1+\tau}\,\rho_i^{-2}(x,t),\quad (x,t)\in M^i\times[-T_i,0).
\end{align*}
Arguing in the same way as in \cite{CCMZ23}, we have:
\begin{Lemma}\label{lm: PDE-of-F}
The following differential inequality holds on $M^i\times[-T_i,0]$.
    \begin{align*}
     \tau\left(\frac{\partial}{\partial t}-\Delta_{g_{i,t}}\right) F_i \ge -\tfrac{1}{2}F_i + cA \sqrt{1+\tau}\,\rho_i^{-3}\left(\rho_i-8A\sqrt{1+\tau}\,\right)-\frac{1}{2\sqrt{1+\tau}}\cdot \frac{1}{T_i}.
\end{align*}
\end{Lemma}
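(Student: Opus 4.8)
The goal is to establish the differential inequality for $F_i$ by a direct (if lengthy) computation, using the evolution equations for each of the three constituent pieces $\sqrt{1+\tau}\,R$, $\rho_i^{-1}$, and $\sqrt{1+\tau}\,\rho_i^{-2}$ under the heat operator $\tau(\partial_t-\Delta)$. First I would recall the standard evolution equations: for the scalar curvature along Ricci flow, $(\partial_t-\Delta)R=2|\Ric|^2\ge\frac{2}{n}R^2$, while $f_i$ solves the conjugate heat equation $(\partial_t+\Delta-|\nabla f_i|^2+R)f_i$-type identity; more precisely, for the conjugate heat kernel, $\partial_t f_i=-\Delta f_i+|\nabla f_i|^2-R+\frac{n}{2\tau}$. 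From this one gets $(\partial_t-\Delta)f_i=-2\Delta f_i+|\nabla f_i|^2-R+\frac{n}{2\tau}$, and it is cleaner to track $\ell_i:=f_i+C_0-W$ (so $\rho_i=\sqrt{1+\tau}\,\ell_i$) together with Bamler's bound $\ell_i\ge 1+\frac{\dist^2}{9\tau}$ from \eqref{rho-lower}, and in particular $\ell_i\ge 1$.

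**Key steps in order.** (1) Compute $\tau(\partial_t-\Delta)\big(\sqrt{1+\tau}\,R\big)$: the time-derivative of the prefactor contributes $\frac{\tau}{2\sqrt{1+\tau}}R$ (note $\partial_t\tau=-1$ so $\partial_t\sqrt{1+\tau}=-\frac{1}{2\sqrt{1+\tau}}$, and one must be careful with signs), plus $\sqrt{1+\tau}\cdot\tau\cdot 2|\Ric|^2\ge \sqrt{1+\tau}\cdot\frac{2\tau}{n}R^2$, the last term being what eventually controls the $-\frac12 F_i$ on the right after dividing by $\ell_i$. (2) Compute $\tau(\partial_t-\Delta)\rho_i^{-1}$ and $\tau(\partial_t-\Delta)\big(\sqrt{1+\tau}\,\rho_i^{-2}\big)$, using the chain rule $(\partial_t-\Delta)h(\rho)=h'(\rho)(\partial_t-\Delta)\rho-h''(\rho)|\nabla\rho|^2$; the gradient terms $|\nabla\rho_i|^2=(1+\tau)|\nabla f_i|^2$ are favorably signed for $h(\rho)=\rho^{-1}$ (since $h''>0$ and it appears with a minus) and must be absorbed or retained. (3) Here one invokes the classical soliton-type identity in the form of Perelman's differential Harnack / the fact that for the conjugate heat kernel $\tau(2\Delta f_i-|\nabla f_i|^2+R)+f_i-n\le \mathcal{N}_{x_i,0}(\tau)+$ error, combined with \eqref{eq:noncollapsing-assumption-CLY-section} giving $\mathcal{N}_{x_i,0}(\tau)\ge W-1$ — but actually the relevant inequality is an \emph{upper} bound, Perelman's $\mathcal{W}\le \mu$, so $\tau(2\Delta f_i-|\nabla f_i|^2+R)+f_i-n\le \mathcal{N}_{x_i,0}(\tau)$ combined with $R\ge -1/T_i$ from \eqref{Sc-almost-positivity}; this is the source of the constant $A=C_0-W+\frac n2$ and of the $\rho_i-8A\sqrt{1+\tau}$ factor. (4) Assemble the three computations, group the "good" quadratic-in-$R$ term against $-\frac12 F_i$ using $\ell_i\ge 1$ and the algebraic identity $\frac{2\tau}{n}R^2-\text{(linear in }R)\ge -\frac{1}{2}(\text{linear in }R)$-type completions of squares, and collect all the $\frac{1}{T_i}$ contributions into the single harmless term $-\frac{1}{2\sqrt{1+\tau}}\cdot\frac{1}{T_i}$.

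**Main obstacle.** The hard part will be the bookkeeping in step (4): organizing the many terms arising from the products $R\cdot\rho_i^{-1}$, $R\cdot\rho_i^{-2}$, $|\nabla f_i|^2\cdot\rho_i^{-3}$, etc., and verifying that the choice $A=C_0-W+\frac n2$ makes precisely the right cancellations so that the residual is $-\frac12 F_i$ plus the stated positive term plus the $1/T_i$ error. In particular one must check that the coefficient $4cA$ in front of $\sqrt{1+\tau}\,\rho_i^{-2}$ is exactly what is needed to kill the cross terms generated by $(\partial_t-\Delta)\rho_i^{-1}$, and that the gradient term $|\nabla\rho_i|^2$ multiplied against $\rho_i^{-3}$ is nonnegative and hence can be dropped (or, if it has the wrong sign anywhere, absorbed into the quadratic $R^2$ term via Young's inequality). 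Since the excerpt states the computation is "the same way as in \cite{CCMZ23}", I would follow that reference's algebra closely, with the only genuinely new feature being the uniform $-1/T_i$ lower bound for $R$ replacing the exact nonnegativity $R\ge 0$ available for shrinkers, which is why the extra $-\frac{1}{2\sqrt{1+\tau}}\cdot\frac{1}{T_i}$ term appears on the right-hand side.
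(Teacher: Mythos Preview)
Your overall skeleton—compute $\tau(\partial_t-\Delta)$ on each of the three constituents of $F_i$ and add—is exactly what the paper does, but two of your anticipated mechanisms are wrong and would derail the bookkeeping.

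First, the $-\tfrac12 F_i$ term does \emph{not} come from the quadratic $R^2$ via any completion of squares. In the paper the $|\Ric|^2$ term is simply discarded (it is nonnegative). Each piece of $F_i$ separately reproduces $-\tfrac12$ of itself: for the scalar curvature piece,
\[
\tau(\partial_t-\Delta)\big(\sqrt{1+\tau}\,R\big)=-\tfrac{\tau}{2\sqrt{1+\tau}}R+2\tau\sqrt{1+\tau}\,|\Ric|^2,
\]
and using $\tau\le 1+\tau$ together with $R+\tfrac{1}{T_i}\ge 0$ gives directly $\ge -\tfrac12\sqrt{1+\tau}\,R-\tfrac{1}{2\sqrt{1+\tau}}\cdot\tfrac{1}{T_i}$. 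For the $\rho_i$ pieces one gets $\tau(\partial_t-\Delta)\rho_i^{-1}\le -\tfrac12\rho_i^{-1}+A\sqrt{1+\tau}\,\rho_i^{-2}$ and $\tau(\partial_t-\Delta)\big(\sqrt{1+\tau}\,\rho_i^{-2}\big)\le -\sqrt{1+\tau}\,\rho_i^{-2}+2A(1+\tau)\rho_i^{-3}$; the coefficient $4A$ in $F_i$ is chosen precisely so that, after extracting $-\tfrac12\big(\rho_i^{-1}+4A\sqrt{1+\tau}\,\rho_i^{-2}\big)$, the residual is exactly $-A\sqrt{1+\tau}\,\rho_i^{-3}\big(\rho_i-8A\sqrt{1+\tau}\big)$. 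No cross terms with $R$, no Young's inequality.

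Second, your step (3) is muddled. The input you need for $\rho_i$ is Perelman's \emph{pointwise} differential Harnack for the conjugate heat kernel, namely $w:=\tau(2\Delta f_i-|\nabla f_i|^2+R)+f_i-n\le 0$, not a Nash-entropy comparison (and certainly not a lower bound on $\mathcal{N}_{x_i,0}$). From the conjugate heat equation one gets $\tau(\partial_t-\Delta)f_i=f_i-\tfrac{n}{2}-w\ge f_i-\tfrac{n}{2}$, and hence
\[
\tau(\partial_t-\Delta)\rho_i\ge -\tfrac{\tau}{2(1+\tau)}\rho_i+\sqrt{1+\tau}\big(f_i-\tfrac{n}{2}\big)\ge \tfrac12\rho_i-A\sqrt{1+\tau},
\]
with $A=C_0-W+\tfrac{n}{2}$ arising exactly from $\sqrt{1+\tau}\big(f_i-\tfrac{n}{2}\big)=\rho_i-A\sqrt{1+\tau}$. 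The entropy bound \eqref{eq:noncollapsing-assumption-CLY-section} plays no role in this lemma.
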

\begin{proof}
    We shall suppress the indices in the proof of the Lemma. The conjugate heat equation $\partial_t f=-\Delta f+|\nabla f|^2-R+\frac{n}{2\tau}$ implies that
    $$\tau\left(\frac{\partial}{\partial t}-\Delta_{g_t}\right) f=f-\tfrac{n}{2}-w,$$
    where
    $$w:=\tau(2\Delta f-|\nabla f|^2+R)+f-n\le 0$$
    by Perelman's Harnack inequality \cite{Per02}. Thus
    $$\tau\left(\frac{\partial}{\partial t}-\Delta_{g_t}\right)\rho \ge -\frac{\tau}{2(1+\tau)}\rho+\sqrt{1+\tau}\left(f-\tfrac{n}{2}\right)\ge \tfrac{1}{2}\rho-A\sqrt{1+\tau},$$
    due to our choice of $A=C_0-W+\frac{n}{2}$ and \eqref{CHK-upper}.
    Then
    \[
    \tau\left(\frac{\partial}{\partial t}-\Delta_{g_t}\right)\rho^{-1}=-\tau\rho^{-2}\left(\tfrac{\partial}{\partial t}-\Delta_{g_t}\right)\rho-2\tau\rho^{-3}|\nabla \rho|^2\le -\tfrac{1}{2}\rho^{-1}+A\sqrt{1+\tau}\rho^{-2}.
    \]
    Similarly, we have
    \begin{align*}
        \tau\left(\frac{\partial}{\partial t}-\Delta_{g_t}\right)\rho^{-2}&\ \le -\rho^{-2}+2A\sqrt{1+\tau}\rho^{-3},
        \\
        \tau\left(\frac{\partial}{\partial t}-\Delta_{g_t}\right)\left(\sqrt{1+\tau}\rho^{-2}\right)&\ \le -\sqrt{1+\tau}\rho^{-2}+2A(1+\tau)\rho^{-3}.
    \end{align*}
    In combination, we have
    \[
    \tau\left(\frac{\partial}{\partial t}-\Delta_{g_t}\right)\left(\rho^{-1}+4A\sqrt{1+\tau}\rho^{-2}\right)\le -\tfrac{1}{2}\left(\rho^{-1}+4A\sqrt{1+\tau}\rho^{-2}\right)-A\sqrt{1+\tau}\rho^{-3}\left(\rho-8A\sqrt{1+\tau}\right).
    \]
    The lemma follows from combining the inequality above with the following fact.
    \begin{align*}
        \tau\left(\frac{\partial}{\partial t}-\Delta_{g_t}\right)\left(\sqrt{1+\tau}R\right)&\ =-\frac{\tau}{2\sqrt{1+\tau}}R+2\tau\sqrt{1+\tau}|\Ric|^2
        \\
        &\ \ge -\frac{\tau}{2\sqrt{1+\tau}}\left(R+\frac{1}{T_i}\right)+\frac{\tau}{2\sqrt{1+\tau}}\cdot\frac{1}{T_i}
        \\
        &\ \ge -\tfrac{1}{2}\sqrt{1+\tau}R-\tfrac{1}{2}\sqrt{1+\tau}\cdot\frac{1}{T_i}+\frac{\tau}{2\sqrt{1+\tau}}\cdot\frac{1}{T_i}
        \\
        &\ =-\tfrac{1}{2}\sqrt{1+\tau}R-\frac{1}{2\sqrt{1+\tau}}\cdot \frac{1}{T_i},
    \end{align*}
    where we have applied \eqref{Sc-almost-positivity}.
\end{proof}

The following boundary conditions can be easily checked.

\begin{Lemma}\label{lm:boundary-conditions}
If $$D>0,\quad c<\frac{a_0}{1+4A},\quad \text{ and }\quad i\ge \underline{I}(c,A,D),$$ then
    \begin{gather}
         \liminf_{t\to 0-}\left(\inf_{x\in M}F_i(x,t)\right) \ge -\frac{1}{T_i}, \label{boundary-t=0}
         \\
         \liminf_{x\to\infty}F_i(x,t) \ge -\frac{\sqrt{1+T_i}}{T_i}\label{boundary-spatial-infinity}
         \\
         \inf_{x\in M^i}F_i(x,-T_i) \ge -\frac{\sqrt{1+T_i}}{T_i}-\frac{c(1+4A)}{\sqrt{1+T_i}}. \label{boundary-t=tau}
    \end{gather}
\end{Lemma}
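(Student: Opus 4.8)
The plan is to reduce $F_i$ to one manageable lower bound and then treat the three boundary regions separately. The starting point is that \eqref{CHK-upper} gives $f_i+C_0-W\ge 1$ everywhere, hence
\[ \rho_i(x,t)\ge\sqrt{1+\tau}\qquad\text{on }M^i\times[-T_i,0). \]
In particular $\sqrt{1+\tau}\,\rho_i^{-2}\le\rho_i^{-1}$, so that
\[ F_i(x,t)\ge\sqrt{1+\tau}\,R_{g_{i,t}}(x)-c(1+4A)\,\rho_i^{-1}(x,t), \]
and combining this with the almost-nonnegativity \eqref{Sc-almost-positivity} yields the ``cheap'' estimate $F_i\ge-\sqrt{1+\tau}/T_i-c(1+4A)\,\rho_i^{-1}$, valid on all of $M^i\times[-T_i,0)$. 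Everything below is read off from these two inequalities.

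For \eqref{boundary-t=tau} I would simply put $\tau=T_i$ and use $\rho_i^{-1}\le(1+T_i)^{-1/2}$ in the cheap estimate; this is precisely the stated bound. For \eqref{boundary-spatial-infinity}, fix $t$ (the statement being vacuous when $M^i$ is compact); as $x\to\infty$ the distance $\dist_{g_{i,t}}(z_{i,t},x)\to\infty$, so \eqref{rho-lower} forces $\rho_i(x,t)\to\infty$ and hence $c(1+4A)\,\rho_i^{-1}\to0$, while $\sqrt{1+\tau}/T_i\le\sqrt{1+T_i}/T_i$ because $\tau\le T_i$; taking $\liminf_{x\to\infty}$ gives $\ge-\sqrt{1+T_i}/T_i$.

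The estimate that needs actual work is \eqref{boundary-t=0}, since the infimum is over all of $M^i$ and the $\rho_i^{-1}$ terms do not decay near the $H_n$-center. Here I would split $M^i$ at the ball $B_{g_{i,t}}\big(z_{i,t},D\sqrt{1+|t|}\big)$. Inside the ball, Theorem \ref{thm: sequential-local-lower-bound-along-Hn-centers} (applicable once $i\ge\underline{I}(c,A,D)$) gives $\sqrt{1+\tau}\,R_{g_{i,t}}\ge a_0/\sqrt{1+\tau}$, which with $\rho_i^{-1}\le(1+\tau)^{-1/2}$ produces $F_i\ge\big(a_0-c(1+4A)\big)/\sqrt{1+\tau}$; positivity of this quantity is exactly where the hypothesis $c<a_0/(1+4A)$ is used. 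Outside the ball, $\dist_{g_{i,t}}(z_{i,t},x)\ge D\sqrt{1+|t|}$, so \eqref{rho-lower} gives $\rho_i\ge\sqrt{1+\tau}\,\big(1+D^2(1+\tau)/(9\tau)\big)$ and hence $\rho_i^{-1}\le 9\tau/D^2$ uniformly in $x$ for $\tau$ small; the cheap estimate then gives $\inf_{M^i\setminus B}F_i\ge-\sqrt{1+\tau}/T_i-9c(1+4A)\tau/D^2$. Combining the two regional bounds and letting $t\to0^-$ (so $\tau\to0^+$) yields $\liminf_{t\to0^-}\inf_{M^i}F_i\ge\min(0,-1/T_i)=-1/T_i$.

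I do not expect any essential difficulty; the proof is bookkeeping once the reduction $F_i\ge\sqrt{1+\tau}\,R_{g_{i,t}}-c(1+4A)\,\rho_i^{-1}$ is in hand. The only points to watch are that the decay $\rho_i^{-1}=O(\tau)$ away from the center is uniform in $x$ — guaranteed by \eqref{rho-lower} — and that ``$i$ large'' is invoked consistently, both so that \eqref{CHK-upper}, \eqref{rho-lower} and \eqref{Sc-almost-positivity} hold on $M^i\times[-T_i,0)$ and so that Theorem \ref{thm: sequential-local-lower-bound-along-Hn-centers} applies with the chosen $D$; this is exactly the role of the threshold $\underline{I}(c,A,D)$ appearing in the statement.
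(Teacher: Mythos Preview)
Your proposal is correct and follows essentially the same approach as the paper: the same split at the ball $B_{g_{i,t}}\big(z_{i,t},D\sqrt{1+|t|}\big)$ for \eqref{boundary-t=0}, and the same direct appeal to \eqref{Sc-almost-positivity} and \eqref{rho-lower} for \eqref{boundary-spatial-infinity} and \eqref{boundary-t=tau}. Your preliminary reduction $F_i\ge\sqrt{1+\tau}\,R_{g_{i,t}}-c(1+4A)\,\rho_i^{-1}$ is a mild streamlining of the paper's argument, which bounds the $\rho_i^{-1}$ and $\rho_i^{-2}$ terms separately, but the substance is identical.
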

\begin{proof}
    We first prove \eqref{boundary-t=0}. We fix an $i$ that is large enough. Let $t\in(-\tau_i,0)$ be a fixed time. For any point $x\in M$, if $x\in B_{g_{i,t}}(z_{i,t},D\sqrt{1+\tau}\,)$, then Theorem \ref{thm: sequential-local-lower-bound-along-Hn-centers} and the choices of $a_0$ and $c$ imply that
\begin{align*}
    F_i(x,t)\ge \frac{a_0}{\sqrt{1+\tau}}-\frac{c(1+4A)}{\sqrt{1+\tau}}\ge0. 
\end{align*}
If $x\not\in B_{g_{i,t}}(z_{i,t},D\sqrt{1+\tau})$, then we have
$$\rho_i(x,t)\ge \sqrt{1+\tau}\left(1+\frac{D^2(1+\tau)}{\tau}\right)\ge \frac{D^2}{\tau},$$
and thus
\begin{align*}
    F_i(x,t)\ge -\frac{1}{T_i}\sqrt{1+\tau}-\frac{c\tau}{D^2}-\frac{4cA\tau^2\sqrt{1+\tau}}{D^4},
\end{align*}
where we have applied \eqref{Sc-almost-positivity}.
Therefore, we have
$$\inf_{x\in M}F_i(x,t)\ge -C(A,D,c)\tau-\frac{1}{T_i}\sqrt{1+\tau}\to -\frac{1}{T_i} \quad\text{ as }\quad t\to 0-.$$
This shows \eqref{boundary-t=0}.

\eqref{boundary-spatial-infinity} and \eqref{boundary-t=tau} both follow directly from the definition of $F^i$, the standard scalar curvature lower bound \eqref{Sc-almost-positivity}, and the lower bound of $\rho$ \eqref{rho-lower}.
\end{proof}

Finally, Theorem \ref{thm: chow-lu-yang estimate} follows from taking the proposition below to the limit.

\begin{Proposition}
    If
    \begin{align*}
        D\ge\underline{D}(A), \quad c<\frac{a_0}{1+4A},\quad \text{ and }\quad i\ge \underline{I}(c,A,D),
    \end{align*}
    then
    \begin{align*}
        \inf_{M^i\times [-T_i,0)}F_i\ge -2\cdot\left(\frac{\sqrt{1+T_i}}{T_i}+\frac{c(1+4A)}{\sqrt{1+T_i}}\right).
    \end{align*}
\end{Proposition}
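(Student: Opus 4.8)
The plan is to run the maximum principle for the function $F_i$ on the compact-in-space-time region $M^i\times[-T_i,0)$, using the differential inequality of Lemma~\ref{lm: PDE-of-F} together with the boundary estimates of Lemma~\ref{lm:boundary-conditions}. The key point is that the reaction term in Lemma~\ref{lm: PDE-of-F},
\[
cA\sqrt{1+\tau}\,\rho_i^{-3}\big(\rho_i-8A\sqrt{1+\tau}\,\big),
\]
has a favorable sign precisely when $\rho_i\ge 8A\sqrt{1+\tau}$, so one must choose $D=\underline D(A)$ large enough that this inequality holds automatically outside the ball $B_{g_{i,t}}(z_{i,t},D\sqrt{1+\tau}\,)$. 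Indeed, by \eqref{rho-lower}, if $x\notin B_{g_{i,t}}(z_{i,t},D\sqrt{1+\tau}\,)$ then
\[
\rho_i(x,t)\ge \sqrt{1+\tau}\Big(1+\tfrac{D^2(1+\tau)}{\tau}\Big)\ge D^2\sqrt{1+\tau},
\]
so taking $D^2\ge 8A$ (i.e. $\underline D(A):=\sqrt{8A}$, with room to spare) guarantees the reaction term is $\ge 0$ there.

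First I would argue by contradiction: suppose the conclusion fails, so that $\inf_{M^i\times[-T_i,0)}F_i < -2\big(\tfrac{\sqrt{1+T_i}}{T_i}+\tfrac{c(1+4A)}{\sqrt{1+T_i}}\big)=:-2B_i$. By Lemma~\ref{lm:boundary-conditions}, $F_i$ is bounded below by $-B_i$ (up to harmless comparison of the three boundary terms, all of which are $\ge -B_i$) on the parabolic boundary, i.e. at $t\to 0-$, at spatial infinity, and at $t=-T_i$; since $\rho_i\to\infty$ forces $F_i$ to stay near its boundary values near spatial infinity, a negative infimum strictly below $-2B_i$ must be attained (or approached) at an interior space-time point, or along a sequence bounded away from the parabolic boundary. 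At such a near-minimum point one has $\partial_t F_i\le 0$ (coming down from $t$ closer to $0$) and $\Delta_{g_{i,t}}F_i\ge 0$, hence $\tau(\partial_t-\Delta_{g_{i,t}})F_i\le 0$. Feeding this into Lemma~\ref{lm: PDE-of-F} gives
\[
0\ \ge\ -\tfrac12 F_i + cA\sqrt{1+\tau}\,\rho_i^{-3}\big(\rho_i-8A\sqrt{1+\tau}\,\big)-\tfrac{1}{2\sqrt{1+\tau}}\cdot\tfrac{1}{T_i}.
\]
If the near-minimum occurs with $x\notin B(z_{i,t},D\sqrt{1+\tau}\,)$ then the reaction term is nonnegative, so $\tfrac12 F_i\ge -\tfrac{1}{2\sqrt{1+\tau}T_i}\ge -\tfrac{1}{2T_i}$, whence $F_i\ge -\tfrac{1}{T_i}\ge -B_i$, a contradiction. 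If instead the near-minimum occurs with $x\in B(z_{i,t},D\sqrt{1+\tau}\,)$, then Theorem~\ref{thm: sequential-local-lower-bound-along-Hn-centers} (with this $D$) gives $(1+|t|)R_{g_{i,t}}(x)\ge a_0$, so $\sqrt{1+\tau}\,R_{g_{i,t}}(x)\ge a_0/\sqrt{1+\tau}$, and together with $\rho_i\ge\sqrt{1+\tau}$ and $c<a_0/(1+4A)$,
\[
F_i(x,t)\ \ge\ \frac{a_0}{\sqrt{1+\tau}}-\frac{c}{\sqrt{1+\tau}}-\frac{4cA}{\sqrt{1+\tau}}\ =\ \frac{a_0-c(1+4A)}{\sqrt{1+\tau}}\ \ge\ 0\ >\ -2B_i,
\]
again a contradiction. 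To make the ``near-minimum point exists'' step rigorous one should add a small barrier $\varepsilon\big(\tfrac{1}{\sqrt{1+\tau}}+\text{something growing in }\dist\big)$ or simply work on a large compact exhaustion and pass $\varepsilon\to0$; the growth bound \eqref{rho-lower} and the scalar curvature lower bound \eqref{Sc-almost-positivity} ensure $F_i$ cannot run off to $-\infty$ at spatial infinity, so a standard Omori–Yau / compactness argument applies.

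The main obstacle I anticipate is not any single estimate but the bookkeeping of the error terms $1/T_i$: they appear in all three boundary bounds and in the differential inequality, and one has to track them carefully so that, after a routine doubling of constants, the final bound takes exactly the asserted form $-2\big(\tfrac{\sqrt{1+T_i}}{T_i}+\tfrac{c(1+4A)}{\sqrt{1+T_i}}\big)$ — in particular so that when this is sent to the limit $i\to\infty$ (with $T_i\nearrow\infty$) it produces the clean bound $F_\infty\ge 0$ on the regular part of the soliton, which by the local smooth convergence of \cite[Theorem 2.5]{Bam20c} yields $R\ge c\,\rho^{-1}\gtrsim c/(f_0-W+C_0)$ and hence Theorem~\ref{thm: chow-lu-yang estimate}. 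A secondary subtlety is ensuring the choices are made in the admissible order $A=A(n,W)$, then $D=\underline D(A)$, then $a_0=a(D)$ from Theorem~\ref{thm: sequential-local-lower-bound-along-Hn-centers}, then $c<a_0/(1+4A)$, then $i\ge\underline I(c,A,D)$ — there is no circularity because $a(D)$ depends only on $D$ and $c_0$, not on $c$.
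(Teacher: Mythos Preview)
Your argument is correct and essentially identical to the paper's: both run a contradiction via the maximum principle, use Lemma~\ref{lm: PDE-of-F} at an interior minimum to force $\rho_i<8A\sqrt{1+\tau}$ (your case split is just a repackaging of this), and then invoke Theorem~\ref{thm: sequential-local-lower-bound-along-Hn-centers} together with $c<a_0/(1+4A)$ to get $F_i\ge 0$ there. One minor arithmetic slip: in your displayed chain you dropped the factor $9$ from \eqref{rho-lower}, so the correct bound outside the $D$-ball is only $\rho_i\ge\sqrt{1+\tau}\,(1+D^2/9)$, which means you need $D^2\ge 72A$ (the paper takes $D\ge\sqrt{100A}$) rather than $D^2\ge 8A$; this does not affect the structure of the proof.
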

\begin{proof}
    
Assume by contradition that for some $i$ large enough (such that Lemma \ref{lm:boundary-conditions} is applicable), we have
\begin{align*}
    \inf_{M^i\times[-\tau_i,0)}F_i< -2\cdot\left(\frac{\sqrt{1+T_i}}{T_i}+\frac{c(1+4A)}{\sqrt{1+T_i}}\right)<0.
\end{align*}
By the boundary conditions \eqref{boundary-t=0}---\eqref{boundary-t=tau}, the infimum must be attained at an interior point $(x_0,t_0)\in M^i\times (-T_i,0)$. Then, by Lemma \ref{lm: PDE-of-F},
\begin{align*}
    0\ge\tau\left(\frac{\partial}{\partial t}-\Delta_{g_{i,t}}\right) F_i\ge -\tfrac{1}{2}F_i + cA \sqrt{1+\tau}\,\rho_i^{-3}\left(\rho_i-8A\sqrt{1+\tau}\,\right)-\frac{1}{2\sqrt{1+\tau}}\cdot \frac{1}{T_i}\quad\text{at}\quad (x_0,t_0).
\end{align*}
Thus, the following holds true at the point $(x_0,t_0)$.
\begin{align*}
   cA \sqrt{1+\tau}\,\rho_i^{-3}\left(\rho_i-8A\sqrt{1+\tau}\,\right)&\ \le \frac{1}{2\sqrt{1+\tau}}\cdot \frac{1}{T_i}+\frac{1}{2}F_i
   \\
   &\le  \frac{1}{2\sqrt{1+\tau}}\cdot \frac{1}{T_i}-\frac{\sqrt{1+T_i}}{T_i}-\frac{c(1+4A)}{\sqrt{1+T_i}}
   \\
   &<0.
\end{align*}
Therefore,
\begin{align*}
    8A\sqrt{1+|t_0|}\ge \rho_i(x_0,t_0)\ge \sqrt{1+|t_0|} \left(1 + \frac{\dist^2_{g_{i,t_0}}(z_{i,t_0},x_0)}{9|t_0|}\right)
\end{align*}
It follows that
\begin{align*}
    \dist_{g_{i,t_0}}(z_{i,t_0},x_0)\le\sqrt{72A(1+|t_0|)}.
\end{align*}
Now, if we take $D\ge \sqrt{100A}$, then Theorem \ref{thm: sequential-local-lower-bound-along-Hn-centers} implies that
\begin{align*}
     F_i(x_0,t_0)\ge \frac{a_0}{\sqrt{1+|t_0|}}-\frac{c(1+4A)}{\sqrt{1+|t_0|}}\ge 0.
\end{align*}
This is a contradiction; the proof of the proposition is complete.

\end{proof}


\section{A local gap theorem}

In this section, we prove Theorem \ref{thm: local-gap} by constructing a cut-off function and applying the method of \cite{CMZ22}. Note that the cut-off function in Proposition \ref{prop:regular cutoff} is also very helpful in the proof of Theorem \ref{thm:linear-volume-lower-bound}. 

\subsection{Perelman's functionals}

In this subsection, we recall some of Perelman's functionals, particularly their local versions. A systematic treatment of Perelman's local functionals can be found in \cite{W18}. Let $M^n$ be any smooth manifold and $\Omega\Subset M$ any precompact open subset. For any Riemannian metric $g$ and real number $\tau>0$, define
\begin{align*}
    \mu(\Omega,g,\tau):=\inf\left\{\overline{\mathcal{W}}(g,u,\tau)\,\left|\,u\in C^\infty_c(\Omega),\ \int_Mu^2\,dg=1\right\}\right.,
\end{align*}
where
\begin{align*}
    \overline{\mathcal{W}}(g,u,\tau):=\int_M\left(\tau(4|\nabla u|^2+Ru^2)-u^2\log u^2\right)\,dg-\left(\frac{n}{2}\log(4\pi\tau)+n\right).
\end{align*}
The local $\nu$-functional is defined to be the infimum of the local $\mu$-functional (on all scales or up to a certain scale):
\begin{align*}
    \nu(\Omega,g)&\ :=\inf_{\tau>0}\nu(\Omega,g,\tau),
    \\
    \nu(\Omega,g,\tau)&\ :=\inf_{\tau\ge s>0}\nu(\Omega,g,s).
\end{align*}

It is clear that the local $\mu$-functional is a local logarithmic Sobolev constant.
\begin{Proposition}\label{prop: local-log-Sobolev-of-mu}
    If $\mu(\Omega,g,\tau)>-\infty$, then
    \begin{align*}
        \tau\int_M(4|\nabla u|^2+Ru^2)\,dg\le \int_M u^2\log u^2\,dg+\frac{n}{2}\log(4\pi\tau)+n-\mu(\Omega,g,\tau),
    \end{align*}
    for any $u\in C_{c}^{0,1}(\Omega)$ satisfying 
    $$\int_M u^2\,dg=1.$$
\end{Proposition}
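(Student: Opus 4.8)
The plan is to derive the inequality directly from the definition of $\mu(\Omega,g,\tau)$ for smooth test functions, and then remove the smoothness assumption by approximation. For the first step, if $u\in C^\infty_c(\Omega)$ with $\int_M u^2\,dg=1$, then by definition of the infimum $\bW(g,u,\tau)\ge\mu(\Omega,g,\tau)$, and writing out $\bW$ and rearranging gives exactly the claimed bound; the hypothesis $\mu(\Omega,g,\tau)>-\infty$ enters only to ensure the estimate is not vacuous. So the sole substantive point is the passage from $C^\infty_c(\Omega)$ to $C^{0,1}_c(\Omega)$.

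Given $u\in C^{0,1}_c(\Omega)$ with $\int_M u^2\,dg=1$, I would set $K:=\spt u\Subset\Omega$, fix a compact $K'$ with $K\subset\operatorname{int}K'\subset K'\subset\Omega$, and mollify $u$ (using a partition of unity subordinate to a finite coordinate cover of a neighborhood of $K$, together with Euclidean mollifiers) to obtain $u_\varepsilon\in C^\infty_c(\Omega)$ with $\spt u_\varepsilon\subset K'$ for all small $\varepsilon$, $u_\varepsilon\to u$ uniformly, and $u_\varepsilon\to u$ in $W^{1,2}(M,g)$. Since $\|u_\varepsilon\|_{L^2}\to\|u\|_{L^2}=1$, the normalized functions $\tilde u_\varepsilon:=u_\varepsilon/\|u_\varepsilon\|_{L^2}$ are admissible smooth test functions once $\varepsilon$ is small, so the first step applies to each of them, and it remains to let $\varepsilon\to0$.

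The gradient term and the $\int_M R u^2\,dg$ term converge without difficulty: $\nabla u_\varepsilon\to\nabla u$ in $L^2$, $R$ is smooth hence bounded on the fixed compact set $K'$, $u_\varepsilon^2\to u^2$ in $L^1$, and $\|u_\varepsilon\|_{L^2}\to1$. The only step that requires care — and the one I would single out as the main obstacle — is the convergence of $\int_M \tilde u_\varepsilon^2\log\tilde u_\varepsilon^2\,dg$, since there is no uniform positive lower bound for $u_\varepsilon$ and one cannot argue away from its zero set. Here one uses that $\varphi(s):=s\log s$ extends continuously to $[0,\infty)$ with $\varphi(0)=0$, that the $u_\varepsilon^2$ are uniformly bounded in $L^\infty$ and all supported in $K'$, and that $u_\varepsilon\to u$ almost everywhere; dominated convergence (with dominating function a constant multiple of $\mathbf 1_{K'}$) then gives $\int_M u_\varepsilon^2\log u_\varepsilon^2\,dg\to\int_M u^2\log u^2\,dg$, and the normalization constants, tending to $1$, contribute nothing in the limit. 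Combining the three limits yields the inequality for $u$.
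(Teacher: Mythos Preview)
Your proposal is correct, and in fact the paper gives no proof of this proposition at all: it simply prefaces the statement with ``It is clear that the local $\mu$-functional is a local logarithmic Sobolev constant'' and moves on. Your argument --- read off the inequality from the definition of $\mu(\Omega,g,\tau)$ for smooth test functions, then pass to $C^{0,1}_c(\Omega)$ by mollification, with dominated convergence handling the $u^2\log u^2$ term --- is exactly the standard justification the paper is implicitly invoking.

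One small remark unrelated to your argument: as written in the paper the inequality sign in the proposition appears to be reversed (the definition $\mu\le\overline{\mathcal W}$ yields $\tau\int(4|\nabla u|^2+Ru^2)\,dg\ge \int u^2\log u^2\,dg+\tfrac{n}{2}\log(4\pi\tau)+n+\mu$, which is indeed how the result is used later, e.g.\ in \eqref{last but two} and \eqref{eq: the-log-Sobolev}). Your proof establishes the correct direction.
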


A standard argument in analysis (cf. \cite{Zh10}) also implies the following Sobolev inequality.
\begin{Proposition}
    If $\nu_0:=\nu(\Omega,g,\tau)>-\infty$ and $R\ge R_{\min}$ on $\Omega$, then there is a positive number $c=c(n)$, such that \begin{align*}
\left(\int_M|u|^{\frac{2n}{n-2}}\,dg\right)^{\frac{n-2}{n}}
  \leq c e^{-\frac{2\nu_0}{n}-c\tau R_{\operatorname{min}}} \int_{\Omega} \bigg(4|\nabla u|^2
  +\left( R_{g_0}-R_{\operatorname{min}}+\frac{c}{\tau}\right)u^2 \bigg)\,dg, 
\end{align*}
for any $u\in C_c^{0,1}(\Omega)$.
\end{Proposition}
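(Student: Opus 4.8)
The plan is to promote the scale-family of logarithmic Sobolev inequalities already available on $\Omega$ to a genuine Sobolev inequality, by the classical semigroup argument. First I would unwind the definition of Perelman's local $\mu$-functional: for every $u\in C_c^{0,1}(\Omega)$ with $\int_M u^2\,dg=1$ and every $s>0$ one has $\overline{\mathcal W}(g,u,s)\ge\mu(\Omega,g,s)$, and restricting to $0<s\le\tau$ and using $\mu(\Omega,g,s)\ge\nu(\Omega,g,\tau)=\nu_0$ gives
\[
\int_M u^2\log u^2\,dg\ \le\ s\int_\Omega\!\big(4|\nabla u|^2+Ru^2\big)\,dg-\tfrac n2\log(4\pi s)-n-\nu_0,\qquad 0<s\le\tau .
\]
Splitting $Ru^2=(R-R_{\min})u^2+R_{\min}u^2$ and using $0<s\le\tau$ to absorb the term $sR_{\min}$ into a quantity of the form $c\,\tau R_{\min}$ (this contribution only helps when $R_{\min}\le 0$, which is the range of interest, e.g. $R_{\min}=0$ on a metric soliton), this becomes a \emph{uniform} logarithmic Sobolev inequality, valid for all $s\in(0,\tau]$, for the nonnegative quadratic form $\mathcal Q(u):=\int_\Omega\!\big(4|\nabla u|^2+(R-R_{\min})u^2\big)\,dg$, with defect function $-\tfrac n2\log s+B$, where $B=B(n,\nu_0,\tau R_{\min})$.

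Second, I would feed this into the standard equivalence between such a scale-family of logarithmic Sobolev inequalities and a Sobolev inequality (the ``standard argument in analysis''; cf.\ \cite{Zh10}). In semigroup language: Davies' ultracontractivity estimate turns the inequalities above into a truncated on-diagonal bound $\|e^{-t\mathcal H}\|_{L^1\to L^\infty}\le\big(C(n)e^{2B/n}/t\big)^{n/2}$ for $0<t\le c\tau$, where $\mathcal H:=-4\Delta_g+(R-R_{\min})$ is the nonnegative Schr\"odinger operator on $\Omega$ with Dirichlet conditions; passing to the shifted generator $\mathcal H+\tfrac c\tau$ and using $\|e^{-t(\mathcal H+c/\tau)}\|_{L^2\to L^2}\le e^{-ct/\tau}$ to beat the polynomial loss for large $t$ (write $e^{-t(\mathcal H+c/\tau)}$ as a three-fold product and interpolate $L^1\to L^2\to L^2\to L^\infty$) upgrades this to a global bound $\|e^{-t(\mathcal H+c/\tau)}\|_{L^1\to L^\infty}\le\big(C(n)e^{2B/n}/t\big)^{n/2}$ for all $t>0$. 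By Varopoulos' theorem this is equivalent to
\[
\|u\|_{L^{2n/(n-2)}}^2\ \le\ C(n)e^{2B/n}\,\Big\langle\big(\mathcal H+\tfrac c\tau\big)u,u\Big\rangle
=C(n)e^{2B/n}\int_\Omega\!\Big(4|\nabla u|^2+\big(R-R_{\min}+\tfrac c\tau\big)u^2\Big)\,dg ,
\]
and rewriting $e^{2B/n}=C(n)e^{-2\nu_0/n-c\tau R_{\min}}$ and renaming constants yields the claim for $\|u\|_2=1$; the general case follows by scaling $u\mapsto u/\|u\|_2$.

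None of the ingredients is deep; the content is bookkeeping. The points requiring care are: (i) $R$ is only bounded below, which forces one to run the argument for the genuinely nonnegative operator $-4\Delta_g+(R-R_{\min})$ rather than $-4\Delta_g+R$; (ii) the logarithmic Sobolev inequality is only available for scales $s\le\tau$, which is precisely what produces the correction $\tfrac c\tau$ inside the Dirichlet form and what obliges one to convert the truncated ultracontractivity estimate into a global one for the shifted operator; and (iii) tracking, through Davies' estimate and Varopoulos' theorem, how the defect constant $B$ — hence the final constant — depends on $n$, $\nu_0$ and $\tau R_{\min}$. I expect (ii), the interpolation promoting the estimate on $(0,\tau]$ to one on $(0,\infty)$, to be the fiddliest step, though it is entirely routine.
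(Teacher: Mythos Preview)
Your proposal is correct and is precisely the ``standard argument in analysis (cf.\ \cite{Zh10})'' that the paper invokes in lieu of a proof: the paper does not supply its own argument for this proposition, so there is nothing to compare against beyond the reference. Your outline---log-Sobolev at all scales $s\le\tau$ from the definition of $\nu(\Omega,g,\tau)$, Davies' method to obtain a truncated ultracontractivity bound for the Dirichlet Schr\"odinger semigroup of $-4\Delta+(R-R_{\min})$, the spectral shift by $c/\tau$ and $L^2\!\to\!L^2$ interpolation to extend the bound to all $t>0$, and Varopoulos' equivalence with the Sobolev inequality---is exactly the route taken in \cite{Zh10}, and your identification of the three delicate bookkeeping points (i)--(iii) is accurate.
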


\subsection{A preliminary soliton entropy gap}

The following gap theorem of the soliton entropy is a straightforward consequence of Bamler's $\varepsilon$-regularity theorem. The critical idea of the proof of Theorem \ref{thm: local-gap} is to convert the local gap problem to the entropy gap problem.

\begin{Theorem}\label{thm-Nash-gap}
    There is a constant $\varepsilon=\varepsilon(n)$ with the following property. Let $\big(\mathcal{X},(\nu_t)_{t\in(-T,0)}\big)$ be a noncollapsed $\mathbb{F}$-limit metric soliton defined over interval $(-T,0)$, where $T\in(0,+\infty]$. Let $W$ be its soliton entropy defined in \eqref{soliton-entropy}. 
    If $W\ge-\varepsilon$, then $\big(\mathcal{X},(\nu_t)_{t\in(-T,0)}\big)$ is the flat Gaussian shrinker.
\end{Theorem}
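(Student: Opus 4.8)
The plan is to deduce the result directly from Bamler's $\varepsilon$-regularity theorem \cite[Theorem 10.2]{Bam20a}, applied along the approximating sequence at arbitrarily large scales. First I would use Theorem \ref{thm: metric soliton is ancient} to extend the soliton to $(-\infty,0)$ and to fix a sequence of $n$-dimensional Ricci flows $\{(M^i,g^i_t,x_i)_{t\in[-T_i,0]}\}_{i=1}^\infty$ with $T_i\nearrow+\infty$, $\mathbb{F}$-converging to $(\mathcal{X},(\nu_t)_{t<0})$ on compact intervals and satisfying $\mathcal{N}_{x_i,0}(T_i)\to W$. Since $\tau\mapsto\mathcal{N}_{x_i,0}(\tau)$ is nonincreasing with $\mathcal{N}_{x_i,0}(0^+)=0$, monotonicity gives
\begin{align*}
0\ \ge\ \mathcal{N}_{x_i,0}(\tau)\ \ge\ \mathcal{N}_{x_i,0}(T_i)\ \ge\ W-o(1)\ \ge\ -\varepsilon-o(1)\qquad\text{for all }\ \tau\in(0,T_i].
\end{align*}
Thus, if $\varepsilon_0=\varepsilon_0(n)>0$ denotes the dimensional constant in \cite[Theorem 10.2]{Bam20a}, then choosing the gap constant $\varepsilon=\varepsilon(n)\le\varepsilon_0/2$ (and, say, $\varepsilon<\log 2$) and discarding finitely many indices, we may assume $\mathcal{N}_{x_i,0}(\tau)\ge-\varepsilon_0$ for every $\tau\in(0,T_i]$.

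Next I would apply \cite[Theorem 10.2]{Bam20a} at $(x_i,0)$ at each scale $r$ with $1\le r^2\le T_i$: the lower bound $\mathcal{N}_{x_i,0}(r^2)\ge-\varepsilon_0$ yields the curvature estimate $|\Rm_{g^i_t}|\le(\varepsilon_0 r)^{-2}$ on a parabolic neighborhood of $(x_i,0)$ of size comparable to $\varepsilon_0 r$, described through the $H_n$-centers $z_{i,t}$ of $(x_i,0)$; in particular, once $(\varepsilon_0 r)^2>1$, we obtain $|\Rm_{g^i_{-1}}|\le(\varepsilon_0 r)^{-2}$ on $B_{g^i_{-1}}\big(z_{i,-1},c(n)\varepsilon_0 r\big)$. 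Recall that the $\mathbb{F}$-convergence upgrades to smooth convergence on the regular part \cite[Theorem 2.5]{Bam20c}, that $(z_{i,-1})$ converges to a center, and that every $p\in\mathcal{R}_X\cong\mathcal{R}_{-1}$ lies at finite $g$-distance from a center. Fixing such a $p$ and letting first $i\to\infty$ and then $r\to\infty$, the curvature bound passes to the limit and gives $\Rm_g(p)=0$; the same scaling also shows that for every $L>0$ the ball $B_g(p,L)$ contains no singular points of $X$. Since $p$ is arbitrary, $\mathcal{R}_X$ is flat and $X=\mathcal{R}_X$ is a complete flat Riemannian manifold.

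It then remains to identify the soliton with the flat Gaussian shrinker. On the flat manifold $\mathcal{R}_X$ the equations \eqref{eq: basic-properties-2} reduce to $\nabla^2 f_0=\tfrac12 g$ and $|\nabla f_0|^2=f_0-W$, so on the universal cover $\widetilde{\mathcal{R}_X}\cong\mathbb{R}^n$ the lift of $f_0$ equals $\tfrac14 d^2(\cdot,p_0)+W$, where $p_0$ is the unique critical point of the strictly convex function $f_0$. Invariance of $f_0$ under the deck group $\Gamma$ forces $\Gamma$ to fix $p_0$, and since $\Gamma$ acts freely this gives $\Gamma=\{1\}$; hence $\mathcal{R}_X=\mathbb{R}^n$ with $f_0=\tfrac14 d^2(\cdot,p_0)+W$, and the normalization $\int_{\mathcal{R}_X}(4\pi)^{-n/2}e^{-f_0}\,dg=1$ (valid because $\nu$ is a probability measure supported on $\mathcal{R}_X$) forces $W=0$. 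Therefore $(\mathcal{X},(\nu_t)_{t<0})$ is the flat Gaussian shrinker.

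The only genuinely delicate step is the middle one: matching the parabolic neighborhood produced by Bamler's $\varepsilon$-regularity conclusion (anchored at $(x_i,0)$ and phrased via $H_n$-centers) with the fixed time slice $\mathcal{R}_{-1}$, and checking that as $r\to\infty$ these neighborhoods exhaust all of $\mathcal{R}_X$, so that the pointwise bound $|\Rm_g(p)|\le(\varepsilon_0 r)^{-2}$ improves to $\Rm_g\equiv 0$ with no singular points remaining. The remaining arguments are routine bookkeeping with the monotonicity of the Nash entropy and the smooth-convergence theorem.
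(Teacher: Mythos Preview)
Your proposal is correct and follows essentially the same approach as the paper: both extend via Theorem~\ref{thm: metric soliton is ancient}, invoke Bamler's $\varepsilon$-regularity \cite[Theorem 10.2]{Bam20a} along the approximating sequence at scales $\to\infty$, and pass the vanishing curvature bound to the limit. Your write-up is simply more detailed---you work at the fixed time slice $t=-1$ via $H_n$-centers and spell out the classical identification of a smooth complete flat shrinker with the Gaussian soliton---whereas the paper applies the $\varepsilon$-regularity once at scale $\sqrt{T_i}$ (obtaining $|\Rm|\le(\varepsilon^2 T_i)^{-1}$ on $B_{g^i_0}(x_i,\varepsilon\sqrt{T_i})\times[-\varepsilon^2 T_i,0]$) and takes the final identification as standard.
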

\begin{proof}
    Let $\varepsilon(n)=\frac{1}{2}\varepsilon_n$, where $\varepsilon_n$ is the constant in \cite[Theorem 10.2]{Bam20a}. By Theorem \ref{thm: metric soliton is ancient}, there is a sequence $\{(M^i,g^i_t,x_i)_{t\in[-T_i,0]}\}_{i=1}^\infty$ with $T_i\nearrow +\infty$ such that \eqref{eq:nash-entropy-converge} and \eqref{eq: limiting-definition-refined} hold. By \cite[Theorem 10.2]{Bam20a} and \eqref{eq:nash-entropy-converge}, we have that
    \begin{align*}
        \left|\Rm_{g^i}\right|\le \frac{1}{\varepsilon^2 T_i}\quad \text{ on }\quad B_{g^i_0}\left(x_i,\varepsilon\sqrt{T_i}\right)\times[-\varepsilon^2 T_i,0].
    \end{align*}
    It follows that the metric soliton is a smooth and flat shrinking gradient Ricci soliton; this finishes the proof of the Theorem.
\end{proof}

\subsection{Construction of the cut-off functions}

In this subsection is the proof of Proposition \ref{prop:regular cutoff}. We shall construct cut-off functions for the regular part of a metric soliton. Precisely, we shall convert the cut-off function \cite[Lemma 15.27]{Bam20c} to one that can be applied to metric solitons. 

 Let $(\mathcal{X},\nu_t)$ be an $n$-dimensional noncollapsed $\mathbb{F}$-limit metric soliton. Denote by $(X,d,\nu)$ its model and $(\mathcal{R}_X,g,f_0)$ the regular part of the model. Let $x_0\in\mathcal{R}_X$ be a center of the soliton and $W$ the soliton entropy. Let $x_0(t)$ be the integral curve of $\partial_{\mathfrak{t}}-\nabla f$ with $x_0(-1)=x_0\in\mathcal{R}_{-1}$. By Theorem \ref{thm: metric soliton is ancient}, we may assume that the soliton is defined over $(-\infty,0)$, and that there is a sequence $\{(M^i,g^i_t,x_i)_{t\in[-T_i,0]}\}_{i=1}^\infty$ satisfying $T_i\nearrow +\infty$, \eqref{eq:nash-entropy-converge}, and \eqref{eq: limiting-definition-refined}. Since in \cite[Lemma 15.27]{Bam20c}, the $P^*$-parabolic neighborhood is defined using the limit of $\nu_{x_i,0\,|\,t}$, which is $\nu_t$ in our case, let us define
\begin{align*}
    P^*_\nu(A,-T):=\left\{x\in\mathcal{X}_{[-T,0)}\,\left|\,d^{-T}_{W_1}(\nu_{x\,|\,-T},\nu_{-T})<A\right\}\right.
\end{align*}

\begin{Lemma}\label{lm:P-star-parabolic}
    For any $x\in B_t\big(x_0(t),A\big)\subset\mathcal{X}_t$, where $t\in[-T,0)$, we have
    $$x\in P^*_\nu\left(A+\sqrt{H_nT},-T\right).$$
\end{Lemma}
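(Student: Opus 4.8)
The plan is to estimate $d^{-T}_{W_1}(\nu_{x\,|\,-T},\nu_{-T})$ by inserting the intermediate measure $\nu_{x_0(t)\,|\,-T}$ and using the triangle inequality for the Wasserstein distance on $\mathcal{X}_{-T}$:
\begin{align*}
    d^{-T}_{W_1}\big(\nu_{x\,|\,-T},\nu_{-T}\big)\le d^{-T}_{W_1}\big(\nu_{x\,|\,-T},\nu_{x_0(t)\,|\,-T}\big)+d^{-T}_{W_1}\big(\nu_{x_0(t)\,|\,-T},\nu_{-T}\big).
\end{align*}
Both terms on the right will be controlled by the fundamental monotonicity property of conjugate heat flows along a metric flow (see \cite{Bam20b}): if $(\mu_s)$ and $(\mu'_s)$ are conjugate heat flows on a common time interval, then $s\mapsto d^s_{W_1}(\mu_s,\mu'_s)$ is non-increasing.

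For the first term I would apply this with $\mu_s=\nu_{x\,|\,s}$ and $\mu'_s=\nu_{x_0(t)\,|\,s}$ on $[-T,t]$ (both are conjugate heat kernels, hence conjugate heat flows), which at $s=t$ equal $\delta_x$ and $\delta_{x_0(t)}$; monotonicity then gives
\begin{align*}
    d^{-T}_{W_1}\big(\nu_{x\,|\,-T},\nu_{x_0(t)\,|\,-T}\big)\le d^t_{W_1}\big(\delta_x,\delta_{x_0(t)}\big)=d_t\big(x,x_0(t)\big)<A
\end{align*}
by the hypothesis $x\in B_t(x_0(t),A)$ (for $t=-T$ this chain is simply an identity). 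For the second term I would apply the same monotonicity with $\mu_s=\nu_{x_0(t)\,|\,s}$ and $\mu'_s=\nu_s$ on $[-T,t]$, which at $s=t$ equal $\delta_{x_0(t)}$ and $\nu_t$, so that
\begin{align*}
    d^{-T}_{W_1}\big(\nu_{x_0(t)\,|\,-T},\nu_{-T}\big)\le d^t_{W_1}\big(\delta_{x_0(t)},\nu_t\big)\le\Big(\operatorname{Var}\big(\nu_t,\delta_{x_0(t)}\big)\Big)^{1/2}\le\sqrt{H_n|t|}\le\sqrt{H_nT},
\end{align*}
where the second inequality is Cauchy--Schwarz (any coupling of $\delta_{x_0(t)}$ with $\nu_t$ is the product measure, so $d^t_{W_1}(\delta_{x_0(t)},\nu_t)=\int d_t(x_0(t),y)\,d\nu_t(y)$), the third inequality is Lemma \ref{lm: Hn-center-on-metric-soliton}, and the last uses $|t|\le T$. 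Adding the two bounds gives $d^{-T}_{W_1}(\nu_{x\,|\,-T},\nu_{-T})<A+\sqrt{H_nT}$, i.e.\ $x\in P^*_\nu(A+\sqrt{H_nT},-T)$.

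I do not expect a genuine obstacle here; the only points requiring care are citing Bamler's $W_1$-monotonicity in the precise form needed (allowing one of the two conjugate heat flows to be a conjugate heat \emph{kernel} $\nu_{x\,|\,\cdot}$ issuing from a point mass, and noting that $(\nu_s)$ restricted to $[-T,t]$ is itself a conjugate heat flow) and handling the degenerate endpoint case $t=-T$, where $\nu_{x\,|\,-T}=\delta_x$ and the estimates above become equalities. Everything else is the routine translation of the variance bound of Lemma \ref{lm: Hn-center-on-metric-soliton} into a $d_{W_1}$-bound via Jensen's inequality.
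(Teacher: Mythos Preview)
Your proposal is correct and essentially the same as the paper's proof: both use the $W_1$-monotonicity of conjugate heat flows, the triangle inequality, the bound $d_{W_1}\le\sqrt{\operatorname{Var}}$, and Lemma \ref{lm: Hn-center-on-metric-soliton}. The only cosmetic difference is the order of operations: the paper applies monotonicity once to get $d^{-T}_{W_1}(\nu_{x\,|\,-T},\nu_{-T})\le d^t_{W_1}(\delta_x,\nu_t)$ and then splits via the triangle inequality at time $t$, whereas you split at time $-T$ first and apply monotonicity to each piece.
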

\begin{proof}
    By Lemma \ref{lm: Hn-center-on-metric-soliton}, the fact $d_{W_1}\le \sqrt{\operatorname{Var}}$, and the monotonicity of the $W_1$-Wasserstein distance (cf. \cite[Proposition 4.17]{Bam20c}), we have
    \begin{align*}
        d^{-T}_{W_1}(\nu_{x\,|\,-T},\nu_{-T})&\ \le 
 d^{t}_{W_1}(\delta_x,\nu_{t})
 \\
 &\  \le 
 d_t(x_0(t),x)+d^{t}_{W_1}(\delta_{x_0(t)},\nu_{t})
 \\
 &\ \le A+\sqrt{H_nT}.
    \end{align*}
    This proves the lemma.
\end{proof}

\begin{proof}[Proof of Proposition \ref{prop:regular cutoff}]

For fixed $\sigma>0$ and $A>\underline{A}(n)$, Lemma \ref{lm:P-star-parabolic} implies that, 
\begin{align}\label{eq:P-star-parabolic}
    B_t(x_0(t),A\sqrt{|t|})\subset P^*_\nu(2A,-2)\quad \text{ for any }\quad t\in[-2,0).
\end{align}
Now, if $r\le \overline{r}(A,W,\sigma)$, we may consider the cut-off function $\eta_r$ defined by Bamler in \cite[Lemma 15.27]{Bam20c}. Note that in the statement thereof, $\Delta=4$, and the parameters $Y_0$ and $\tau_0$ are absorbed into the parameter $W$ in our case due to \eqref{eq:nash-entropy-converge}. 

By \eqref{eq:P-star-parabolic} and \cite[Lemma 15.27]{Bam20c}, we have
\begin{align*}
    &\ \int_{-2}^{-\frac{1}{2}}\left(\int_{\mathcal{R}_t\cap B_t(x_0(t),A\sqrt{|t|})\cap\{0\le \eta_r<1\}}dg_t\right)dt
    \\
    \le &\ \int_{-2}^0\int_{\{0<\tilde{r}_{\Rm}\le 2r\}\cap P^*(2A,-2)\cap\mathcal{R}_t}dg_tdt
    \\
    \le &\ Cr^{4-\sigma},
\end{align*}
where $C=C(A,W,\sigma)$. Thus, we may find a time $t\in[-2,-1/2]$, such that
    $$\int_{\mathcal{R}_t\cap B_t(x_0(t),A\sqrt{|t|})\cap\{0\le \eta_r<1\}}dg_t\le 2Cr^{4-\sigma}.$$
Since the geometry of $\mathcal{R}_t\subset\mathcal{X}_t$ and $\mathcal{R}_X\subset X$ differ only by a scaling with bounded factor $|t|\in[1/2,2]$, and since $B_t(x_0(t),A\sqrt{|t|})\subset\mathcal{X}_t$ corresponds to $B(x_0,A)\subset X$, we see that the time-$t$-slice of $\eta_r$, when identified as a function on $X$ via the isometry $\overline{\phi}_t$ defined in \eqref{eq:canonical-form}, satisfies Properties (1)---(4) in the statement of the proposition. 

Finally, to see Property (5), let us fix an arbitrary $x\in \mathcal{R}_X$ and let $x(t)$ ($t\in[-2,-1/2]$) be its trajectory. We may find a positive number $c$, such that for all $t\in[-2,-1/2]$, it holds that the parabolic neighborhood $B_t(x(t),2c)\times[t-4c^2,t+4c^2]\subset\mathcal{R}$ is unscathed and
\begin{align*}
|\Rm_{g_t}|\le (2c)^{-2}\quad \text{ on }\quad B_t(x(t),2c)\times[t-4c^2,t+4c^2]. 
\end{align*}
On the other hand, for each $t\in[-2,-1/2]$, we can also find $x_i'\in M^i$, such that
\begin{align*}
(x'_i,t)\xrightarrow{\quad i\to\infty\quad} x(t).
\end{align*}
This convergence is either via the local diffeomorphisms which define the smooth convergence, or is a convergence of points within a correspondence. By the local smooth convergence property \cite[Theorem 2.5]{Bam20c} at $x(t)$, we have that
$$r_{\Rm}(x_i',t)\ge c\qquad \text{ for all $i$ large enough},$$
where $r_{\Rm}$ is the curvature scale defined in \cite[Definition 10.1]{Bam20a}. Thus, we have $\tilde{r}_{\Rm}(x(t))\ge c$ for all $t\in[-2,-1/2]$ due to \cite[Lemma 15.16]{Bam20c}. By Bamler's definition of $\eta_r$, we have $\eta_r(x(t))=1$ for all $t\in[-2,-1/2]$ whenever $r<c/2$. Thus, in our choice of $\eta_r$ above, no matter which time slice it is, we always have $x\in \{\eta_r=1\}$ if $r<c/2$; this shows property (5).
     
\end{proof}

\subsection{Proof of the local gap theorem}

In this subsection, we prove Theorem \ref{thm: local-gap}. The settings are the same as that of the previous subsection. Let
$$\sigma>0,\quad0<\delta\le\overline{\delta}(n), \quad 0<r\le \overline{r}(\delta,W,\sigma)$$
to be determined. Let $\eta$ be a standard cutoff function on $\IR$ such that
\[
    \eta|_{(-\infty,1/2]}=1,\quad
    \eta|_{[1,\infty)}=0,\quad
    - 3\sqrt{\eta} \le \eta'\le 0.
\]
Define
\begin{align}\label{eq:cut-off-on-large-scale}
\phi_\delta(x) = \eta\left(\frac{d(x,x_0)}{\delta^{-1}}\right)=\eta\left(\frac{d_g(x,x_0)}{\delta^{-1}}\right).
\end{align}
Then we obviously have that
\begin{gather}\label{cutoff}
\phi_\delta\in C^{0,1}\big(B(x_0,1/\delta)\cap\mathcal{R}_X\big),\\\nonumber
|\nabla \phi_\delta|\le 3\delta \quad \text{ on }\quad \mathcal{R}_X,
\\\nonumber
\phi_\delta(x)=1\quad\text{ on }\quad B(x_0,1/2\delta)\cap \mathcal{R}_X.
\end{gather}
We shall apply the logarithmic Sobolev inequality given by the local $\mu$-functional (Proposition \ref{prop: local-log-Sobolev-of-mu}) to the test function $\eta_r^2\phi_\delta^2(4\pi)^{-\frac{n}{2}}e^{-f_0}\in C_{c}^{0,1}\big(B(x_0,1/\delta)\cap\mathcal{R}_X\big)$, where $\eta_r$ is the cut-off function given by Proposition \ref{prop:regular cutoff}. Let us first of all normalize its integral. If $\delta^{-1}>2\sqrt{2H_n}$, then we have
\begin{align}\label{lower bound of V}
     V
    &:= \int_{\mathcal{R}_X} \eta_r^2\phi_\delta^2(4\pi)^{-\frac{n}{2}}
     e^{- f_0}\, dg 
    \\\nonumber
    &=\int_{\mathcal{R}_X}\phi_\delta^2 (4\pi)^{-\frac{n}{2}}
     e^{- f_0}\, dg- \int_{\mathcal{R}_X} \phi_\delta^2(1-\eta_r^2)(4\pi)^{-\frac{n}{2}}
     e^{-f_0}\, dg
    \\\nonumber
    &\ge \int_{  B\big(x_0,\frac{1}{2}\delta^{-1}\big)}\, d\nu -\int_{\{0\le\eta_r<1\}\cap\mathcal{R}_X\cap B(x_0,\delta^{-1})} (4\pi)^{-\frac{n}{2}}
     e^{-f_0}\, dg
    \\\nonumber
    &\ge \nu\left( B\left(x_0,\tfrac{1}{2}\delta^{-1}\right)\right)-C(n)e^{-W}\int_{\{0\le\eta_r<1\}\cap\mathcal{R}_X\cap B(x_0,\delta^{-1})} \, dg
    \\\nonumber
    &\ge 1- 4H_n\delta^2-C(\delta,W,\sigma)r^{4-\sigma},
\end{align}
where we have applied Lemma \ref{lm: Hn-center-on-metric-soliton} (in combination with \cite[Proposition 3.13]{Bam20a}) and Theorem \ref{thm:potential-function}.
Define
\[
     u := \phi_\delta\eta_r\sqrt{(4\pi)^{-\frac{n}{2}}
     e^{-f_0}/V},
\]
and we shall use it as the test function for the local $\mu$-functional. Note that
\[
\int_{\mathcal{R}_X}u^2\,dg=1\qquad \text{ and }\qquad u\in C^{0,1}_c(\mathcal{R}_X).
\]
By Proposition \ref{prop: local-log-Sobolev-of-mu}, we have

\begin{align}\label{last but two}
     & \mu\big(B(x_0,\delta^{-1})\cap\mathcal{R}_X,g,1\big) 
     \\\nonumber
    &\le \int_{\mathcal{R}_X} \left(4|\nabla u|^2
        + Ru^2
    \right)\, dg-\int_{\mathcal{R}_X} u^2\log u^2\,dg
    - \tfrac{n}{2}\log (4\pi) - n\\\nonumber
    &= 
    \int_{\mathcal{R}_X} \left(4\left|\nabla \log (\phi_\delta\eta_r)-\tfrac{1}{2}\nabla f_0\right|^2u^2
        + Ru^2\right)\,dg
        \\\nonumber
        &\qquad -\int_{\mathcal{R}_X}\left(\log (\phi_\delta\eta_r)^2-\tfrac{n}{2}\log 4\pi - f_0-\log V\right)u^2\,dg - \tfrac{n}{2}\log (4\pi) - n
        \\\nonumber
&=\frac{1}{V}\int_{\mathcal{R}_X}\left(4\left|\nabla(\phi_\delta\eta_r)\right|^2-4(\phi_\delta\eta_r)\left\langle\nabla(\phi_\delta\eta_r),\nabla f_0\right\rangle\right)(4\pi)^{-\frac{n}{2}}e^{- f_0}\,dg
\\\nonumber
&\qquad+\int_{\mathcal{R}_X}\left(\left|\nabla f_0\right|^2+R+ f_0-n\right)u^2\,dg+\log V
-\frac{1}{V}\int_{\mathcal{R}_X}\left((\phi_\delta\eta_r)^2\log(\phi_\delta\eta_r)^2\right) (4\pi)^{-\frac{n}{2}}e^{- f_0}\,dg       
   \\\nonumber
   &=\frac{1}{V}\int_{\mathcal{R}_X}\left(4\left|\nabla(\phi_\delta\eta_r)\right|^2+2(\phi_\delta\eta_r)^2\Delta f_0-2(\phi_\delta\eta_r)^2\left|\nabla f_0\right|^2\right)(4\pi)^{-\frac{n}{2}}e^{- f_0}\,dg
   \\\nonumber
&\qquad+\int_{\mathcal{R}_X}\left(\left|\nabla f_0\right|^2+R+ f_0-n\right)u^2\,dg+\log V
-\frac{1}{V}\int_{\mathcal{R}_X}\left((\phi_\delta\eta_r)^2\log(\phi_\delta\eta_r)^2\right) (4\pi)^{-\frac{n}{2}}e^{- f_0}\,dg       
   \\\nonumber
&=\int_{\mathcal{R}_X}\left(2\Delta  f_0-\left|\nabla f_0\right|^2+R+ f_0-n\right)u^2\,dg+\log V
\\\nonumber
&\qquad +\frac{4}{V}\int_{\mathcal{R}_X}|\nabla (\phi_\delta\eta_r)|^2(4\pi)^{-\frac{n}{2}}e^{- f_0}\,dg
-\frac{1}{V}\int_{\mathcal{R}_X}\left((\phi\eta_r)^2\log(\phi\eta_r)^2\right) (4\pi)^{-\frac{n}{2}}e^{- f_0}\,dg 
\\\nonumber
&=W+\log V+\frac{4}{V}\int_{\mathcal{R}_X}|\nabla (\phi_\delta\eta_r)|^2(4\pi)^{-\frac{n}{2}}e^{- f_0}\,dg
-\frac{1}{V}\int_{\mathcal{R}_X}\left((\phi_\delta\eta_r)^2\log(\phi_\delta\eta_r)^2\right) (4\pi)^{-\frac{n}{2}}e^{- f_0}\,dg. 
\end{align}
We estimate the last two terms in the above formula. First of all, by \eqref{cutoff} and Proposition  \ref{prop:regular cutoff}, we have
\begin{align}\label{last-but-one}
    &\ \frac{4}{V}\int_{\mathcal{R}_X}|\nabla (\phi_\delta\eta_r)|^2(4\pi)^{-\frac{n}{2}}e^{- f_0}\,dg
    \\\nonumber
    \le&\ \frac{8}{V}\int_{\mathcal{R}_X}\left(\eta_r^2|\nabla\phi_\delta|^2+\phi_\delta^2|\nabla\eta_r|^2\right)(4\pi)^{-\frac{n}{2}}e^{- f_0}\,dg
    \\\nonumber
    \le &\ \frac{8}{V}\sup_{\mathcal{R}_X}|\nabla\phi_\delta|^2 \cdot\nu(X)+\frac{8}{V}\int_{B(x_0,\delta^{-1})\cap\mathcal{R}_X}|\nabla\eta_r|^2\,dg
    \\\nonumber
    \le &\ \frac{72\delta^2}{V}+\frac{C(n)}{V}\int_{\{|\nabla\eta_r|\neq 0\}\cap B(x_0,\delta^{-1})\cap\mathcal{R}_X}r^{-2}\,dg
    \\\nonumber
    \le & \frac{1}{V}\left(72\delta^2+C(\delta,W,\sigma)r^{2-\sigma}\right).
\end{align}
On the other hand, by Jensen's inequality, we have
\begin{align}\label{last}
 \frac{1}{V}\int_M\left((\phi_\delta\eta_r)^2\log(\phi_\delta\eta_r)^2\right) (4\pi)^{-\frac{n}{2}}e^{- f_0}\,dg
& = \frac{1}{V}\int_M(\phi_\delta\eta_r)^2\log(\phi_\delta\eta_r)^2 d\nu
\\\nonumber
&\ge \frac{1}{V}\left(\int_M(\phi_\delta\eta_r)^2\,d\nu\right)\log\left(\int_M(\phi_\delta\eta_r)^2\,d\nu\right)
\\\nonumber
&=\frac{1}{V}\cdot V\log V
\\\nonumber
&=\log V.
\end{align}

By fixing $\sigma=1/2$, $\delta\le \overline{\delta}(n)$, and $r\le\overline{r}(\delta,W,\sigma)$, we have by \eqref{lower bound of V} that
$$V\ge 1-C_n\delta^2\ge \tfrac{1}{2}.$$
Thus, combining \eqref{last but two}, \eqref{last-but-one}, and \eqref{last}, we obtain
\begin{align}\label{eq:mu-functional-upper}
    W\ge \mu\big(B(x_0,\delta^{-1})\cap\mathcal{R}_X,g,1\big)-\delta-C_n\delta^2.
\end{align}
Finally, letting $\delta=\delta(n)$ be small enough, we then have that if
\begin{align*}
    \mu\big(B(x_0,\delta^{-1})\cap\mathcal{R}_X,g,1\big)\ge -\delta,
\end{align*}
then
\begin{align*}
    W\ge -\varepsilon(n),
\end{align*}
where $\varepsilon(n)$ is the constant in Theorem \ref{thm-Nash-gap}; this finishes the proof of Theorem \ref{thm: local-gap}.
\\

Taking $\delta\to 0$ in \eqref{eq:mu-functional-upper}, we also have the following estimate.
\begin{Corollary}\label{coro:mu-functional-upper}
    On a metric soliton, we have
    \begin{align*}
        W\ge \mu(\mathcal{R}_X,g,1).
    \end{align*}
\end{Corollary}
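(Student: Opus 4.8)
The plan is to read off Corollary~\ref{coro:mu-functional-upper} as the $\delta\to 0^+$ limit of the estimate \eqref{eq:mu-functional-upper} established in the proof of Theorem~\ref{thm: local-gap}. That argument shows, with $\sigma=\tfrac{1}{2}$, that for every $\delta\le\overline{\delta}(n)$ and every $r\le\overline{r}(\delta,W,\sigma)$ one has
\[
W \ge \mu\big(B(x_0,\delta^{-1})\cap\mathcal{R}_X,g,1\big)-\delta-C_n\delta^2,
\]
obtained by feeding the normalized cut-off built from $\phi_\delta\eta_r(4\pi)^{-n/2}e^{-f_0}$ into Proposition~\ref{prop: local-log-Sobolev-of-mu} and simplifying with the soliton identities \eqref{eq: basic-properties-2} together with the cut-off bounds of Proposition~\ref{prop:regular cutoff}. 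So no new computation is required; all the work has already been done in the proof of Theorem~\ref{thm: local-gap}.

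The only genuine step is to pass from the ball to all of $\mathcal{R}_X$, which is the easy direction of domain monotonicity for the local $\mu$-functional: since $B(x_0,\delta^{-1})\cap\mathcal{R}_X\subset\mathcal{R}_X$, every function admissible for the ball is also admissible for $\mathcal{R}_X$, so enlarging the domain cannot raise the infimum,
\[
\mu\big(B(x_0,\delta^{-1})\cap\mathcal{R}_X,g,1\big) \ge \mu(\mathcal{R}_X,g,1).
\]
Combining the two displays gives $W\ge \mu(\mathcal{R}_X,g,1)-\delta-C_n\delta^2$ for all sufficiently small $\delta>0$, and letting $\delta\to 0^+$ finishes the proof.

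There is essentially no obstacle here. One should only note that \eqref{eq:mu-functional-upper} must be available along an entire null sequence of $\delta$'s; this is automatic, since $\overline{\delta}(n)$ is a fixed dimensional threshold and Proposition~\ref{prop:regular cutoff} supplies a suitable $r=r(\delta)$ for each such $\delta$. Also, if $\mu(\mathcal{R}_X,g,1)=-\infty$ the inequality is vacuous, so no finiteness of the functional has to be checked at this point; it will in any case be pinned down (as $\mu(\mathcal{R}_X,g,1)=W$) together with Theorem~\ref{thm: nu-functional}. If one wished, one could instead verify directly that $\mu\big(B(x_0,\delta^{-1})\cap\mathcal{R}_X,g,1\big)\to\mu(\mathcal{R}_X,g,1)$ by exhaustion, using that every $u\in C^{0,1}_c(\mathcal{R}_X)$ has support compact in $\mathcal{R}_X$ and hence contained in some $B(x_0,L)\cap\mathcal{R}_X$, but this refinement is not needed.
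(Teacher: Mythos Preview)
Your proposal is correct and follows exactly the paper's own argument: the paper simply writes ``Taking $\delta\to 0$ in \eqref{eq:mu-functional-upper}'' to deduce the corollary, and you have spelled out the one implicit step, namely the domain monotonicity $\mu\big(B(x_0,\delta^{-1})\cap\mathcal{R}_X,g,1\big)\ge\mu(\mathcal{R}_X,g,1)$.
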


\section{Global \texorpdfstring{$\nu$}{nu}-functional and Sobolev inequality}

In this section, we shall prove Theorem \ref{thm: nu-functional}. In view of Corollary \ref{coro:mu-functional-upper}, we need only to show the lower estimate of the $\nu$-functional; the whole section is devoted to the proof of the following proposition.

\begin{Proposition}\label{prop:global-sobolev-1}
    Let $(X,d,\nu)$ be the model of an $n$-dimensional noncollapsed $\mathbb{F}$-limit metric soliton and $(\mathcal{R}_X,g,f_0)$ its regular part. Let $W\in(-\infty,0]$ be the soliton entropy. Then we have
    $$\mu(\mathcal{R}_X,g,\tau)\ge W\quad\text{ for all }\quad \tau>0.$$
    In particular, we have    $$\nu(\mathcal{R}_X,g)\ge W$$
    where $\mu$ and $\nu$ are Perelman's functionals defined in Subsection 6.1.
\end{Proposition}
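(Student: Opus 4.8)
The plan is to carry the observation of Carrillo--Ni \cite{CN09} (and its scale-free strengthening by Li--Wang \cite{LW20}) over to the metric soliton: after a change of variables, the inequality $\overline{\mathcal{W}}(g,u,\tau)\ge W$ becomes an instance of the Bakry--\'Emery logarithmic Sobolev inequality for the probability measure $\nu=(4\pi)^{-n/2}e^{-f_0}\,dg$ on $\mathcal{R}_X$.

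First I would record that $\int_{\mathcal{R}_X}(4\pi)^{-n/2}e^{-f_0}\,dg=\nu(X)=1$, so that any $u\in C^{0,1}_c(\mathcal{R}_X)$ with $\int_{\mathcal{R}_X}u^2\,dg=1$ factors uniquely as $u=\psi\cdot(4\pi)^{-n/4}e^{-f_0/2}$ with $\psi\in C^{0,1}_c(\mathcal{R}_X)$, $\psi\ge 0$, and $\int_{\mathcal{R}_X}\psi^2\,d\nu=1$. Substituting this into $\overline{\mathcal{W}}(g,u,\tau)$, integrating the cross term by parts (permissible since $\psi$ is compactly supported inside the smooth locus $\mathcal{R}_X$), and invoking the soliton identities of \eqref{eq: basic-properties-2} --- in particular $\Ric+\nabla^2 f_0=\tfrac12 g$ and $2\Delta f_0-|\nabla f_0|^2+R+f_0-n=W$ --- collapses all the $f_0$-dependent terms. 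For $\tau=1$ this gives the clean identity
\[
\overline{\mathcal{W}}(g,u,1)=W+4\int_{\mathcal{R}_X}|\nabla\psi|^2\,d\nu-\int_{\mathcal{R}_X}\psi^2\log\psi^2\,d\nu,
\]
so that $\mu(\mathcal{R}_X,g,1)\ge W$ is \emph{equivalent} to the logarithmic Sobolev inequality $\int_{\mathcal{R}_X}\psi^2\log\psi^2\,d\nu\le 4\int_{\mathcal{R}_X}|\nabla\psi|^2\,d\nu$ for $\nu$. The same computation for general $\tau>0$ produces the scale-$\tau$ version of this inequality, the extra terms being handled by the elementary bound $\tau-1\ge\log\tau$. (One can also reduce general $\tau$ to $\tau=1$ via Perelman's $\mu$-monotonicity along the soliton's self-similar Ricci flow, available by Theorem \ref{thm: metric soliton is ancient} together with $(\mathcal{R}_{-s},g_{-s})=(\mathcal{R}_X,sg)$ from Theorem \ref{thm: soliton-basic-properties}, letting the earlier time tend to $-\infty$.)

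The remaining content is the logarithmic Sobolev inequality itself. On $\mathcal{R}_X$ one has the Bakry--\'Emery bound $\Ric+\nabla^2 f_0=\tfrac12 g>0$, which by the $\Gamma_2$-criterion yields exactly the constant $4$ we need (this is the $CD(\tfrac12,\infty)$ inequality) --- \emph{provided} the relevant weighted heat semigroup is conservative, which is automatic on a complete manifold but must be argued here. This is where the structure of an $\mathbb{F}$-limit metric soliton enters: $X\setminus\mathcal{R}_X$ has codimension at least $4$ by Theorem \ref{thm: soliton-basic-properties}(4), so it is a null set of zero capacity, hence polar for the weighted diffusion; $f_0$ grows quadratically by Theorem \ref{thm:potential-function}; and $R\ge 0$ by Corollary \ref{Coro: scalar-positivity}. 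These facts make the diffusion on $(\mathcal{R}_X,g,\nu)$ conservative, so the semigroup proof of the Bakry--\'Emery inequality goes through. I would make this quantitative by inserting the cutoff functions $\eta_r$ of Proposition \ref{prop:regular cutoff}: their bounds $|\nabla_g\eta_r|\le C_0 r^{-1}$ and $|\{0\le\eta_r<1\}\cap B(x_0,A)|\le Cr^{4-\sigma}$ force the cutoff errors, which scale like $r^{-2}\cdot r^{4-\sigma}=r^{2-\sigma}$, to vanish as $r\to 0$, so that the inequality for compactly supported $\psi$ follows by approximation from the essentially complete smooth region.

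The main obstacle throughout is precisely the incompleteness of $\mathcal{R}_X$: the integration by parts, the Bakry--\'Emery/semigroup argument, and Perelman's monotonicity are all classically stated for complete spaces, and the work lies in showing that the codimension-$\ge 4$ singular set is invisible to the heat flows involved while the quadratic growth of $f_0$ controls the genuine ends. Once $\mu(\mathcal{R}_X,g,\tau)\ge W$ is established for all $\tau>0$, the ``in particular'' clause is immediate from $\nu(\mathcal{R}_X,g)=\inf_{\tau>0}\mu(\mathcal{R}_X,g,\tau)$, and combining this with Corollary \ref{coro:mu-functional-upper} (which gives $W\ge\mu(\mathcal{R}_X,g,1)\ge\nu(\mathcal{R}_X,g)$) proves Theorem \ref{thm: nu-functional}.
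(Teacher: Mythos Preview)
Your approach is genuinely different from the paper's. The paper never works intrinsically on $(\mathcal{R}_X,g,\nu)$: given a test function $u\in C^\infty_c(\mathcal{R}_X)$, it pushes $u$ forward along the diffeomorphisms of local smooth convergence to the approximating smooth Ricci flows $(M^i,g^i_t)$ furnished by Theorem~\ref{thm: metric soliton is ancient}, applies the local Sobolev inequality of \cite{CMZ23a} on $(M^i,g^i_{-1})$ (which bounds the local $\nu$-functional below by the pointed Nash entropy up to small error), compares that Nash entropy with $\mathcal{N}_{x_i,0}(T_i)\to W$ via \cite[Corollary~5.11]{Bam20a} and \eqref{eq:near-the-Hn-center-3}, and lets $i\to\infty$. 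No Bakry--\'Emery inequality, no semigroup, and no analysis of the singular set enters; the functional inequality is simply inherited from the smooth approximants, where it is already known. Your route, by contrast, would give a self-contained intrinsic proof independent of the approximating sequence --- a worthwhile thing if it can be made to work.

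The gap in your proposal is exactly where you locate it, and it is real. Your reduction of $\mu(\mathcal{R}_X,g,1)\ge W$ via the Carrillo--Ni identity to the $CD(\tfrac12,\infty)$ log-Sobolev inequality for $\nu$ is correct, and the estimate $\int|\nabla\eta_r|^2\,dg\lesssim r^{2-\sigma}\to 0$ from Proposition~\ref{prop:regular cutoff} is indeed the zero-capacity statement for the singular set. But the chain ``zero capacity $\Rightarrow$ polar for the weighted diffusion $\Rightarrow$ conservative semigroup $\Rightarrow$ Bakry--\'Emery log-Sobolev holds'' consists of links each of which needs its own argument on an incomplete weighted manifold, and you have not supplied any of them; the semigroup interpolation proof in particular requires tracking mass under the $f_0$-weighted heat flow and showing none escapes to $X\setminus\mathcal{R}_X$, which does not follow from a static cutoff bound alone. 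Your alternative for general $\tau$ (Perelman's $\mu$-monotonicity along the self-similar flow) runs into the same obstacle: the monotonicity formula is pointwise, but comparing \emph{infima} at two times requires controlling an approximate minimizer under the conjugate heat equation on the incomplete slices $\mathcal{R}_t$, which is conservativeness again. And the ``$\tau-1\ge\log\tau$'' remark does not by itself handle general $\tau$, since after the $\tau=1$ identity the leftover term is $(\tau-1)\int(4|\nabla u|^2+Ru^2)\,dg-\tfrac{n}{2}\log\tau$, whose sign is not controlled without further input. The paper's limiting argument sidesteps all of this by borrowing the inequality, at every scale $\tau$, from the smooth flows.
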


Let us consider a metric soliton $(\mathcal{X},(\nu_t)_{t<0})$ in the statement of the proposition. By Theorem \ref{thm: metric soliton is ancient} and the reasoning at the beginning of Section 5, we may pick a sequence $\{(M^i,g^i_t,x_i)\}_{i=1}^\infty$ satisfying $T_i\nearrow +\infty$, \eqref{eq:nash-entropy-converge},  \eqref{eq: limiting-definition-refined}, and \eqref{Sc-almost-positivity}. By \cite[Theorem 9.31]{Bam20b} and \cite[Theorem 2.5]{Bam20c}, there is an open-set-exhaustion $U_1\subset U_2\subset\hdots \subset U_i\subset \hdots\subset \mathcal{R}\subset\mathcal{X}$ and a sequence of smooth time-preserving diffeomorphisms $\psi_i$ 
\begin{align*}
    \psi_i:U_i\longrightarrow V_i\subset M^i\times [-T_i,0],
\end{align*}
and $\varepsilon_i\to 0$ such that \cite[Theorem 9.31(a)---(f)]{Bam20b} hold. In particular, we have
\begin{align}\label{eq:convergence-of-metric}
    \left\|\psi_i^*g^i_t-g_t\right\|_{C^{[\varepsilon_i^{-1}]}(U_i)}\le \varepsilon_i.
\end{align}
Let $x_0=x_0(-1)\in\mathcal{R}_{-1}$ be a center of the metric soliton, and let $$(z_i,-1)=\psi_i(x_0)\in M^i\times\{-1\}$$ be the images of $x_0$. Arguing in the same way as the proof of \eqref{near-the-Hn-center-1}, we have that whenever $i$ is large enough, each $(z_i,-1)$ is at most $3\sqrt{H_n}$-away from an $H_n$-center of $(x_i,0)$. Thus, in view of \cite[Definition 3.10]{Bam20a}, we have
\begin{align}\label{eq:near-the-Hn-center-3}
    d^{g^i_{-1}}_{W_1}\left(\delta_{z_i},\nu_{x_i,0\,|\,-1}\right)\le 4\sqrt{H_n}.
\end{align}

By the nature of local smooth convergence, the geometry of $\mathcal{R}_X$ near a center is close to that of $(M^i,g^i_{-1})$ near an $H_n$-center of $(x_i,0)$. In spite of the formation of singularity along the converging sequence, the Sobolev inequality is actually unaffected. The idea is to send functions compactly supported on $\mathcal{R}_X (\cong\mathcal{R}_{-1})$ to functions on $(M^i,g^i_{-1})$, and apply the Sobolev inequality therein. However, since $(M^i,g^i_t)$ is not necessarily ancient, we cannot directly apply the uniform Sobolev inequality \cite{CMZ23}, but the local Sobolev inequality \cite{CMZ23a} is sufficient for our purpose.

\begin{Proposition}[Local Sobolev inequality {\cite[Theorem 1.1]{CMZ23a}}]\label{prop:local Sobolev}
    For any $\varepsilon>0$ and for each $i$, we have
    \begin{align*}
        \nu\left(B_{g^i_{-1}}\left(z_i,\varepsilon\sqrt{T_i/2}\right),g^i_{-1},\varepsilon T_i/2\right)\ge \mathcal{N}_{z_i,-1}(T_i/2)-\sqrt{n}\varepsilon-\tfrac{n}{2}\varepsilon-\tfrac{n}{2}\log(1+\varepsilon).
    \end{align*}
\end{Proposition}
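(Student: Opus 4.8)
Since this statement is quoted from \cite{CMZ23a}, I only sketch how its proof runs. The plan is to unwind the definition $\nu(\Omega,g,\tau)=\inf_{0<s\le\tau}\mu(\Omega,g,s)$ and reduce to showing, for each scale $0<s\le\varepsilon T_i/2$ and each $u\in C^\infty_c\big(B_{g^i_{-1}}(z_i,\varepsilon\sqrt{T_i/2})\big)$ with $\int u^2\,dg^i_{-1}=1$, that
\[
    \overline{\mathcal{W}}(g^i_{-1},u,s)\ \ge\ \mathcal{N}_{z_i,-1}(T_i/2)-\sqrt{n}\,\varepsilon-\tfrac n2\varepsilon-\tfrac n2\log(1+\varepsilon).
\]
Writing $u^2=(4\pi s)^{-n/2}e^{-f}$, the left-hand side is Perelman's $\mathcal{W}(g^i_{-1},f,s)$, and the first step is to run the conjugate heat equation backward from time $-1$ with initial datum $u^2$ (letting the scale parameter increase accordingly) down to time $-1-\tfrac{T_i}{2}$; this is legitimate since $T_i\nearrow+\infty$ keeps $-1-\tfrac{T_i}{2}$ well inside $[-T_i,0]$, and \eqref{Sc-almost-positivity} controls the scalar curvature there. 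Perelman's monotonicity \cite{Per02} then gives $\overline{\mathcal{W}}(g^i_{-1},u,s)\ge\mathcal{W}(g^i_{-1-\frac{T_i}{2}},h,s+\tfrac{T_i}{2})$, where $(4\pi(s+\tfrac{T_i}{2}))^{-n/2}e^{-h}$ denotes the evolved density $v$.

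The main step is to bound the latter from below by the pointed Nash entropy. The density $v$ is the average, against the probability measure $u^2\,dg^i_{-1}$, of the conjugate heat kernels $K_i(x,-1\,|\,\cdot,-1-\tfrac{T_i}{2})$ with $x$ ranging over a ball of radius $\varepsilon\sqrt{T_i/2}$ about $z_i$ --- a radius that is $O(\varepsilon)$ times the natural length scale $\sqrt{T_i/2}$. Bamler's two-sided Gaussian heat kernel bounds \cite{Bam20a}, which need only the Nash entropy lower bound (itself a consequence of $\mathcal{N}_{z_i,-1}(T_i/2)\ge W-1$), then show that all these kernels, and hence $v$, are comparable to $K_i(z_i,-1\,|\,\cdot,-1-\tfrac{T_i}{2})$ up to a factor $e^{C(n)\varepsilon}$, with matching control on their gradients; recall that this last kernel has differential entropy $\mathcal{N}_{z_i,-1}(T_i/2)+\tfrac n2\log(4\pi\tfrac{T_i}{2})+\tfrac n2$. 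One now expands $\mathcal{W}(g^i_{-1-\frac{T_i}{2}},h,s+\tfrac{T_i}{2})$ into its entropy, scalar-curvature, and gradient pieces: concavity of the differential entropy under mixtures turns the entropy piece into $\int\mathcal{N}_{x,-1}(T_i/2)\,u^2(x)\,dg^i_{-1}(x)$ up to the change-of-scale correction $\tfrac n2\log\tfrac{s+T_i/2}{T_i/2}\le\tfrac n2\log(1+\varepsilon)$; Bamler's estimates on the dependence of the pointed Nash entropy on its base point (\cite[Corollary 5.11]{Bam20a}) together with its monotonicity replace $\mathcal{N}_{x,-1}(T_i/2)$ by $\mathcal{N}_{z_i,-1}(T_i/2)$ at the cost of an $O(n\varepsilon)$ error; \eqref{Sc-almost-positivity} disposes of the scalar term; and the gradient piece, bounded below via the Gaussian comparison and Bamler's heat-kernel gradient estimates, is handled so as to absorb the constant produced by the normalization of the Nash entropy. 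Summing up yields the displayed inequality.

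The main obstacle is the quantitative bookkeeping in this second step. One has to make the $e^{C(n)\varepsilon}$-comparison of neighbouring conjugate heat kernels (and of their gradients) genuinely effective --- this is where the noncollapsing implied by the Nash entropy bound is essential --- and to arrange the various $\varepsilon$-errors and the change-of-scale term so that the final estimate carries no stray additive constant and so that the pointed Nash entropy, rather than merely Perelman's $\mathcal{W}$-entropy, survives on the right. These technicalities are carried out in \cite{CMZ23a}.
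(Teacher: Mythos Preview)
The paper does not supply a proof of this proposition; it is quoted verbatim from \cite[Theorem 1.1]{CMZ23a} and used as a black box. So there is no in-paper argument to compare against, and your decision to give only a sketch referring back to \cite{CMZ23a} is entirely appropriate.

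Your outline captures the correct overall architecture: reduce to a single test function, evolve it by the conjugate heat equation from time $-1$ to $-1-T_i/2$, invoke Perelman's monotonicity of $\mathcal{W}$, recognize the evolved density as a $u^2\,dg^i_{-1}$-mixture of conjugate heat kernels $K_i(x,-1\,|\,\cdot,-1-T_i/2)$ with $x$ in the $\varepsilon\sqrt{T_i/2}$-ball, use concavity of differential entropy to pass to an average of pointed Nash entropies, then apply \cite[Corollary 5.11]{Bam20a} to replace each $\mathcal{N}_{x,-1}(T_i/2)$ by $\mathcal{N}_{z_i,-1}(T_i/2)$ at cost $O(\varepsilon)$, with the $\tfrac{n}{2}\log(1+\varepsilon)$ coming from the scale shift $T_i/2\mapsto s+T_i/2$. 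This is indeed the skeleton of the argument in \cite{CMZ23a}.

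One remark on your sketch: the passage where you invoke pointwise two-sided Gaussian bounds to say the individual kernels are ``comparable up to a factor $e^{C(n)\varepsilon}$'' and then use this for the gradient piece is not really how the argument goes, and as stated would not produce the clean dimensional error terms in the proposition. In \cite{CMZ23a} the gradient-plus-scalar term $\tau\int(|\nabla h|^2+R)v$ is not estimated by kernel comparison; rather, one uses that $|\nabla h|^2\ge 0$ and $R\ge -1/T_i$ (your \eqref{Sc-almost-positivity}) to bound this term below directly, and the resulting small negative contribution is exactly what feeds into the $\tfrac{n}{2}\varepsilon$ term. The entropy concavity and the base-point variation estimate do all the real work; no pointwise Gaussian comparison is needed. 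This is a minor inaccuracy in an otherwise sound sketch.
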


For any fixed positive scale $\tau>0$, let us consider an arbitrary test function $u\in C^{\infty}_c(\mathcal{R}_X)$ satisfying 
\begin{align}
 \int_{\mathcal{R}_X}u^2\,dg=1.
\end{align} 
Write $K=\operatorname{\spt} u$ and fix an  $A>0$ such that
$$K\Subset \mathcal{R}_X\cap B(x_0,A)\subset \mathcal{R}_{-1}.$$
Obviously, whenever $i$ is large enough, we have
\begin{align*}
    K\Subset U_i\quad \text{ and }\quad \psi_{i,-1}(K)\subset B_{g^i_{-1}}(z_i,2A).
\end{align*}
We may then consider the pull-backs of $u$ via $\psi_{i,-1}$. Define
\begin{align*}
    u_i:=u\circ\psi_{i,-1}^{-1}: M^i\to\mathbb{R}.
\end{align*}
The following lemma is a straightforward consequence of \eqref{eq:convergence-of-metric}.
\begin{Lemma}
    Whenever $i$ is large enough, we have
    \begin{enumerate}[(1)]
        \item $u_i$ is compactly supported in $B_{g^i_{-1}}(z_i,2A)$.
        \item Let $\displaystyle V_i^2:=\int_{M^i} u_i^2\,dg^i_{-1}$, then $V_i=1+\Psi(\varepsilon_i)$.
        \item 
        \begin{align}
            \int_{M^i}u_i^2\log u_i^2\,dg^i_{-1}&\ =(1+\Psi(\varepsilon_i))\int_{\mathcal{R}_X}u^2\log u^2\,dg,\label{eq:almost-mu-functional-test-1}
            \\
            \int_{M^i}\tau(4|\nabla_{g^i_{-1}}u_i|^2+R_{g^i_{-1}}u_i^2)\,dg^i_{-1}&\ =(1+\Psi(\varepsilon_i))\int_{\mathcal{R}_X}\tau(4|\nabla u|^2+Ru^2)\,dg.\label{eq:almost-mu-functional-test-2}
        \end{align}
    \end{enumerate}
    Here (and in the following) $\Psi(\varepsilon)$ stands for a general constant that converges to zero as $\varepsilon\to 0$, which depends on $\tau$ and $u$ and varies from line to line; $\varepsilon_i$ is the constant in \eqref{eq:convergence-of-metric}.
\end{Lemma}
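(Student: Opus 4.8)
The plan is to treat this lemma as a routine change-of-variables computation driven entirely by the smooth convergence \eqref{eq:convergence-of-metric}. Set $K:=\spt u$, a fixed compact subset of $\mathcal{R}_X$. For $i$ large one has $K\subset U_i$, and writing $h_i:=\psi_{i,-1}^{*}g^i_{-1}-g$ on $U_i$, the estimate \eqref{eq:convergence-of-metric} gives $\|h_i\|_{C^2(K)}\le\varepsilon_i\to0$ (legitimate since $[\varepsilon_i^{-1}]\to\infty$). First I would record the elementary consequences of this $C^2$-smallness on the fixed set $K$: the map $\psi_{i,-1}$ is a $(1+\Psi(\varepsilon_i))$-bi-Lipschitz embedding on $K$, distorting $g$-lengths of curves in $K$ by a factor $1+\Psi(\varepsilon_i)$; it pulls back the volume measure as $\psi_{i,-1}^{*}(dg^i_{-1})=(1+\Psi(\varepsilon_i))\,dg$; for smooth $v$ it satisfies $|\nabla_{g^i_{-1}}v|^2\circ\psi_{i,-1}=|\nabla_{\psi_{i,-1}^{*}g^i_{-1}}(v\circ\psi_{i,-1})|^2=(1+\Psi(\varepsilon_i))\,|\nabla_g(v\circ\psi_{i,-1})|^2$; and, because scalar curvature is a universal expression in the metric and its first two derivatives, $R_{g^i_{-1}}\circ\psi_{i,-1}=R_{\psi_{i,-1}^{*}g^i_{-1}}=R_g+\Psi(\varepsilon_i)$ uniformly on $K$.

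Next I would deduce (1). Since $u_i=u\circ\psi_{i,-1}^{-1}$, we have $\spt u_i=\psi_{i,-1}(K)$, which is compact. For $y\in K$ we have $d(x_0,y)<A$; because the singular set of $(X,d,\nu)$ has codimension at least $4$ (Theorem \ref{thm: soliton-basic-properties}), the intrinsic distance of the Riemannian manifold $(\mathcal{R}_X,g)$ agrees with $d$ on $\mathcal{R}_X$, so there is a $g$-path $\gamma_y\subset\mathcal{R}_X$ from $x_0$ to $y$ of length $<A$, and these paths may be taken to lie in a common precompact subset of $\mathcal{R}_X$. Hence for $i$ large $\gamma_y\subset U_i$, and by the bi-Lipschitz bound $d_{g^i_{-1}}(z_i,\psi_{i,-1}(y))\le\operatorname{length}_{g^i_{-1}}(\psi_{i,-1}\circ\gamma_y)\le(1+\Psi(\varepsilon_i))A<2A$, so $\spt u_i\subset B_{g^i_{-1}}(z_i,2A)$. (Alternatively one simply enlarges $A$ to a fixed constant; its exact value is immaterial for the eventual application to the local $\mu$-functional.)

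For (2) and (3), every integrand below is supported in $K$, where $u_i\circ\psi_{i,-1}=u$, so the change of variables $x=\psi_{i,-1}(\cdot)$ together with the estimates above gives at once
\[
V_i^2=\int_{M^i}u_i^2\,dg^i_{-1}=(1+\Psi(\varepsilon_i))\int_{\mathcal{R}_X}u^2\,dg=1+\Psi(\varepsilon_i),
\]
hence $V_i=1+\Psi(\varepsilon_i)$; likewise $\int_{M^i}u_i^2\log u_i^2\,dg^i_{-1}=(1+\Psi(\varepsilon_i))\int_{\mathcal{R}_X}u^2\log u^2\,dg$ (using that $u^2\log u^2$ is bounded and compactly supported), $\int_{M^i}|\nabla_{g^i_{-1}}u_i|^2\,dg^i_{-1}=(1+\Psi(\varepsilon_i))\int_{\mathcal{R}_X}|\nabla u|^2\,dg$, and $\int_{M^i}R_{g^i_{-1}}u_i^2\,dg^i_{-1}=\int_{\mathcal{R}_X}R_g u^2\,dg+\Psi(\varepsilon_i)$. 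Multiplying by $\tau$ and adding the last two yields \eqref{eq:almost-mu-functional-test-2}, the additive $\Psi(\varepsilon_i)$ being rewritten multiplicatively because $\int_{\mathcal{R}_X}(4|\nabla u|^2+Ru^2)\,dg\ge 4\int_{\mathcal{R}_X}|\nabla u|^2\,dg>0$ (the latter strict since $u\in C^\infty_c(\mathcal{R}_X)$ with $\int u^2\,dg=1$ is not constant).

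The only step that is not purely formal is the distance bound in (1): the bi-Lipschitz estimate controls only curves that stay inside $U_i$, so one genuinely needs that near-minimizing paths from $x_0$ to the points of $K$ can be confined to a fixed precompact region of $\mathcal{R}_X$, which is exactly what the codimension-$\ge 4$ structure of the singular set (Theorem \ref{thm: soliton-basic-properties}), equivalently the agreement of intrinsic and extrinsic distances on $\mathcal{R}_X$, provides. Everything else follows directly from \eqref{eq:convergence-of-metric}, consistent with the lemma being flagged as a straightforward consequence of it.
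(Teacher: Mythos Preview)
Your proposal is correct and matches the paper's approach: the paper gives no proof of this lemma beyond the sentence ``The following lemma is a straightforward consequence of \eqref{eq:convergence-of-metric},'' and your argument is precisely the intended change-of-variables computation, with the only nonformal step (the containment $\psi_{i,-1}(K)\subset B_{g^i_{-1}}(z_i,2A)$) being something the paper already asserts as ``obvious'' in the paragraph preceding the lemma. One small correction: the equality $d|_{\mathcal{R}_X}=d_g$ you use in part (1) is part of the definition of the singular space $(X,d)$ (the model of the metric soliton in Theorem \ref{thm: soliton-basic-properties}(3)) rather than a consequence of the codimension-$\ge 4$ bound, but this does not affect your argument; your sketch that near-minimizing paths from $x_0$ to points of the compact set $K$ can be confined to a common precompact subset of $\mathcal{R}_X$ (e.g.\ by a finite-cover-and-concatenation argument) is the right justification.
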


Taking $u_i/V_i$ as the test function of $\mu\big(B_{g^i_{-1}}(z_i,2A),g^i_{-1},\tau\big)$, we have
\begin{Lemma}\label{lm:almost-test-function}
    \begin{align*}
        &\ \int_{\mathcal{R}_X}\left(\tau(4|\nabla u|^2+Ru^2)-u^2\log u^2\right)\,dg-\left(\frac{n}{2}\log(4\pi\tau)+n\right)
        \\
    \ge &\ (1+\Psi(\varepsilon_i))\mu\Big(B_{g^i_{-1}}(z_i,2A),g^i_{-1},(1+\Psi(\varepsilon_i))\tau\Big)+\Psi(\varepsilon_i).
    \end{align*}
\end{Lemma}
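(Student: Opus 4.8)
The plan is to use the normalized pullback $u_i/V_i$ as a test function in Perelman's local $\overline{\mathcal{W}}$-functional on the ball $B_{g^i_{-1}}(z_i,2A)\subset M^i$, at a time parameter $\sigma_i>0$ slightly rescaled from $\tau$, and then to unwind the definitions using the previous lemma. By that lemma, $u_i/V_i\in C^\infty_c\big(B_{g^i_{-1}}(z_i,2A)\big)$ and $\int_{M^i}(u_i/V_i)^2\,dg^i_{-1}=1$, so $u_i/V_i$ is an admissible competitor in the definition of $\mu$ and therefore
\[
\mu\Big(B_{g^i_{-1}}(z_i,2A),g^i_{-1},\sigma\Big)\le \overline{\mathcal{W}}\big(g^i_{-1},u_i/V_i,\sigma\big)\qquad\text{for every }\sigma>0 .
\]

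Next I would expand the right-hand side. Since $\int_{M^i}u_i^2\,dg^i_{-1}=V_i^2$, a direct computation gives
\[
\overline{\mathcal{W}}\big(g^i_{-1},u_i/V_i,\sigma\big)=\frac{\sigma}{V_i^2}\int_{M^i}\!\big(4|\nabla u_i|^2+R\,u_i^2\big)\,dg^i_{-1}-\frac{1}{V_i^2}\int_{M^i}\! u_i^2\log u_i^2\,dg^i_{-1}+\log V_i^2-\tfrac n2\log(4\pi\sigma)-n .
\]
Substituting the identities \eqref{eq:almost-mu-functional-test-1} and \eqref{eq:almost-mu-functional-test-2} together with $V_i=1+\Psi(\varepsilon_i)$, both integral terms turn into $\big(1+\Psi(\varepsilon_i)\big)$ times the corresponding integrals of $u$ (with possibly different error factors), while $\log V_i^2=\Psi(\varepsilon_i)$. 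I would then choose $\sigma=\sigma_i:=(1+\Psi(\varepsilon_i))\tau$, the precise ratio being dictated by the requirement that the coefficient of $\int_{\mathcal{R}_X}(4|\nabla u|^2+Ru^2)\,dg$ equal $\tau$ times that of $\int_{\mathcal{R}_X}u^2\log u^2\,dg$; after this choice a common factor $1+\Psi(\varepsilon_i)$ can be pulled out of the bracket $\tau\int_{\mathcal{R}_X}(4|\nabla u|^2+Ru^2)\,dg-\int_{\mathcal{R}_X}u^2\log u^2\,dg$. Adding and subtracting $(1+\Psi(\varepsilon_i))\big(\tfrac n2\log(4\pi\tau)+n\big)$ and using $\tfrac n2\log(\sigma_i/\tau)=\Psi(\varepsilon_i)$, one recognizes the bracket (up to $\Psi(\varepsilon_i)$) as $\overline{\mathcal{W}}(g,u,\tau)+\tfrac n2\log(4\pi\tau)+n$, which yields
\[
\overline{\mathcal{W}}\big(g^i_{-1},u_i/V_i,\sigma_i\big)=\big(1+\Psi(\varepsilon_i)\big)\Big(\int_{\mathcal{R}_X}\!\big(\tau(4|\nabla u|^2+Ru^2)-u^2\log u^2\big)\,dg-\tfrac n2\log(4\pi\tau)-n\Big)+\Psi(\varepsilon_i).
\]

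Finally, combining this identity with the test-function inequality above at $\sigma=\sigma_i$ and dividing through by $1+\Psi(\varepsilon_i)$ (so that $1/(1+\Psi(\varepsilon_i))$ is absorbed into a new factor $1+\Psi(\varepsilon_i)$ and the remaining constant term into a new $\Psi(\varepsilon_i)$) produces exactly the asserted inequality, with $\sigma_i=(1+\Psi(\varepsilon_i))\tau$ in the $\mu$-functional. The only point that needs care is the bookkeeping of the various quantities $V_i$, $\sigma_i/\tau$, and the two integral ratios, each of which must be checked to be of the form $1+\Psi(\varepsilon_i)$ with $\Psi$ allowed to depend on the fixed data $u$, $\tau$ and $A$; there is no analytic obstacle, since all the hard work — the smooth convergence \eqref{eq:convergence-of-metric} and the estimates of the preceding lemma — is already in place. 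Note that we do not attempt to let $\Psi(\varepsilon_i)$ annihilate the (a priori possibly unbounded) factor $\mu(B_{g^i_{-1}}(z_i,2A),g^i_{-1},\sigma_i)$; the multiplicative error $1+\Psi(\varepsilon_i)$ is kept in the statement precisely to avoid this.
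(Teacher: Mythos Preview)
Your proposal is correct and follows essentially the same approach as the paper: use $v_i=u_i/V_i$ as a test function for the local $\mu$-functional on $B_{g^i_{-1}}(z_i,2A)$, then convert the resulting $\overline{\mathcal{W}}$-expression back to one involving $u$ via the identities \eqref{eq:almost-mu-functional-test-1}, \eqref{eq:almost-mu-functional-test-2} and $V_i=1+\Psi(\varepsilon_i)$. The paper's proof is terser---it simply rewrites the two integrals in terms of $v_i$ and says ``the lemma follows from combining''---whereas you spell out explicitly why the scale must be adjusted to $\sigma_i=(1+\Psi(\varepsilon_i))\tau$ (the two multiplicative errors in front of the gradient term and the $\log$ term need not coincide) and how the additive constants $\tfrac{n}{2}\log(4\pi\tau)+n$ are absorbed; this extra bookkeeping is correct and in fact clarifies what the paper leaves implicit.
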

\begin{proof}
Let $v_i=u_i/V_i$, by \eqref{eq:almost-mu-functional-test-1} we have
    \begin{align*}
        \int_{M^i}v_i^2\log v_i^2\,dg^i_{-1}&\ =\frac{1}{V_i^2}\int_{M^i}u_i^2\log u_i^2\,dg^i_{-1}-\frac{\log V_i^2}{V_i^2}\int_{M^i} u_i^2\,dg^i_{-1}
        \\
        &\ =(1+\Psi(\varepsilon_i))\int_{\mathcal{R}_X}u^2\log u^2\,dg+\Psi(\varepsilon_i),
    \end{align*}
    or, equivalently
    \begin{align}\label{eq:almost-mu-functional-test-3}
        \int_{\mathcal{R}_X}u^2\log u^2\,dg =(1+\Psi(\varepsilon_i))\int_{M^i}v_i^2\log v_i^2\,dg^i_{-1}+\Psi(\varepsilon_i).
    \end{align}
    
    On the other hand, \eqref{eq:almost-mu-functional-test-2} implies that
    \begin{align}\label{eq:almost-mu-functional-test-4}
        \int_{\mathcal{R}_X}\tau(4|\nabla u|^2+Ru^2)\,dg=(1+\Psi(\varepsilon_i))\int_{M^i}\tau(4|\nabla_{g^i_{-1}}v_i|^2+R_{g^i_{-1}}v_i^2)\,dg^i_{-1}.
    \end{align}

    The lemma follows from combining \eqref{eq:almost-mu-functional-test-3} and \eqref{eq:almost-mu-functional-test-4}.
    
\end{proof}

It turns out that one needs only to bound the local $\mu$-functional at $(z_i,-1)$, which in turn depends on the Nash entropy at $(z_i,-1)$ by Proposition \ref{prop:local Sobolev}. The latter can be easily estimated via \cite[Corollary 5.11]{Bam20a}.

\begin{Lemma}\label{lm:almost-converging-nash-entropy}
    For any $\varepsilon>0$, we have $\mathcal{N}_{z_i,-1}(T_i/2)\ge W-\varepsilon$ whenever $i$ is large enough.
\end{Lemma}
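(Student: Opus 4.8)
The plan is to transfer the convergence $\mathcal{N}_{x_i,0}(T_i)\to W$ from \eqref{eq:nash-entropy-converge} to a lower bound for $\mathcal{N}_{z_i,-1}(T_i/2)$, exploiting that the backward scale $T_i/2$ is much larger than the time separation $1$ between the slices $\{-1\}$ and $\{0\}$, so that the Nash entropy based at the almost-$H_n$-center $(z_i,-1)$ is forced to be close to the Nash entropy based at $(x_i,0)$.

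First I would recall that, by \eqref{eq:near-the-Hn-center-3}, we have $d^{g^i_{-1}}_{W_1}(\delta_{z_i},\nu_{x_i,0\,|\,-1})\le 4\sqrt{H_n}$, so $(z_i,-1)$ is a $D$-center of $(x_i,0)$ for a dimensional constant $D=D(n)$. Then, for $i$ large enough that $T_i\ge 2$, the flow $g^i_t$ is defined on $[-(T_i/2+1),0]$, where $R_{g^i_t}\ge -1/T_i$ by \eqref{Sc-almost-positivity}; I would apply \cite[Corollary 5.11]{Bam20a} between the times $-1$ and $0$ with backward scale $\tau=T_i/2$ to obtain a comparison
\begin{align*}
    \mathcal{N}_{z_i,-1}(T_i/2)\ge \mathcal{N}_{x_i,0}(T_i/2+1)-\Psi_i,
\end{align*}
where the error $\Psi_i=\Psi_i(n)$ is controlled by the ratio $1/(T_i/2)$ of the time gap to the backward scale and by $|R_{\min}|\cdot 1=1/T_i$, and hence $\Psi_i\to 0$ as $i\to\infty$. (If one wishes to invoke \cite[Corollary 5.11]{Bam20a} only for genuine $H_n$-centers, one would first compare $\mathcal{N}_{z_i,-1}(T_i/2)$ with the Nash entropy at an actual $H_n$-center $(z_i'',-1)$ of $(x_i,0)$, which lies within the fixed distance $3\sqrt{H_n}$ of $(z_i,-1)$ by the argument preceding \eqref{eq:near-the-Hn-center-3}; a gradient-type bound for the Nash entropy with bounded displacement against the growing scale contributes only a further error tending to $0$.)

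Next I would use the monotonicity of the pointed Nash entropy $\tau\mapsto\mathcal{N}_{x_i,0}(\tau)$ (non-increasing in $\tau$) together with $T_i/2+1\le T_i$ to conclude $\mathcal{N}_{x_i,0}(T_i/2+1)\ge\mathcal{N}_{x_i,0}(T_i)$, and therefore
\begin{align*}
    \liminf_{i\to\infty}\mathcal{N}_{z_i,-1}(T_i/2)\ge\liminf_{i\to\infty}\big(\mathcal{N}_{x_i,0}(T_i)-\Psi_i\big)=W
\end{align*}
by \eqref{eq:nash-entropy-converge}; this yields $\mathcal{N}_{z_i,-1}(T_i/2)\ge W-\varepsilon$ for all $i$ large, as claimed.

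The main obstacle is essentially bookkeeping: one must extract from \cite[Corollary 5.11]{Bam20a} (combined with the almost-positivity \eqref{Sc-almost-positivity}) the precise quantitative form of the error term and verify that it vanishes in our regime, in which the backward scale $T_i/2\to\infty$ while the separation of the two slices remains equal to $1$; the remaining ingredients—the $D$-center property, the monotonicity of the Nash entropy, and \eqref{eq:nash-entropy-converge}—are routine.
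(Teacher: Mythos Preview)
Your proposal is correct and follows essentially the same route as the paper: apply \cite[Corollary 5.11]{Bam20a} with $(x_1,t_1)=(x_i,0)$, $(x_2,t_2)=(z_i,-1)$, $t^*=-1$, $s=-1-T_i/2$ together with \eqref{eq:near-the-Hn-center-3} and \eqref{Sc-almost-positivity} to obtain $\mathcal{N}_{z_i,-1}(T_i/2)\ge \mathcal{N}_{x_i,0}(T_i/2+1)-o(1)$, and then conclude via \eqref{eq:nash-entropy-converge}. Your explicit use of the monotonicity $\mathcal{N}_{x_i,0}(T_i/2+1)\ge\mathcal{N}_{x_i,0}(T_i)$ just spells out the last step that the paper leaves implicit, and your parenthetical about genuine $H_n$-centers is unnecessary since Corollary 5.11 only requires the $W_1$-bound \eqref{eq:near-the-Hn-center-3}.
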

\begin{proof}
    Let us apply \cite[Corollary 5.11]{Bam20a} with
    \begin{align*}
        (x_1,t_1)=(x_i,0),\quad (x_2,t_2)=(z_i,-1), \quad t^*=-1,\quad s=-1-T_i/2,\quad R_{\min}=-\tfrac{1}{T_i}.
    \end{align*}
    Note that the last term above is due to \eqref{Sc-almost-positivity}, and that we have the closeness between $(x_1,t_1)$ and $(x_2,t_2)$ \eqref{eq:near-the-Hn-center-3}. Thus,
    \begin{align*}
        \mathcal{N}_{z_i,-1}(T_i/2)\ge \mathcal{N}_{x_i,0}(T_i/2+1)-\frac{C(n)}{T_i}.
    \end{align*}
    The lemma follows from the above inequality and the convergence of the Nash entropy \eqref{eq:nash-entropy-converge}.
\end{proof}

Finally, let us fix any positive number $\varepsilon>0$ and take $i$ large enough such that $\varepsilon T_i/2\gg \tau$. Proposition \ref{prop:local Sobolev}, Lemma \ref{lm:almost-test-function}, and Lemma \ref{lm:almost-converging-nash-entropy} imply that
\begin{align*}
    \int_{\mathcal{R}_X}\left(\tau(4|\nabla u|^2+Ru^2)-u^2\log u^2\right)\,dg-\left(\frac{n}{2}\log(4\pi\tau)+n\right)\ge (1+\Psi(\varepsilon_i))W-\Psi(\varepsilon_i)-\Psi(\varepsilon).
\end{align*}
First taking $i\to\infty$ and then $\varepsilon\to 0$, we have
$$\int_{\mathcal{R}_X}\left(\tau(4|\nabla u|^2+Ru^2)-u^2\log u^2\right)\,dg-\left(\frac{n}{2}\log(4\pi\tau)+n\right)\ge W.$$
Proposition \ref{prop:global-sobolev-1} follows since the test function $u$ is arbitrarily chosen. As a consequence, we have proved Theorem \ref{thm: nu-functional}.

\section{Volume growth lower estimate}


This section is devoted to the proof of Theorem \ref{thm:linear-volume-lower-bound}.  The idea of the proof, according to \cite{MW12,LW20}, consists of two steps. The first step is to show that a noncompact (in the sense of infinite diameter) metric soliton has infinite volume. In this step, we apply the Sobolev inequality (Corollary \ref{coro: Sobolev}) to show that the volume of unit balls does not decay at a rate higher than $r^{-n}$, where $r$ is the distance from the origin. While an argument similar to \cite{MW12} shows that a metric soliton with finite volume must have exponential decay rate for the volume of unit balls. The second step is to apply the argument of \cite{LW20}, and show that sublinear volume growth contradicts the infinity of the volume.


As always, we consider an $n$-dimensional noncollapsed $\mathbb{F}$-limit metric soliton. Let $(X,d,\nu)$ be its model and $(\mathcal{R}_X,g,f_0)$ the regular part. Let $x_0\in\mathcal{R}_X$ be the center of the soliton and $W$ the soliton entropy.

\subsection{Preliminary volume decay lower bound}



\begin{Lemma}\label{lm: local-volume-lower-bound}
    For any $x\in X$, if $R\le r^{-2}$ on $B(x,r)\cap\mathcal{R}_X$, then
    \begin{align*}
        |B(x,r)\cap\mathcal{R}_X|\ge c(n)e^{W}r^n
    \end{align*}
\end{Lemma}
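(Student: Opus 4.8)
\emph{Proof proposal.} The plan is to run the classical ``Sobolev inequality $\Longrightarrow$ Euclidean volume lower bound'' argument of \cite{MW12,LW20} on the incomplete Riemannian manifold $\mathcal{R}_X$. The only genuinely new point is that the test functions admissible in the Sobolev inequality of Corollary~\ref{coro: Sobolev} must be compactly supported \emph{inside} $\mathcal{R}_X$, so an ordinary distance cut-off has to be multiplied by the cut-off functions of Proposition~\ref{prop:regular cutoff} to excise a neighbourhood of the singular set, after which one lets that second cut-off degenerate.

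Concretely, I would first reduce to a regular base point: if $|B(x,r)\cap\mathcal{R}_X|=+\infty$ there is nothing to prove, and otherwise I use the density of $\mathcal{R}_X$ in $X$ to pick $x'\in\mathcal{R}_X$ with $d(x,x')<r/4$, set $R_0:=r/2$, and observe that $B(x',R_0)\subset B(x,r)$, hence $0\le R\le r^{-2}\le\rho^{-2}$ on $B(x',\rho)$ for every $0<\rho\le R_0$ (using Corollary~\ref{Coro: scalar-positivity}); it then suffices to bound $V(R_0)$ from below, where $V(\rho):=|B(x',\rho)\cap\mathcal{R}_X|\in(0,+\infty)$. For $0<\rho\le R_0$, let $\psi_\rho$ be a distance cut-off equal to $1$ on $B(x',\rho/2)$, vanishing outside $B(x',\rho)$, with $|\nabla\psi_\rho|\le C(n)\rho^{-1}$, and let $\eta_\lambda$ (with $\sigma=1$, and $A$ large enough that $B(x',R_0)\subset B(x_0,A)$) be the function of Proposition~\ref{prop:regular cutoff}; then $\psi_\rho\eta_\lambda\in C^{0,1}_c(\mathcal{R}_X)$ by properties (1)--(3) there. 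Feeding $u=\psi_\rho\eta_\lambda$ into Corollary~\ref{coro: Sobolev}, bounding $\int 4|\nabla(\psi_\rho\eta_\lambda)|^2\le C(n)\rho^{-2}V(\rho)+C\lambda^{2-\sigma}$ and $\int R(\psi_\rho\eta_\lambda)^2\le\rho^{-2}V(\rho)$ via properties (2) and (4), and using property (4) once more to get $|B(x',\rho/2)\cap\{\eta_\lambda=1\}|\ge V(\rho/2)-C\lambda^{4-\sigma}$, I would let $\lambda\to0$ to arrive, for every $0<\rho\le R_0$, at
\begin{align*}
    V(\rho/2)^{\frac{n-2}{n}}\le C(n)\,e^{-\frac{2W}{n}}\,\rho^{-2}\,V(\rho).
\end{align*}

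A single use of this inequality does not suffice, because it is self-improving; instead I would iterate it along the dyadic radii $\rho_k:=R_0/2^{k}$ and argue by contradiction. Suppose $|B(x,r)\cap\mathcal{R}_X|<\delta e^{W}r^{n}$ for a dimensional $\delta=\delta(n)$ to be chosen, so that the normalized volumes $\theta_k:=e^{-W}V(\rho_k)\rho_k^{-n}$ satisfy $\theta_0<2^{n}\delta$. Substituting $V(\rho_k)=e^{W}\rho_k^{n}\theta_k$ into the displayed inequality, the powers of $e^{W}$ and of $R_0$ drop out and one is left with a recursion $\theta_{k+1}\le A_n\,\theta_k^{1+\gamma}$, where $A_n=A_n(n)>0$ and $\gamma=\tfrac{2}{n-2}>0$; hence, once $\delta$ (and thus $\theta_0$) is small enough, $\theta_k\to0$. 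But $x'$ is an ordinary point of the smooth manifold $\mathcal{R}_X$, so $V(\rho_k)/\rho_k^{n}\to\omega_n$ and therefore $\theta_k\to e^{-W}\omega_n\ge\omega_n>0$, using $W\le0$. This contradiction forces $\theta_0\ge\delta(n)$, i.e.\ $V(R_0)\ge\delta(n)e^{W}R_0^{n}$, whence $|B(x,r)\cap\mathcal{R}_X|\ge V(R_0)\ge 2^{-n}\delta(n)\,e^{W}r^{n}$, as claimed.

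I expect the main work to be the passage $\lambda\to0$: one must verify that the contributions of $\eta_\lambda$ to the gradient term, to the scalar-curvature term, and to the normalizing volume all vanish in the limit, uniformly enough that the clean inequality above survives simultaneously at every scale $\rho\le R_0$; here the decay rates $\lambda^{4-\sigma}$ and $\lambda^{2-\sigma}$ built into Proposition~\ref{prop:regular cutoff}(4) (with $\sigma=1$) are precisely what is needed. The reduction to a regular base point $x'$ is also essential, since it is the Euclidean volume ratio $V(\rho_k)/\rho_k^{n}\to\omega_n$ at $x'$ that closes the contradiction; the iteration itself is routine, as in \cite{MW12,LW20}.
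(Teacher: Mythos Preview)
Your proposal is correct and follows essentially the same route as the paper: both arguments multiply a distance cut-off by the singular-set cut-off $\eta$ of Proposition~\ref{prop:regular cutoff}, feed the product into the Sobolev inequality of Corollary~\ref{coro: Sobolev}, let the $\eta$-parameter tend to zero to obtain a clean recursion $V(\rho/2)^{(n-2)/n}\le C(n)e^{-2W/n}\rho^{-2}V(\rho)$, and then iterate using the Euclidean volume ratio at a regular base point. The only cosmetic differences are that the paper uses the tent function $(r-d_g(x,\cdot))_+$ instead of a $0$--$1$ cut-off, and defers the reduction to $x\in\mathcal{R}_X$ to the end rather than the beginning; your write-up of the iteration is more explicit than the paper's ``as in the classical case.''
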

\begin{proof}
    This is a straightforward consequence of Corollary \ref{coro: Sobolev} and an iteration argument. First of all, we consider the case $x\in\mathcal{R}_X$. Fix $\sigma=1/2$, $A\gg r+d_g(x_0,x)$, and any $\delta>0$ (corresponding to $r$ in Proposition \ref{prop:regular cutoff}), let $\eta_\delta$ be the cut-off function given by Proposition \ref{prop:regular cutoff}. Letting
    \begin{align*}
        u:=\left(r-d_g(x,\cdot)\right)_+: \ \mathcal{R}_X\to \mathbb{R}
    \end{align*}
    (note that $u$ is $1$-Lipschitz on $\mathcal{R}_X$) and applying the Sobolev inequality \eqref{eq: the-uniform-Sobolev} to $u\eta_\delta$, we have
    \begin{align*}
        \left(\int_{\mathcal{R}_X}(u\eta_\delta)^{\frac{2n}{n-2}}\,dg\right)^{\frac{n-2}{2}}\le&\  C(n)e^{-\frac{2W}{n}}\int_{\mathcal{R}_X}\left(4|\nabla(u\eta_\delta)|^2+R(u\eta_\delta)^2\right)\,dg
        \\
        \le&\ C(n)e^{-\frac{2W}{n}}\int_{\mathcal{R}_X}\left(8|\nabla u|^2\eta_\delta^2+8u^2|\nabla\eta_{\delta}|^2+\eta_\delta^2\right)\,dg
        \\
        \le &\ C(n)e^{-\frac{2W}{n}}\left(9|B(x,r)\cap\mathcal{R}_X|+C(A,W,\sigma)\delta^{2-\sigma}\right).
    \end{align*}
    Taking $\delta\to 0$ and applying Proposition \ref{prop:regular cutoff} as well as the dominated convergence theorem, we have
    \begin{align*}
        \left(\tfrac{r}{2}|B(x,r/2)\cap\mathcal{R}_X|\right)^{\frac{n-2}{n}}\le  C(n)e^{-\frac{2W}{n}}|B(x,r)\cap\mathcal{R}_X|.
    \end{align*}
    Since $x\in\mathcal{R}_X$, the geometry in a very small disk around $x$ is almost-Euclidean. Thus, iterating the inequality above, we obtain
    $$|B(x,r)\cap\mathcal{R}_X|\ge c(n)e^Wr^n$$
    as in the classical case. 
    
    Finally, if $x\in X\setminus \mathcal{R}_X$, we can then apply the former result to a point $x'\in \mathcal{R}_X\cap B(x,r/100)$, and the conclusion follows in like manner.
\end{proof}

\begin{Lemma}\label{lm:polynomial-volume-decay}
    If $d(x,x_0)\ge 1$, then
    $$|B(x,1)\cap \mathcal{R}_X|\ge c(n)|B(x_0,1)\cap \mathcal{R}_X|\cdot\frac{1}{d^n(x_0,x)}.$$
\end{Lemma}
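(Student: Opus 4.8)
The plan is to derive this from the local volume lower bound Lemma~\ref{lm: local-volume-lower-bound}, used not at unit scale but at the small scale $r\sim d(x_0,x)^{-1}$, and then to convert the resulting constant into $|B(x_0,1)\cap\mathcal{R}_X|$ by means of the volume upper bound Corollary~\ref{volume-upper-1}. Set $\rho:=d(x_0,x)\ge 1$. The point is that, although $R$ may be of size $\sim\rho^2$ in a unit ball around $x$, on a ball of radius $\sim\rho^{-1}$ it is automatically bounded by the reciprocal of the square of that radius, simply because of the quadratic growth of $f_0$.

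Concretely, I would first note that for $y\in B(x,r)\cap\mathcal{R}_X$ with $r\le 1$ one has $d_g(x_0,y)=d(x_0,y)<\rho+1\le 2\rho$ (recall $d=d_g$ on $\mathcal{R}_X$), so the third identity of \eqref{eq: basic-properties-2} and the upper estimate of Theorem~\ref{thm:potential-function} give
\[
R(y)\le f_0(y)-W\le \tfrac{1}{4}\big(d_g(x_0,y)+C(n)\big)^2\le \tfrac{1}{4}\big(2+C(n)\big)^2\rho^2=:C_1(n)\,\rho^2 .
\]
Fixing the dimensional constant $c_1(n):=\min\{1,C_1(n)^{-1/2}\}$ and putting $r:=c_1(n)\rho^{-1}\le 1$, we conclude $R\le r^{-2}$ on $B(x,r)\cap\mathcal{R}_X$, whence Lemma~\ref{lm: local-volume-lower-bound} yields
\[
|B(x,1)\cap\mathcal{R}_X|\ \ge\ |B(x,r)\cap\mathcal{R}_X|\ \ge\ c(n)\,e^{W}r^{n}\ =\ c(n)\,c_1(n)^n\,e^{W}\rho^{-n}.
\]
On the other hand, Corollary~\ref{volume-upper-1} at radius $1$ gives $|B(x_0,1)\cap\mathcal{R}_X|\le C(n)e^{W}$, i.e.\ $e^{W}\ge C(n)^{-1}|B(x_0,1)\cap\mathcal{R}_X|$; substituting this into the previous display produces $|B(x,1)\cap\mathcal{R}_X|\ge c(n)\,|B(x_0,1)\cap\mathcal{R}_X|\,d(x_0,x)^{-n}$, which is the assertion.

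No genuinely hard step is involved — the argument is a short bookkeeping combination of Theorem~\ref{thm:potential-function}, Corollary~\ref{volume-upper-1} and Lemma~\ref{lm: local-volume-lower-bound}, and in particular it uses neither the non-Ricci-flat hypothesis nor the scalar curvature lower bound of Theorem~\ref{thm: chow-lu-yang estimate}. The only mild subtlety is to see that Lemma~\ref{lm: local-volume-lower-bound} should be invoked at the scale $r=c_1(n)/\rho$ rather than at unit scale (which is permissible, since that lemma imposes no lower restriction on the radius), and to choose the dimensional constant $c_1(n)$ small enough that the hypothesis $R\le r^{-2}$ holds on the \emph{whole} ball $B(x,r)\cap\mathcal{R}_X$ and not merely near its center.
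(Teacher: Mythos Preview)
Your proof is correct and follows essentially the same route as the paper: bound $R$ quadratically via Theorem~\ref{thm:potential-function} and the third identity in \eqref{eq: basic-properties-2}, apply Lemma~\ref{lm: local-volume-lower-bound} at scale $r\sim d(x_0,x)^{-1}$, and convert $e^{W}$ to $|B(x_0,1)\cap\mathcal{R}_X|$ via Corollary~\ref{volume-upper-1}. The paper's write-up is more terse but logically identical.
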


\begin{proof}
    The third equation in \eqref{eq: basic-properties-2} and Theorem \ref{thm:potential-function} implies that
    \begin{align}\label{Rupb}
        R(x)\le \frac{1}{4}d^2(x_0,x)+C(n)\quad \text{ for all }\quad x\in \mathcal{R}_X.
    \end{align}
    Thus, if we can show 
    \begin{align*}
        e^W\ge c(n)|B(x_0,1)\cap \mathcal{R}_X|,
    \end{align*}
    the lemma follows from Lemma \ref{lm: local-volume-lower-bound}; but this is simply Corollary \ref{volume-upper-1}.
\end{proof}

\subsection{Differential equation of the volume and the scalar curvature}

Similar to \cite{CZ10}, let $$\rho:=2\sqrt{f_0-W}\ge 0,$$and we shall consider the volumes of the sub-level-sets of $\rho$ and the integrals of the scalar curvature therein. It follows from Theorem \ref{thm:potential-function} that
\begin{equation}\label{rhoest}
2d_g(x,x_0)/3-c_n\leq \rho(x) \leq d_g(x,x_0)+c_n,
\end{equation}
where $c_n>0$ is a dimensional constant. We also define
\begin{eqnarray*}
D(s)&:=&\{x\in X:\,\rho(x)\leq s \}\\
V(s)&:=&\int_{D(s)\cap \mathcal{R}_X}\,dg\\
\chi(s)&:=&\int_{D(s)\cap \mathcal{R}_X} R\,dg.
\end{eqnarray*}
In spite of the existence of singular points, $V(s)$ and $\chi(s)$ are still nice in the measure-theoretic sense.
\begin{Lemma}\label{lm:absolutely-continuous-V-and-chi}
   $V(s)$ and $\chi(s)$ are absolutely continuous, and 
    \begin{align}\label{eq: ODE-for-V-and-chi}
        V'(s)&\ = \int_{\{\rho=s\}\cap\mathcal{R}_X}\frac{1}{|\nabla\rho|}\,dA= \frac{s}{2}\int_{\{\rho=s\}\cap\mathcal{R}_X}\frac{1}{|\nabla f_0|}\,dA,
        \\\nonumber
        \chi'(s)&\ = \int_{\{\rho=s\}\cap\mathcal{R}_X}\frac{R}{|\nabla\rho|}\,dA= \frac{s}{2}\int_{\{\rho=s\}\cap\mathcal{R}_X}\frac{R}{|\nabla f_0|}\,dA,
    \end{align}
    for almost every $s\ge 0$.
\end{Lemma}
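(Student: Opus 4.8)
The plan is to obtain both statements from the coarea formula applied to $\rho$ on the (incomplete) Riemannian manifold $(\mathcal{R}_X,g)$, after two preliminary observations. First, $\rho$ is globally $1$-Lipschitz on $(\mathcal{R}_X,g)$ and smooth on the open locus $\{f_0>W\}$: the third identity in \eqref{eq: basic-properties-2} together with $R\ge 0$ (Corollary \ref{Coro: scalar-positivity}) gives $|\nabla f_0|^2\le f_0-W$, so wherever $f_0>W$ one has
\[
\nabla\rho=\frac{\nabla f_0}{\sqrt{f_0-W}}=\frac{2}{\rho}\,\nabla f_0,\qquad |\nabla\rho|=\frac{|\nabla f_0|}{\sqrt{f_0-W}}\le 1,
\]
and continuity of $\rho$ promotes this to a global Lipschitz bound. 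Second, for each $S>0$ the set $D(S)\cap\mathcal{R}_X$ has finite $g$-volume: by \eqref{rhoest} it is contained in $B\big(x_0,\tfrac{3}{2}(S+c_n)\big)\cap\mathcal{R}_X$, which has finite volume by Corollary \ref{volume-upper-2}; and by \eqref{Rupb} and \eqref{rhoest}, $R$ is bounded by a constant $R_S$ on $D(S)\cap\mathcal{R}_X$, whence $0\le\chi(S)\le R_S V(S)<\infty$. So $V$ and $\chi$ are finite nondecreasing functions of $s$.

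The main step is then one application of the coarea formula. Put $E:=\{|\nabla f_0|=0\}\cap\mathcal{R}_X$; note $\{f_0=W\}\subset E$ (since $|\nabla f_0|^2\le f_0-W$) and that $E$ is precisely the critical set of $\rho$ inside $\{\rho>0\}$. Granting that $E$ is $\mathcal{H}^n$-null (discussed below), apply the coarea formula on $(\mathcal{R}_X,g)$ to $\rho$ with the weight $h=|\nabla\rho|^{-1}\mathbf 1_{D(S)\setminus E}$ (finite off $E$) to get, for fixed $S>0$,
\[
V(S)=\big|(D(S)\cap\mathcal{R}_X)\setminus E\big|=\int_{\mathcal{R}_X}h\,|\nabla\rho|\,dg=\int_0^S\left(\int_{\{\rho=s\}\cap\mathcal{R}_X}\frac{1}{|\nabla\rho|}\,dA\right)ds,
\]
where the last equality uses $|E|=0$ and the fact, itself a consequence of the coarea formula applied to $\mathbf 1_E$, that $\{\rho=s\}\cap E$ is $\mathcal{H}^{n-1}$-null for a.e.\ $s$. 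Since $|\nabla\rho|^{-1}\ge 1$, the inner integral is finite and locally integrable in $s$, so $V$ is absolutely continuous with the stated a.e.\ derivative. The same computation with weight $R\,|\nabla\rho|^{-1}\mathbf 1_{D(S)\setminus E}$ (legitimate since $0\le R\le R_S$ on $D(S)$) gives the absolute continuity of $\chi$ and its derivative formula. Finally, on $\{\rho=s\}$ one has $|\nabla\rho|=2|\nabla f_0|/s$, so $|\nabla\rho|^{-1}=\tfrac{s}{2}\,|\nabla f_0|^{-1}$, which turns both identities into their second, $|\nabla f_0|$-weighted forms.

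The delicate point — which I expect to be the main obstacle, and the only place where more than the pointwise soliton identities \eqref{eq: basic-properties-2} is needed — is that $E=\{|\nabla f_0|=0\}\cap\mathcal{R}_X$ is $\mathcal{H}^n$-null; equivalently, that no level set of $f_0$ has positive $g$-volume, which is exactly what keeps $V$ and $\chi$ from jumping. Since $(g,f_0)$ solves the soliton system \eqref{eq: basic-properties-2}, which is elliptic in harmonic coordinates, $g$ and $f_0$ are real-analytic on $\mathcal{R}_X$; hence $|\nabla f_0|^2$ is real-analytic and $E$ is $\mathcal{H}^n$-null as soon as $f_0$ is not locally constant. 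If $f_0$ were constant on a nonempty open set, \eqref{eq: basic-properties-2} would give $\Ric=\tfrac{1}{2}g$ there, hence $\Ric\equiv\tfrac{1}{2}g$ on the connected manifold $\mathcal{R}_X$ by analytic continuation, and then $X$ — a noncollapsed metric completion of a full-measure regular part with $\Ric\ge\tfrac{1}{2}>0$ and singular set of codimension $4$ — would be compact by a Bonnet--Myers diameter bound; this is excluded in the setting in which the lemma is used (cf.\ the noncompactness hypothesis of Theorem \ref{thm:linear-volume-lower-bound}), the flat case $(\mathcal{R}_X,g)=(\mathbb{R}^n,\text{eucl})$, $f_0=\tfrac14|\cdot|^2$, being explicit anyway. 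Hence in the relevant case $E$ is null, completing the argument.
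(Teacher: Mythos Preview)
Your approach is essentially the same as the paper's: both invoke real-analyticity of $(g,f_0)$ on $\mathcal{R}_X$ (the paper cites Kotschwar), deduce that the critical set $\{|\nabla f_0|=0\}$ is $g$-null (the paper cites Mityagin), and then run the coarea formula on the complement; the paper uses the Morse--Sard theorem for analytic functions to pass to regular level sets, whereas you use the coarea formula applied to $\mathbf 1_E$ to the same effect. One minor slip: the clause ``since $|\nabla\rho|^{-1}\ge 1$, the inner integral is finite'' is backwards---a lower bound on the integrand cannot yield finiteness; what you actually need (and already have) is that the coarea identity together with $V(S)<\infty$ forces $s\mapsto \int_{\{\rho=s\}}|\nabla\rho|^{-1}\,dA$ into $L^1([0,S])$. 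Your handling of the degenerate case $|\nabla f_0|\equiv 0$ via Bonnet--Myers is more explicit than the paper (which simply applies the zero-set result without comment), though on the singular space $X$ it is cleaner to note that $\Ric\equiv\tfrac12 g$ forces $f_0-W\equiv R+|\nabla f_0|^2=\tfrac n2$, and then the upper bound in Theorem~\ref{thm:potential-function} bounds the diameter directly.
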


\begin{proof}

Thanks to Lemma \ref{lm: R-integral-const},  Theorem \ref{thm:potential-function}, and Corollary \ref{volume-upper-1}, both $V$ and $\chi$ are nondecreasing finite real-valued functions in $s$.
In view of the regularity result \cite[Corolary 1.3]{Kot13}, both $g$ and $\nabla f_0$ are real analytic in the geodesic normal coordinates. Consequently, both the soliton potential $f_0=W+R+|\nabla f_0|^2$ and the scalar curvature $R$ are real analytic function on $\mathcal{R}_X$. Applying \cite{M20} to the analytic function $|\nabla f_0|^2=f_0-R-W$, we have that the singular set $\mathcal{C}:=\{x\in \mathcal{R}_X: |\nabla \rho|(x)=0 \}$ has zero measure. Hence, $\frac{1}{|\nabla \rho|}$ is a nonnegative measurable function on $\mathcal{R}_X\setminus\mathcal{C}$. Furthermore, by the Morse-Sard Theorem \cite{SS72}, the singular value set $\rho(\mathcal{C})$ is at most countable.

By applying the coarea formula to $\rho$ on $\mathcal{R}_X\setminus\mathcal{C}$, we also have
\begin{eqnarray*}
V(s)&=& |D(s)\cap \mathcal{R}_X\cap \mathcal{C}|+|D(s)\cap \mathcal{R}_X\setminus\mathcal{C}|= |D(s)\cap \mathcal{R}_X\setminus\mathcal{C}|\\
&=&\int_{D(s)\cap \mathcal{R}_X\setminus\mathcal{C}}\,dg =\int_0^s\int_{\{x\in \mathcal{R}_X\setminus\mathcal{C}:\, \rho(x)=t\}}\frac{1}{|\nabla \rho|}\,dA\, dt.
\end{eqnarray*}
Similarly, 
\[
\chi(s)=\int_{D(s)\cap \mathcal{R}_X}R\,dg=\int_{D(s)\cap \mathcal{R}_X\setminus\mathcal{C}}R\,dg=
\int_0^s\int_{\{x\in \mathcal{R}_X\setminus\mathcal{C}:\, \rho(x)=t\}}\frac{R}{|\nabla \rho|}\,dA\, dt.
\]
Hence the nonnegative functions
\begin{eqnarray*}
t&\mapsto& \int_{\{x\in \mathcal{R}_X\setminus\mathcal{C}:\, \rho(x)=t\}}\frac{1}{|\nabla \rho|}\,dA\\
t&\mapsto& \int_{\{x\in \mathcal{R}_X\setminus\mathcal{C}:\, \rho(x)=t\}}\frac{R}{|\nabla \rho|}\,dA
\end{eqnarray*}
are integrable on $[0,s]$, and $V$ and $\chi$ are absolutely continous functions with
\begin{eqnarray*}
V'(s)&=&\int_{\{x\in \mathcal{R}_X\setminus\mathcal{C}:\, \rho(x)=s\}}\frac{1}{|\nabla \rho|}\,dA,
\\
\chi'(s)&=&\int_{\{x\in \mathcal{R}_X\setminus\mathcal{C}:\, \rho(x)=s\}}\frac{R}{|\nabla \rho|}\,dA
\end{eqnarray*}
for almost every $s$. Since for each regular value $s$ of $\rho$, it holds that
\[
\{x\in \mathcal{R}_X\setminus\mathcal{C}:\, \rho(x)=s\}=\{x\in \mathcal{R}_X:\, \rho(x)=s\},
\]
so the integral on both sets are equal. By The Morse-Sard Theorem again, the singular value consists of a null set;  \eqref{eq: ODE-for-V-and-chi} follows immediately.

\end{proof}

We then show a statement analogous to \cite[Lemma 3.1]{CZ10}.
\begin{Lemma}\cite{{CZ10}}\label{CZlem} For almost every positive number $s$, 
\begin{equation} \label{diffeq}
    \frac{n}{2}V(s)-\frac{s}{2} V'(s)=\chi(s)-\frac{2}{s}\chi'(s),
\end{equation}
and 
\begin{equation} \label{intbddR}
\chi(s) \leq  \frac{n}{2}V(s).
\end{equation}
\end{Lemma}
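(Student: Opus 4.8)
The plan is to mimic the divergence--theorem computation of Cao--Zhou \cite[Lemma 3.1]{CZ10}, the only new difficulty being that $\mathcal{R}_X$ is incomplete, so the divergence theorem cannot be applied to $D(s)\cap\mathcal{R}_X$ directly; this is precisely where the cut-off functions of Proposition \ref{prop:regular cutoff} come in. First I would record the algebraic consequences of \eqref{eq: basic-properties-2}: tracing the first identity gives $\Delta f_0=\tfrac n2-R$, the third gives $|\nabla f_0|^2=f_0-W-R=\tfrac{\rho^2}{4}-R$, and from $\rho=2(f_0-W)^{1/2}$ we get $\nabla\rho=\tfrac2\rho\nabla f_0$, hence $|\nabla f_0|=\tfrac\rho2|\nabla\rho|$ and $|\nabla\rho|^2=1-\tfrac{4R}{\rho^2}$. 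Since $R\ge0$ (Corollary \ref{Coro: scalar-positivity}), on the sublevel set $D(s)$ this yields the pointwise bounds $R\le\tfrac{s^2}{4}$, $|\nabla f_0|\le\tfrac s2$ and $|\nabla\rho|\le1$; together with the finiteness of $V(s)$ and $\chi(s)$ from Lemma \ref{lm:absolutely-continuous-V-and-chi}, this makes $\Delta f_0=\tfrac n2-R$ bounded and integrable on $D(s)\cap\mathcal{R}_X$.

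Next I would fix $\sigma=\tfrac12$, pick $A$ with $D(s)\subset B(x_0,A)$ (possible by \eqref{rhoest}), and work with the cut-off $\eta_r$ of Proposition \ref{prop:regular cutoff} for $r$ small. For a regular value $s>0$ of $\rho$ at which $V$ and $\chi$ are differentiable with finite derivative --- a full-measure set of $s$ by Lemma \ref{lm:absolutely-continuous-V-and-chi} --- the level set $\{\rho=s\}\cap\mathcal{R}_X$ is a smooth hypersurface, and by properties (1) and (3) the vector field $\eta_r^2\nabla f_0$ is compactly supported in $\mathcal{R}_X$ inside $\{\rho\le s\}$; hence the divergence theorem applies and gives
\[
\int_{D(s)\cap\mathcal{R}_X}\eta_r^2\,\Delta f_0\,dg+\int_{D(s)\cap\mathcal{R}_X}\langle\nabla\eta_r^2,\nabla f_0\rangle\,dg=\int_{\{\rho=s\}\cap\mathcal{R}_X}\eta_r^2\,\frac{\langle\nabla f_0,\nabla\rho\rangle}{|\nabla\rho|}\,dA,
\]
where the boundary integrand equals $|\nabla f_0|$ on $\{\rho=s\}$ by the identities above. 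I would then let $r\to0$: the middle term is at most $2\int_{\{|\nabla\eta_r|\neq0\}\cap B(x_0,A)}|\nabla\eta_r|\,|\nabla f_0|\,dg\le C_0s\cdot C(A,W,\sigma)\,r^{3-\sigma}\to0$ by properties (2) and (4), while $\eta_r^2\to1$ pointwise on $\mathcal{R}_X$ by property (5), so dominated convergence --- using $|\Delta f_0|\le\tfrac n2+\tfrac{s^2}{4}$ on $D(s)$, and $|\nabla f_0|\le\tfrac s2$ on $\{\rho=s\}$ together with $\int_{\{\rho=s\}\cap\mathcal{R}_X}dA\le\int_{\{\rho=s\}\cap\mathcal{R}_X}|\nabla\rho|^{-1}dA=V'(s)<\infty$ --- yields $\tfrac n2V(s)-\chi(s)=\int_{\{\rho=s\}\cap\mathcal{R}_X}|\nabla f_0|\,dA$. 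Since the right-hand side is nonnegative, this already proves \eqref{intbddR}.

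Finally, to obtain \eqref{diffeq} I would evaluate the boundary integral using the coarea formulas for $V'$ and $\chi'$ in Lemma \ref{lm:absolutely-continuous-V-and-chi} and the relation $|\nabla f_0|^2=\tfrac{s^2}{4}-R$ on $\{\rho=s\}$:
\[
\tfrac s2V'(s)-\tfrac2s\chi'(s)=\int_{\{\rho=s\}\cap\mathcal{R}_X}\frac{\tfrac{s^2}{4}-R}{|\nabla f_0|}\,dA=\int_{\{\rho=s\}\cap\mathcal{R}_X}|\nabla f_0|\,dA=\tfrac n2V(s)-\chi(s),
\]
and rearranging gives $\tfrac n2V(s)-\tfrac s2V'(s)=\chi(s)-\tfrac2s\chi'(s)$, which is \eqref{diffeq}.

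The step I expect to be the main obstacle is justifying the divergence theorem on the incomplete manifold $\mathcal{R}_X$ --- specifically, ruling out a hidden boundary contribution along the singular set $X\setminus\mathcal{R}_X$. This is exactly what Proposition \ref{prop:regular cutoff} is built to handle: properties (1) and (3) localize the support of $\eta_r^2\nabla f_0$ to a compact piece of $\mathcal{R}_X$, and property (4) --- which supplies an extra power of $r$ beyond the naive bound $|\nabla\eta_r|\le C_0r^{-1}$ --- is precisely what forces the cut-off error term $\int\langle\nabla\eta_r^2,\nabla f_0\rangle\,dg$ to vanish as $r\to0$. Everything else is the routine manipulation sketched above.
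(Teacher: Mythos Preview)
Your proposal is correct and follows essentially the same route as the paper: multiply the traced soliton equation $\Delta f_0=\tfrac n2-R$ by $\eta_r^2$, apply the divergence theorem to $\eta_r^2\nabla f_0$ on $D(s)\cap\mathcal{R}_X$ (which is legitimate precisely because of properties (1), (3) of Proposition \ref{prop:regular cutoff}), kill the cross term $\int\langle\nabla\eta_r^2,\nabla f_0\rangle\,dg$ via properties (2) and (4), pass to the limit $r\to0$ by dominated convergence and property (5), and then rewrite the surviving boundary integral $\int_{\{\rho=s\}}|\nabla f_0|\,dA$ using $|\nabla f_0|^2=\tfrac{s^2}{4}-R$ and the coarea expressions for $V'(s),\chi'(s)$. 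The only cosmetic differences are your specific choice $\sigma=\tfrac12$ (the paper takes any $\sigma<2$) and your use of the area bound $\int_{\{\rho=s\}}dA\le V'(s)$ to dominate the boundary integrand, whereas the paper bounds $|\nabla f_0|\le\tfrac{s^2}{4}|\nabla f_0|^{-1}$ directly.
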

\begin{proof} 
Let $s$ be a positive regular value of $f_0$ at which both $V$ and $\chi$ are differentiable.  By \eqref{rhoest}
\[
D(s)\subseteq B_g(x_0, 3(s+c_n)/2).
\]
We apply Proposition \ref{prop:regular cutoff} with $A:=3(s+c_n)+\underline{A}(n)$ and $\sigma<2$ to get a cutoff function $\eta_r$ for each $r\le \overline{r}(A,W,\sigma)$; here $0<\sigma<2$ is arbitrarily fixed.
By taking the trace of the soliton equation \eqref{eq: basic-properties-2}, we have
\[
\Delta f_0+R=\frac{n}{2}.
\]
We multiply the equation by $\eta_r^2$ and integrate it over $D(s)\cap \mathcal{R}_X$:
\begin{equation}\label{rto0}
    \int_{D(s)\cap \mathcal{R}_X}\eta_r^2 \Delta f_0\, dg+ \int_{D(s)\cap \mathcal{R}_X} \eta^2_r R\, dg=\frac{n}{2}\int_{D(s)\cap \mathcal{R}_X} \eta^2_r \, dg.
\end{equation}
We shall estimate the equation term by term. 

First of all, we have
\[
0\le\frac{n}{2}\int_{D(s)\cap \mathcal{R}_X} (1-\eta^2_r) \, dg\leq \frac{n}{2}\int_{\mathcal{R}_X\cap B(x_0,A)\cap\{0\le \eta_r<1\}}\,dg \le C(A,W,\sigma)r^{4-\sigma}\to 0\ \text{  as  }\ r\to 0.
\]
Hence 
\[
\frac{n}{2}\int_{D(s)\cap \mathcal{R}_X} \eta^2_r \, dg\to \frac{n}{2}V(s)\ \text{  as  }\ r\to 0.
\]

Using also the third equation of \eqref{eq: basic-properties-2}, it holds that $R\le \frac{1}{4}\rho^2$. Thus
\[
0\le \int_{D(s)\cap \mathcal{R}_X} (1-\eta^2_r)R \, dg\leq  \frac{s^2}{4}\int_{D(s)\cap \mathcal{R}_X} (1-\eta^2_r) \, dg\leq C(A,W,\sigma) s^2 r^{4-\sigma}\to 0
\]
and 
\[
\int_{D(s)\cap \mathcal{R}_X} \eta^2_rR \, dg\to \int_{D(s)\cap \mathcal{R}_X} R \, dg\  \text{  as  }\  r\to 0.
\]

 By Proposition \ref{prop:regular cutoff}, when applying the divergence theorem \cite[Theorem 16.32, Theorem 16.48]{Lee13} to $\eta_r^2\nabla f_0$ on $D(s)\cap \mathcal{R}_X$, the boundary integral occurs only on $\{x\in \mathcal{R}_X: \rho(x)=s\}$, which is a smooth hypersurface (recall that $s$ is a regular value of $f_0$). Thus
\begin{eqnarray*}
     \int_{D(s)\cap \mathcal{R}_X}\eta_r^2 \Delta f_0\, dg&=&- 2\int_{D(s)\cap \mathcal{R}_X}\eta_r\langle\nabla \eta_r, \nabla f_0\rangle\, dg+\int_{\{x\in \mathcal{R}_X: \rho(x)=s\}}\eta_r^2\frac{\langle\nabla_0 f, \nabla \rho\rangle}{|\nabla\rho|} \,dA\\
&=&- 2\int_{D(s)\cap \mathcal{R}_X}\eta_r\langle\nabla \eta_r, \nabla f_0\rangle\, dg+\int_{\{x\in \mathcal{R}_X: \rho(x)=s\}}\eta_r^2|\nabla f_0| \,dA
\end{eqnarray*}
\begin{eqnarray*}
2\int_{D(s)\cap \mathcal{R}_X}\eta_r\left|\langle\nabla \eta_r, \nabla f_0\rangle\right|\, dg&\le& C_0sr^{-1} \int_{\mathcal{R}_X\cap B(x_0,A)\cap\{|\nabla_g\eta_r|\neq0\}}dg\\
&\le& C(A,W,\sigma)sr^{3-\sigma}\to 0 \ \text{  as  }\  r\to 0.
\end{eqnarray*}
Moreover, Lemma \ref{lm:absolutely-continuous-V-and-chi} also implies that, for almost every $s\ge 0$,
\begin{align*}
\int_{\mathcal{R}_X\cap\{\rho=s\}}|\nabla f_0|\,dA\le \frac{s^2}{4}\int_{\mathcal{R}_X\cap\{\rho=s\}}\frac{1}{|\nabla f_0|} dA <+\infty.
\end{align*}
So, for such an $s$, Proposition \ref{prop:regular cutoff}(5) implies that
\begin{align*}
\lim_{r\to 0} \int_{\{x\in \mathcal{R}_X: \rho(x)=s\}}\eta_r^2|\nabla f_0| \,dA =\int_{\mathcal{R}_X\cap\{\rho=s\}}|\nabla f_0|\,dA
\end{align*}
due to the dominated convergence theorem.

Finally, \eqref{intbddR} follows from letting $r\to 0$ in \eqref{rto0} and the fact 
\begin{eqnarray*}
\frac{n}{2}V(s)- \int_{D(s)\cap \mathcal{R}_X} R \, dg =\int_{\{x\in \mathcal{R}_X: \rho(x)=s\}}|\nabla f_0| \,dA\ge 0.
\end{eqnarray*}
In like manner, \eqref{diffeq} also follows from taking $r\to 0$; note that the right-hand-side of \eqref{diffeq} comes from the following computation as in \cite{CZ10} using \eqref{eq: basic-properties-2}
\[
\int_{\{x\in \mathcal{R}_X: \rho(x)=s\}}|\nabla f_0| \,dA=\int_{\{x\in \mathcal{R}_X: \rho(x)=s\}}\frac{f_0-W-R}{|\nabla f_0|} \,dA=\frac{s}{2}V'(s)-\frac{2}{s}\chi'(s).
\]
\end{proof}

\subsection{Noncompact metric soliton has infinite volume}

Next we show that a noncompact metric soliton must have infinite volume. Taking $\tau=1$ in \eqref{eq: the-log-Sobolev}, we get the following logarithmic Sobolev inequality which shall be applied in this subsection. 
\begin{eqnarray}\label{logsob}
\W\int_{\mathcal{R}_X}u^2\,dg+\int_{\mathcal{R}_X}u^2\log u^2\,dg
\leq\int_{\mathcal{R}_X}\left(4|\nabla u|^2+Ru^2\right)\,dg+\int_{\mathcal{R}_X}u^2\,dg\cdot \log\left(\int_{\mathcal{R}_X}u^2\,dg\right),
\end{eqnarray}
for $u\in C^{0,1}_c(\mathcal{R}_X)$. Here we have defined
$$\W:= \min\left\{W+n+\frac{n}{2}\log (4\pi),0\right\}$$
for notational simplicity. The following theorem is prove by an argument similar to that of \cite{MW12}.

\begin{Proposition}\label{infvol}Suppose that $\mathcal{R_X}$ has infinite diameter. Then $\left|\mathcal{R_X}\right|=\infty$.
\end{Proposition}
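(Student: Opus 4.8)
The plan is to argue by contradiction: suppose $\left|\mathcal{R}_X\right|<\infty$, and derive a contradiction with the polynomial volume lower bound of Lemma \ref{lm:polynomial-volume-decay} by showing that finiteness of the volume forces $\left|B(x,1)\cap\mathcal{R}_X\right|$ to decay \emph{exponentially} in $d(x_0,x)$. I would first dispose of the Ricci-flat case: if $g$ is Ricci-flat, then the first equation of \eqref{eq: basic-properties-2} becomes $\nabla^2 f_0=\tfrac12 g$, which forces $(\mathcal{R}_X,g)$ to be a flat space of Euclidean volume growth (the Gaussian soliton, up to quotients), so $\left|\mathcal{R}_X\right|=\infty$ and there is nothing to prove. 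Hence I may assume $g$ is non-Ricci-flat, so that $R>0$ on $\mathcal{R}_X$ by Corollary \ref{Coro: scalar-positivity}, $c_R>0$, and the quadratic scalar curvature lower bound $R\ge c\,(f_0-W+C_0)^{-1}$ of Theorem \ref{thm: chow-lu-yang estimate} is available; I keep the notation $\rho=2\sqrt{f_0-W}$ and $V(s)$, $\chi(s)$, $D(s)$ of Subsection 8.2.

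The heart of the argument, following \cite{MW12}, is the implication: \emph{if $V(\infty):=\left|\mathcal{R}_X\right|<\infty$, then $\left|B(x,1)\cap\mathcal{R}_X\right|\le C e^{-c\,d(x_0,x)}$ once $d(x_0,x)$ is large.} Put $W(s):=V(\infty)-V(s)$, so $W(s)\searrow 0$. One first notes, using the Cao--Zhou identities \eqref{diffeq}--\eqref{intbddR} from Lemma \ref{CZlem}, that the boundary flux $I(s):=\tfrac n2 V(s)-\chi(s)=\int_{\{\rho=s\}\cap\mathcal{R}_X}\left|\nabla f_0\right|dA\ge 0$ tends to a limit $L\ge 0$, and that $L=0$: otherwise $\tfrac s2 V'(s)=I(s)+\tfrac2s\chi'(s)\ge I(s)$ would give $V'(s)\ge L/(2s)$ for large $s$, hence $V(s)\to\infty$. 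Next, one applies the Sobolev inequality \eqref{eq: the-uniform-Sobolev} of Corollary \ref{coro: Sobolev} (and the logarithmic Sobolev inequality \eqref{logsob}) to test functions $\eta_r\,\psi$, where $\psi\in C^{0,1}$ equals $1$ on the tail $\mathcal{R}_X\setminus D(s)$, is supported in $D(T)\setminus D(s-1)$ with $\left|\nabla\psi\right|\le 2$ (afterwards $T\to\infty$), and $\eta_r$ is the regular-part cutoff of Proposition \ref{prop:regular cutoff} (afterwards $r\to0$). Estimating the gradient term by $\left|\nabla\psi\right|\le 2$ and $\left|\nabla f_0\right|^2\le f_0-W=\rho^2/4$, and the scalar-curvature term by $R\le\rho^2/4$ together with the volume bound of Corollary \ref{volume-upper-1} and the coarea identities of Lemma \ref{lm:absolutely-continuous-V-and-chi}, one reaches a closed inequality relating $W(s)$ to its values at nearby radii and to $I(s-1)\le-\tfrac{s-1}{2}W'(s-1)$; integrating this over unit intervals in $s$ and feeding back the smallness $W\to0$ (and the Gaussian decay of $\nu(\{\rho>s\})$ furnished by Theorem \ref{thm:potential-function}) gives, after iteration, the stated exponential decay of $W(s)$. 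Since $B(x,1)\cap\mathcal{R}_X\subseteq D(d(x_0,x)+c_n)\setminus D(\tfrac23 d(x_0,x)-c_n)$ by \eqref{rhoest}, this yields $\left|B(x,1)\cap\mathcal{R}_X\right|\le W(\tfrac23 d(x_0,x)-c_n)\le Ce^{-c\,d(x_0,x)}$. On the other hand $\mathcal{R}_X$ has infinite diameter, so there are points $x\in\mathcal{R}_X$ with $d(x_0,x)$ arbitrarily large, for which Lemma \ref{lm:polynomial-volume-decay} forces $\left|B(x,1)\cap\mathcal{R}_X\right|\ge c(n)\left|B(x_0,1)\cap\mathcal{R}_X\right|d(x_0,x)^{-n}>0$; since a positive polynomial beats a decaying exponential, this contradicts the bound above for $d(x_0,x)$ large. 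Hence $\left|\mathcal{R}_X\right|=\infty$.

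The main obstacle, as everywhere in Sections 6--8, is the incompleteness of $\mathcal{R}_X$. Every integration by parts---inside Perelman's functionals and in the divergence theorem on the sublevel sets $D(s)$---must be performed after inserting the cutoff $\eta_r$ of Proposition \ref{prop:regular cutoff}, and one then passes to the limit $r\to0$ using that $\{0\le\eta_r<1\}$ and $\{\left|\nabla_g\eta_r\right|\neq0\}$ have volume $O(r^{4-\sigma})$ on each fixed ball $B(x_0,A)\cap\mathcal{R}_X$. A second delicate point is that the Munteanu--Wang iteration must be run with only the measure-theoretic regularity of $V$ and $\chi$ supplied by Lemma \ref{lm:absolutely-continuous-V-and-chi} (absolute continuity and the coarea identities) in place of the smoothness available on a complete soliton; here the real-analyticity of $g$ and $f_0$ and the Morse--Sard theorem---already used in that lemma---are exactly what make the limiting steps and the iteration legitimate. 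I expect the exponential-decay step itself to be the most technical part, since closing the iteration requires balancing the curvature and gradient contributions carefully against the smallness of $W$ and of $\nu(\{\rho>s\})$.
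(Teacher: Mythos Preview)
Your proposal is essentially the same approach as the paper's: argue by contradiction, assume $|\mathcal{R}_X|<\infty$, use the logarithmic Sobolev inequality \eqref{logsob} applied to tail test functions (with the regular-part cutoff $\eta_r$ of Proposition~\ref{prop:regular cutoff} inserted and then $r\to0$) to derive a Munteanu--Wang differential inequality forcing exponential decay of the tail volume, and contradict the polynomial lower bound of Lemma~\ref{lm:polynomial-volume-decay}. The paper carries this out with the specific test function $u_t(\rho)$ and the key inequality $ty'(t)-2y(t)\log y(t)\le C\,y(t-1)$; your $\psi$ and $W(s)=V(\infty)-V(s)$ are equivalent repackagings.

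Two points of cleanup. First, the preliminary split into the Ricci-flat case and the invocation of Theorem~\ref{thm: chow-lu-yang estimate} are unnecessary: the paper's argument never uses $R>0$ pointwise or the quadratic scalar lower bound, only $R\ge0$ and the soliton identities \eqref{eq: basic-properties-2}. Second, and more substantively, your treatment of the scalar-curvature term is the one place where the sketch is not quite right. The crude bound $R\le\rho^2/4$ on the tail gives $\int R\psi^2\,dg\lesssim s^2 W(s-1)$, which is too weak to close the iteration (the factor $s^2$ ruins the exponential decay). What you must do---and what the paper does, following \cite{MW12}---is integrate the traced soliton equation $\Delta f_0+R=\tfrac n2$ against $\eta_r^2 u_t^2$ on $D(s)$, apply the divergence theorem, and use $\langle\nabla(u_t^2),\nabla f_0\rangle\le t\,u_t$ on the transition annulus to obtain $\int_{\mathcal{R}_X} R\,u_t^2\,dg\le \tfrac n2 y(t)-\tfrac t2 y'(t)$. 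This is precisely the step that converts the $R$-term into something comparable to $y$ and $ty'$ rather than $s^2 y$, and it is how your $I(s-1)\le-\tfrac{s-1}{2}W'(s-1)$ should enter the argument. Once you make that replacement, the remainder of your outline (the cutoff limits, the absolute-continuity issues via Lemma~\ref{lm:absolutely-continuous-V-and-chi}) is exactly what the paper does.
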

\begin{proof} Suppose on the contrary that $\left|\mathcal{R_X}\right|<\infty$. We will show that the volume of the unit ball decays at least exponentially. This contradicts the polynomial decay in Lemma \ref{lm:polynomial-volume-decay}. Similar to \cite{MW12}, we consider the Lipschitz function
\begin{equation*}
    u_t(s)=\int^s_{-\infty}\chi_{[t-1,t]}(\tau)\,d\tau=\left\{
\begin{array}{rl}
1 &\text{  if  } t\le s;\\
s-t+1 &\text{  if  } t-1\le s\le t;\\
 0  &\text{  if  } s\le t-1.
\end{array} 
\right.
\end{equation*}
By the definition of $u_t$, for all $s\ge 0$
\[
|u_{t_1}(s)-u_{t_2}(s)|\le 4|t_1-t_2|.
\]
We write $u_t(x)=u_t(\rho(x))\in W^{1,2}(\mathcal{R}_X)$ and let 
\[
y(t)=\int_{\mathcal{R}_X} u_t^2\,dg.
\]
Since $u_{t_2}\le u_{t_1}$ if $t_1\le t_2$ and $\mathcal{R}_X$ has finite volume, $y(t)$ is a nonincreasing Lipschitz function in $t$. We claim that there exists positive constant $C=C(n,W)$ such that
\begin{equation}\label{diffineq}
    ty'(t)-2 y(t)\log y(t)\leq Cy(t-1).
\end{equation}

Assuming the claim at the moment, the same argument in \cite{MW12} shows that there are small positive constants $\varepsilon$ and $\delta$ such that 
\[
y(t)\leq \delta e^{-\varepsilon t}\quad \text{ for all $t\ge 1/\varepsilon$}.
\]
For any $t\ge 1/\varepsilon$, choose $x\in \mathcal{R}_X$ such that $\rho(x)=t+1$. Using \eqref{rhoest} and the fact that $|\nabla \rho|=\tfrac{|\nabla f_0|}{\sqrt{f_0-W}}\le 1$ on $B(x,1)\cap \mathcal{R}_X$,  for each $z\in B(x,1)\cap \mathcal{R}_X$, we may integrate $\nabla\rho$ along almost minimizing curve joining $x$ and $z$ to obtain 
\[
t\le \rho(z)\le t+2
\]
and 
\[
d_g(z,x_0)\leq c_n(t+1)\quad\text{ for all }\quad z\in B(x,1)\cap \mathcal{R}_X.
\]
Hence by Lemma \ref{lm:polynomial-volume-decay}
\[
c(n)|B(x_0,1)\cap \mathcal{R}_X|\cdot\frac{1}{(t+1)^n}\leq
|B(x,1)\cap \mathcal{R}_X|\leq y(t)\leq \delta e^{-\varepsilon t}, 
\]
which is impossible when $t$ is large; it remains to prove \eqref{diffineq} under the assumption $|\mathcal{R}_X|<+\infty$. 

Since \eqref{logsob} only holds for functions compactly supported on $\mathcal{R}_X$, we first verify that indeed \eqref{logsob} also holds when $u=u_t(x)$. Fix  $\sigma\in(0,1/2)$ and let $\delta$ be a small positive number, we define cutoff functions $\phi_\delta$ and $\eta_r$ in the same way as in Section 6.4, that is, $\phi_\delta$ is the cut-off function defined by \eqref{eq:cut-off-on-large-scale}, and $\eta_r$ is the cut-off function given by Proposition \ref{prop:regular cutoff} with $A=\delta^{-1}$ and $r\le \overline{r}(\delta,W,\sigma)$. We substitute $u=\phi_\delta\eta_r u_t(x)$ into \eqref{logsob}. Since
\[
\int_{\mathcal{R}_X}\phi_\delta^2(1-\eta_r^2) u_t^2(x)\,dg\leq \int_{\mathcal{R}_X\cap B(x_0,\delta^{-1})\cap\{0\le \eta_r<1\}}\,dg \le C(\delta,W,\sigma)r^{4-\sigma}\to 0\ \text{  as  }\ r\to 0,
\]
by first letting $r\to 0$ and then $\delta\to 0$, we have
\[
\int_{\mathcal{R}_X}\phi^2_\delta\eta_r^2 u_t^2(x)\,dg\to \int_{\mathcal{R}_X}u_t^2(x)\,dg=y(t),
\]
\[
\int_{\mathcal{R}_X}\phi_\delta^2\eta_r^2u_t^2(x)\,dg\log\left(\int_{\mathcal{R}_X}\phi_\delta^2\eta_r^2u_t^2(x)\,dg\right)\to y(t)\log y(t).
\]

By the similar computations as in \eqref{last but two}, \eqref{last-but-one} and \eqref{last}, we have
\begin{eqnarray*}
  &&  \int_{\mathcal{R}_X}\left(4\left|\nabla (\phi_\delta\eta_ru_t)\right|^2+R\phi_\delta^2\eta_r^2u_t^2\right)\,dg-  \int_{\mathcal{R}_X}\phi^2_\delta\eta_r^2u_t^2\log\left(\phi_\delta^2\eta_r^2u_t^2\right)\, dg\\
  &=& \int_{\mathcal{R}_X}\left(4\phi_\delta^2\eta_r^2\left|\nabla u_t\right|^2+R\phi_\delta^2\eta_r^2u_t^2\right)\,dg- \int_{\mathcal{R}_X}\phi_\delta^2\eta_r^2u_t^2\log (\phi_\delta^2\eta_r^2u_t^2)\, dg\\
  &&+\int_{\mathcal{R}_X}\left(4u_t^2\left|\nabla (\phi_\delta\eta_r)\right|^2+8\phi_\delta\eta_r u_t\langle\nabla (\phi_\delta\eta_r),\nabla u_t\rangle\right)\,dg.
\end{eqnarray*}
Since $0\le u_t(x)\le 1$ and $u_t(x)$ is $1$-Lipschitz in $x$,  $\left|\mathcal{R}_X\right|<\infty$,  and $-e^{-1}\le u_t^2\log u_t^2\leq 0$, we have
\begin{eqnarray*}
&& \bigg|\int_{\mathcal{R}_X}\phi_\delta^2\eta_r^2u_t^2\log\left(\phi_\delta^2\eta_r^2u_t^2\right)\, dg-\int_{\mathcal{R}_X}u_t^2\log u_t^2\, dg\bigg|\\
&=&\bigg|\int_{\mathcal{R}_X}\Big(u_t^2\phi_\delta^2\eta_r^2\log(\phi_\delta^2\eta_r^2) +\phi_\delta^2(\eta_r^2-1)u_t^2\log u_t^2+(\phi_\delta^2-1)u_t^2\log u_t^2\Big)\,dg\bigg|
\\
&\leq& \left|\int_{\{\phi_\delta>0\}\cap(\{\phi_\delta< 1\}\cup \{0<\eta_r<1\})}e^{-1}\,dg\right| + \left|\int_{\mathcal{R}_X}\phi_\delta^2(1-\eta_r^2)e^{-1}\, dg\right|+\left|\int_{\mathcal{R}_X}(1-\phi_\delta^2)e^{-1}\, dg\right|\\
&\leq& Cy(1/2\delta)+ C(\delta,W,\sigma) r^{4-\sigma}\to 0,
\end{eqnarray*}
\begin{eqnarray*}
\int_{\mathcal{R}_X}4u_t^2\left|\nabla (\phi_\delta\eta_r)\right|^2&\leq& \int_{\mathcal{R}_X}8u_t^2(\left|\nabla \phi_\delta\right|^2+\left|\nabla\eta_r\right|^2)\,dg\\
&\leq&  C(n)\left|\mathcal{R}_X\right|\delta^2+C(\delta,W,\sigma)r^{2-\sigma}\to 0,\\
\int_{\mathcal{R}_X}8\phi_\delta\eta_r u_t\left|\langle\nabla (\phi_\delta\eta_r),\nabla u_t\rangle\right|\,dg
&\leq& 
C\int_{\mathcal{R}_X}8\phi_\delta\eta_r u_t(\left|\nabla\phi_\delta\right|+\left|\nabla\eta_r\right|)\,dg\\
&\leq& C(n)\left|\mathcal{R}_X\right|\delta+C(\delta,W,\sigma)r^{3-\sigma}\to 0,
\end{eqnarray*}
the limits above are all obtained by first taking $r\to 0$ and then $\delta\to 0$. Hence from \eqref{logsob} and the definition of $u_t$, we have $$y(t)\leq C(\left|\mathcal{R}_X\right|-V(t-1))$$ and
\begin{align}\label{diffeq2}
(C\W-e^{-1})(\left|\mathcal{R}_X\right|-V(t-1))
&\leq \W y(t)-e^{-1}\int_{\mathcal{R}_X\setminus D(t-1)}\,dg\\\nonumber
&\leq  \W y(t)+\int_{\mathcal{R}_X}u_t^2\log u_t^2\,dg\\\nonumber
&\leq \int_{\mathcal{R}_X}(4|\nabla u_t|^2+Ru_t^2)\,dg+y(t)\log y(t)\\\nonumber
&\leq C(\left|\mathcal{R}_X\right|-V(t-1))+ \int_{\mathcal{R}_X}Ru_t^2\,dg+y(t)\log y(t).\nonumber
\end{align}

Moreover, as in \cite{MW12}, we have
\begin{align}\label{eq:theMW12ineq}
  \langle\nabla (u_t^2), \nabla f_0\rangle\leq \rho u_t\leq t u_t \text{  on  } \mathcal{R}_X\cap D(t)\setminus D(t-1)\text{ and vanishes elsewhere.} 
\end{align}
The following differential equation 
can also be verified straightforwardly.
\\

\noindent\textbf{Claim. }
\[
\frac{d}{dt}y(t)=-2\int_{\mathcal{R}_X\cap D(t)\setminus D(t-1)}u_t\,dg
\]
\emph{for every $t\ge 0$}.
\begin{proof}[Proof of the claim]
   For each $t\ge 0$ and $\varepsilon>0$, we may compute as follows.
\begin{align*}
    y(t+\varepsilon)-y(t)=&\ \int_{\mathcal{R}_X}\left(u_{t+\varepsilon}^2-u_t^2\right)\,dg
    \\
    =&\ -\int_{\mathcal{R}_X\cap D(t+\varepsilon-1)\setminus D(t-1)}\left(\rho-(t-1)\right)^2\,dg
    \\
    &\ -\int_{\mathcal{R}_X\cap D(t+\varepsilon)\setminus D(t)}\big(1-(\rho-(t+\varepsilon-1))^2\big)\,dg
    \\
    &\ -\varepsilon\int_{\mathcal{R}_X\cap D(t)\setminus D(t+\varepsilon-1)}\big(2(\rho-(t-1))-\varepsilon\big)\,dg.
\end{align*}
The first observation obtained from the above formula is that $y(t)$ is locally Lipschitz (and hence absolutely continuous), since each term on the right-hand-side can be bounded with $C(n)|\mathcal{R}_X|\varepsilon$. Secondly, we have
\begin{align*}
    &\frac{1}{\varepsilon}\left|\int_{\mathcal{R}_X\cap D(t+\varepsilon-1)\setminus D(t-1)}\left(\rho-(t-1)\right)^2\,dg\right|\le \varepsilon |\mathcal{R}_X\cap D(t+\varepsilon-1)\setminus D(t-1)|\to 0,
    \\
    &\frac{1}{\varepsilon}\left|\int_{\mathcal{R}_X\cap D(t+\varepsilon)\setminus D(t)}\big(1-(\rho-(t+\varepsilon-1))^2\big)\,dg\right|\le 2|\mathcal{R}_X\cap D(t+\varepsilon)\setminus D(t)|\to 0
    \\
    & \frac{1}{\varepsilon}\left(-\varepsilon\int_{\mathcal{R}_X\cap D(t)\setminus D(t+\varepsilon-1)}\big(2(\rho-(t-1))-\varepsilon\big)\,dg\right)\to -2\int_{\mathcal{R}_X\cap D(t)\setminus D(t-1)}u_t\,dg,
\end{align*}
as $\varepsilon \to 0$. 

\end{proof}

We continue the proof of the proposition. For any regular value $s>0$ of $\rho$, picking a cut-off function $\eta_r$ in Proposition \ref{prop:regular cutoff} with $A\ge 3s+\underline{A}(n)$, $\sigma<3$, and $r\le\overline{r}(A,W,\sigma)$,  we may integrate $\Delta f_0+R=\frac{n}{2}$ on $\mathcal{R}_X\cap D(s)$ and apply the divergence theorem as in the proof of Lemma \ref{CZlem}.
\begin{eqnarray*}
0\le \int_{\mathcal{R}_X\cap D(s)}R\eta_r^2u_t^2\,dg&=&\frac{n}{2}\int_{\mathcal{R}_X\cap D(s)}\eta_r^2u_t^2\,dg-\int_{\mathcal{R}_X\cap D(s)}\Delta f_0\eta_r^2u_t^2\,dg\\
&=&\frac{n}{2}\int_{\mathcal{R}_X\cap D(s)}\eta_r^2u_t^2\,dg+2\int_{D(s)\cap \mathcal{R}_X}u_t^2\eta_r\langle\nabla \eta_r, \nabla f_0\rangle\, dg\\
& &+\int_{D(s)\cap \mathcal{R}_X}\eta_r^2\langle\nabla (u_t^2), \nabla f_0\rangle\, dg-\int_{\{x\in \mathcal{R}_X: \rho(x)=s\}}\eta_r^2u_t^2\frac{\langle\nabla_0 f, \nabla \rho\rangle}{|\nabla\rho|} \,dA\\
&\leq&\frac{n}{2}y(t)+\int_{D(s)\cap \mathcal{R}_X}\eta_r^2\langle\nabla (u_t)^2, \nabla f_0\rangle\, dg+C(A,W,\sigma)r^{3-\sigma}.
\end{eqnarray*}
By letting $r\to 0$ and then $s\to+\infty$, we have
\[
0\le \int_{\mathcal{R}_X}Ru_t^2\,dg\leq \frac{n}{2}y(t)-\frac{t}{2}y'(t),
\]
where we have applied  \eqref{eq:theMW12ineq},  the claim above, and the dominated convergence theorem; \eqref{diffineq} then follows by $\left|\mathcal{R}_X\right|-V(t-1)\leq y(t-1)$ and substituting the above inequality into \eqref{diffeq2}. This completes the proof of the proposition.
\end{proof}

\subsection{Noncompact metric soliton has at least linear volume growth}

\begin{proof}[Proof of Theorem \ref{thm:linear-volume-lower-bound}]
    In view of the arguments in \cite[Theorem 6.1]{MW12} and \cite[Proposition 6]{LW20}, it suffices to verify the following differential inequalities for $t\ge c(n)$
    \begin{equation}\label{1st}
          V(t+1)\leq 2V(t),
    \end{equation}
 \begin{equation}\label{2nd}
    V(t+1)-V(t)\leq C_1\frac{V(t)}{t},
    \end{equation}
   \begin{equation}\label{3rd}
    |D(t+1)\cap\mathcal{R}_X\setminus D(t)|^{\frac{n-2}{n}}\leq C_2e^{-\frac{2W}{n}}\Big[ |D(t)\cap\mathcal{R}_X\setminus D(t-1)|+|D(t+2)\cap\mathcal{R}_X\setminus D(t+1)|+\chi(t+2)-\chi(t-1)\Big],
  \end{equation}
    where $C_1$ and $C_2$ are dimensional constants, c.f. equations (266), (267), and (271) in \cite{LW20}. By the absolute continuity of $V$ and $\chi$, we can mulitply both sides of \eqref{diffeq} by $s^{-n}$ and integrate the equation over $[t,t+1]$ as in \cite{CZ10} to get
    \begin{equation}\label{interm}
    (t+1)^{-n}V(t+1)-t^{-n}V(t)\leq4(t+1)^{-(n+2)}\chi(t+1)\leq 2n(t+1)^{-(n+2)}V(t+1)
   \end{equation}
    for all $t\ge \sqrt{2(n+2)}$; here we also used \eqref{intbddR}. Hence
    \[
    V(t+1)\left(1-\frac{2n}{(t+1)^2}\right)\leq \frac{(t+1)^n}{t^n}V(t)
    \]
    which implies \eqref{1st} for all $t\ge c(n)$. Substitute \eqref{1st} into \eqref{interm} and use $((t+1)^n-t^n)/t^n \leq 2^n/t$ as in \cite{MW12}, we have
    \[
     V(t+1)-V(t)\leq \frac{2^n}{t}V(t)+\frac{4n}{(t+1)^2}V(t).
    \]
    This gives \eqref{2nd}. To prove \eqref{3rd}, we make use of the global Sobolev inequality in Corollary \ref{coro: Sobolev}. For any fixed $t\ge c(n)$ Let 
    \begin{equation*}
    w_t(s)=\left\{
\begin{array}{rl}
0 &\text{  if  } s\le t-1;\\
s-t+1 &\text{  if  } t-1\le s\le t;\\
 1  &\text{  if  } t \le s\le t+1;\\
 t+2-s &\text{  if  } t+1 \le s\le t+2;\\
 0 &\text{  if  } s\ge t+2.
\end{array} 
\right.
\end{equation*}
We also write $w_t(x):=w_t(\rho(x))$. Using \eqref{rhoest}, we may fix $A\ge 3t+\underline{A}(n)$ in Proposition \ref{prop:regular cutoff} to get a cutoff function $\eta_r$ for all $r\le \overline{r}(t,W,\sigma)$, where $0<\sigma<2$ is arbitrarily fixed. We substitute $u:=\eta_r w_t\in C^{0,1}_c(\mathcal{R}_X)$ into the Sobolev inequality of Corollary \ref{coro: Sobolev} to get
\begin{equation}\label{someq}
 \left(\int_{\mathcal{R}_X} |\eta_r w_t|^{\frac{2n}{n-2}}\,dg\right)^{\frac{n-2}{n}}\le C(n)e^{-\frac{2W}{n}}\int_{\mathcal{R}_X}\left(4|\nabla (\eta_r w_t)|^2+ R\eta_r^2 w_t^2\right)\,dg.
 \end{equation}

By Proposition \ref{prop:regular cutoff} and the definition of $w_t$, as $r\to 0$
\[
\int_{\mathcal{R}_X} |(1-\eta_r) w_t|^{\frac{2n}{n-2}}\,dg\leq \int_{\mathcal{R}_X\cap B(x_0,A)\cap\{0\le \eta_r<1\}}\,dg \le C(A,W,\sigma)r^{4-\sigma}\to 0 
\]
and thus by the triangle inequality
\[
 \left(\int_{\mathcal{R}_X} |\eta_r w_t|^{\frac{2n}{n-2}}\,dg\right)^{\frac{n-2}{n}} \to  \left(\int_{\mathcal{R}_X} |w_t|^{\frac{2n}{n-2}}\,dg\right)^{\frac{n-2}{n}}\ge  |D(t+1)\cap\mathcal{R}_X\setminus D(t)|^{\frac{n-2}{n}}.
\]

Using \eqref{Rupb}, we see that,
\begin{align*}
0&\ \le\int_{\mathcal{R}_X}R(1-\eta_r^2) w_t^2\,dg\leq c_n(t+2)^2\int_{\mathcal{R}_X}(1-\eta_r^2) w_t^2
\\
&\ \leq c_n(t+2)^2\int_{\mathcal{R}_X\cap B(x_0,A)\cap\{0\le \eta_r<1\}}\,dg \le C(A,W,\sigma)\cdot (t+2)^2\cdot r^{4-\sigma}\to 0 
\end{align*}
as $r\to 0$, and 
\[
\int_{\mathcal{R}_X}R\eta_r^2 w_t^2\,dg\to \int_{\mathcal{R}_X}R w_t^2\,dg\leq  \int_{D(t+2)\cap\mathcal{R}_X\setminus D(t-1)}R \,dg=\chi(t+2)-\chi(t-1).
\]

On the other hand,
\[
\int_{\mathcal{R}_X}4|\nabla (\eta_r w_t)|^2\,dg=\int_{\mathcal{R}_X}\left(4\eta_r^2|\nabla  w_t|^2+4w_t^2|\nabla  \eta_r|^2+8\eta_rw_t\langle\nabla\eta_r,\nabla w_t \rangle\right)\,dg.
\]
Thanks to the fact $|\nabla \rho|\leq 1$ and Proposition \ref{prop:regular cutoff}, when $r\to 0$,
\begin{eqnarray*}
&&\int_{\mathcal{R}_X}\left(4w_t^2|\nabla  \eta_r|^2+8\eta_rw_t\left|\langle\nabla\eta_r,\nabla w_t \rangle\right|\right)\,dg
\\
&\leq& C\int_{\mathcal{R}_X\cap\{|\nabla\eta_r|\neq 0\}} w_t^2r^{-2}\,dg+C\int_{D(t+2)\cap\mathcal{R}_X} |\nabla\eta_r|\,dg\\
&\leq& C(A,W,\sigma)(r^{2-\sigma}+r^{3-\sigma})\to 0,
\end{eqnarray*}
\[
\int_{\mathcal{R}_X}4(1-\eta_r^2)|\nabla  w_t|^2\leq C\int_{\mathcal{R}_X\cap B(x_0,A)\cap\{0\le \eta_r<1\}}\,dg \le C(A,W,\sigma)r^{4-\sigma}\to 0.
\]
Hence as $r\to 0$,
\begin{eqnarray*}
\int_{\mathcal{R}_X}4|\nabla (\eta_r w_t)|^2\,dg&=&\int_{\mathcal{R}_X}4\eta_r^2|\nabla  w_t|^2+4w_t^2|\nabla  \eta_r|^2+8\eta_rw_t\langle\nabla\eta_r,\nabla w_t \rangle\,dg\\
&\to& \int_{\mathcal{R}_X}4|\nabla w_t|^2\,dg\\
&=&\int_{D(t)\cap\mathcal{R}_X\setminus D(t-1)}4|\nabla w_t|^2\,dg+\int_{D(t+2)\cap\mathcal{R}_X\setminus D(t+1)}4|\nabla w_t|^2\,dg\\
&\leq& 4|D(t)\cap\mathcal{R}_X\setminus D(t-1)|+4|D(t+2)\cap\mathcal{R}_X\setminus D(t+1)|.
\end{eqnarray*}
\eqref{3rd} then follows by letting $r\to 0$ in \eqref{someq}. The linear volume growth is now a consequence of \eqref{1st},\eqref{2nd}, \eqref{3rd}, Proposition \ref{infvol}, and the contradiction arguments in \cite{MW12, LW20}.
\end{proof}

\bigskip


\begin{thebibliography}{CCG{\alphalchar{+}}10}












 













\bibitem[Bam20a]{Bam20a}
Bamler, Richard~H. \emph{Entropy and heat kernel bounds on a Ricci flow background}.
arXiv preprint arXiv:2008.07093 (2020).

\bibitem[Bam20b]{Bam20b}\bysame, \emph{Compactness theory of the space of super Ricci flows.} Invent. Math. 233 (2023), no. 3, 1121–1277. 
  
\bibitem[Bam20c]{Bam20c} \bysame, \emph{Structure theory of non-collapsed limits of Ricci flows}. arXiv preprint  arXiv:2009.03243 (2020).

\bibitem[Bam21]{Bam21} \bysame, \emph{On the fundamental group of non-collapsed ancient Ricci flows}, arXiv preprint arXiv:2110.02254 (2021).


 








\bibitem[CN09]{CN09} Carrillo, Jos\'e A., and Ni,Lei. \emph{Sharp logarithmic Sobolev inequalities on gradient solitons and applications.} Communications in Analysis and Geometry 17.4 (2009): 721-753.



\bibitem[CZ10]{CZ10}Cao, Huai-Dong; Zhou, De-Tang. \emph{On complete gradient
shrinking Ricci solitons.} J. Differential Geom. \textbf{85} (2010), 175--186.




\bibitem[CMZ22]{CMZ22} 
Chan, Pak-Yeung; Ma, Zilu; Zhang, Yongjia. 
\emph{A local gap theorem for Ricci shrinkers.} arXiv preprint 	arXiv:2212.09203 (2022).

\bibitem[CMZ23]{CMZ23} Chan, Pak-Yeung; Ma, Zilu;  Zhang, Yongjia. \emph{A uniform Sobolev inequality for ancient Ricci flows with bounded Nash entropy.} International Mathematics Research Notices 2023.13 (2023): 11127-11144.

\bibitem[CMZ23a]{CMZ23a} 
Chan, Pak-Yeung; Ma, Zilu; Zhang, Yongjia. 
\emph{A local Sobolev inequality on Ricci flow and its applications.} J. Funct. Anal. 285 (2023), no. 5, Paper No. 109995, 36 pp.

\bibitem[CMZ23b]{CMZ23b} 
Chan, Pak-Yeung; Ma, Zilu; Zhang, Yongjia. 
\emph{Local smooth convergence of F-limit flows.} arXiv preprint arXiv: 2310.14007 (2023)

\bibitem[CCMZ23]{CCMZ23} Chan, Pak-Yeung; Chow, Bennett; Ma, Zilu;  Zhang, Yongjia. \emph{ Lower bounds for the scalar curvatures of Ricci flow singularity models.} Journal f\"ur die reine und angewandte Mathematik (Crelles Journal), 2023(794), 253-265.


\bibitem[Che09]{Che09}
Chen, Bing-Long,
\emph{Strong uniqueness of the Ricci flow}. J. Differential Geom. 82 (2009), 363--382.





\bibitem[CLY11]{CLY11}
 Chow, Bennett; Lu, Peng; Yang, Bo
\emph{ Lower bounds for the scalar curvatures of noncompact gradient Ricci solitons}. C. R. Math. Acad. Sci. Paris 349 (2011), no. 23-24, 1265–1267. 








 
 







\bibitem[FIK03]{FIK03} Feldman, Mikhail,  Ilmanen, Tome, and Knopf, Dan. \emph{Rotationally symmetric shrinking and expanding gradient K\"ahler-Ricci solitons}. Journal of Differential Geometry 65.2 (2003): 169-209.




\bibitem[Ham93]{Ham93} Hamilton, Richard S.  \emph{The formation of singularities in the Ricci flow.\ }Surveys in differential geometry, Vol.\ II (Cambridge, MA, 1993), 7--136, Internat. Press, Cambridge, MA, 1995.







\bibitem[Kot13]{Kot13} Kotschwar, Brett.\emph{A local version of Bando's theorem on the real-analyticity of solutions to the Ricci flow}. Bull. Lond. Math. Soc. 45 (2013), no.1, 153–158.








\bibitem[Lee13]{Lee13}Lee, John M. \emph{Introduction to smooth manifolds}, Second edition, Grad. Texts in Math., 218 Springer, New York, 2013.

\bibitem[LW20]{LW20} Li, Yu; Wang, Bing. \emph{Heat kernel on Ricci shrinkers}. Calc. Var. Partial Differential Equations 59 (2020), no. 6, Paper No. 194, 84 pp.



\bibitem[M20]{M20} Boris Mityagin, \emph{The Zero Set of a Real Analytic Function}. (Russian) Mat. Zametki 107 (2020), no.3, 473–475; translation in
Math. Notes 107 (2020), no.3-4, 529–530. 	


\bibitem[MW12]{MW12} Ovidiu Munteanu, and Jiaping Wang, \emph{Analysis of weighted Laplacian and applications to Ricci solitons}, Communications in Analysis and Geometry 20, no. 1 (2012): 55-94.


\bibitem[Per02]{Per02} Perelman, Grisha, \emph{The entropy formula for the Ricci flow and its geometric applications}, arXiv:math.DG/0211159 (2002).


\bibitem[Per03a]{Per03a} Perelman, Grisha, \emph{Ricci flow with surgery on three-manifolds}, arXiv:math.DG/0303109 (2003).

\bibitem[Per03b]{Per03b} Perelman, Grisha. \emph{Finite extinction time for the solutions to the Ricci flow on certain three-manifolds}, arXiv preprint math/0307245 (2003).


\bibitem[SS72]{SS72} Sou\v{c}ek, Ji\v{r}\'{\i}; Sou\v{c}ek, Vladim\'{\i}r. \emph{Morse-Sard theorem for real-analytic functions}. Commentationes Mathematicae Universitatis Carolinae, Vol. 13 (1972), No. 1, 45-51

\bibitem[W18]{W18} Wang, Bing, \emph{The local entropy along Ricci flow Part A: the no-local-collapsing theorems.} Camb. J. Math. \textbf{6} (2018), no. 3, 267–346.


\bibitem[Yo09]{Y09} Yokota, Takumi. \emph{Perelman's reduced volume and a gap theorem for the Ricci flow.} Communications in Analysis and Geometry 17.2 (2009): 227-263.

\bibitem[Yo12]{Y12} Yokota, Takumi. \emph{Addendum to `Perelman’s reduced volume and a gap theorem for the Ricci flow'.} Communications in Analysis and Geometry 20.5 (2012): 949-955.

\bibitem[Zh10]{Zh10} Zhang, Qi S. \emph{Sobolev inequalities, heat kernels under Ricci flow, and the Poincaré conjecture.} CRC Press, 2010.


\end{thebibliography}
\bibliographystyle{amsalpha}

\newcommand{\alphalchar}[1]{$^{#1}$}
\providecommand{\bysame}{\leavevmode\hbox to3em{\hrulefill}\thinspace}
\providecommand{\MR}{\relax\ifhmode\unskip\space\fi MR }
\providecommand{\MRhref}[2]{%
  \href{http://www.ams.org/mathscinet-getitem?mr=#1}{#2}
}

\bigskip

\bigskip

\noindent Mathematics Institute, Zeeman Building, University of Warwick, Coventry CV4 7AL, UK
\\ E-mail address: \verb"pak-yeung.chan@warwick.ac.uk"
\\

\noindent Department of Mathematics, Rutgers University, Piscataway, NJ 08854, USA
\\ E-mail address: \verb"zilu.ma@rutgers.edu"
\\

\noindent School of Mathematical Sciences, Shanghai Jiao Tong University, Shanghai, China, 200240
\\ E-mail address: \verb"sunzhang91@sjtu.edu.cn"

\end{document}